\def\beq{\begin{equation}}
\def\eeq{\end{equation}}
\def\rit{\mathbb{R}}
\newtheorem{theorem}{Theorem}
\newtheorem{lemma}[theorem]{Lemma}
\newtheorem{remark}[theorem]{Remark}
\begin{document}

\centerline{\Large \bf Bifurcations of viscous boundary layers in the half space}

\bigskip

\centerline{D. Bian\footnote{School of Mathematics and Statistics, Beijing Institute of Technology, Beijing 100081, China. Email: biandongfen@bit.edu.cn and emmanuelgrenier@bit.edu.cn}, 
E. Grenier$^1$,
G. Iooss\footnote{Laboratoire J.A.Dieudonné, I.U.F.,
Université Côte d’Azur,
Parc Valrose
06108 Nice Cedex 02, France}}


\subsubsection*{Abstract}


It is well-established that shear flows are linearly unstable provided the viscosity is small enough, 
when the horizontal Fourier wave number lies in some interval, 
between the so-called lower and upper marginally stable curves.
In this article, we prove that, under a natural spectral assumption, 
shear flows undergo a Hopf bifurcation near their upper marginally stable curve.
In particular, close to this curve, there exists space periodic traveling waves solutions of the full incompressible
Navier-Stokes equations.
For the linearized operator, the occurrence of an essential spectrum containing the entire negative real axis causes certain difficulties which are overcome. 
Moreover, if this Hopf bifurcation is super-critical, these time and space periodic solutions are linearly
and nonlinearly asymptotically stable.


\section{Introduction}


In this paper, we consider the incompressible Navier-Stokes equations in the half space $\Omega = \rit \times \rit_+$
 \beq \label{NS1} 
\partial_t u^\nu + (u^\nu \cdot \nabla) u^\nu - \nu \Delta u^\nu + \nabla p^\nu = f^\nu,
\eeq
\beq \label{NS2}
\nabla \cdot u^\nu = 0 ,
\eeq
together with the Dirichlet boundary condition
\beq \label{NS3} 
u^\nu = 0 \quad \hbox{when} \quad y = 0
\eeq
and address the classical question of the linear and nonlinear stability of shear flows for these equations.

A shear flow is a stationary solution of (\ref{NS1},\ref{NS2},\ref{NS3}) of the form
$$
U(y) = (U_s(y),0), \qquad f^\nu = (- \nu \Delta U_s,0),
$$
where $U_s(y)$ is a smooth function, vanishing at $y = 0$ and converging exponentially fast
at infinity to some constant $U_+$. 

The question of the linear stability of such shear flows is one of the most classical questions
in Fluid Mechanics, which has been intensively studied since the pioneering work of Lord Rayleigh
at the end of the $19^{th}$ century. The situation has been progressively understood in the $20^{th}$ century
thanks to the works of L. Prandtl, Orr, Sommerfeld, Schlichting and C.C. Lin to only quote a few names.
We in particular refer to \cite{Reid,Reid2,Schmidt} for a detailed presentation of the approach in physics.

\begin{figure}[th]
\begin{center}
\includegraphics[width=7cm]{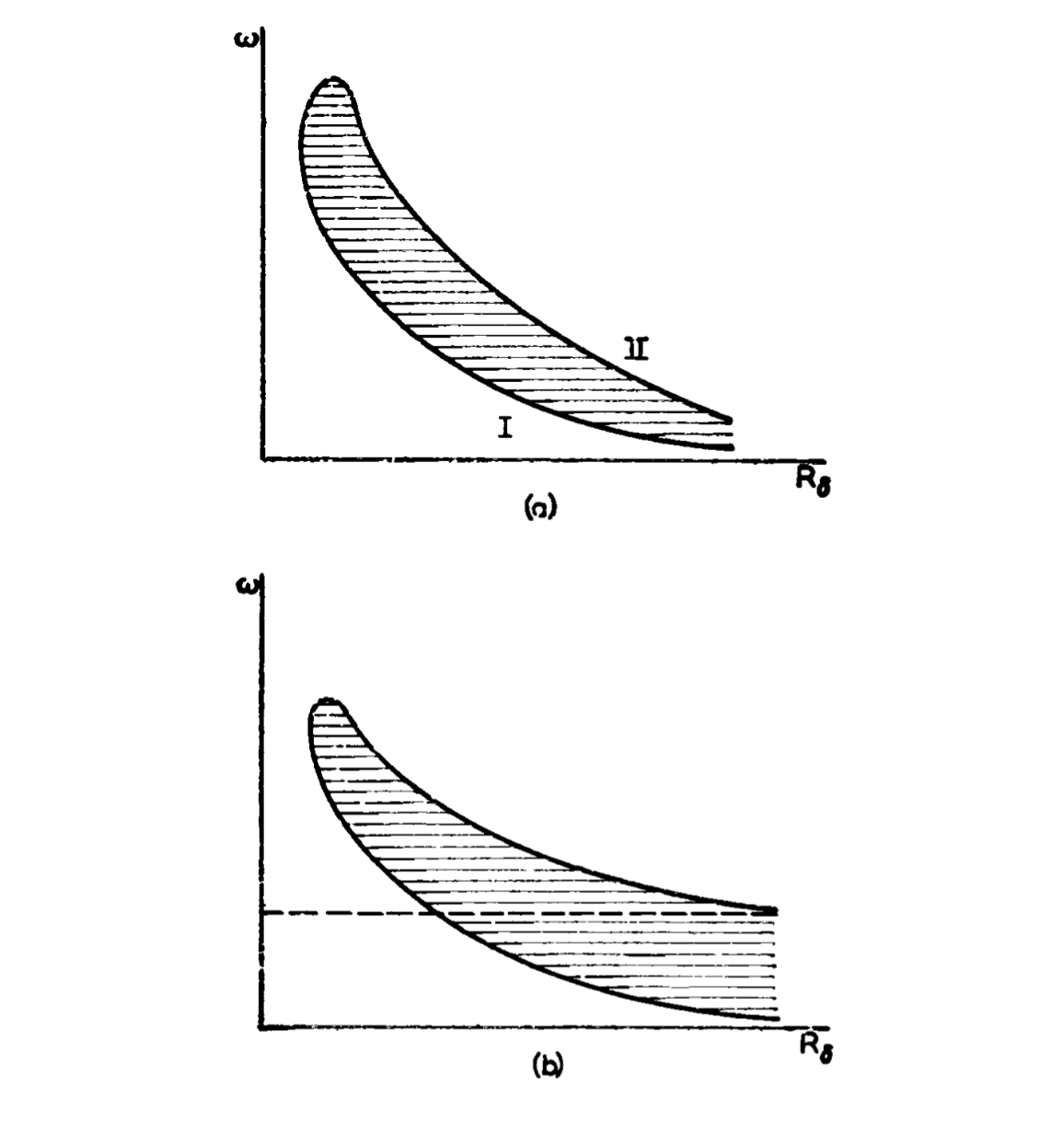}
\end{center}
\caption{Stability of shear flows: 
horizontally, the Reynolds number (inverse of $\nu$), vertically the horizontal wave length
$\alpha$ of the perturbation. In grey, the unstable area. 
Top sub-figure: Euler-stable profile. Bottom sub-figure: Euler-unstable profile.
From \cite{Landau}.}
\label{unstable}
\end{figure}

Roughly speaking, shear flows can be classified into two categories:

\begin{itemize}
\item Some are spectrally unstable for the Euler equations, namely in the case $\nu = 0$ 
(see the bottom sub-figure of Figure \ref{unstable}).
According to Rayleigh criterium, such shear flows have inflection points. 

\item Other are spectrally stable for the Euler equations, which is the case if they have no inflection points,
for instance if they are convex or concave (see the top sub-figure of Figure \ref{unstable}).

\end{itemize}

In both cases, provided the Reynolds number is large enough, namely provided the viscosity $\nu$ is small enough,
the shear layer is linearly unstable with respect to perturbations whose horizontal wavenumber $\alpha$ 
lies in some interval $[\alpha_-(\nu),\alpha_+(\nu)]$ for some increasing functions $\alpha_\pm(\nu)$. 
The function $\alpha_-(\nu)$
(respectively $\alpha_+(\nu)$) is called the lower (respectively upper) marginal stability curve.

Let us fix $\nu = \nu_0$ small enough, which corresponds to a large enough Reynolds number.
Then if $\alpha > \alpha_{+}(\nu_0)$, we note that the harmonics of $\alpha$, namely all the positive
multiple of $\alpha$, remain larger than the basic harmonic $\alpha_{+}(\nu_0)$ and thus are all stable.
As a consequence, we expect that the shear layer $U$ is linearly and nonlinear stable with respect
to perturbations which are $2 \pi / \alpha$ periodic.

However, if $\alpha < \alpha_{+}(\nu_0)$ and close to $\alpha_{+}(\nu_0)$, a linear instability appears
and, following \cite{Schmidt} (section $5.3$), we expect a bifurcation: small perturbations will
grow exponentially till the nonlinear term saturates them, leading to a bifurcated solution,
which is a traveling wave of small amplitude.

\medskip

The aim of this article is to investigate mathematically this physical scenario.
We first formalize the spectral situation depicted on Figure \ref{unstable},
which leads to the following set of assumptions and then study the bifurcation arising at 
$\alpha_+(\nu)$.

\medskip

We take the Fourier transform in the
$x$ variable, with dual Fourier variable $\alpha$. We denote by $Sp_{\alpha,\nu}$ the
spectrum of the linearized Navier-Stokes equations after this Fourier transform, and by
$\lambda(\alpha,\nu) \in Sp_{\alpha,\nu}$ the eigenvalue (if it exists and is unique) with the largest real part.

In this article, we assume that, 
when the viscosity $\nu$ is small enough,
there exist two smooth and increasing functions $\alpha_\pm(\nu)$, 
defined for $\nu$ small enough, with $\alpha_-(\nu) < \alpha_+(\nu)$, such that

\begin{itemize}

\item[(A1)] for $\alpha_-(\nu) < |\alpha| < \alpha_+(\nu)$, $\lambda(\alpha,\nu)$ exists, is unique,
and satisfies $$
\Re \lambda(\alpha, \lambda) > 0. 
$$
Moreover, $\lambda$ is simple, with corresponding  eigenvector $\zeta(x,y)$ of the form
$$
\zeta(x,y) = \nabla^{\perp} \Bigl[ e^{i \alpha x} \psi_{\alpha,\nu}(y) \Bigr],
$$
for some smooth stream function $\psi_{\alpha,\nu}(y)$ which is exponentially decreasing at infinity,

\item[(A2)] for $|\alpha| > \alpha_+(\nu)$, close to $\alpha_+(\nu)$,
$\lambda(\alpha,\nu)$ exists, is unique and satisfies
$$
\Re \lambda(\alpha,\nu) < 0,
$$

\item[(A3)] for $| \alpha | = \alpha_+(\nu)$, $\lambda(\alpha,\nu)$ is well defined and purely imaginary.
Moreover, defining $\nu_0(\alpha)$ by $\alpha=\alpha_+(\nu_0)$
$$
(\partial_\nu \Re \lambda) (\alpha,\nu) |_{\nu = \nu_0(\alpha)} > 0.
$$
\end{itemize}

The mathematical proof of the existence of an unstable
mode for some values of $\alpha$
has been initiated in \cite{Guo}, where it is 
established that any shear flow is spectrally
unstable for the incompressible Navier-Stokes equations provided the viscosity is small enough,
a first result later improved by \cite{Zhang} and \cite{BG4}. In this last paper, 
we proved that strictly convex or concave flows satisfy the set of assumptions (A1), (A2) and (A3)
under the additional assumptions $U_s'(0) \ne 0$ and $U_+ \ne 0$.

When we cross the upper marginally stable curve $\alpha = \alpha_+(\nu)$, two eigenvalues cross
the imaginary axis (corresponding to $\pm \alpha$): we are exactly in the situation of an Hopf bifurcation.
This paper focuses on the study of this bifurcation.

Let us now state our main result in an informal way. We refer to Theorems \ref{maintheo1}, \ref{maintheo2} 
and \ref{maintheo3} for detailed statements, including precise definitions of the function
spaces.

\begin{theorem}\label{maintheo}
Let us assume (A1), (A2) and (A3).
Let $\nu_0 > 0$ and let $\alpha = \alpha_+(\nu_0)$, let us consider
perturbations which are $2 \pi / \alpha$ periodic in $x$.
Then, at $\nu=\nu_0$, the system undergoes a Hopf bifurcation:
\begin{itemize}

\item for $\nu < \nu_0$, the shear flow $U$ is linearly and nonlinearly stable,

\item if the Hopf bifurcation is subcritical, there exists a time and space periodic solution
of (\ref{NS1},\ref{NS2},\ref{NS3}) for $\nu < \nu_0$ sufficiently close to $\nu_0$,
and this time and space periodic solution is linearly unstable,

\item if the Hopf bifurcation is supercritical, there exists a time and space periodic solution
of (\ref{NS1},\ref{NS2},\ref{NS3}) for $\nu > \nu_0$, sufficiently close to $\nu_0$, and this
time and space periodic solution is linearly and nonlinearly stable.

\end{itemize}

\end{theorem}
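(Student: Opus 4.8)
The plan is to recast the problem as an abstract Hopf bifurcation in a suitable Banach space and to handle the essential spectrum carefully. First I would write the incompressible Navier--Stokes equations in vorticity--stream function form, linearize around the shear flow $U$, and, working with $2\pi/\alpha$-periodic perturbations in $x$, decompose the perturbation into Fourier modes $e^{ik\alpha x}$, $k\in\mathbb Z$. For each $k$ one obtains an Orr--Sommerfeld type operator $L_{k\alpha,\nu}$ acting on functions of $y\in\mathbb R_+$. Assumptions (A1)--(A3) tell us that at $\nu=\nu_0$ and $k=\pm1$ there is a pair of simple eigenvalues $\pm i\omega_0$ on the imaginary axis crossing transversally as $\nu$ varies (the transversality is exactly (A3)); for all other $k$ the relevant spectrum lies strictly in the left half plane — indeed $k=0$ is the shear-flow direction (marginal only in the trivial translation sense, which one quotients out), and $|k|\ge 2$ gives $|k\alpha|>\alpha_+(\nu_0)$, hence stable by (A2), while $k=\pm1$ away from the imaginary axis is also stable. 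The main structural difficulty, flagged in the abstract, is that the linearized operator has essential spectrum filling the whole negative real axis (coming from the diffusion operator on the unbounded domain $\mathbb R_+$ with the constant limit $U_+$ at infinity), so the resolvent does not decay and the classical center-manifold / Lyapunov--Schmidt machinery in the form requiring a spectral gap with sectorial generator must be adapted.

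The key steps, in order, would be: (i) set up function spaces with exponential weights in $y$ (or otherwise) in which $\psi_{\alpha,\nu}$ and its harmonics live, and establish the needed resolvent estimates for $L_{k\alpha,\nu}$ — here the essential spectrum on $(-\infty,0]$ forces one to work in a half plane $\Re\lambda\ge -\delta$ only to the right of the essential spectrum, or to use analyticity of the Evans function / Rayleigh function to locate eigenvalues; (ii) verify the spectral hypotheses of an abstract Hopf theorem: exactly two eigenvalues $\pm i\omega_0$ on $i\mathbb R$ at $\nu=\nu_0$, all the rest of the spectrum (point and essential) in $\{\Re\lambda<0\}$ uniformly, plus transversality from (A3); (iii) run a Lyapunov--Schmidt reduction in the space of time-periodic functions — seek a traveling wave $u(x-ct,y)$, i.e. impose a co-rotating frame so that the bifurcating solution is stationary, reducing to a two-dimensional (amplitude, frequency) algebraic problem; (iv) compute the first Landau coefficient by expanding to third order — its sign decides sub- vs super-criticality, which is the $\pm$ dichotomy in the statement; (v) for the supercritical branch, prove linear stability of the bifurcated periodic solution by studying the Floquet/monodromy operator of the linearization about it, showing the non-trivial Floquet exponents stay to the left of the essential spectrum, and then upgrade to nonlinear asymptotic stability by a standard energy/semigroup argument exploiting the spectral gap of the monodromy operator; in the subcritical case the same Floquet analysis yields an unstable exponent. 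The first bullet — linear and nonlinear stability of $U$ for $\nu<\nu_0$ — follows from (A1)--(A2) giving $\Re\lambda<0$ for all relevant wavenumbers together with the essential-spectrum bound, by a resolvent-plus-Duhamel argument.

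I would carry out the bifurcation analysis using the Iooss--Joseph style formulation: rather than a center manifold, directly look for $2\pi$-periodic-in-$\tau=\omega t$ solutions as a fixed point, splitting off the two-dimensional kernel of the linearized period map and solving the infinite-dimensional complement by the implicit function theorem, where invertibility on the complement is precisely where the resolvent estimates from step (i) enter and where the essential spectrum must be shown to be harmless (it contributes a bounded inverse because we stay uniformly to its right). The amplitude equation is then $\dot A = \lambda(\nu)A + c_3|A|^2A + \dots$, and the real part of $c_3$ is the Landau constant; the hypothesis "if the Hopf bifurcation is supercritical/subcritical" in the theorem means we simply assume $\Re c_3<0$ or $>0$ respectively, so we do not need to determine its sign, only to express the conclusions in terms of it.

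The hard part will be two intertwined technical points: first, obtaining uniform resolvent bounds for the linearized Orr--Sommerfeld operators on the half-line in the presence of the essential spectrum reaching up to $0$ — this requires a careful construction (matched asymptotics in $\nu$, or a vanishing-viscosity/boundary-layer analysis as in the earlier works \cite{Guo,Zhang,BG4}) to show no eigenvalues other than the designated pair approach $i\mathbb R$, and in particular to control the behavior near the junction of the essential spectrum with the imaginary axis; second, propagating these bounds through the Lyapunov--Schmidt/Floquet analysis to get the bifurcated branch and its (in)stability, since the lack of a true spectral gap below the essential spectrum means decay estimates for the linearized semigroup about the periodic orbit are only polynomial-free if one works in the right weighted space, and the nonlinear stability argument must close in that space. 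Everything else — the structure of the Hopf normal form, the transversality from (A3), the reduction to a traveling wave via a co-moving frame — is, by comparison, routine.
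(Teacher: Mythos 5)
Your broad outline is right---Lyapunov--Schmidt in a space of time-periodic functions in the Iooss--Joseph style rather than a center manifold, co-moving frame turning the Hopf orbit into a traveling wave, transversality supplied by (A3), sign of the Landau coefficient deciding sub/super-criticality, and the correct recognition that the essential spectrum touching $0$ is the central obstruction. But at exactly the point the paper advertises as its main contribution, your proposal is too vague to carry the proof, and the vague version would in fact fail.

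You propose to work in "exponentially weighted spaces in $y$" and to obtain polynomial-in-time decay "in the right weighted space," without identifying what that space is or why polynomial decay is enough to close the nonlinear estimates. Imposing a uniform exponential weight $e^{\eta y}$ on all Fourier modes in $x$ does not work: the zero-$x$-Fourier component of the linearized operator is just $\nu\partial_y^2$ on the half line with Dirichlet data, and the heat semi-group it generates, measured in a weighted $L^\infty_\eta$ norm, grows like $e^{\nu\eta^2 t}$ (the paper remarks on this explicitly). What actually works is a mixed space: the non-zero-$x$-Fourier modes live in an exponentially weighted space $\mathring{H}^2_\eta$ in which the linearized operator has a genuine spectral gap and yields $e^{-\xi t}$ decay, while the zero mode lives in $\mathring{W}^{2,\infty}$ with no $y$-weight at all; a contour-integral estimate then shows the zero-mode semi-group maps data decaying like $e^{-\eta y}$ to merely bounded data whose norm decays like $1/(1+t)$. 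The reason this asymmetry suffices is a structural cancellation you do not mention: since the zero-mode vector field has $v_0^y=0$, the quadratic term satisfies $P_0 B(P_0 u, P_0 v)\equiv 0$, so the zero-mode equation is forced only by products of the exponentially decaying non-zero modes. One then closes a coupled system in which $\widetilde{P}v$ decays like $e^{-\xi t}$ and $P_0 v$ decays like $1/(1+t)$ by the implicit function theorem. Without both ingredients---the dropped weight on the zero mode and the vanishing of $B$ on zero-mode pairs---the "resolvent-plus-Duhamel" argument you sketch for the first bullet, and the analogous stability argument for the bifurcated wave in the third bullet, do not close.

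Two smaller points of divergence. You plan to locate the discrete spectrum via Evans/Rayleigh-function analyticity; the paper instead shows that the perturbation coming from $U-U_+$ and $U'$ is relatively compact with respect to the free Stokes operator and appeals to Kato's perturbation theorem, which identifies the essential spectrum and yields the required resolvent bounds along the contour used in the semi-group estimates. And the traveling-wave form of the bifurcated solution is not put in by hand via a co-moving ansatz but is derived: the reduced equations are equivariant under the two $SO(2)$ actions (time shift $T_a$ and space shift $\tau_\beta$) and the relation $T_a\tau_{-a/\alpha}=\mathrm{Id}$ forces the solution to depend only on $x+s/\alpha$. Your co-moving-frame formulation would also produce a wave, but you would then owe an argument that it captures all bifurcating periodic solutions.
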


There exist many works dealing with Hopf bifurcations on Navier-Stokes equations (see \cite{Haragus}
for detailed references), however, all these works consider bounded domains or periodic domains
(for Couette-Taylor flow \cite{CI} or Bénard-Rayleigh convection for example).
Recently, some works  deal with parallel flows, like 
for example Poiseuille flow in \cite{Charru} 
at section $6.4$, where a sub-critical Hopf bifurcation of a traveling wave is completely treated
(see also \cite{BHG} for the Hopf bifurcation of a shear flow between two plates).

None of these works deal with an half plane domain ($x\in \mathbb{R},y\in \mathbb{R}^+$),
 assuming periodicity only in the $x$ direction, the $y$ direction being unbounded,
which is a problem of serious physical interest. 
 As the domain is unbounded in the $y$ direction, the essential spectrum of the linearized operator
goes up to zero and there is no spectral gap. The classical tools of bifurcation theory cannot
be applied and we have to design a new approach.
Moreover, in cases where the shear flow $U$ is stable, perturbation do not decay exponentially
fast, but only like polynomially fast, as $t^{-1}$.

\medskip

The super-criticality or sub-criticality of the Hopf bifurcation depends on the sign of a coefficient
which unfortunately can only be numerically studied. In the case of exponential profiles
$U_s(y) = 1 - e^{-y}$, numerical evidence \cite{BG5} indicates that the Hopf bifurcation is supercritical.

\medskip

The paper is organized as follows: in section $2$, we introduce the various function spaces, the
various linear operators and investigate their spectra and their related semi-groups.
In section $3$ we prove the nonlinear stability of the shear flow $U$ above the upper marginal stability curve.
In section $4$ we investigate the bifurcation and prove the existence of a traveling wave, time periodic solution to the
Navier Stokes equations. In section $5$, we focus on the linear and non linear stability of this
periodic solution.
The proofs of the various lemmas are detailed in the Appendix.


\subsubsection*{Notations}


In all this paper, $\eta$ will be a sufficiently small positive number.
We define the one-dimensional periodic torus of period $2 \pi / \alpha$ to be $\mathbb{T} = \mathbb{R} / (2 \pi / \alpha) \mathbb{Z}$.
We define the Banach space $C^0_\eta(\mathbb{R}^+)$ by its norm
$$
\|u\|_{C_{\eta}^{0}} = \underset{y\in \mathbb{R}^{+}}{\sup }|u(y) e^{\eta y}|.
$$
The components of a two dimensional vector field $v$ will be denoted by $v = (v^x,v^y)$.
We denote by $D$ the derivative with respect to $y$.
We denote by $u_n(y)$ the $n^{th}$ Fourier component of a function $u(x,y)$.

By a slight abuse of language, we will also denote the vector $(U_+,0)$ by $U_+$.


\section{Preliminaries}



\subsection{Function spaces}


We first observe that if $v = (v^x,v^y)$ is a two dimensional divergence free vector field which is independent on $x$ 
and vanishes at $y = 0$, then, using the incompressibility condition, $v^y = 0$.

Let us define the Banach space
\begin{equation*}
\mathcal{X}_{\eta}=\mathring{L}_{\eta}^{2}\oplus \mathring{L}^{\infty },
\end{equation*}%
so that $v\in \mathcal{X}_{\eta}$ can be written as%
\begin{eqnarray*}
v &=&\widetilde{v}+v_{0},\text{ }\widetilde{v}\in \mathring{L}_{\eta}^{2}, \\
v_{0} &=&(v_{0}^{x},0),\text{ }v_{0}^{x}\in L^{\infty }(\mathbb{R}^{+}),
\end{eqnarray*}%
where we define%
\begin{equation*}
\mathring{L}_{\eta}^{2}=\Bigl\{ \widetilde{v}\in L_\eta^{2}; 
\, \nabla \cdot \widetilde{v}=0, \, \widetilde{v}^{y}|_{y=0}=0 \Bigr\} ,
\end{equation*}%
with%
\begin{equation*}
L_{\eta}^{2}=\Bigl\{ \widetilde{u}\in [L^{2}[\mathbb{T},C_{\eta}^{0}(\mathbb{R}^{+})]]^{2};
\, \int_{0}^{2\pi/\alpha }\widetilde{u}(x,y) \, dx=0,\text{ }\|\widetilde{u}\|_{L_{\eta}^{2}}<\infty \Bigr\},
\end{equation*}%
\begin{equation*}
\mathring{L}^{\infty } = \Bigl\{ (v_{0}^{x},0); \, v_{0}^{x}\in L^{\infty }(\mathbb{R}^{+})\Bigr\},
\end{equation*}%
\begin{eqnarray*}
\|\widetilde{u}\|_{L_{\eta}^{2}}^{2} &=&\sum_{|n|\geq
1}\|u_{n}\|_{C_{\eta}^{0}}^{2},\\
\|v_{0}\|_{\mathring{L}^{\infty }} &=&\|v_{0}^{x}\|_{L^{\infty }}.
\end{eqnarray*}%
Note that we do not assume any decay in $y$ on the 0-Fourier mode $v_0$.
By analogy, we define the Banach spaces $H_{\eta}^{2},$ $\mathring{H}_{\eta}^{2}$ and
\begin{equation*}
\mathcal{Z}_{\eta}=\mathring{H}_{\eta}^{2}\oplus \mathring{W}^{2,\infty },
\end{equation*}%
where%
\begin{equation*}
\mathring{H}_{\eta}^{2}= \Bigl\{\widetilde{v}\in H_{\eta}^{2}; \, \nabla \cdot \widetilde{v}%
=0, \, \widetilde{v}|_{y=0}=0, \, \|\widetilde{v}\|_{\mathring{H}_{\eta}^{2}}<\infty \Bigr\},
\end{equation*}%
\begin{eqnarray*}
H_{\eta}^{2} &=&\Bigl\{ \widetilde{u}\in \lbrack H^{2}[\mathbb{T},C_{\eta}^{2}(\mathbb{R}^{+})]]^{2}; \,
 \int_{0}^{2\pi/\alpha }\widetilde{u}(x,y) \, dx=0, \, \|\widetilde{u}%
\|_{H_{\eta}^{2}}<\infty \Bigl\} ,
\end{eqnarray*}%
\begin{equation*}
\mathring{W}^{2,\infty }= \Bigl\{ (v_{0}^{x},0); \, v_{0}^{x}|_{y=0}=0, \, v_{0}^{x} 
\in W^{2,\infty }(\mathbb{R}^{+})\Bigr\},
\end{equation*}%
with the corresponding norms
\begin{equation*}
\|\widetilde{v}\|_{H_{\eta}^{2}}^{2}=\sum_{|n|\geq
1}\|D^{2}v_{n}\|_{C_{\eta}^{0}}^{2}+n^{2}\|Dv_{n}\|_{C_{\eta}^{0}}^{2}+n^{4}\|v_{n}\|_{C_{\eta}^{0}}^{2}
\end{equation*}%
and
\begin{equation*}
\|v_{0}\|_{\mathring{W}^{2,\infty }}=\|v_{0}^{x}\|_{W^{2,\infty }},
\end{equation*}%
so that $\mathcal{Z}_{\eta}\mathcal{\hookrightarrow X}_{\eta}$ densely.

We also define the intermediary Banach space
\begin{equation*}
\mathcal{Y}_{\eta}=\mathring{H}_{\eta}^{1}\oplus \mathring{W}^{1,\infty
}\hookrightarrow \mathcal{X}_{\eta},
\end{equation*}%
with 
\begin{eqnarray*}
\mathring{H}_{\eta}^{1} &=&\Bigl\{ \widetilde{v}\in \lbrack H^{1}[\mathbb{T},C_{\eta}^{1}( \mathbb{R}^{+})]]^{2}; \\
&&\quad \int_{0}^{2\pi/\alpha }\widetilde{v}(x,y) \, dx=0,\text{ } \nabla \cdot 
\widetilde{v}=0,\widetilde{v}^{y}|_{y=0}=0,\text{ }\|\widetilde{v}\|_{%
\mathring{H}_{\eta}^{1}}<\infty \Bigr\} ,
\end{eqnarray*}%
\begin{equation*}
\mathring{W}^{1,\infty }= \Bigl\{(v_{0}^{x},0); \, v_{0}^{x}\in W^{1,\infty }(\mathbb{R}^{+})\Bigr\}
\end{equation*}%
and the norm%
\begin{equation*}
\|\widetilde{v}\|_{\mathring{H}_{\eta}^{1}}^{2}=\sum_{|n|\geq
1}n^{2}\|v_{n}\|_{C_{\eta}^{0}}^{2}+\|Dv_{n}\|_{C_{\eta}^{0}}^{2}.
\end{equation*}%



\subsection{The Helmholtz decomposition in $\mathcal{X}_{\eta}$}


Let $u\in L_{\eta}^{2}\oplus (L^{\infty })^{2}$. Then $u$ can be decomposed in  
$u=\widetilde{u}+u_{0}$
with $\widetilde{u}\in L_{\eta}^{2}$ 
and $u_{0}\in \lbrack L^{\infty }(\mathbb{R}%
^{+})]^{2}$. We further decompose $\widetilde{u}$ in
\begin{eqnarray*}
\widetilde{u} = \widetilde{v}+\nabla \widetilde{\phi }
\end{eqnarray*}
with $\widetilde{v}\in \mathring{L}_{\eta}^{2}$ and
\begin{eqnarray*}
\Delta \widetilde{\phi } = \nabla \cdot \widetilde{u}, \quad 
\frac{\partial \widetilde{\phi }}{\partial y}|_{y=0}=\widetilde{u}^{y}|_{y=0}\in L^{2}(%
\mathbb{T}),
\end{eqnarray*}%
where $\nabla \cdot \widetilde{u}$ is understood in the distribution sense.
The component $u_{0}$ can be
decomposed as%
\begin{eqnarray*}
u_{0} &=&\binom{v_{0}^{x}}{0}+\binom{0}{D\phi _{0}},\text{ }\binom{v_{0}^{x}%
}{0}\in \mathring{L}^{\infty }, \\
\end{eqnarray*}%
with $D \phi_0 = u_0^y \in L^\infty(\mathbb{R}^+)$,
so that $\widetilde{v}+v_{0}\in \mathcal{X}_{\eta}$.  We note that $v_{0}$
and $\phi _{0}$ are independent of $x$. We now define the projection $\Pi$ on divergence free vector fields by 

\begin{lemma}
\label{lem:decomp Hk} Let $\eta < \alpha / 2$.
Let $u\in L_{\eta}^{2}\oplus (L^{\infty })^{2}$, which may be decomposed in  
\begin{equation*}
u=\widetilde{u}+u_{0},\text{ }\widetilde{u}\in L_{\eta}^{2},\text{ }u_{0}\in
(L^{\infty }(\mathbb{R}^{+}))^{2}.
\end{equation*}%
We can further decompose $\widetilde{u}$ in $\widetilde{u}=\widetilde{v}+\widetilde{\nabla \phi }$
where $\widetilde{v} \in \mathring{L}_{\eta}^{2}$.
We define the projection $\Pi u$ by 
\begin{equation*}
\Pi u=\Pi \widetilde{u}+(\Pi u)_{0}
\end{equation*}%
where $\Pi \widetilde{u} = \widetilde{v}$ and $(\Pi u)_0 = (v_0^x,0)$.
The projection $\Pi$ is bounded from $L_{\eta}^{2}$ to $\mathring{L}_{\eta}^{2}$
and from $(L^{\infty})^{2}$ to $\mathring{L}^{\infty }$
\end{lemma}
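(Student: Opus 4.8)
The plan is to construct the decomposition explicitly, mode by mode in the $x$-variable, and then estimate each piece. First I would handle the mean-zero part $\widetilde u \in L^2_\eta$. Since $\widetilde u$ has zero average in $x$, its Fourier expansion $\widetilde u(x,y) = \sum_{|n|\ge 1} u_n(y) e^{in\alpha x}$ involves only nonzero frequencies. For each such $n$, I would solve the elliptic problem for the Fourier-$n$ component of the potential: writing $\phi_n$ for the $n$-th mode of $\widetilde\phi$, the equation $\Delta\widetilde\phi = \nabla\cdot\widetilde u$ becomes the ODE $(-n^2\alpha^2 + D^2)\phi_n = in\alpha\, u_n^x + D u_n^y$, with the boundary condition $D\phi_n|_{y=0} = u_n^y|_{y=0}$. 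Because $n\ne 0$, the operator $D^2 - n^2\alpha^2$ with a Neumann-type condition at $y=0$ is invertible with a bounded Green's function whose kernel decays like $e^{-|n|\alpha|y-y'|}$; the key point, and the reason the hypothesis $\eta < \alpha/2$ enters, is that this exponential decay rate $|n|\alpha \ge \alpha$ dominates the weight $e^{\eta y}$, so convolution against the Green's kernel is bounded on $C^0_\eta(\mathbb{R}^+)$ uniformly — and in fact the $n$-dependence comes with a favorable factor $1/|n|$ (or better) that is compatible with the weighted $\ell^2$ summation defining $\|\cdot\|_{L^2_\eta}$. Setting $\widetilde v = \widetilde u - \nabla\widetilde\phi$ then gives $\widetilde v \in \mathring L^2_\eta$: it is divergence free by construction, and its $y$-component vanishes at $y=0$ precisely because of the chosen boundary condition on $\widetilde\phi$.

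Next I would treat the zero mode $u_0 = (u_0^x, u_0^y) \in (L^\infty(\mathbb{R}^+))^2$. Here the situation is simpler: a divergence-free vector field independent of $x$ must have $\partial_y u_0^y = 0$, so the incompressible part is forced to be $(v_0^x, 0)$. Concretely I would set $v_0^x = u_0^x$ and absorb $u_0^y$ into a potential $\phi_0$ with $D\phi_0 = u_0^y$ (defined up to a constant, e.g. $\phi_0(y) = \int_0^y u_0^y$). Then $(\Pi u)_0 := (v_0^x, 0) \in \mathring L^\infty$ with $\|(\Pi u)_0\|_{\mathring L^\infty} = \|u_0^x\|_{L^\infty} \le \|u_0\|_{(L^\infty)^2}$, which gives the claimed boundedness on the $L^\infty$ factor immediately. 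Defining $\Pi u = \widetilde v + (\Pi u)_0$ as in the statement, the boundedness $\|\Pi u\|_{\mathcal X_\eta} \lesssim \|\widetilde v\|_{\mathring L^2_\eta} + \|(\Pi u)_0\|_{\mathring L^\infty}$ reduces to the estimate $\|\widetilde v\|_{\mathring L^2_\eta} \lesssim \|\widetilde u\|_{L^2_\eta}$, which follows from the mode-by-mode Green's function bounds above together with the triangle inequality $\|\widetilde v\| \le \|\widetilde u\| + \|\nabla\widetilde\phi\|$.

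The main obstacle is the last estimate — controlling $\|\nabla\widetilde\phi\|_{L^2_\eta}$ by $\|\widetilde u\|_{L^2_\eta}$ in the weighted norm. One must check that, after applying the Green's operator for $D^2 - n^2\alpha^2$ and then differentiating once in $y$ or multiplying by $in\alpha$, the resulting operator is bounded on $C^0_\eta$ with a constant that is \emph{uniform in $n$} (indeed summable-compatible against the $n^2$, $n^4$ weights appearing in the higher norms, though for the $L^2_\eta$ statement here only uniform boundedness is needed). This is where the restriction $\eta < \alpha/2$ is used in an essential way: the pointwise bound $\bigl|\int_0^\infty e^{-|n|\alpha|y-y'|} g(y')\,dy'\bigr| \, e^{\eta y} \lesssim \|g\|_{C^0_\eta}$ requires $\eta < |n|\alpha$, hence $\eta < \alpha$ for the worst mode $|n|=1$; the sharper $\alpha/2$ presumably leaves room for the derivative landing on the kernel and for the boundary term coming from the Neumann condition. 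I would carry this out by writing the explicit Green's function (a combination of $e^{-|n|\alpha|y-y'|}$ and a reflected term $e^{-|n|\alpha(y+y')}$ to enforce the boundary condition), bounding the two convolutions separately, and collecting the constants. The remaining verifications — that $\widetilde v$ is genuinely divergence free, that $\widetilde v^y|_{y=0}=0$, that the decomposition is the one described — are then routine.
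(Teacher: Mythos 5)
Your proposal is correct and follows essentially the same route as the paper's own proof in Appendix A.1: Fourier decomposition in $x$, solving the Neumann problem for $\phi_n$ mode by mode via the explicit Green's kernel (with its reflected term $e^{-n\alpha(y+\tau)}$), observing that the $e^{-n\alpha|y-\tau|}$ decay dominates the weight $e^{\eta y}$ when $\eta<\alpha/2$, extracting the $1/n$ gain for $\phi_n$ itself so that $\nabla\phi_n$ is bounded uniformly in $n$, and handling the $0$-mode trivially. The only detail you leave vague — why $\alpha/2$ rather than $\alpha$ — is that the condition $\eta<\alpha/2$ gives $n\alpha-\eta\geq n\alpha/2$ for all $|n|\geq1$, hence a constant genuinely uniform in $n$ and $\eta$, whereas $\eta<\alpha$ would still work but with a constant degenerating as $\eta\to\alpha$; your guess about derivatives and boundary terms is not the actual reason, but this is a cosmetic point that does not affect the validity of your argument.
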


The projector $\Pi$ is highly classical, however our function spaces are not the usual ones.
We thus detail the proof of this Lemma in Appendix \ref{App:decompHk}.


\subsection{Linear operators\label{sect: lin operators}}


We define the linear operators $\boldsymbol{L}_{(\nu )},\boldsymbol{L}^{(0)}_{(\nu )},%
\boldsymbol{L}^{(1)}$ by
\begin{align*}
\boldsymbol{L}_{(\nu )} &=\boldsymbol{L}_{(\nu )}^{(0)}+\boldsymbol{L}^{(1)}, \\
\boldsymbol{L}_{(\nu)}^{(0)}v &=\nu \Pi \Delta v, \quad
\boldsymbol{L}^{(1)}v =-\Pi \Bigl[ (U\cdot \nabla )v+(v\cdot \nabla )U \Bigr], 
\end{align*}
and further decompose $\boldsymbol{L}_{(\nu)}^{(0)}$ in 
$$
\boldsymbol{L}_{(\nu )}^{(0)} =\boldsymbol{L}^{(0)}+(\nu-\nu_0)\boldsymbol{L}'
$$
with
\begin{align*}
\boldsymbol{L}^{(0)}v &=\nu _{0}\Pi \Delta v,\text{ }\boldsymbol{L}^{\prime }v=\Pi \Delta v.
\end{align*}
Note that $\boldsymbol{L}'$ and
$\boldsymbol{L}^{(0)}_{(\nu)}$ are Stokes operators.
We easily obtain the following Lemma.

\begin{lemma}
\label{lem: L1} Assume that $U\in C^{1}(\mathbb{R}^{+})$ with 
\begin{equation*}
|U(y)|+|DU(y)|\leq M<\infty ,\text{ \ }y\in \mathbb{R}^{+},
\end{equation*}%
then the linear operator $\boldsymbol{L}^{(1)}$ cancels on $\mathring{W}%
^{1,\infty }$. For $\eta <\alpha /2$ and $v =  \widetilde{v}+v_{0}\in \mathcal{Z}%
_{\eta}$ with $\widetilde{v}\in \mathring{H}_{\eta}^{2},$ $v_{0}\in \mathring{W}%
^{2,\infty },$ then%
\begin{equation*}
\boldsymbol{L}^{(1)}v=\boldsymbol{L}^{(1)}\widetilde{v}\in \mathring{H}%
_{\eta}^{1}.
\end{equation*}%
Moreover for $u\in \widetilde{u}+u_{0}\in \mathcal{Y}_{\eta}$ with $\widetilde{u%
}\in \mathring{H}_{\eta}^{1},$ $u_{0}\in \mathring{W}^{1,\infty },$ then%
\begin{equation*}
\boldsymbol{L}^{(1)}u=\boldsymbol{L}^{(1)}\widetilde{u}\in \mathring{L}%
_{\eta}^{2}.
\end{equation*}%
There exists $C>0$ such that we have the estimates%
\begin{equation*}
\|\boldsymbol{L}^{(1)}v\|_{\mathring{H}_{\eta}^{1}}\leq C\|\widetilde{v}\|_{%
\mathring{H}_{\eta}^{2}},
\end{equation*}%
\begin{equation*}
\|\boldsymbol{L}^{(1)}u\|_{\mathring{L}_{\eta}^{2}}\leq C\|\widetilde{u}\|_{%
\mathring{H}_{\eta}^{1}}.
\end{equation*}
\end{lemma}

The main observation is that
\begin{equation*}
(U\cdot \nabla )v+(v\cdot \nabla )U=U_s\partial _{x}v+\binom{v^{y}DU_s}{0},
\end{equation*}%
cancels when $v$ is independent of $x$ and $v^{y}=0,$ which is the case in $%
\mathring{W}^{1,\infty }.$


\subsection{Study of $\boldsymbol{L}_{(\protect\nu )}^{(0)}$ and $\boldsymbol{%
L}_{(\protect\nu )}$\label{sect: spectra}}


We now turn to the study of the spectrum and of the resolvent of the operator $\boldsymbol{L}^{(0)}_{(\nu)}$.
This operator is the classical Stokes operator, however our spaces
are not the usual ones, thus we have to restart its study from the beginning.

\begin{lemma} \label{lemma3}
\label{lem:spectrumL0 expky}The spectrum of $\boldsymbol{L}_{(\nu
)}^{(0)}=\Pi \nu \Delta  $ acting in $\mathring{L}^{\infty }$ is only
formed by an essential spectrum which equals 
\begin{equation*}
Sp|_{\mathring{L}^{\infty }} \, \boldsymbol{L}_{(\nu )}^{(0)}=(-\infty ,0]=%
\mathbb{R}^{-},
\end{equation*}%
and we have the estimates
\begin{equation*}
\Bigl\|(\boldsymbol{L}_{(\nu )}^{(0)}-\lambda \mathbb{I)}^{-1} \Bigr\|_{\mathcal{L}(%
\mathring{L}^{\infty })}\leq \left\{ 
\begin{array}{ll}
\sqrt{2}|\lambda |^{-1} & \text{when }\Re \lambda \geq 0, \\ 
(|\lambda |\cos \theta /2)^{-1} & \text{when }\lambda =|\lambda |e^{i\theta
} \text{ with } \Re \lambda <0.%
\end{array}%
\right.
\end{equation*}%
For $\lambda \in (-\infty ,0],$ the range of the operator $\boldsymbol{L}%
_{(\nu )}^{(0)}-\lambda \mathbb{I}$ is not closed in $\mathring{L}%
^{\infty }$.

Now consider the linear operator $\boldsymbol{L}_{(\nu )}^{(0)}=\Pi \nu
\Delta$ acting in $\mathring{L}_{\eta}^{2}$ . For any $\delta >0$ such
that $0<\delta <\pi /6,$ and $\lambda \in \mathbb{C}$ such that 
\begin{eqnarray*}
0 &\leq &|\arg (\lambda +\nu \alpha ^{2})|\leq \frac{ 2\pi}{3} - \delta , \\
0 &< &  \varepsilon_0 \le |\lambda +\nu \alpha ^{2}|,
\end{eqnarray*}%
there exists $C>0$ such that, for $\eta>0$ small enough,%
\begin{equation}
\Bigl\|(\boldsymbol{L}_{(\nu)}^{(0)}-\lambda \mathbb{I)}^{-1} \Bigr\|_{\mathcal{L}(%
\mathring{L}_{\eta}^{2})}\leq \frac{C}{|\lambda +\nu \alpha ^{2}|}.
\label{estimresol1zero}
\end{equation}
\end{lemma}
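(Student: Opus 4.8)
The plan is to split Lemma~\ref{lemma3} into its two essentially independent halves: the statement about $\boldsymbol{L}_{(\nu)}^{(0)}$ on $\mathring{L}^\infty$, which is a genuinely one-dimensional ODE analysis, and the resolvent estimate on $\mathring{L}^2_\eta$, which is a Fourier-by-Fourier reduction to weighted ODEs on $\mathbb{R}^+$.

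For the $\mathring{L}^\infty$ part, recall that a vector field in $\mathring{L}^\infty$ is of the form $v_0 = (v_0^x(y),0)$, independent of $x$ and divergence free, so $\Pi$ acts trivially and $\boldsymbol{L}_{(\nu)}^{(0)} v_0 = \nu (D^2 v_0^x, 0)$; the operator is thus just $\nu D^2$ acting on $L^\infty(\mathbb{R}^+)$ (no boundary condition is imposed on $v_0^x$ since $\mathring{L}^\infty$ carries none). To identify the spectrum I would solve $(\nu D^2 - \lambda) w = f$ explicitly: for $\Re\lambda \ge 0$ (and $\lambda \ne 0$) the bounded solution is $w(y) = -\frac{1}{2\sqrt{\nu\lambda}}\int_0^\infty e^{-\sqrt{\lambda/\nu}\,|y-z|} f(z)\,dz$ plus a decaying homogeneous term $c\, e^{-\sqrt{\lambda/\nu}\,y}$; estimating the convolution kernel in $L^\infty$ gives the bound with the factor $|\lambda|^{-1}$ (the $\sqrt 2$ absorbing the worst angular constant from $\mathrm{Re}\sqrt{\lambda/\nu}$ when $\arg\lambda$ approaches $\pm\pi/2$), and similarly writing $\lambda = |\lambda| e^{i\theta}$ with $\Re\lambda < 0$ one reads off $\mathrm{Re}\sqrt{\lambda/\nu} = \sqrt{|\lambda|/\nu}\cos(\theta/2)$, yielding the $(|\lambda|\cos(\theta/2))^{-1}$ bound. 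For $\lambda \in (-\infty,0]$ one exhibits a Weyl sequence: take $f_n(y) = e^{i\omega_n y}\chi_n(y)$ with $\omega_n^2 = -\lambda/\nu$ and $\chi_n$ a slowly-varying cutoff, so that $(\nu D^2 - \lambda) f_n \to 0$ in $L^\infty$ while $\|f_n\|_{L^\infty}=1$; this shows $(-\infty,0] \subset Sp$, and the resolvent bounds above show nothing else is in the spectrum, so $Sp = (-\infty,0]$, and the Weyl sequence simultaneously shows the range of $\boldsymbol{L}_{(\nu)}^{(0)} - \lambda\mathbb{I}$ is not closed there.

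For the $\mathring{L}^2_\eta$ resolvent estimate, I would decompose $\widetilde v = \sum_{|n|\ge 1} v_n(y) e^{in\alpha x}$ and note that on each mode the Stokes operator becomes, after eliminating the pressure via $\Pi$, the Orr--Sommerfeld-type operator $\nu(D^2 - n^2\alpha^2)$ acting on the stream function (with Dirichlet and Neumann conditions at $y=0$), i.e. essentially $\nu(D^2 - n^2\alpha^2)$ on each vorticity component. Since $|n|\ge 1$, one has $n^2\alpha^2 \ge \alpha^2$, so the relevant spectral parameter on mode $n$ is $\lambda + \nu n^2\alpha^2$, whose real part is shifted right by at least $\nu\alpha^2$; this is precisely why the stated hypothesis is phrased in terms of $\lambda + \nu\alpha^2$ and why there is a genuine spectral gap in $\mathring{L}^2_\eta$ (unlike in $\mathring{L}^\infty$). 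I would then solve the mode-$n$ resolvent equation with the explicit Green's function for $\nu(D^2 - n^2\alpha^2) - \lambda$, whose exponent is $\mu_n = \sqrt{n^2\alpha^2 + \lambda/\nu}$ with $\Re\mu_n$ bounded below by a constant times $\sqrt{(|\lambda|+\nu n^2\alpha^2)/\nu}$ on the sector $|\arg(\lambda+\nu\alpha^2)| \le 2\pi/3 - \delta$. The one subtlety is the weight: we must bound the operator on $C^0_\eta$, i.e. control $\|e^{\eta y}(\cdots)\|_{L^\infty_y}$, so I would conjugate the Green's function by $e^{\eta y}$ and check that for $\eta < \alpha/2$ (hence $\eta$ small relative to $\Re\mu_n$) the conjugated kernel $e^{\eta(y-z)} e^{-\mu_n|y-z|}$ still has $L^1_z$ norm bounded by $C/|\mu_n|^2 \le C/|\lambda + \nu n^2\alpha^2| \le C/|\lambda+\nu\alpha^2|$, uniformly in $n$; summing in $\ell^2$ over $n$ preserves the bound because the per-mode constant is uniform. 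The resulting estimate is \eqref{estimresol1zero}.

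The main obstacle I anticipate is bookkeeping rather than conceptual: one must be careful that the Helmholtz projector $\Pi$ (Lemma~\ref{lem:decomp Hk}) genuinely reduces the vector Stokes operator to the scalar operators claimed on each Fourier mode \emph{and} on the zero mode, including correctly tracking the Neumann condition $D\widetilde\phi|_{y=0} = \widetilde u^y|_{y=0}$ that $\Pi$ introduces; and that the weighted-$L^\infty$ (i.e. $C^0_\eta$) norms interact well with the explicit Green's kernels, in particular that the homogeneous solution $c\,e^{-\mu_n y}$ picked to enforce the boundary conditions does not spoil the $\eta$-weighted bound when $\Re\mu_n$ could in principle be as small as comparable to $\sqrt{\nu}\,\alpha$ times a $\delta$-dependent constant near $\arg = 2\pi/3 - \delta$. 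Keeping the constant $C$ genuinely independent of $n$ and of $\lambda$ in the stated region — and of $\eta$ for $\eta$ small — is where the care is needed; everything else is explicit ODE computation.
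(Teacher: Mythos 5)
Your plan tracks the paper's proof at a high level (explicit Green's functions, Fourier reduction in $x$), but there are genuine gaps. On $\mathring L^\infty$ you claim no boundary condition is imposed on $v_0^x$; in fact the domain of $\boldsymbol{L}^{(0)}_{(\nu)}$ restricted to $\mathring L^\infty$ is $\mathring W^{2,\infty}$, which carries the Dirichlet condition $v_0^x|_{y=0}=0$. Without it, $\nu D^2-\lambda$ has a one-dimensional kernel for every $\lambda$ off the negative real axis (spanned by $e^{-sy}$, $s=\sqrt{\lambda/\nu}$, $\Re s>0$), so the spectrum would be all of $\mathbb C$ and the claimed resolvent estimate could not hold; the image term $\frac{1}{2s\nu}\int_0^\infty f_0(\tau)e^{-s(y+\tau)}d\tau$ in the paper's explicit formula is there precisely to enforce $v_0^x(0)=0$, and your \emph{decaying homogeneous term} $c\,e^{-sy}$ implicitly uses that condition to pin down $c$. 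For $\mathring L^2_\eta$, eliminating the pressure does not leave the single second-order operator you describe: it gives the fourth-order Orr--Sommerfeld-type equation $(D^2-n^2\alpha^2)(D^2-s_n^2)v_n^y=\frac{1}{\nu}(D^2-n^2\alpha^2)g_n$ with $v_n^y=Dv_n^y=0$ at $y=0$, and the explicit solution carries boundary corrections with \emph{both} exponents $e^{-s_ny}$ and $e^{-n\alpha y}$, multiplied by a coefficient $\frac{n\alpha+s_n}{n\alpha-s_n}$ that must be controlled via the cancellation $|n\alpha-s_n|\,|n\alpha+s_n|=|\lambda|/\nu$. A single-exponent kernel argument misses this entirely, and it is where the substantive work of the lemma lies. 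Finally, $\eta<\alpha/2$ is not the correct smallness threshold: the key bound $\Re s_n\geq C\bigl(\alpha\sqrt{n^2-1}+|\lambda+\nu\alpha^2|^{1/2}\bigr)$ degenerates to $\Re s_{\pm1}\geq C\varepsilon_0$ at $n=\pm1$, so one needs $\eta\lesssim\varepsilon_0$ -- the smallness of $\eta$ is tied to the lower bound on $|\lambda+\nu\alpha^2|$, not to $\alpha$.

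On the non-closed range, your Weyl-sequence route is genuinely different from the paper's, which produces an explicit $f_0(y)=\chi(y)/y$ with $\chi$ an indicator of resonant windows, shows directly that the corresponding $v_0^x$ grows logarithmically and is not in $W^{2,\infty}$, and exhibits $f^{(N)}=f_0\chi_{[0,N]}\to f_0$ with each $f^{(N)}$ in the range. The Weyl-sequence argument can be completed, but you must account for the one-dimensional kernel $\mathbb C\sin(sy)$ present for $\lambda<0$: the sequence must stay bounded away from the kernel and fail to have a convergent subsequence modulo it before non-closedness follows. That extra step is absent from your sketch, whereas the paper's construction is self-contained and avoids any Fredholm-type reasoning.
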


The spectrum of $\boldsymbol{L}_{(\nu )}^{(0)}$
on ${\mathring{L}_{\eta}^{2}}$ is
thus contained in an angular sector of $\mathbb{C}$ defined by%
\begin{equation*}
\frac{2\pi}{3}\leq |\arg (\lambda +\nu \alpha ^{2})|\leq \pi .
\end{equation*}%
The proof of this Lemma is detailed in Appendix \ref{proof2}.

\begin{remark}
If we chose to include the exponential decay $e^{-\eta y}$ for the $0$-Fourier
mode, this would imply that 
\begin{equation*}
Sp|_{\mathring{L}_{\eta}^{\infty }} \, \boldsymbol{L}_{(\nu )}^{(0)}\subset
\Bigl\{\lambda \in \mathbb{C}; \, |\Im \lambda |\leq 4\nu \eta^{2}(\nu \eta^{2}-\Re \lambda
)\Bigr\}
\end{equation*}%
which is a parabolic region centered on, and containing, the negative real
axis, bounded on the right side by $\Re \lambda =\nu \eta^{2}.$
\end{remark}

Moreover, we have the following corollaries.

\begin{lemma}
\label{lem:equiv norms}The linear operator $\boldsymbol{L}_{(\nu)}^{(0)}=\Pi \nu \Delta$ 
acting in $\mathring{L}_{\eta}^{2}$ has a
bounded inverse in $\mathcal{L}(\mathring{L}_{\eta}^{2},\mathring{H}_{\eta}^{2}).$
In $\mathring{H}_{\eta}^{2}$, the norms $\|\cdot \|_{\mathring{H}_{\eta}^{2}}$ and $%
\|\boldsymbol{L}_{(\nu )}^{(0)}\cdot \|_{\mathring{L}_{\eta}^{2}}$ are
equivalent.
\end{lemma}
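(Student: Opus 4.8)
\textbf{Proof plan for Lemma \ref{lem:equiv norms}.}

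The plan is to deduce both statements from the resolvent estimate \eqref{estimresol1zero} of Lemma \ref{lemma3} together with the elliptic regularity carried by the Stokes operator. First I would establish that $0$ is in the resolvent set of $\boldsymbol{L}_{(\nu)}^{(0)}$ acting on $\mathring{L}_{\eta}^{2}$. Since the spectrum on $\mathring{L}_{\eta}^{2}$ is contained in the sector $\tfrac{2\pi}{3}\leq|\arg(\lambda+\nu\alpha^{2})|\leq\pi$, the point $\lambda=0$ (for which $\lambda+\nu\alpha^{2}=\nu\alpha^{2}>0$ has argument $0$) lies outside the spectrum, and by \eqref{estimresol1zero} with $\lambda=0$ we get $\|(\boldsymbol{L}_{(\nu)}^{(0)})^{-1}\|_{\mathcal{L}(\mathring{L}_{\eta}^{2})}\leq C/(\nu\alpha^{2})$. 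So $(\boldsymbol{L}_{(\nu)}^{(0)})^{-1}$ is a bounded operator on $\mathring{L}_{\eta}^{2}$.

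Next I would upgrade this to boundedness into $\mathring{H}_{\eta}^{2}$. Given $f\in\mathring{L}_{\eta}^{2}$, set $v=(\boldsymbol{L}_{(\nu)}^{(0)})^{-1}f$, so $\Pi\nu\Delta v=f$, i.e. $\nu\Delta v=f+\nabla q$ for the associated pressure $q$. Working Fourier mode by Fourier mode in $x$ (recall $v$ has no $0$-mode), each mode $v_{n}$ solves a second-order ODE in $y$ of the form $\nu(D^{2}-n^{2}\alpha^{2})v_{n}=f_{n}+\nabla q_{n}$ with the boundary conditions built into $\mathring{H}_{\eta}^{2}$; eliminating the pressure and using the incompressibility constraint, one obtains, by the standard interior-and-boundary estimates for this constant-coefficient ODE in the weighted space $C^{0}_{\eta}$, a bound $\|D^{2}v_{n}\|_{C^{0}_{\eta}}+n^{2}\alpha^{2}\|v_{n}\|_{C^{0}_{\eta}}\le C\|f_{n}\|_{C^{0}_{\eta}}$, and interpolating (or reading off directly from the ODE) the intermediate term $n\alpha\|Dv_{n}\|_{C^{0}_{\eta}}$. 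Summing over $|n|\geq 1$ with the weights appearing in the definition of $\|\cdot\|_{\mathring{H}_{\eta}^{2}}$ gives $\|v\|_{\mathring{H}_{\eta}^{2}}\leq C\|f\|_{\mathring{L}_{\eta}^{2}}$, which is exactly boundedness of $(\boldsymbol{L}_{(\nu)}^{(0)})^{-1}$ in $\mathcal{L}(\mathring{L}_{\eta}^{2},\mathring{H}_{\eta}^{2})$. Since $\boldsymbol{L}_{(\nu)}^{(0)}$ maps $\mathring{H}_{\eta}^{2}$ into $\mathring{L}_{\eta}^{2}$ (its definition together with Lemma \ref{lem: L1}-type bounds for the Stokes part), and is the two-sided inverse of the bounded operator just constructed, it follows that $\boldsymbol{L}_{(\nu)}^{(0)}\colon\mathring{H}_{\eta}^{2}\to\mathring{L}_{\eta}^{2}$ is an isomorphism.

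For the equivalence of norms on $\mathring{H}_{\eta}^{2}$: the inequality $\|\boldsymbol{L}_{(\nu)}^{(0)}v\|_{\mathring{L}_{\eta}^{2}}\leq C\|v\|_{\mathring{H}_{\eta}^{2}}$ is the boundedness of $\boldsymbol{L}_{(\nu)}^{(0)}$ as a map $\mathring{H}_{\eta}^{2}\to\mathring{L}_{\eta}^{2}$, and the reverse inequality $\|v\|_{\mathring{H}_{\eta}^{2}}\leq C\|\boldsymbol{L}_{(\nu)}^{(0)}v\|_{\mathring{L}_{\eta}^{2}}$ is obtained by applying the bound for $(\boldsymbol{L}_{(\nu)}^{(0)})^{-1}$ to $f=\boldsymbol{L}_{(\nu)}^{(0)}v$. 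Together these give the claimed equivalence. I expect the main obstacle to be the second step: carefully extracting the full $\mathring{H}_{\eta}^{2}$-norm (all three weighted terms, uniformly in $n$) from the mode-by-mode ODE analysis in the sup-weighted space $C^{0}_{\eta}$ rather than in $L^{2}$, and in particular handling the pressure/incompressibility coupling and the boundary terms so that the constants do not degenerate as $|n|\to\infty$ or as $\eta\to 0$; the resolvent bound \eqref{estimresol1zero} does much of this work, but translating it into the $\mathring{H}_{\eta}^{2}$-gain requires the elliptic estimate, which is where the real content lies.
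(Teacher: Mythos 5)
Your proposal follows essentially the same route as the paper: both rest on Lemma \ref{lemma3}, with invertibility of $\boldsymbol{L}_{(\nu)}^{(0)}$ read off from the resolvent estimate at $\lambda=0$ (the point $\lambda+\nu\alpha^2=\nu\alpha^2$ lies in the good sector), and the gain into $\mathring{H}_{\eta}^{2}$ coming from the explicit mode-by-mode solution formulas underlying the proof of Lemma \ref{lemma3}. The paper compresses this into a one-line chain of inequalities $\|\widetilde v\|_{\mathring H_\eta^2}\le M\|\boldsymbol L^{(0)}_{(\nu)}\widetilde v\|_{\mathring L_\eta^2}\le Mc\|\Delta\widetilde v\|_{\mathring L_\eta^2}\le Mc\|\widetilde v\|_{\mathring H_\eta^2}$ whose first link is exactly the elliptic estimate you correctly identify as the real content; your plan is the same argument, just more explicit about where that work sits.
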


This Lemma is a direct consequence of Lemma \ref%
{lem:spectrumL0 expky} since
\begin{equation*}
\|\widetilde{v}\|_{\mathring{H}_{\eta}^{2}}\leq M\|\boldsymbol{L}^{(0)}_{(\nu)}%
\widetilde{v}\|_{\mathring{L}_{\eta}^{2}}\leq Mc\|\Delta \widetilde{v}\|_{%
\mathring{L}_{\eta}^{2}}\leq Mc\|\widetilde{v}\|_{\mathring{H}_{\eta}^{2}}.
\end{equation*}

\begin{lemma}
\label{lem: locationspect} There exists $M>0$ such that the spectrum of $%
\boldsymbol{L}_{(\nu )}$ is included in 
$$
\Bigl\{ |\lambda +\nu \alpha ^{2}| \le M \Bigr\} \cup
 \Bigl\{ \frac{2\pi}{3}  \leq |\arg (\lambda +\nu \alpha ^{2})|\leq \pi  \Bigr\}
\cup (-\infty ,0].
$$
Let $\delta > 0$ be arbitrarily small.
In the domain, $|\lambda +\nu \alpha ^{2}| > M$ and
 $0  \leq |\arg (\lambda +\nu \alpha ^{2})|\leq 2 \pi / 3$,
we have
\begin{equation}
\Bigl\|(\boldsymbol{L}_{(\nu)} -\lambda \mathbb{I)}^{-1} \Bigr\|_{\mathcal{L}(%
\mathring{L}_{\eta}^{2})}\leq \frac{C}{|\lambda |}.
\label{estimresol1}
\end{equation}
\end{lemma}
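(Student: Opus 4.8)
The plan is to deduce Lemma \ref{lem: locationspect} from the already-established resolvent bounds for $\boldsymbol{L}^{(0)}_{(\nu)}$ (Lemma \ref{lem:spectrumL0 expky}) by treating $\boldsymbol{L}^{(1)}$ as a relatively compact, lower-order perturbation. The key structural fact, supplied by Lemma \ref{lem: L1}, is that $\boldsymbol{L}^{(1)}$ maps $\mathring{H}^2_\eta$ into $\mathring{H}^1_\eta$ with $\|\boldsymbol{L}^{(1)}v\|_{\mathring{H}^1_\eta}\le C\|\widetilde v\|_{\mathring{H}^2_\eta}$, and that it vanishes on the $0$-Fourier mode. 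Since $\boldsymbol{L}^{(0)}_{(\nu)}$ is boundedly invertible from $\mathring{L}^2_\eta$ to $\mathring{H}^2_\eta$ (Lemma \ref{lem:equiv norms}), and the inclusion $\mathring{H}^1_\eta \hookrightarrow \mathring{L}^2_\eta$ restricted to the nonzero modes gains a factor decaying in the Fourier index $n$, the operator $\boldsymbol{L}^{(1)}(\boldsymbol{L}^{(0)}_{(\nu)})^{-1}$ is bounded on $\mathring{L}^2_\eta$ with small norm when localized to high frequencies, and compact overall.

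First I would write, for $\lambda$ in the region $|\lambda+\nu\alpha^2|>M$, $|\arg(\lambda+\nu\alpha^2)|\le 2\pi/3$, the identity
\begin{equation*}
\boldsymbol{L}_{(\nu)}-\lambda\mathbb{I}
=\bigl(\mathbb{I}+\boldsymbol{L}^{(1)}(\boldsymbol{L}^{(0)}_{(\nu)}-\lambda\mathbb{I})^{-1}\bigr)
(\boldsymbol{L}^{(0)}_{(\nu)}-\lambda\mathbb{I}).
\end{equation*}
By Lemma \ref{lem:spectrumL0 expky}, on this region $\|(\boldsymbol{L}^{(0)}_{(\nu)}-\lambda\mathbb{I})^{-1}\|_{\mathcal{L}(\mathring{L}^2_\eta)}\le C/|\lambda+\nu\alpha^2|$; in particular the $\delta$ appearing in that lemma can be chosen so that the sector $|\arg(\lambda+\nu\alpha^2)|\le 2\pi/3-\delta'$ is covered, and the endpoint case $|\arg|\le 2\pi/3$ with $|\lambda+\nu\alpha^2|$ large is handled after possibly enlarging $M$. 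Next I would estimate $\boldsymbol{L}^{(1)}(\boldsymbol{L}^{(0)}_{(\nu)}-\lambda\mathbb{I})^{-1}$: the resolvent is bounded $\mathring{L}^2_\eta\to\mathring{L}^2_\eta$ with norm $O(1/|\lambda+\nu\alpha^2|)$, and additionally $\boldsymbol{L}^{(0)}_{(\nu)}(\boldsymbol{L}^{(0)}_{(\nu)}-\lambda\mathbb{I})^{-1}=\mathbb{I}+\lambda(\boldsymbol{L}^{(0)}_{(\nu)}-\lambda\mathbb{I})^{-1}$ is bounded on $\mathring{L}^2_\eta$, so by Lemma \ref{lem:equiv norms} the resolvent maps $\mathring{L}^2_\eta$ into $\mathring{H}^2_\eta$ boundedly (uniformly in such $\lambda$). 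Interpolating between the $O(1/|\lambda+\nu\alpha^2|)$ bound into $\mathring{L}^2_\eta$ and the $O(1)$ bound into $\mathring{H}^2_\eta$ gives a bound into $\mathring{H}^1_\eta$ of order $|\lambda+\nu\alpha^2|^{-1/2}$; composing with the bounded map $\boldsymbol{L}^{(1)}:\mathring{H}^2_\eta\to\mathring{H}^1_\eta$ — or more precisely using $\|\boldsymbol{L}^{(1)}w\|_{\mathring{L}^2_\eta}\le C\|w\|_{\mathring{H}^1_\eta}$ from Lemma \ref{lem: L1} — yields
\begin{equation*}
\bigl\|\boldsymbol{L}^{(1)}(\boldsymbol{L}^{(0)}_{(\nu)}-\lambda\mathbb{I})^{-1}\bigr\|_{\mathcal{L}(\mathring{L}^2_\eta)}\le \frac{C}{|\lambda+\nu\alpha^2|^{1/2}}.
\end{equation*}
Choosing $M$ large enough that $C/M^{1/2}\le 1/2$, the Neumann series converges, $\mathbb{I}+\boldsymbol{L}^{(1)}(\boldsymbol{L}^{(0)}_{(\nu)}-\lambda\mathbb{I})^{-1}$ is invertible with inverse bounded by $2$, hence $\lambda\notin Sp\,\boldsymbol{L}_{(\nu)}$ and
\begin{equation*}
\|(\boldsymbol{L}_{(\nu)}-\lambda\mathbb{I})^{-1}\|_{\mathcal{L}(\mathring{L}^2_\eta)}
\le 2\|(\boldsymbol{L}^{(0)}_{(\nu)}-\lambda\mathbb{I})^{-1}\|_{\mathcal{L}(\mathring{L}^2_\eta)}
\le \frac{C}{|\lambda+\nu\alpha^2|}\le\frac{C}{|\lambda|},
\end{equation*}
the last step using that $\nu\alpha^2$ is a fixed bounded quantity and $|\lambda|$ is large, so $|\lambda+\nu\alpha^2|\ge c|\lambda|$ on the relevant region (again after enlarging $M$). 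This proves \eqref{estimresol1}. For the spectral inclusion statement, the complement of the claimed set is exactly (up to adjusting constants) the region just treated together with the sector $2\pi/3\le|\arg(\lambda+\nu\alpha^2)|\le\pi$ removed; since on the former the resolvent exists and is bounded, and the latter together with $(-\infty,0]$ and the ball $\{|\lambda+\nu\alpha^2|\le M\}$ is precisely what we allow, the spectrum is contained in the stated union.

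The main obstacle I anticipate is making the interpolation/gain-of-half-derivative argument clean and uniform: one must verify that $(\boldsymbol{L}^{(0)}_{(\nu)}-\lambda\mathbb{I})^{-1}$ really does map into $\mathring{H}^2_\eta$ with a norm controlled by $|\lambda+\nu\alpha^2|$ times the $\mathring{H}^2_\eta$-norm of the output being comparable to the $\mathring{L}^2_\eta$-norm of $\boldsymbol{L}^{(0)}_{(\nu)}$ applied to it (Lemma \ref{lem:equiv norms}), and that the frequency-by-frequency structure of the norms in the definitions of $\mathring{H}^1_\eta$ and $\mathring{H}^2_\eta$ is compatible with a genuine interpolation inequality $\|w\|_{\mathring{H}^1_\eta}^2\le\|w\|_{\mathring{L}^2_\eta}\|w\|_{\mathring{H}^2_\eta}$ (which follows from Cauchy–Schwarz applied termwise in $n$, using $n^2\|v_n\|^2 = (n^4\|v_n\|^2)^{1/2}(\|v_n\|^2)^{1/2}$ and similarly for the $D$-terms). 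A secondary point needing care is the treatment of the $0$-Fourier mode: on $\mathring{W}^{2,\infty}$ the operator $\boldsymbol{L}^{(1)}$ vanishes, so $\boldsymbol{L}_{(\nu)}$ restricted there equals $\boldsymbol{L}^{(0)}_{(\nu)}$, whose resolvent bound on $\mathring{L}^\infty$ from Lemma \ref{lem:spectrumL0 expky} already has the right form; thus the two components decouple in the relevant region and the estimate on $\mathring{L}^2_\eta$ is what remains to be shown, exactly as above. The only genuinely delicate sign/constant issue is ensuring that the sector $|\arg(\lambda+\nu\alpha^2)|\le 2\pi/3$ with large modulus is fully inside the validity region $|\arg(\lambda+\nu\alpha^2)|\le 2\pi/3-\delta$ of \eqref{estimresol1zero}; this is handled by noting that the boundary rays $\arg=\pm 2\pi/3$ lie in the closed resolvent set of $\boldsymbol{L}^{(0)}_{(\nu)}$ away from the origin (the spectrum being in the closed sector $2\pi/3\le|\arg|\le\pi$), so a standard compactness/continuity argument extends the bound up to and slightly past those rays, at the cost of a possibly larger constant $C$ and larger $M$.
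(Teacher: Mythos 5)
Your route is essentially the paper's: both reduce matters, via the factorization $\boldsymbol{L}_{(\nu)}-\lambda=\bigl(\mathbb{I}+\boldsymbol{L}^{(1)}(\boldsymbol{L}^{(0)}_{(\nu)}-\lambda)^{-1}\bigr)(\boldsymbol{L}^{(0)}_{(\nu)}-\lambda)$, to showing $\|\boldsymbol{L}^{(1)}(\boldsymbol{L}^{(0)}_{(\nu)}-\lambda)^{-1}\|_{\mathcal{L}(\mathring{L}^2_\eta)}\lesssim|\lambda+\nu\alpha^2|^{-1/2}$ using Lemma \ref{lem:spectrumL0 expky}, Lemma \ref{lem: L1}/the explicit Fourier-wise bound, and Lemma \ref{lem:equiv norms}, and then invoking the Neumann series (the paper phrases this through an $a$--$b$ relative bound with a free parameter $\varepsilon$ and Kato's Theorem~3.17, whereas you phrase it as interpolation; optimizing the paper's $\varepsilon\sim|\lambda+\nu\alpha^2|^{-1/2}$ recovers exactly your bound).

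One step you gloss over is not quite right as written: the claim that $\|w\|_{\mathring{H}^1_\eta}^2\le\|w\|_{\mathring{L}^2_\eta}\|w\|_{\mathring{H}^2_\eta}$ ``follows from Cauchy--Schwarz applied termwise in $n$\ldots and similarly for the $D$-terms.'' For the weight $n^2\|v_n\|_{C^0_\eta}^2$ this is indeed termwise Cauchy--Schwarz, but for $\|Dv_n\|_{C^0_\eta}^2$ there is no termwise identity available: the $\mathring{L}^2_\eta$ norm controls only $\|v_n\|_{C^0_\eta}$, and the $\mathring{H}^2_\eta$ norm controls $\|D^2v_n\|_{C^0_\eta}$, $|n|\|Dv_n\|_{C^0_\eta}$, $n^2\|v_n\|_{C^0_\eta}$; none of the pairings $\|v_n\|\cdot|n|\|Dv_n\|$ or $\|v_n\|\cdot n^2\|v_n\|$ dominates $\|Dv_n\|^2$, and $\|Dv_n\|^2\le\|v_n\|\,\|D^2v_n\|$ is a Landau--Kolmogorov inequality in $C^0_\eta$, not a Cauchy--Schwarz step. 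There are two clean fixes. Either prove the weighted Landau inequality: for $g=v_ne^{\eta y}\in C^2[0,\infty)$, $\|g'\|_\infty^2\le4\|g\|_\infty\|g''\|_\infty$ gives $\|Dv_n\|_{C^0_\eta}^2\lesssim\|v_n\|_{C^0_\eta}\|D^2v_n\|_{C^0_\eta}+\|v_n\|_{C^0_\eta}^2$, and the extra term only produces an $O(|\lambda+\nu\alpha^2|^{-1})$ contribution, harmless since $|\lambda+\nu\alpha^2|>M$. Or, cleaner and closer to the paper, observe that $\boldsymbol{L}^{(1)}v$ involves only $U v_n$ and $U' v_n^y$ (no $y$-derivative of $v$), so $\|(\boldsymbol{L}^{(1)}v)_n\|_{C^0_\eta}\le c(1+|n|)\|v_n\|_{C^0_\eta}$; the only interpolation then needed is the trivial $n^2\|v_n\|^2=(n^4\|v_n\|^2)^{1/2}(\|v_n\|^2)^{1/2}$, and the $D$-terms never enter. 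With either repair, your argument goes through and matches the paper's conclusion.
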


We detail the proof of this lemma in Appendix \ref{proof3}.

We now decompose $L^{(1)}$ in
$$
L^{(1)} = L^{(1,0)} + L^{(1,c)},
$$
where
$$
L^{(1,0)} v  = - \Pi \Bigl[ (U_+ \cdot \nabla) v \Bigr],
\qquad 
L^{(1,c)} v = - \Pi \Bigl[ \Bigl( (U - U_+) \cdot \nabla \Bigr) v  + (v \cdot \nabla) U \Bigr].
$$
\begin{lemma}
\label{relatcomp} Assume that $U\in C^{2}(\mathbb{R}^{+})$ satisfies%
\begin{equation*}
|U(y) - U_+|+|DU(y)|+|D^{2}U(y)|\leq ce^{-\gamma y},\text{ }y\in \mathbb{R}^{+},
\end{equation*}%
then the linear operator $\boldsymbol{L}^{(1,c)}$ is relatively compact with
respect to $\boldsymbol{L}_{(\nu )}^{(0)}$ acting in $\mathcal{X}_{\eta}.$
\end{lemma}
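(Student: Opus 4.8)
The plan is to use the sequential characterization of relative compactness: $\boldsymbol{L}^{(1,c)}$ is relatively compact with respect to $\boldsymbol{L}_{(\nu)}^{(0)}$ in $\mathcal{X}_{\eta}$ as soon as every sequence $(v_k)$ bounded in $\mathcal{X}_{\eta}$ with $(\boldsymbol{L}_{(\nu)}^{(0)}v_k)$ bounded in $\mathcal{X}_{\eta}$ admits a subsequence along which $(\boldsymbol{L}^{(1,c)}v_k)$ converges in $\mathcal{X}_{\eta}$. Two preliminary reductions simplify this. First, exactly as in the computation following Lemma~\ref{lem: L1}, $\boldsymbol{L}^{(1,c)}$ annihilates the $0$-Fourier mode: on a vector field $v_0=(v_0^x,0)$ independent of $x$ one has $((U-U_+)\cdot\nabla)v_0=0$, $(v_0\cdot\nabla)U=0$ and $(U_+\cdot\nabla)v_0=0$, so $\boldsymbol{L}^{(1,c)}v=\boldsymbol{L}^{(1,c)}\widetilde v$. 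Second, by Lemma~\ref{lem:equiv norms}, boundedness of $\boldsymbol{L}_{(\nu)}^{(0)}v_k$ in $\mathcal{X}_{\eta}$ forces $\widetilde v_k$ to be bounded in $\mathring{H}_{\eta}^{2}$. Hence the statement reduces to showing that $\boldsymbol{L}^{(1,c)}$ is a \emph{compact} operator from $\mathring{H}_{\eta}^{2}$ into $\mathring{L}_{\eta}^{2}$. Using the identity recalled after Lemma~\ref{lem: L1} I would write $\boldsymbol{L}^{(1,c)}\widetilde v=-\,\Pi\,M\widetilde v$ with
\begin{equation*}
M\widetilde v:=(U_s-U_+)\,\partial_x\widetilde v+\binom{\widetilde v^{\,y}\,DU_s}{0},
\end{equation*}
and, since $\Pi$ is bounded from $L_{\eta}^{2}$ to $\mathring{L}_{\eta}^{2}$ (Lemma~\ref{lem:decomp Hk}) and $M$ preserves the zero-$x$-mean condition, it suffices to prove that $M:\mathring{H}_{\eta}^{2}\to L_{\eta}^{2}$ is compact.

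To prove the latter I would exhibit $M$ as an operator-norm limit of compact operators, in two steps. The first is a cut-off in $y$: pick $\chi_R\in C^{\infty}(\mathbb{R}^{+})$ with $\chi_R\equiv1$ on $[0,R]$ and $\mathrm{supp}\,\chi_R\subset[0,2R]$, and set $M_R\widetilde v=\chi_R\,M\widetilde v$. Since the hypothesis gives $|U_s(y)-U_+|+|DU_s(y)|\le c\,e^{-\gamma y}$, estimating each Fourier component of $(1-\chi_R)M\widetilde v$ on $\{y\ge R\}$ (where $e^{-\gamma y}\le e^{-\gamma R}$, the weight $e^{\eta y}$ being attached to $\widetilde v$) yields
\begin{equation*}
\|(M-M_R)\widetilde v\|_{L_{\eta}^{2}}^{2}\le C\,e^{-2\gamma R}\sum_{|n|\ge1}(1+n^{2})\,\|v_n\|_{C_{\eta}^{0}}^{2}\le C\,e^{-2\gamma R}\,\|\widetilde v\|_{\mathring{H}_{\eta}^{2}}^{2},
\end{equation*}
so $\|M-M_R\|_{\mathcal{L}(\mathring{H}_{\eta}^{2},L_{\eta}^{2})}\le C e^{-\gamma R}\to0$. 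The second is a cut-off in the discrete $x$-frequency: with $P_N$ the projection onto the modes $1\le|n|\le N$, put $M_{R,N}=P_NM_R$. The factor $in\alpha$ produced by $\partial_x$, together with the gap between the weight $n^{4}$ in the $\mathring{H}_{\eta}^{2}$ norm and the weight $1$ in the $L_{\eta}^{2}$ norm, gives, uniformly in $R$,
\begin{equation*}
\|(M_R-M_{R,N})\widetilde v\|_{L_{\eta}^{2}}^{2}=\sum_{|n|>N}\|(M_R\widetilde v)_n\|_{C_{\eta}^{0}}^{2}\le C\sum_{|n|>N}n^{2}\|v_n\|_{C_{\eta}^{0}}^{2}\le\frac{C}{N^{2}}\,\|\widetilde v\|_{\mathring{H}_{\eta}^{2}}^{2},
\end{equation*}
hence $\|M_R-M_{R,N}\|\le C/N\to0$. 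It then remains only to see that each $M_{R,N}$ is compact.

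For that last point, $M_{R,N}\widetilde v$ depends only on the finitely many Fourier coefficients $v_n$ with $1\le|n|\le N$, each multiplied by a smooth bounded function supported in the compact interval $[0,2R]$, on which the weight $e^{\eta y}$ is bounded above and below; from a sequence bounded in $\mathring{H}_{\eta}^{2}$ one therefore extracts, for each such $n$, a subsequence along which $v_n$ converges in $C^{0}([0,2R])$ by Arzelà–Ascoli (the $C^{2}$-bound built into the $\mathring{H}_{\eta}^{2}$ norm supplies uniform boundedness and equicontinuity), and since the remaining operations — multiplication by the bounded coefficients and by $in\alpha$, then summation of finitely many modes — are continuous, $M_{R,N}$ is compact. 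Being a norm limit of compact operators, $M$ is compact, hence so is $\boldsymbol{L}^{(1,c)}=-\Pi M$ from $\mathring{H}_{\eta}^{2}$ to $\mathring{L}_{\eta}^{2}$, which by the reductions above proves the lemma. The genuine difficulty I expect is not any individual estimate but identifying the two mechanisms that restore compactness on this non-compact configuration space: the exponential decay of $U-U_+$ and $DU$ handles the unboundedness in $y$ (the $y$-truncation step), and the surplus powers of $|n|$ carried by the $\mathring{H}_{\eta}^{2}$ norm handle the infinitely many $x$-modes (the frequency-truncation step); the non-locality of $\Pi$, which might look like an obstruction, is harmless because $\Pi$ enters only as a bounded factor in front of the genuinely compact multiplication operator $M$.
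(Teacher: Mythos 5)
Your proof is correct, and it rests on the same two mechanisms the paper exploits: the exponential decay of $U-U_+$, $DU$ to gain compactness in $y$, and the surplus powers of $n$ in the $\mathring{H}_{\eta}^{2}$ norm to gain compactness in the $x$-frequency, with Arzelà–Ascoli supplying the convergence on the truncated set. The organization differs slightly: the paper factors $\boldsymbol{L}^{(1,c)}$ through the more strongly weighted space $\mathring{H}_{\eta+\gamma}^{1}$ (so the exponential gain $\gamma$ is recorded as improved boundedness of the operator) and then proves that the embedding $\mathring{H}_{\eta+\gamma}^{1}\hookrightarrow\mathring{L}_{\eta}^{2}$ is compact by a $y$-truncation plus diagonal subsequence; you instead keep the target weight fixed and approximate the multiplication operator $M$ in operator norm by the compact truncations $M_{R,N}$. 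These are equivalent bookkeeping for the same argument. One advantage of your version is that you make the Fourier-mode cut-off $P_{N}$ explicit with the estimate $\|M_R-M_{R,N}\|\le C/N$; in the paper that tail control is folded into the phrase ``bounded and equicontinuous in $y$'' without being spelled out. A second small advantage is that you apply the bounded projector $\Pi$ last, from $L_{\eta}^{2}$ into $\mathring{L}_{\eta}^{2}$, so you never need to invoke boundedness of $\Pi$ at the intermediate weight $\eta+\gamma$, which in the paper's factorization silently requires $\eta+\gamma<\alpha/2$ (easily arranged, but worth noticing). Otherwise the two proofs are interchangeable.
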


This Lemma is proved in Appendix \ref{proof4} (see \cite{Kato} for the
definition of relative compactness of an operator with respect to another
operator).

\begin{lemma} \label{newlemma}
The essential spectrum of $L_{(\nu)}$ is the essential spectrum of $L_{(\nu)}^{(0)} + L^{(1,0)}$. 
It is the half line $(-\infty,0]$ if restricted to 
$\mathring{L}^{\infty }.$ 
The rest of the essential spectrum, for the operator restricted to $\mathring{L}_{\eta}^{2},$ is included in the region $\Sigma_{U_+}$ defined by
$$
\Re \lambda < - \nu (\alpha^2 - \eta^2),
$$
and either
$$
\Re \lambda < - {\nu (\Im \lambda)^2 \over
U_+^2 + 4 \nu^2 \eta^2} + \nu \eta^2
$$
or
\begin{equation*}
 2\pi /3\leq |\arg (\lambda +\nu \alpha ^{2})|\leq \pi .
\end{equation*}%
The rest of the spectrum of $L_{(\nu)} = L_{(\nu)}^{(0)} + L^{(1)}$ is uniquely formed by
isolated eigenvalues with finite multiplicities at finite distances of $0$. See Figure \ref{spectrumb}.
\end{lemma}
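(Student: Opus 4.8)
The plan is to decompose $\boldsymbol{L}_{(\nu)} = \boldsymbol{L}^{(0)}_{(\nu)} + \boldsymbol{L}^{(1)}$ and exploit the decomposition $\boldsymbol{L}^{(1)} = \boldsymbol{L}^{(1,0)} + \boldsymbol{L}^{(1,c)}$ together with two classical stability facts: the essential spectrum is invariant under relatively compact perturbations (so $\boldsymbol{L}^{(1,c)}$ can be discarded for the essential spectrum), and outside the essential spectrum the operator is Fredholm of index zero, so any spectrum there consists of isolated eigenvalues of finite multiplicity accumulating only on the essential spectrum. The first assertion, that the essential spectrum of $\boldsymbol{L}_{(\nu)}$ coincides with that of $\boldsymbol{L}^{(0)}_{(\nu)} + \boldsymbol{L}^{(1,0)}$, is immediate from Lemma \ref{relatcomp} (relative compactness of $\boldsymbol{L}^{(1,c)}$ with respect to $\boldsymbol{L}^{(0)}_{(\nu)}$, hence with respect to $\boldsymbol{L}^{(0)}_{(\nu)}+\boldsymbol{L}^{(1,0)}$) combined with Weyl's theorem on the stability of the essential spectrum; see \cite{Kato}.

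Next I would locate the essential spectrum of $\boldsymbol{L}^{(0)}_{(\nu)} + \boldsymbol{L}^{(1,0)}$ by splitting according to the two summands $\mathring{L}^{\infty}$ and $\mathring{L}_{\eta}^{2}$ of $\mathcal{X}_\eta$. On $\mathring{L}^{\infty}$ the $0$-Fourier mode is $x$-independent with vanishing $y$-component, so $\boldsymbol{L}^{(1,0)}$ (which contains $U_+\partial_x$) acts trivially there, and the essential spectrum is exactly $(-\infty,0]$ by Lemma \ref{lem:spectrumL0 expky}. On $\mathring{L}_{\eta}^{2}$, working mode by mode in the Fourier variable $n\alpha$ with the weight $e^{\eta y}$, the operator $\boldsymbol{L}^{(0)}_{(\nu)}+\boldsymbol{L}^{(1,0)}$ is, after conjugation by $e^{\eta y}$, an explicitly solvable constant-coefficient Stokes-plus-transport operator on the half line; its spectrum is computed by plane-wave/Fourier-Laplace analysis in $y$ (symbols $-\nu(\alpha^2 n^2 + (\xi + i\eta)^2) - i n\alpha U_+$ with $\xi\in\mathbb{R}$, together with the contribution of the boundary). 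Taking the union over $|n|\ge 1$ and $\xi\in\mathbb{R}$ and simplifying the resulting inequalities yields the region $\Sigma_{U_+}$: the bound $\Re\lambda < -\nu(\alpha^2-\eta^2)$ comes from $|n|\ge 1$ and the worst case $\xi=0$, while the parabola $\Re\lambda < -\nu(\Im\lambda)^2/(U_+^2 + 4\nu^2\eta^2) + \nu\eta^2$ (together with the angular sector already present from Lemma \ref{lemma3}) records how the imaginary part grows with $\xi$ through the transport term. The estimates of Lemmas \ref{lemma3} and \ref{lem: locationspect} furnish resolvent bounds off this region, confirming it is indeed the whole essential spectrum.

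Finally, for the last sentence, one invokes analytic Fredholm theory: on the complement of $\Sigma_{U_+}\cup(-\infty,0]$, the operator $\boldsymbol{L}^{(0)}_{(\nu)}-\lambda\mathbb{I}$ is boundedly invertible by Lemma \ref{lem: locationspect}, and $\boldsymbol{L}^{(1)}=\boldsymbol{L}^{(1,0)}+\boldsymbol{L}^{(1,c)}$ is, by Lemmas \ref{lem: L1} and \ref{relatcomp}, relatively compact with respect to $\boldsymbol{L}^{(0)}_{(\nu)}$; hence $(\boldsymbol{L}^{(0)}_{(\nu)}-\lambda\mathbb{I})^{-1}\boldsymbol{L}^{(1)}$ is compact and $\boldsymbol{L}_{(\nu)}-\lambda\mathbb{I} = (\mathbb{I} + (\boldsymbol{L}^{(0)}_{(\nu)}-\lambda\mathbb{I})^{-1}\boldsymbol{L}^{(1)})(\boldsymbol{L}^{(0)}_{(\nu)}-\lambda\mathbb{I})$ is Fredholm of index zero, analytic in $\lambda$ on this connected region. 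Since at least one $\lambda$ (say with $\Re\lambda$ large or in the sector of Lemma \ref{lem: locationspect}) is in the resolvent set, the analytic Fredholm theorem gives that the spectrum there is a discrete set of isolated eigenvalues of finite algebraic multiplicity, which can accumulate only at the essential spectrum or at infinity; the resolvent decay $\|(\boldsymbol{L}_{(\nu)}-\lambda\mathbb{I})^{-1}\| \le C/|\lambda|$ of Lemma \ref{lem: locationspect} rules out escape to infinity, so the eigenvalues lie at finite distance from $0$.

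The main obstacle I expect is the explicit computation of the essential spectrum of $\boldsymbol{L}^{(0)}_{(\nu)}+\boldsymbol{L}^{(1,0)}$ on $\mathring{L}_{\eta}^{2}$ and the correct bookkeeping of the half-line boundary at $y=0$: the Stokes projector $\Pi$ is nonlocal, and the exponential weight $e^{\eta y}$ shifts the continuous $y$-Fourier variable into the complex strip, so one must carefully track both the "interior" contribution (plane waves $e^{i\xi y}$ with the shift $\xi \mapsto \xi + i\eta$) and the possibility of boundary-generated spectrum, then show the union over all modes collapses exactly to $\Sigma_{U_+}$. Matching the somewhat unusual shape of $\Sigma_{U_+}$ — the interplay of the linear bound, the parabola, and the inherited angular sector — to these symbol computations is the delicate part; everything else is an application of standard perturbation and Fredholm theory.
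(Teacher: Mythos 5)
Your overall structure matches the paper's: reduce the essential spectrum to that of $\boldsymbol{L}_{(\nu)}^{(0)} + \boldsymbol{L}^{(1,0)}$ by relative compactness of $\boldsymbol{L}^{(1,c)}$ (Lemma \ref{relatcomp} plus Kato's stability theorem), compute that essential spectrum mode by mode, and then argue that the remaining spectrum consists of isolated eigenvalues of finite multiplicity. The middle step, however, is framed somewhat differently from what the paper actually does. You propose a plane-wave/Fourier--Laplace symbol analysis with the shift $\xi\mapsto\xi+i\eta$; the paper instead writes out the explicit Green's-function resolvent for $\nu\Pi\Delta - (U_+)\partial_x - \lambda$ in terms of $s_n$ with $s_n^2 = n^2\alpha^2 + (\lambda - in\alpha U_+)/\nu$, $\Re s_n>0$, and identifies the bad region as the locus where $s_n=0$ or $\Re s_n=\eta$ (which is where the weighted resolvent estimate degenerates). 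These are computationally the same parabola, but the paper's approach only needs to prove an \emph{inclusion} — it establishes explicit resolvent bounds on the complement (adapting Lemma \ref{lemme30} to the new $s_n$), which is less delicate than characterizing the essential spectrum of a half-line operator via symbols and boundary contributions. Note also that the resolvent bounds you cite from Lemmas \ref{lemma3} and \ref{lem: locationspect} apply to $\boldsymbol{L}_{(\nu)}^{(0)}$ and $\boldsymbol{L}_{(\nu)}$ respectively, not to $\boldsymbol{L}_{(\nu)}^{(0)} + \boldsymbol{L}^{(1,0)}$, so the estimates outside $\Sigma_{U_+}$ have to be re-derived with the $U_+$-modified $s_n$; they do not come for free.

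There is a genuine error in your final paragraph. You assert that $\boldsymbol{L}^{(1)} = \boldsymbol{L}^{(1,0)} + \boldsymbol{L}^{(1,c)}$ is relatively compact with respect to $\boldsymbol{L}_{(\nu)}^{(0)}$ and factor $\boldsymbol{L}_{(\nu)}-\lambda\mathbb{I} = (\mathbb{I} + (\boldsymbol{L}_{(\nu)}^{(0)}-\lambda\mathbb{I})^{-1}\boldsymbol{L}^{(1)})(\boldsymbol{L}_{(\nu)}^{(0)}-\lambda\mathbb{I})$. This contradicts your own (correct) first paragraph: $\boldsymbol{L}^{(1,0)} = -\Pi(U_+\cdot\nabla)$ is only relatively \emph{bounded} (Lemma \ref{lem: L1}), not relatively compact, and in fact it genuinely changes the essential spectrum — that is the whole point of $\Sigma_{U_+}$ and the appearance of $U_+$ in the parabola. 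If $\boldsymbol{L}^{(1)}$ were relatively compact, the essential spectrum of $\boldsymbol{L}_{(\nu)}$ would coincide with that of $\boldsymbol{L}_{(\nu)}^{(0)}$, which is false. The Fredholm factorization must be based at $\boldsymbol{L}_{(\nu)}^{(0)} + \boldsymbol{L}^{(1,0)}$, i.e.\ $\boldsymbol{L}_{(\nu)}-\lambda\mathbb{I} = \bigl(\mathbb{I} + (\boldsymbol{L}_{(\nu)}^{(0)}+\boldsymbol{L}^{(1,0)}-\lambda\mathbb{I})^{-1}\boldsymbol{L}^{(1,c)}\bigr)\bigl(\boldsymbol{L}_{(\nu)}^{(0)}+\boldsymbol{L}^{(1,0)}-\lambda\mathbb{I}\bigr)$, valid for $\lambda$ outside $\Sigma_{U_+}\cup(-\infty,0]$, not merely outside the resolvent set of the bare Stokes operator. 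This is precisely the content of the paper's appeal to Kato's Theorem 5.35 applied to the relatively compact perturbation $\boldsymbol{L}^{(1,c)}$ of $\boldsymbol{L}_{(\nu)}^{(0)}+\boldsymbol{L}^{(1,0)}$. With that correction, the rest of your argument (analytic Fredholm theorem on the connected complement, plus the large-$\lambda$ resolvent bound from Lemma \ref{lem: locationspect} to rule out accumulation at infinity) is sound.
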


\begin{proof}
The localization of the essential spectrum of $L_{(\nu)}^{(0)} + L^{(1,0)}$ is proved in Appendix \ref{newappendix}.
The result on the spectrum of $\boldsymbol{L}_{(\nu)}$ comes
from the fact that it is the addition
to $\boldsymbol{L}^{(0)}_{(\nu)} + L^{(1,0)}$ of the linear operator $\boldsymbol{L}^{(1,c)}$
which is a relatively compact perturbation, using the fact that 
$U(y) - U_+$ tends to 0 exponentially as $y\rightarrow \infty$. We then apply
theorem 5.35 in \cite{Kato}.
\end{proof}

A corollary of the previous Lemmas is the following estimate for $\lambda $ on the imaginary axis,
far enough from the origin. This estimate will be useful in the study of bifurcating periodic solutions.

\begin{lemma} \label{lemma11}
\label{lem: inv n omega - L}Assume $\omega >0$ and $\eta>0$ small enough, then
there exists $C>0$ and $N>0$ such that for $|n|>N$ 
\begin{equation*}
\Bigl\|(in\omega -\boldsymbol{L}_{(\nu )})^{-1} \Bigr\|_{\mathcal{L}(\mathcal{X}_{\eta})}\leq \frac{C}{|n|}.
\end{equation*}
\end{lemma}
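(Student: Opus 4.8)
\textbf{Proof plan for Lemma \ref{lemma11}.}

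The plan is to deduce the estimate for large $|n|$ from the resolvent bounds already established, by observing that as $|n| \to \infty$ the point $in\omega$ moves up the imaginary axis into the region where $\boldsymbol{L}_{(\nu)}$ is well controlled. First I would split $\boldsymbol{L}_{(\nu)} = \boldsymbol{L}_{(\nu)}^{(0)} + \boldsymbol{L}^{(1)}$ and treat the two summands of $\mathcal{X}_{\eta} = \mathring{L}_{\eta}^{2} \oplus \mathring{L}^{\infty}$ separately. On the $0$-Fourier component in $\mathring{L}^{\infty}$, the operator $\boldsymbol{L}^{(1)}$ vanishes (Lemma \ref{lem: L1}), so one only needs $(in\omega - \boldsymbol{L}_{(\nu)}^{(0)})^{-1}$ there; since $in\omega$ has $\Re \lambda = 0$, Lemma \ref{lem:spectrumL0 expky} gives the bound $\sqrt{2}\,|in\omega|^{-1} = \sqrt{2}/(n\omega)$, which is exactly of the desired form $C/|n|$. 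On the $\mathring{L}_{\eta}^{2}$ component the point $\lambda = in\omega$ satisfies $|\lambda + \nu\alpha^{2}| \to \infty$ and $\arg(\lambda + \nu\alpha^{2}) \to \pi/2 < 2\pi/3$ as $|n|\to\infty$, so for $|n| > N$ with $N$ large enough we are in the regime $|\lambda + \nu\alpha^{2}| > M$, $|\arg(\lambda+\nu\alpha^{2})| \le 2\pi/3$ covered by Lemma \ref{lem: locationspect}, which yields $\|(in\omega - \boldsymbol{L}_{(\nu)})^{-1}\|_{\mathcal{L}(\mathring{L}_{\eta}^{2})} \le C/|in\omega| = C/(n\omega)$.

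The remaining point is to combine these two block estimates into a single bound on $\mathcal{X}_{\eta}$. Here one must be careful that the two subspaces are not $\boldsymbol{L}_{(\nu)}$-invariant: $\boldsymbol{L}^{(1)}$ maps $\mathring{H}_{\eta}^{2}$ into $\mathring{H}_{\eta}^{1} \subset \mathring{L}_{\eta}^{2}$ and, crucially, it does \emph{not} produce a $0$-Fourier component (the product of a function independent of $x$ with a mean-zero function in $x$ is again mean-zero, and $U_s\partial_x v$ has no zero mode). So $\boldsymbol{L}_{(\nu)}$ is block lower-triangular with respect to the splitting $\mathring{L}^{\infty} \to \mathring{L}_{\eta}^{2}$ in the sense that it preserves $\mathring{L}^{\infty}$ modulo the $\mathring{L}_{\eta}^{2}$ part; equivalently, the quotient action on $\mathring{L}^{\infty}$ is just $\boldsymbol{L}_{(\nu)}^{(0)}$. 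I would therefore solve $(in\omega - \boldsymbol{L}_{(\nu)}) u = f$ by first solving for $u_0 \in \mathring{L}^{\infty}$ using the $\mathring{L}^{\infty}$ bound, then substituting and solving for $\widetilde{u} \in \mathring{L}_{\eta}^{2}$ the equation $(in\omega - \boldsymbol{L}_{(\nu)}^{(0)} - \boldsymbol{L}^{(1)})\widetilde{u} = \widetilde{f} + \boldsymbol{L}^{(1)} u_0 = \widetilde f$, using that $\boldsymbol{L}^{(1)} u_0 = 0$. Both steps lose only a factor $1/|n|$, and the norms add, giving the claimed $C/|n|$.

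The main obstacle I anticipate is not the large-$|n|$ asymptotics themselves — those are immediate from the earlier lemmas — but rather making the block-triangular reduction fully rigorous, in particular verifying that $in\omega$ actually lies in the resolvent set of $\boldsymbol{L}_{(\nu)}$ for $|n|$ large (so that the inverse exists as a bounded operator on all of $\mathcal{X}_{\eta}$). This follows because $in\omega$ avoids the essential spectrum region $\Sigma_{U_+}$ of Lemma \ref{newlemma} for $\eta$ small (since $\Re(in\omega) = 0 \not< -\nu(\alpha^2-\eta^2) < 0$, and the parabolic and angular-sector conditions are violated for $|\Im\lambda| = n\omega$ large), and the isolated eigenvalues of $\boldsymbol{L}_{(\nu)}$ lie at finite distance from $0$, hence are avoided once $n\omega$ exceeds that distance; choosing $N$ larger than all these thresholds makes $in\omega$ a resolvent point, and the quantitative bound then follows from the two block estimates as above. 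A secondary technical point is to confirm the hypothesis $\eta < \alpha/2$ needed for $\Pi$ and for the $\mathring{L}_{\eta}^2$ resolvent estimate, which is already part of the standing "$\eta$ small enough" convention.
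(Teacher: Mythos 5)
Your proposal is correct and takes exactly the route the paper intends: the paper presents Lemma~\ref{lemma11} merely as ``a corollary of the previous Lemmas'' without a written proof, and you have correctly filled in the details by splitting $\mathcal{X}_\eta$ into $\mathring{L}^\infty$ (handled by the $\sqrt{2}|\lambda|^{-1}$ bound of Lemma~\ref{lemma3}, since $\boldsymbol{L}^{(1)}$ vanishes there) and $\mathring{L}_\eta^2$ (handled by the $C/|\lambda|$ bound of Lemma~\ref{lem: locationspect}, valid since $\arg(in\omega+\nu\alpha^2)\to\pm\pi/2<2\pi/3$ and $|in\omega+\nu\alpha^2|\to\infty$), then observing that the two blocks are in fact uncoupled because $\boldsymbol{L}^{(1)}$ kills the $0$-mode and does not create one. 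The only small stylistic remark is that the operator is actually block-diagonal, not merely block lower-triangular, which makes the final recombination even simpler than your cautious phrasing suggests.
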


\begin{figure}[th]
\begin{center}
\includegraphics[width=7cm]{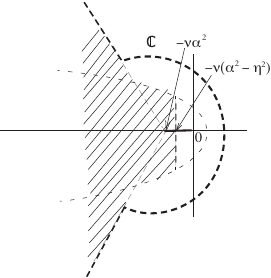}
\end{center}
\caption{a) Location of the spectrum of $\boldsymbol{L}_{(\nu)}$ inside the region
bounded by dashed line b) Location of the essential spectrum in the hatched
region  $\Sigma_{U_+}$ and on half left real line.}
\label{spectrumb}
\end{figure}


\subsection{Semi-groups}


For the study of the linear and nonlinear stabilities of the basic flow $U,$
we need to understand the behavior of the semi-group $e^{\boldsymbol{L}%
_{(\nu )}t}$ for $t>0.$
We start with estimates on the Stokes flow.

\begin{lemma} Let $\eta > 0$ be small enough.
Assume $0<\delta <\pi /6.$ The linear operator $\boldsymbol{L}_{(\nu
)}^{(0)} $ is the infinitesimal generator of a bounded semi-group $e^{%
\boldsymbol{L}_{(\nu )}^{(0)}t}$ in $\mathcal{L}(\mathcal{X}_{\eta})$ which is 
holomorphic for $t\in \mathbb{C}$ in a sector of angle {$\pi /3-2\delta $}
centered on $\mathbb{R}^{+}.$ Moreover there exists $C>0$ such that for any $t \ge 0$,
\begin{eqnarray}
\|e^{\boldsymbol{L}_{(\nu )}^{(0)}t}\|_{\mathcal{L}(\mathring{L}_{\eta}^{2})}
&\leq &Ce^{-\nu \alpha ^{2}t},  \notag \\
\|e^{\boldsymbol{L}_{(\nu )}^{(0)}t}\|_{\mathcal{L}(\mathring{L}^{\infty })}
&\leq &C.  \label{est semigr L(0)}
\end{eqnarray}
\end{lemma}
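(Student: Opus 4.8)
The plan is to recognise $\boldsymbol{L}_{(\nu)}^{(0)}=\Pi\nu\Delta$ as a sectorial operator on each summand of $\mathcal{X}_\eta=\mathring{L}^2_\eta\oplus\mathring{L}^\infty$ and then apply the classical analytic–semigroup theory (cf.\ \cite{Kato}), representing the semigroup by the Dunford integral $e^{\boldsymbol{L}_{(\nu)}^{(0)}t}=\frac{1}{2\pi i}\int_\Gamma e^{\lambda t}(\boldsymbol{L}_{(\nu)}^{(0)}-\lambda\mathbb{I})^{-1}\,d\lambda$ over the boundary $\Gamma$ of a sector enclosing the spectrum. One first checks that $\boldsymbol{L}_{(\nu)}^{(0)}$ is block diagonal for the splitting $\mathcal{X}_\eta=\mathring{L}^2_\eta\oplus\mathring{L}^\infty$: $\Delta$ leaves invariant the $x$–mean-free part and the $x$–independent part, and by Lemma \ref{lem:decomp Hk} so does $\Pi$; hence it suffices to treat the two blocks separately and afterwards take the intersection of the resulting sectors of holomorphy.

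On $\mathring{L}^\infty$, Lemma \ref{lemma3} gives $\|(\boldsymbol{L}_{(\nu)}^{(0)}-\lambda\mathbb{I})^{-1}\|_{\mathcal{L}(\mathring{L}^\infty)}\le\sqrt2\,|\lambda|^{-1}$ for $\Re\lambda\ge0$ and $\le(|\lambda|\cos(\theta/2))^{-1}$ for $\lambda=|\lambda|e^{i\theta}$ with $\Re\lambda<0$; since $\cos(\theta/2)\ge\sin\delta$ on $\{|\arg\lambda|\le\pi-2\delta\}$, the resolvent is bounded by $C_\delta|\lambda|^{-1}$ on that whole sector. As the spectrum equals $(-\infty,0]$, this is precisely the sectorial estimate with vertex at the origin, so $\boldsymbol{L}_{(\nu)}^{(0)}$ generates a bounded analytic semigroup on $\mathring{L}^\infty$, holomorphic and uniformly bounded on $\{|\arg t|<\pi/2-2\delta\}$; in particular $\|e^{\boldsymbol{L}_{(\nu)}^{(0)}t}\|_{\mathcal{L}(\mathring{L}^\infty)}\le C$. (Here $\mathring{W}^{2,\infty}$ is not dense in $\mathring{L}^\infty$, so the semigroup is analytic but not $C_0$ at $t=0$, which does no harm; and since $\pi/2-2\delta>\pi/6-\delta$, this summand does not constrain the final angle.)

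On $\mathring{L}^2_\eta$, set $\zeta=\lambda+\nu\alpha^2$ and work with $B:=\boldsymbol{L}_{(\nu)}^{(0)}+\nu\alpha^2$, whose spectrum Lemma \ref{lemma3} confines to $\{2\pi/3\le|\arg\zeta|\le\pi\}$ and which satisfies \eqref{estimresol1zero}, i.e.\ $\|(B-\zeta\mathbb{I})^{-1}\|_{\mathcal{L}(\mathring{L}^2_\eta)}\le C|\zeta|^{-1}$ on $\{|\arg\zeta|\le2\pi/3-\delta\}$. The half-opening $2\pi/3-\delta$ of this sector exceeds $\pi/2$ by $\pi/6-\delta$, which is exactly what yields holomorphy of the semigroup in a sector of angle $\pi/3-2\delta$ about $\mathbb{R}^+$. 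Granting this estimate down to the vertex $\zeta=0$, the Dunford integral over $\Gamma=\partial\{|\arg\zeta|=\phi\}$ for a fixed $\phi\in(\pi/2,\,2\pi/3-\delta)$, with the small circular arc of $\Gamma$ taken of radius $|t|^{-1}$, gives by the usual estimates a bounded analytic semigroup $e^{Bt}$; the vertex $\zeta=0$ lying on $\partial\{|\arg\zeta|<\phi\}$ rather than inside it, $\|e^{Bt}\|_{\mathcal{L}(\mathring{L}^2_\eta)}\le C$, and therefore $\|e^{\boldsymbol{L}_{(\nu)}^{(0)}t}\|_{\mathcal{L}(\mathring{L}^2_\eta)}=e^{-\nu\alpha^2\Re t}\|e^{Bt}\|\le Ce^{-\nu\alpha^2 t}$ for $t>0$, together with the stated holomorphy.

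The point that needs genuine work — the main obstacle — is that \eqref{estimresol1zero} is only asserted for $|\zeta|=|\lambda+\nu\alpha^2|\ge\varepsilon_0$, while the part of $\Gamma$ of radius $<\varepsilon_0$, i.e.\ the neighbourhood of the vertex $\lambda=-\nu\alpha^2$, is exactly what controls the semigroup for large $t$; Lemma \ref{lemma3} says nothing there. I would close this by decomposing $\mathring{L}^2_\eta$ into its Fourier modes $n$ ($|n|\ge1$), on which $\boldsymbol{L}_{(\nu)}^{(0)}$ reduces in the stream-function variable to the explicit fourth-order Stokes ODE $\nu(D^2-\alpha^2n^2)^2\psi=\lambda(D^2-\alpha^2n^2)\psi$, $\psi(0)=D\psi(0)=0$, with exponential decay: for $|n|\ge2$ the shifted mode operator is boundedly invertible with norm uniform in $n$, so a Neumann series keeps its resolvent $O(|\zeta|^{-1})$ near $\zeta=0$ uniformly; for $|n|=1$ one writes the resolvent by hand — solving the ODE, matching the two admissible decaying exponentials $e^{-\alpha y}$ and $e^{-\sqrt{\alpha^2+\lambda/\nu}\,y}$, and tracking the $C^0_\eta$–weight — to control it on the relevant sector. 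Summing in $\ell^2$ over $n$ then upgrades \eqref{estimresol1zero} to a bound valid on all of $\{|\arg(\lambda+\nu\alpha^2)|\le2\pi/3-\delta\}$, after which the abstract argument applies verbatim. I expect the $|n|=1$ mode to be the delicate case: it carries the part of the essential spectrum lying nearest the vertex $\zeta=0$, and the careful analysis there is what pins down the admissible exponential rate in the final estimate.
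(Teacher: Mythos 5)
Your route is the same as the paper's: the paper's entire proof is a citation of the resolvent estimates of Lemma \ref{lemma3} together with the classical theory of holomorphic semi-groups in \cite{Kato}, which is exactly your Dunford-integral argument on each summand of $\mathcal{X}_\eta$, and your derivation of the sector angle $\pi/3-2\delta$ from the half-opening $2\pi/3-\delta$ of the resolvent sector is correct, as is the treatment of the $\mathring{L}^\infty$ block.

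The one substantive point is the vertex issue you flag, and there your diagnosis is right but your proposed repair overreaches. The restriction $|\lambda+\nu\alpha^2|\geq\varepsilon_0$ in (\ref{estimresol1zero}) is not an artifact that a sharper mode-by-mode analysis can remove: for the modes $n=\pm 1$ one has $s_1^2=(\lambda+\nu\alpha^2)/\nu$, so the slowly decaying exponential $e^{-s_1y}$ in the explicit resolvent has $\Re s_1<\eta$ as soon as $|\lambda+\nu\alpha^2|\lesssim\nu\eta^2$, hence leaves $C^0_\eta$; the essential spectrum of $\boldsymbol{L}^{(0)}_{(\nu)}$ in $\mathring{L}^2_\eta$ genuinely reaches $\Re\lambda=-\nu(\alpha^2-\eta^2)$, to the \emph{right} of the vertex $-\nu\alpha^2$ (this is consistent with the region $\Sigma_{U_+}$ of Lemma \ref{newlemma}, whose right boundary is $\Re\lambda=-\nu(\alpha^2-\eta^2)$). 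So the resolvent estimate cannot be "upgraded to all of $\{|\arg(\lambda+\nu\alpha^2)|\le 2\pi/3-\delta\}$" for fixed $\eta>0$. The correct way to close the gap is to keep the contour outside a disc of radius $O(\nu\eta^2)$ about $-\nu\alpha^2$; the Dunford integral then yields $\|e^{\boldsymbol{L}^{(0)}_{(\nu)}t}\|_{\mathcal{L}(\mathring{L}^2_\eta)}\le Ce^{-\nu(\alpha^2-\eta^2)t}$ rather than the rate $\nu\alpha^2$ literally written in (\ref{est semigr L(0)}). This is the rate the statement should carry (and a spectral-inclusion argument shows it cannot be improved), but it is all that is used downstream, where only some $\gamma$ with $0<\gamma<\nu\alpha^2/2$ is needed. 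With that correction your argument is complete; for $|n|\ge 2$ your observation that $\Re s_n$ is bounded below uniformly is exactly why no further difficulty arises there.
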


\begin{proof}
This Lemma follows from the estimate (\ref{estimresol1}) and from classical results
on holomorphic semi-groups (see \cite{Kato}).
\end{proof}

Now the operator $\boldsymbol{L}_{(\nu )}$ is a perturbation of $\boldsymbol{%
L}_{(\nu )}^{(0)}$ with the same essential spectrum as $\boldsymbol{%
L}_{(\nu )}^{(0)}+\boldsymbol{L}^{(0,1)}$, and with possible eigenvalues in
a region bounded by the dashed curve in Figure \ref{spectrumb}. 
By assumptions (A1), (A2) and (A3), the spectrum of eigenvalues stays on the left half complex
plane for $\alpha$ fixed and $\nu < \nu_0(\alpha)$. 
Moreover, for $\nu = \nu_{0}$,  we have
two simple isolated eigenvalues $\pm i\omega _{0}$ with
no other eigenvalue of $\boldsymbol{L}_{(\nu _{0})}=\boldsymbol{L}^{(0)}+%
\boldsymbol{L}^{(1)}$ as well on the imaginary axis as on the right side of
the complex plane. \ It results from the bound of the spectrum found in
previous section, and from the fact that eigenvalues are isolated, that
there is a vertical line in the left complex plane bounding all other
eigenvalues, at a finite distance from the imaginary axis, hence staying on
the left side of the complex plane for $\nu $ close to $\nu _{0}.$ 

As soon
as we are able to justify the definition and obtain a good estimate of the
semi-group generated by $\boldsymbol{L}_{(\nu )},$ it will result that the
linear stability of the basic solution is determined by the sign of the real part of the
eigenvalues perturbing the above two eigenvalues, meaning in this case that 
\emph{spectral stability implies linear stability}.

\begin{remark}
If we introduced the decay in $e^{-\eta y}$ in the $0$- Fourier mode of the
function spaces, we could not obtain an estimate such as (\ref{est semigr
L(0)}), since we would have an exponential growing in $e^{\nu \eta^{2}t}$ for the bound. 
\end{remark}

For the study of the nonlinear stability of the basic flow $U$, we need to
estimate $e^{\boldsymbol{L}_{(\nu )}t}$ in $\mathcal{L(Y}_{\eta},\mathcal{Z}%
_{\eta})$ so that we can apply the semi-group to $B(u,u)\in \mathcal{Y}_{\eta}.$ This
is detailed  in the next Lemmas.

Since the part of the operator $\boldsymbol{L}_{(\nu )}$ acting in $%
\mathring{L}_{\eta}^{2}$ is uncoupled from the $\mathring{L}^{\infty }$ part,
we shall split the study in the two corresponding parts.


\subsubsection{Study of $e^{\boldsymbol{L}_{(\protect\nu )}t}$ in $\mathring{L}%
^{\infty }$ and in $\mathring{L}_{\eta}^{\infty }$}


\begin{lemma} \label{lem:estimsemigroup poids}
Assume that the eigenvalues $\lambda$ of $\boldsymbol{L}_{(\nu )}$ are such that $%
\Re \lambda <\gamma <0$.  Then, for any $v_{0}\in \mathring{W}_{\eta}^{2,\infty },$
 where%
\begin{equation*}
\|v_{0}\|_{\mathring{W}_{\eta}^{2,\infty }}=\|v_{0}\|_{\mathring{L}_{\eta}^{\infty }}
+\|Dv_{0}\|_{\mathring{L}_{\eta}^{\infty }}+\|D^2 v_{0}\|_{\mathring{L}_{\eta}^{\infty }}, \quad 
\|v_{0}e^{\eta\cdot }\|_{\mathring{L}^{\infty }}=\|v_{0}\|_{\mathring{L}_{\eta}^{\infty }},
\end{equation*}%
we have the estimate%
\begin{equation}
\Bigl\|e^{\boldsymbol{L}_{(\nu )}t}v_{0}\Bigr\|_{\mathring{W}^{2,\infty }}\leq \frac{M%
}{1+t}\|v_{0}\|_{\mathring{W}_{\eta}^{2,\infty }},\text{ }%
t\in \lbrack 0,\infty ).  \label{estim sg L(0) W2}
\end{equation}%
Moreover, for $f_{0}\in \mathring{W}_{\eta}^{1,\infty }$, we have%
\begin{equation}
\Bigl\|e^{\boldsymbol{L}_{(\nu )}t}f_{0}\Bigr\|_{\mathring{W}^{2,\infty }}\leq \frac{M%
}{\sqrt{t} ( 1 + \sqrt{t})}\|f_{0}\|_{\mathring{W}_{\eta}^{1,\infty }},\text{ 
}t>0.  \label{estim sg L(0) W1}
\end{equation}
\end{lemma}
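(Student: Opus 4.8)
The plan is to exploit the fact that on $\mathring{L}^\infty$ the operator $\boldsymbol{L}_{(\nu)}$ is, by Lemma \ref{newlemma}, a relatively compact perturbation of $\boldsymbol{L}^{(0)}_{(\nu)}+\boldsymbol{L}^{(1,0)}$ whose essential spectrum is exactly $(-\infty,0]$, together with finitely many isolated eigenvalues all having $\Re\lambda<\gamma<0$ by hypothesis; on the $0$-Fourier mode the operator is actually just $\nu\Pi\Delta$ acting on functions of $y$ alone, i.e. the one-dimensional heat semigroup with Dirichlet condition at $y=0$ (restricted to the solenoidal component, which for $x$-independent fields is just the $x$-component). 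So the heart of the matter is a sharp pointwise estimate for the Dirichlet heat kernel on $\mathbb{R}^+$ applied to data $v_0$ that decays like $e^{-\eta y}$: this is where the $1/(1+t)$ rate comes from. First I would write, for the $0$-mode, $e^{\boldsymbol{L}_{(\nu)}t}v_0 = e^{\nu t\partial_y^2}_{\mathrm{Dir}} v_0$ and use the explicit kernel $G_t(y,z)=\frac{1}{\sqrt{4\pi\nu t}}\bigl(e^{-(y-z)^2/4\nu t}-e^{-(y+z)^2/4\nu t}\bigr)$. For data $\|v_0 e^{\eta\cdot}\|_{L^\infty}\le 1$, one bounds $|e^{\nu t\partial_y^2}v_0(y)|\lesssim \int_0^\infty |G_t(y,z)| e^{-\eta z}\,dz$; the antisymmetric (odd reflection) structure gives an extra factor $\min(1,z/\sqrt{\nu t})$ near $z=0$, and combining this with the Gaussian weight and the exponential decay $e^{-\eta z}$ yields, after splitting $t\le 1$ and $t\ge 1$, the bound $\lesssim 1/(1+t)$ — the point being that for large $t$ the kernel mass that survives the odd reflection is $O(1/\sqrt{\nu t})$ from the boundary layer of width $\sqrt{\nu t}$ times the $O(1/\sqrt{\nu t})$ from spreading against an $e^{-\eta z}$-integrable profile, i.e. $t^{-1}$. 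Differentiating the kernel once or twice in $y$ costs the extra factors of $t^{-1/2}$ in \eqref{estim sg L(0) W1} (and is harmless for large $t$), which gives the $M/(\sqrt{t}(1+\sqrt t))$ rate for $\mathring{W}^{1,\infty}$ data.

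Next I would promote this $0$-mode estimate to the full operator on $\mathring{L}^\infty$ (and on $\mathring{L}^\infty_\eta$) by the standard spectral-decomposition argument: since the isolated eigenvalues lie strictly to the left of $\Re\lambda=\gamma<0$ and the essential spectrum is $(-\infty,0]$, the spectral projection $P_{\mathrm{ess}}$ onto the essential part satisfies $e^{\boldsymbol{L}_{(\nu)}t}=e^{\boldsymbol{L}_{(\nu)}t}P_{\mathrm{ess}}+e^{\boldsymbol{L}_{(\nu)}t}(I-P_{\mathrm{ess}})$, the second term decays like $e^{\gamma t}$ in operator norm on a finite-dimensional range, and on the range of $P_{\mathrm{ess}}$ the operator is, modulo the relatively compact $\boldsymbol{L}^{(1,c)}$, the translation-type operator $\boldsymbol{L}^{(0)}_{(\nu)}+\boldsymbol{L}^{(1,0)}$ whose $0$-mode semigroup is the heat semigroup just estimated. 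I would handle $\boldsymbol{L}^{(1,0)}$ — which on the $0$-mode contributes a constant-coefficient transport $-U_+\partial_x$ that annihilates $x$-independent fields — by noting it does not affect the $0$-mode at all, and on the nonzero modes it only produces the already-exponential decay $e^{-\nu\alpha^2 t}$ from \eqref{est semigr L(0)}. The relatively compact perturbation $\boldsymbol{L}^{(1,c)}$ is then absorbed by a Duhamel/Neumann-series argument against the algebraically decaying semigroup: writing $e^{\boldsymbol{L}_{(\nu)}t}=e^{(\boldsymbol{L}^{(0)}_{(\nu)}+\boldsymbol{L}^{(1,0)})t}+\int_0^t e^{(\boldsymbol{L}^{(0)}_{(\nu)}+\boldsymbol{L}^{(1,0)})(t-s)}\boldsymbol{L}^{(1,c)} e^{\boldsymbol{L}_{(\nu)}s}\,ds$, using that $\boldsymbol{L}^{(1,c)}$ maps $\mathring{W}^{2,\infty}\to\mathring{W}^{1,\infty}_\eta$ (it carries the exponential decay of $U-U_+$) together with the smoothing estimate \eqref{estim sg L(0) W1}, and closing the estimate by a Gronwall/bootstrap in the weighted space; the product of two $t^{-1}$-type (resp. $t^{-1/2}$-type) decays and one exponential weight keeps the convolution integrable and preserves the $1/(1+t)$ (resp. $1/(\sqrt t(1+\sqrt t))$) rate.

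The main obstacle is the large-time algebraic rate itself: unlike the usual exponentially-decaying semigroups of bifurcation theory, here there is no spectral gap — the essential spectrum touches $0$ — so one cannot simply read off decay from a resolvent bound via an inverse Laplace transform along a contour to the left of the spectrum. The $t^{-1}$ rate is genuinely a property of the Dirichlet heat kernel acting on exponentially-localized data (the odd reflection kills the would-be $O(1)$ heat mass and leaves only an $O(t^{-1})$ remnant), and one must be careful that the perturbations $\boldsymbol{L}^{(1,0)}$ and $\boldsymbol{L}^{(1,c)}$ do not destroy this: the first is harmless because it vanishes on $x$-independent fields, and the second because it is relatively compact and, crucially, gains exponential decay in $y$, which is exactly the weighted smallness needed to run the Duhamel argument without losing the rate. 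A secondary technical point is tracking the weight $e^{\eta y}$ carefully through the heat kernel: one needs $\nu$ (or rather the interplay of $\eta$ with the Gaussian) handled so that $e^{\eta y}\int G_t(y,z)e^{-\eta z}dz$ stays bounded uniformly in $t$ and $y$, which forces the " $\eta$ small enough" hypothesis and is where the remark about $e^{\nu\eta^2 t}$ growth (avoided here precisely because no weight is put on the $0$-mode of the \emph{target} space) becomes relevant.
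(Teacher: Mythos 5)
Your first paragraph is a valid alternative to the paper's argument. The paper works instead through Dunford's formula $e^{\boldsymbol{L}^{(0)}_{(\nu)}t}f = \frac{1}{2i\pi}\int_\Gamma e^{\lambda t}(\lambda-\boldsymbol{L}^{(0)}_{(\nu)})^{-1}f\,d\lambda$ on the contour of Figure~\ref{fig:gamma}, using a uniform resolvent bound $\|(\lambda-\boldsymbol{L}^{(0)}_{(\nu)})^{-1}\|_{\mathcal{L}(L^\infty_\eta,W^{2,\infty})}\le c_0$ for bounded $\lambda$ (this is where the weight $e^{-\eta y}$ on the data enters) and an $O(|\lambda|^{-1})$ bound on the outer part; the $1/(1+t)$ rate then comes out of $\int_0^\gamma e^{-\tau t}\,d\tau$ on the horizontal leg $\Gamma_1$. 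Both routes encode the same phenomenon — the odd reflection kills the would-be $O(1)$ heat mass — and your direct kernel computation is arguably the more elementary of the two.

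However, your second and third paragraphs fight a battle that does not exist, and in doing so make spectral claims that are false for the subspace at hand. The lemma concerns only the $0$-Fourier mode, i.e.~the action of $e^{\boldsymbol{L}_{(\nu)}t}$ on $\mathring{L}^\infty$, and on that subspace the operator $\boldsymbol{L}^{(1)}$ vanishes \emph{identically}, since $(U\cdot\nabla)v+(v\cdot\nabla)U = U_s\partial_x v+(v^y DU_s,0)$ is zero for $x$-independent $v$ with $v^y=0$; in particular both $\boldsymbol{L}^{(1,0)}$ and $\boldsymbol{L}^{(1,c)}$ vanish there. The paper records this as a one-line remark immediately after the lemma: $\boldsymbol{L}_{(\nu)}=\boldsymbol{L}^{(0)}_{(\nu)}$ on $\mathring{L}^\infty$. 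So the operator whose semigroup you need is exactly $\nu D^2$ with Dirichlet condition, with no $\boldsymbol{L}^{(1,c)}$ to absorb and no Duhamel/Gronwall bootstrap to run. The spectral-projection picture is also wrong for this subspace: by Lemma~\ref{lemma3}, the spectrum of $\boldsymbol{L}^{(0)}_{(\nu)}$ on $\mathring{L}^\infty$ is purely essential and equals $(-\infty,0]$, with no isolated eigenvalues at all, so there is no finite-rank part decaying like $e^{\gamma t}$ to split off; the eigenvalue hypothesis in the lemma statement is inherited from the global setup and plays no active role here. Once the spectral-decomposition-and-Duhamel scaffolding is deleted, your heat-kernel estimate stands as a complete and correct alternative proof; as written, the second paragraph shows you have not seen the structural observation that makes all of that scaffolding unnecessary.
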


Note that we have $\boldsymbol{L}_{(\nu )}^{(0)}+\boldsymbol{L}^{(1)}=\boldsymbol{L}%
_{(\nu )}^{(0)}$ on $\mathring{L}^{\infty }$ (since $\boldsymbol{L}^{(1)}$
cancels).
Note also that we can choose $\gamma > 0$ with $0 < \gamma < \nu \alpha^2 / 2$ such that
all the eigenvalues of the linearized operator $\boldsymbol{L}_{(\nu )}$ acting in $\mathring{L}%
_{\eta}^{2}$ are such that $\Re \lambda <-2\gamma$.

The proof, which relies on the study of Dunford's formula on the contour $\Gamma $
 described in Figure \ref{fig:gamma}, is detailed in Appendix \ref{app: lem
semigr poids}.

We observe on (\ref{estim sg L(0) W2}) that despite that the spectrum
contains the full negative real axis, we have a decay at infinity in $t$
as soon as the semi-group operates on functions decaying to 0 exponentially
as $y$ goes to $\infty $, which is better than (\ref{est semigr L(0)}) in $%
\mathcal{L}(\mathring{L}^{\infty }).$ However we loose the decay in $y$ for $%
e^{\boldsymbol{L}_{(\nu )}t}v_{0}.$ 

\begin{figure}[th]
\begin{center}
\includegraphics[width=5cm]{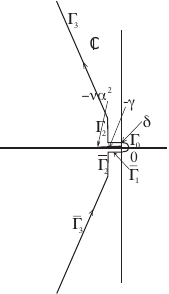}
\end{center}
\caption{Contour $\Gamma $ for the estimate of the semi-group used in Lemma \ref{lem:estimsemigroup poids}.}
\label{fig:gamma}
\end{figure}


\subsubsection{Study of the semi-group $e^{\boldsymbol{L}_{(\protect\nu )}t}$
in $\mathring{L}_{\eta}^{2}$}


The following Lemma, which can be found in \cite{Io72} chap VII, uses the
holomorphy of the semi-group $e^{\boldsymbol{L}_{(\nu )}t}:$

\begin{lemma}
\label{lem:basic Lem}If there exists $M>0$ such that the following
estimate holds%
\begin{equation}
\Bigl\|(\mathbb{I}-\varepsilon \boldsymbol{L}_{(\nu )}^{(0)})^{-1} \Bigr\|_{\mathcal{L}(%
\mathring{H}_{\eta}^{1},\mathring{H}_{\eta}^{2})}\leq M\varepsilon ^{-1/2},\text{ }%
\varepsilon \in (0,\delta _{0})\text{, }\delta _{0}>0,  \label{estimperturb}
\end{equation}%
then there exists $C>0$ such that%
\begin{equation}
\Bigl\|e^{\boldsymbol{L}_{(\nu )}^{(0)}t} \Bigr\|_{\mathcal{L}(\mathring{H}_{\eta}^{1},%
\mathring{H}_{\eta}^{2})}\leq \frac{C}{t^{1/2}},\text{ }t\in (0,1].
\label{estim semigroup hilb}
\end{equation}
\end{lemma}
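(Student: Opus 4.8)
The plan is to follow the standard argument (as in \cite{Io72}, chap.~VII) relating a resolvent-type bound to a smoothing estimate for a holomorphic semigroup, adapted to the pair of spaces $(\mathring{H}_{\eta}^{1},\mathring{H}_{\eta}^{2})$. First I would record that by the holomorphic semigroup lemma proved above, $\boldsymbol{L}_{(\nu)}^{(0)}$ generates a bounded analytic semigroup on $\mathcal{X}_{\eta}$, and by Lemma~\ref{lem:equiv norms} the graph norm of $\boldsymbol{L}_{(\nu)}^{(0)}$ on $\mathring{L}_{\eta}^{2}$ is equivalent to the $\mathring{H}_{\eta}^{2}$ norm, and (interpolating, or directly) the $\mathring{H}_{\eta}^{1}$ norm is controlled by the square root of the graph norm, i.e. $\mathring{H}_{\eta}^{1}$ plays the role of the domain of $(-\boldsymbol{L}_{(\nu)}^{(0)})^{1/2}$. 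Thus, writing $A = -\boldsymbol{L}_{(\nu)}^{(0)}$, the hypothesis \eqref{estimperturb} says precisely that $\|(\mathbb{I}+\varepsilon A)^{-1}\|_{\mathcal{L}(D(A^{1/2}),D(A))}\leq M\varepsilon^{-1/2}$, and the claim \eqref{estim semigroup hilb} is $\|e^{-tA}\|_{\mathcal{L}(D(A^{1/2}),D(A))}\leq C t^{-1/2}$ for $t\in(0,1]$.

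The key steps are as follows. For $t>0$ and a parameter $\varepsilon>0$ to be chosen, decompose
\begin{equation*}
e^{-tA} = (\mathbb{I}+\varepsilon A)^{-1}e^{-tA} + \varepsilon A(\mathbb{I}+\varepsilon A)^{-1}e^{-tA}.
\end{equation*}
The first term maps $\mathring{H}_{\eta}^{1}\to\mathring{H}_{\eta}^{2}$ with norm at most $M\varepsilon^{-1/2}\|e^{-tA}\|_{\mathcal{L}(\mathring{H}_{\eta}^{1})}$, and $e^{-tA}$ is bounded on $\mathring{H}_{\eta}^{1}=D(A^{1/2})$ uniformly for $t\in(0,1]$ because analytic semigroups commute with fractional powers and are bounded on the interpolation space. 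For the second term, write $\varepsilon A(\mathbb{I}+\varepsilon A)^{-1} = \mathbb{I} - (\mathbb{I}+\varepsilon A)^{-1}$, so it suffices to bound $A e^{-tA}$ (equivalently $e^{-tA}$) from $\mathring{H}_{\eta}^{1}$ into $\mathring{H}_{\eta}^{2}$; using the semigroup property $e^{-tA}=e^{-tA/2}e^{-tA/2}$ together with the analytic smoothing estimates $\|A^{1/2}e^{-sA}\|_{\mathcal{L}(\mathring{L}_{\eta}^{2})}\le C s^{-1/2}$ and $\|Ae^{-sA}\|_{\mathcal{L}(\mathring{L}_{\eta}^{2})}\le C s^{-1}$ furnished by Lemma~\ref{lemma3} and the semigroup lemma, one gains $\|e^{-tA}\|_{\mathcal{L}(\mathring{H}_{\eta}^{1},\mathring{H}_{\eta}^{2})}\le C t^{-1/2}$ directly. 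Combining the two pieces and optimizing $\varepsilon\sim t$ gives a bound of order $t^{-1/2}$ on $(0,1]$, which is the assertion.

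The step I expect to be the main obstacle is making the identification of $\mathring{H}_{\eta}^{1}$ with (an equivalent of) $D(A^{1/2})$ precise in these nonstandard weighted spaces, since the $0$-Fourier mode lives in $\mathring{W}^{k,\infty}$ with no $y$-decay while the nonzero modes carry the $e^{\eta y}$ weight, so the usual Hilbert-space interpolation and fractional-power machinery does not apply off the shelf on all of $\mathcal{X}_{\eta}$. The way around this is the remark made just before the lemma: the $\mathring{L}_{\eta}^{2}$ part of $\boldsymbol{L}_{(\nu)}^{(0)}$ is \emph{uncoupled} from the $\mathring{L}^{\infty}$ part, so the estimate \eqref{estim semigroup hilb} is genuinely a statement about the Hilbertian component $\mathring{L}_{\eta}^{2}$ alone, where $A$ is sectorial with resolvent bound \eqref{estimresol1zero}, Lemma~\ref{lem:equiv norms} gives $D(A)\simeq\mathring{H}_{\eta}^{2}$ with equivalent norms, and standard Kato theory then yields $D(A^{1/2})\simeq[\mathring{L}_{\eta}^{2},\mathring{H}_{\eta}^{2}]_{1/2}$; one checks this interpolation space coincides with $\mathring{H}_{\eta}^{1}$ using the explicit Fourier-mode norms defining these spaces. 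Once that identification is in hand, the abstract argument above closes the proof, and the remaining computations are the routine analytic-semigroup estimates.
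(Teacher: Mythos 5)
Your plan has the right general flavor (couple the resolvent hypothesis to the holomorphy of $e^{\boldsymbol{L}_{(\nu)}^{(0)}t}$), but as written it contains two genuine gaps, and the proposed workaround for the second of them does not apply in these spaces.

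First, the handling of the ``second term'' is circular. You write $e^{-tA}=(\mathbb{I}+\varepsilon A)^{-1}e^{-tA}+\varepsilon A(\mathbb{I}+\varepsilon A)^{-1}e^{-tA}$ and then rewrite $\varepsilon A(\mathbb{I}+\varepsilon A)^{-1}=\mathbb{I}-(\mathbb{I}+\varepsilon A)^{-1}$, which shows the second term is precisely $e^{-tA}$ minus the first term; you then say it ``suffices to bound $e^{-tA}$ from $\mathring{H}_{\eta}^{1}$ into $\mathring{H}_{\eta}^{2}$,'' which is the assertion itself. Second, the fallback of identifying $\mathring{H}_{\eta}^{1}$ with $D(A^{1/2})$ via interpolation is not available here: $\mathring{L}_{\eta}^{2}$ is \emph{not} a Hilbert space (its norm is an $\ell^2$ sum over Fourier modes of $C_{\eta}^{0}$ sup-norms in $y$), so the Hilbertian identity $D(A^{1/2})\simeq[\mathring{L}_{\eta}^{2},D(A)]_{1/2}$ and the ``standard Kato theory'' you invoke do not apply; in a general Banach space that identity can fail. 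That is exactly why the lemma is stated with the concrete resolvent estimate \eqref{estimperturb} as hypothesis (verifiable by the direct Fourier computation of Appendix~\ref{proof6}) rather than with an identification of $\mathring{H}_{\eta}^{1}$ as a fractional domain.

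The intended short argument (from \cite{Io72}) avoids both issues by putting the resolvent on the \emph{initial datum} and never bounding $e^{\boldsymbol{L}_{(\nu)}^{(0)}t}$ on $\mathring{H}_{\eta}^{1}$. Write $L=\boldsymbol{L}_{(\nu)}^{(0)}$. For $u\in\mathring{H}_{\eta}^{1}$ and $\varepsilon\in(0,\delta_0)$, set $v=(\mathbb{I}-\varepsilon L)^{-1}u\in\mathring{H}_{\eta}^{2}$, so that $u=v-\varepsilon Lv$ and
\begin{equation*}
e^{Lt}u = e^{Lt}v - \varepsilon\, L e^{Lt}v.
\end{equation*}
By Lemma \ref{lem:equiv norms} and the fact that $e^{Lt}$ commutes with $L$ and is bounded on $\mathring{L}_{\eta}^{2}$, $\|e^{Lt}v\|_{\mathring{H}_{\eta}^{2}}\le C\|Le^{Lt}v\|_{\mathring{L}_{\eta}^{2}}=C\|e^{Lt}Lv\|_{\mathring{L}_{\eta}^{2}}\le C'\|v\|_{\mathring{H}_{\eta}^{2}}$. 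For the second term, by the analytic smoothing estimate $\|Le^{Lt}\|_{\mathcal{L}(\mathring{L}_{\eta}^{2})}\le C/t$ (a consequence of \eqref{estimresol1zero}) and Lemma~\ref{lem:equiv norms},
\begin{equation*}
\|Le^{Lt}v\|_{\mathring{H}_{\eta}^{2}}\le C\|L^{2}e^{Lt}v\|_{\mathring{L}_{\eta}^{2}}=C\|Le^{Lt}(Lv)\|_{\mathring{L}_{\eta}^{2}}\le \frac{C'}{t}\|Lv\|_{\mathring{L}_{\eta}^{2}}\le \frac{C''}{t}\|v\|_{\mathring{H}_{\eta}^{2}}.
\end{equation*}
Combining and using \eqref{estimperturb},
\begin{equation*}
\|e^{Lt}u\|_{\mathring{H}_{\eta}^{2}}\le C\Bigl(1+\frac{\varepsilon}{t}\Bigr)\|v\|_{\mathring{H}_{\eta}^{2}}\le CM\Bigl(\varepsilon^{-1/2}+\frac{\varepsilon^{1/2}}{t}\Bigr)\|u\|_{\mathring{H}_{\eta}^{1}},
\end{equation*}
and choosing $\varepsilon=t$ for $t\le\delta_0$ (the range $\delta_0<t\le 1$ being handled trivially by boundedness) yields \eqref{estim semigroup hilb}. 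Note the hypothesis enters exactly once, at the last step, and only boundedness of $e^{Lt}$ on $\mathring{L}_{\eta}^{2}$ and $\mathring{H}_{\eta}^{2}$ is needed --- no fractional powers, no interpolation.
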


In the Appendix \ref{proof6}, we will prove the following Lemma.

\begin{lemma}
\label{lem perturb hilb}There exists $M>0$ such that the estimate (\ref%
{estimperturb})
holds  in $\mathcal{L}(\mathring{H}_{\eta}^{1};\mathring{H}_{\eta}^{2})$.
\end{lemma}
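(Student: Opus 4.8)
The plan is to turn (\ref{estimperturb}) into a resolvent estimate of ``$\varepsilon^{-1/2}$-gain'' type and then verify it mode by mode in the $x$-Fourier variable. By Lemma \ref{lem:equiv norms}, for $w\in\mathring H^2_\eta$ one has $\|w\|_{\mathring H^2_\eta}\simeq\|\boldsymbol{L}^{(0)}_{(\nu)}w\|_{\mathring L^2_\eta}$; since $w:=(\mathbb{I}-\varepsilon\boldsymbol{L}^{(0)}_{(\nu)})^{-1}f$ lies in $\mathring H^2_\eta$ and satisfies $\varepsilon\boldsymbol{L}^{(0)}_{(\nu)}w=w-f$, the estimate (\ref{estimperturb}) is equivalent to
\begin{equation*}
\|w-f\|_{\mathring L^2_\eta}\le C\sqrt{\varepsilon}\,\|f\|_{\mathring H^1_\eta},\qquad w=(\mathbb{I}-\varepsilon\boldsymbol{L}^{(0)}_{(\nu)})^{-1}f.
\end{equation*}
As $\boldsymbol{L}^{(0)}_{(\nu)}$ is diagonal in the Fourier mode $n$ and the three norms are $\ell^2$-sums over $|n|\ge1$ of mode-wise norms, it suffices to bound $\|w_n-f_n\|$ uniformly in $|n|\ge1$ and $\varepsilon\in(0,\delta_0)$. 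I would split the argument according to the size of $k:=n\alpha$ relative to the boundary-layer scale $(\varepsilon\nu)^{-1/2}$.

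For large modes, $|k|\gtrsim(\varepsilon\nu)^{-1/2}$, I would only use that on the $n$-th Fourier level the spectrum of $\boldsymbol{L}^{(0)}_{(\nu)}$ sits in $\{\Re\lambda\le-\nu k^2\}$, so that an inspection of the proof of Lemma \ref{lemma3} restricted to that mode gives $\|(\mathbb{I}-\varepsilon\boldsymbol{L}^{(0)}_{(\nu)})^{-1}\|\le C(1+\varepsilon\nu k^2)^{-1}\le C$, hence $\|w_n-f_n\|\le C'\|f_n\|_{\mathring L^2_\eta}$. But $\|f_n\|_{\mathring L^2_\eta}\le|n|^{-1}\|f_n\|_{\mathring H^1_\eta}\le C\sqrt{\varepsilon\nu}\,\|f_n\|_{\mathring H^1_\eta}$ in this range, which already yields $\|w_n-f_n\|\le C\sqrt{\varepsilon}\,\|f_n\|_{\mathring H^1_\eta}$. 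So the only real work is in the range $|k|\lesssim(\varepsilon\nu)^{-1/2}$.

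There I would use the explicit Stokes resolvent. Written through its stream function, the resolvent equation on the $n$-th mode becomes a one-dimensional problem of the form
\begin{equation*}
\psi^w_n-\varepsilon\nu(D^2-k^2)\psi^w_n=\psi^f_n-\varepsilon\nu\,(D^2\psi^w_n)(0)\,e^{-|k|y},\qquad \psi^w_n(0)=D\psi^w_n(0)=0,
\end{equation*}
with exponential decay, the rank-one term recording the pressure gradient produced by $\Pi$ and the scalar $(D^2\psi^w_n)(0)$ being fixed self-consistently. Solving by variation of constants with the two fundamental scales $1/|k|$ (modes $e^{\pm|k|y}$) and $\beta^{-1}$, $\beta:=\sqrt{k^2+(\varepsilon\nu)^{-1}}$ (boundary-layer modes $e^{\pm\beta y}$), one gets $\psi^w_n$ as a Dirichlet--Green's-function term applied to $\psi^f_n$ plus a multiple of $\varepsilon\nu(e^{-\beta y}-e^{-|k|y})$ which restores $D\psi^w_n(0)=0$. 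The first term is estimated directly from the kernel; the coefficient of the second equals $(\beta+|k|)$ times $(\varepsilon\nu)^{-1}\!\int_0^\infty e^{-\beta y'}\psi^f_n(y')\,dy'$, and here I would use that $f$ vanishes at the wall, so that $\psi^f_n(y')=O(y'^2)$ near $y=0$ and this integral is $O\bigl(\beta^{-3}|D^2\psi^f_n(0)|\bigr)$; with $\varepsilon\nu\beta^2=1+\varepsilon\nu k^2$ this bounds the coefficient by $C\,\varepsilon\nu\,|D^2\psi^f_n(0)|/(1+\varepsilon\nu k^2)$. Passing back to velocities costs three $y$-derivatives, i.e. a factor $\beta^3$, and the elementary identity $\beta^3\varepsilon\nu=(1+\varepsilon\nu k^2)^{3/2}(\varepsilon\nu)^{-1/2}\le C(\varepsilon\nu)^{-1/2}$ (in the present range of $k$) turns this into the desired $O(\varepsilon^{-1/2})\|f_n\|_{\mathring H^1_\eta}$. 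Inequalities such as $|k|(1+\varepsilon\nu k^2)^{-1}\le(4\varepsilon\nu)^{-1/2}$ handle the remaining terms; the weight $e^{\eta y}$ is harmless because $\eta<\alpha/2\le|k|\le\beta$, so conjugating by $e^{\eta y}$ merely shifts the exponents in all the kernels and preserves their $L^1_y$ norms. An $\ell^2$ sum over $|n|\ge1$ then gives (\ref{estimperturb}), and Lemma \ref{lem:basic Lem} applies.

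The hard part is the moderate-mode boundary-layer estimate: the correction term forced by the restoration of the no-slip condition $w|_{y=0}=0$ naively contributes $O(\varepsilon^{-1})$ to $\|w\|_{\mathring H^2_\eta}$, and it is only the extra vanishing of $f$ at the wall --- supplying the missing half power of $\varepsilon$ through the $O(y'^2)$ behaviour of $\psi^f_n$ --- that brings it down to $O(\varepsilon^{-1/2})$. Making this quantitative and uniform in the mode number, and matching it with the large-mode regime near $|k|\sim(\varepsilon\nu)^{-1/2}$, is the crux; the rest is bookkeeping with explicit exponential kernels.
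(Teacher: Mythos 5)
Your reduction via Lemma~\ref{lem:equiv norms} to the equivalent estimate $\|w-f\|_{\mathring L^2_\eta}\le C\sqrt{\varepsilon}\,\|f\|_{\mathring H^1_\eta}$, the mode-by-mode decomposition, and the large-mode argument are all sound, and the spirit of the moderate-mode analysis (boundary-layer scales $\beta$ and $|k|$) matches the paper's. The paper's Appendix~\ref{proof6} organizes the computation differently: it works directly on the velocity components, solves the fourth-order Orr--Sommerfeld-type ODE for $u^y_{\varepsilon,n}$, treats $u^x_{\varepsilon,n}$ through the Stokes pressure $q_n\propto D^2u^y_{\varepsilon,n}(0)\,e^{-n\alpha y}$, and encodes the high/low-mode dichotomy uniformly through the variable $X=|n|\sqrt{\varepsilon}$ instead of a case split.

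The load-bearing step in your moderate-mode argument, however, invokes a vanishing that the input space does not provide. You assert $\psi^f_n(y')=O(y'^2)$ near $y=0$ ``because $f$ vanishes at the wall.'' But $\mathring{H}^1_\eta$ only imposes $\widetilde{v}^y|_{y=0}=0$; the tangential component $\widetilde{v}^x|_{y=0}$ is \emph{not} constrained (that constraint belongs to $\mathring{H}^2_\eta$, the \emph{target} space). In stream-function variables, $\psi^f_n(0)=f^y_n(0)/(in\alpha)=0$ but $D\psi^f_n(0)=-f^x_n(0)$ is generically nonzero, so $\psi^f_n(y')=O(y')$ only. With this weaker vanishing, one integration by parts gives $\int_0^\infty e^{-\beta y'}\psi^f_n(y')\,dy'=O\bigl(\beta^{-2}\|D\psi^f_n\|_{C^0_\eta}\bigr)$, not $O(\beta^{-3}|D^2\psi^f_n(0)|)$; running your own bookkeeping --- the factor $(\beta+|k|)(\varepsilon\nu)^{-1}$ for the coefficient, then $\varepsilon\nu\beta^3$ for three $y$-derivatives --- then produces a boundary-layer contribution of order $(\beta+|k|)\,\beta\,\|D\psi^f_n\|\sim(\varepsilon\nu)^{-1}\|f^x_n\|_{C^0_\eta}$ at low Fourier modes, i.e.\ $\varepsilon^{-1}$ rather than $\varepsilon^{-1/2}$. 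So the moderate-mode estimate as written loses a full half power of $\varepsilon$ and does not close. Note that the paper's appendix integrates by parts \emph{only} on $u^y_n$, which is exactly the one component that $\mathring{H}^1_\eta$ forces to vanish at $y=0$; any repair of your stream-function argument would have to exploit this asymmetry between the two velocity components rather than the full vanishing of $f$ at the wall.
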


Let us now consider the linear operator $\boldsymbol{L}_{(\nu )}=\boldsymbol{%
L}_{(\nu )}^{(0)}+\boldsymbol{L}^{(1)}$ acting in $\mathring{L}_{\eta}^{2}$. We
have%
\begin{equation*}
\mathbb{I}-\varepsilon \boldsymbol{L}_{(\nu )}=(\mathbb{I}-\varepsilon 
\boldsymbol{L}_{(\nu )}^{(0)}) \Bigl(\mathbb{I}-\varepsilon (\mathbb{I}%
-\varepsilon \boldsymbol{L}_{(\nu )}^{(0)})^{-1}\boldsymbol{L}^{(1)} \Bigr),
\end{equation*}%
and we note that 
\begin{eqnarray*}
\Bigl\|\varepsilon (\mathbb{I}-\varepsilon \boldsymbol{L}_{(\nu )}^{(0)})^{-1}%
\boldsymbol{L}^{(1)}\Bigr\|_{\mathcal{L}(\mathring{H}_{\eta}^{2})} \leq
\Bigl\|\varepsilon (\mathbb{I}-\varepsilon \boldsymbol{L}_{(\nu
)}^{(0)})^{-1}\Bigr\|_{\mathcal{L}(\mathring{H}_{\eta}^{1},\mathring{H}_{\eta}^{2})}\|%
\boldsymbol{L}^{(1)}\|_{\mathcal{L}(\mathring{H}_{\eta}^{2},\mathring{H}%
_{\eta}^{1})} 
\leq M\varepsilon ^{1/2},
\end{eqnarray*}%
hence, for $\varepsilon $ small enough, $M\varepsilon ^{1/2}<1/2$ and the
operator $\mathbb{I}-\varepsilon (\mathbb{I}-\varepsilon \boldsymbol{L}%
_{(\nu )}^{(0)})^{-1}\boldsymbol{L}^{(1)}$ has a bounded (by $2$) inverse
in $\mathcal{L}(\mathring{H}_{\eta}^{2}).$ Therefore,%
\begin{eqnarray*}
\Bigl\|(\mathbb{I}-\varepsilon \boldsymbol{L}_{(\nu )})^{-1}\Bigr\|_{\mathcal{L}(%
\mathring{H}_{\eta}^{1},\mathring{H}_{\eta}^{2})} \leq 2 \Bigl\|(\mathbb{I}%
-\varepsilon \boldsymbol{L}_{(\nu )}^{(0)})^{-1} \Bigr\|_{\mathcal{L}(\mathring{H}%
_{\eta}^{1},\mathring{H}_{\eta}^{2})} 
\leq \frac{2M}{\varepsilon ^{1/2}}.
\end{eqnarray*}%
 Applying Lemma \ref{lem:basic Lem}  to $\boldsymbol{L}_{(\nu )},$
 we have the following Lemma

\begin{lemma}
Assume that the eigenvalues $\lambda$ of $\boldsymbol{L}_{(\nu )}$ are such that $%
\Re \lambda <\gamma <0,$ then, for $\widetilde{u}\in \mathring{H}%
_{\eta}^{2},$ we have%
\begin{equation*}
\Bigl\|e^{\boldsymbol{L}_{(\nu )}t}\widetilde{u}\Bigr\|_{\mathring{H}_{\eta}^{2}}\leq
Me^{-\gamma t} \, \|\widetilde{u}\|_{\mathring{H}_{\eta}^{2}},\text{ }t\geq 0.
\end{equation*}%
Moreover, we have the estimate%
\begin{equation*}
\Bigl\| e^{\boldsymbol{L}_{(\nu )}t} \widetilde{u}\Bigr\|_{\mathcal{L}(\mathring{H}_{\eta}^{1},\mathring{%
H}_{\eta}^{2})}\leq M \Bigl( 1+\frac{1}{t^{1/2}} \Bigr)e^{-\gamma t}
\text{ for }t>0.
\end{equation*}
\end{lemma}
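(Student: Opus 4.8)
The statement to prove is the last Lemma in the excerpt, namely the decay estimate
$$
\Bigl\|e^{\boldsymbol{L}_{(\nu)}t}\widetilde{u}\Bigr\|_{\mathring{H}_{\eta}^{2}}\leq Me^{-\gamma t}\,\|\widetilde{u}\|_{\mathring{H}_{\eta}^{2}}
$$
together with the smoothing estimate in $\mathcal{L}(\mathring{H}_{\eta}^{1},\mathring{H}_{\eta}^{2})$. The strategy is to combine the three ingredients already available: (i) the spectral localization of $\boldsymbol{L}_{(\nu)}$ on $\mathring{L}_{\eta}^{2}$ from Lemma \ref{newlemma} and Lemma \ref{lem: locationspect}, which says the whole spectrum (essential part plus isolated eigenvalues) lies in $\{\Re\lambda<-2\gamma\}$ for a suitable $\gamma$ with $0<\gamma<\nu\alpha^{2}/2$; (ii) the resolvent bound $\|(\boldsymbol{L}_{(\nu)}-\lambda\mathbb{I})^{-1}\|_{\mathcal{L}(\mathring{L}_{\eta}^{2})}\leq C/|\lambda|$ valid in the sector $|\arg(\lambda+\nu\alpha^{2})|\leq 2\pi/3$, $|\lambda+\nu\alpha^{2}|>M$, which makes $\boldsymbol{L}_{(\nu)}$ a sectorial (hence holomorphic semi-group) generator; and (iii) the short-time smoothing estimate $\|e^{\boldsymbol{L}_{(\nu)}t}\|_{\mathcal{L}(\mathring{H}_{\eta}^{1},\mathring{H}_{\eta}^{2})}\leq C/t^{1/2}$ for $t\in(0,1]$, which was just derived via Lemma \ref{lem:basic Lem} and Lemma \ref{lem perturb hilb} from the perturbation estimate \eqref{estimperturb}.

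First I would establish the exponential decay of $\|e^{\boldsymbol{L}_{(\nu)}t}\|_{\mathcal{L}(\mathring{L}_{\eta}^{2})}$. Since the spectrum of $\boldsymbol{L}_{(\nu)}$ on $\mathring{L}_{\eta}^{2}$ lies strictly to the left of the line $\Re\lambda=-2\gamma$, and since by Lemma \ref{lem: locationspect} the resolvent decays like $C/|\lambda|$ along a sectorial contour, I would shift the Dunford integral representation $e^{\boldsymbol{L}_{(\nu)}t}=\frac{1}{2\pi i}\int_{\Gamma}e^{\lambda t}(\lambda\mathbb{I}-\boldsymbol{L}_{(\nu)})^{-1}\,d\lambda$ onto a contour $\Gamma$ that stays in $\{\Re\lambda\leq-\gamma\}$: take $\Gamma$ to be the boundary of the shifted sector $\{\Re(\lambda+\nu\alpha^2)\le -\gamma'\}\cup\{|\arg(\lambda+\nu\alpha^2)|\ge 2\pi/3-\delta\}$ for small $\delta$, modified near the vertex so that it passes entirely in $\Re\lambda\le-\gamma$ (this is possible because the finitely many isolated eigenvalues all have real part $<-2\gamma$ and only finitely many sit in the relevant bounded region, while the sector opens to the left). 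On this contour $e^{\lambda t}$ is bounded by $e^{-\gamma t}$ times a factor integrable against $|d\lambda|/|\lambda|$ on the two rays (using $\Re\lambda\sim-c|\lambda|$ there), giving $\|e^{\boldsymbol{L}_{(\nu)}t}\|_{\mathcal{L}(\mathring{L}_{\eta}^{2})}\leq Me^{-\gamma t}$ for $t\geq 1$; for $t\in[0,1]$ boundedness is immediate from sectoriality. Then, using Lemma \ref{lem:equiv norms} (equivalence of $\|\cdot\|_{\mathring{H}_{\eta}^{2}}$ and $\|\boldsymbol{L}_{(\nu)}^{(0)}\cdot\|_{\mathring{L}_{\eta}^{2}}$) together with the commutation-type argument already used in the excerpt to pass from $\boldsymbol{L}_{(\nu)}^{(0)}$ to $\boldsymbol{L}_{(\nu)}$, I transfer this $\mathring{L}_{\eta}^{2}$ decay to an $\mathring{H}_{\eta}^{2}$ decay: write $\|e^{\boldsymbol{L}_{(\nu)}t}\widetilde{u}\|_{\mathring{H}_{\eta}^{2}}\asymp\|\boldsymbol{L}_{(\nu)}^{(0)}e^{\boldsymbol{L}_{(\nu)}t}\widetilde{u}\|_{\mathring{L}_{\eta}^{2}}$ and, since $\boldsymbol{L}_{(\nu)}^{(0)}$ differs from $\boldsymbol{L}_{(\nu)}$ by the $\boldsymbol{L}^{(1)}$ term which is bounded $\mathring{H}_{\eta}^{2}\to\mathring{H}_{\eta}^{1}\hookrightarrow\mathring{L}_{\eta}^{2}$ (Lemma \ref{lem: L1}), split into $\boldsymbol{L}_{(\nu)}e^{\boldsymbol{L}_{(\nu)}t}\widetilde{u}-\boldsymbol{L}^{(1)}e^{\boldsymbol{L}_{(\nu)}t}\widetilde{u}$; for the first, use $\boldsymbol{L}_{(\nu)}e^{\boldsymbol{L}_{(\nu)}t}=e^{\boldsymbol{L}_{(\nu)}t/2}\boldsymbol{L}_{(\nu)}e^{\boldsymbol{L}_{(\nu)}t/2}$ and the semi-group bound together with the bound $\|\boldsymbol{L}_{(\nu)}e^{\boldsymbol{L}_{(\nu)}s}\|\lesssim s^{-1}$ (standard for holomorphic semi-groups) to absorb one derivative, and for the second use the $\mathring{L}_{\eta}^{2}\to\mathring{L}_{\eta}^{2}$ decay applied to the bounded term.

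For the smoothing estimate in $\mathcal{L}(\mathring{H}_{\eta}^{1},\mathring{H}_{\eta}^{2})$, I would split the time interval. For $t\in(0,1]$ this is exactly the content of Lemma \ref{lem:basic Lem} applied (via the perturbation identity displayed just above the statement) to $\boldsymbol{L}_{(\nu)}$, which gives the $C/t^{1/2}$ factor; multiplying by $e^{-\gamma t}$ is harmless on a bounded interval. For $t\geq 1$, write $e^{\boldsymbol{L}_{(\nu)}t}=e^{\boldsymbol{L}_{(\nu)}(t-1/2)}\,e^{\boldsymbol{L}_{(\nu)}(1/2)}$: the first factor is bounded by $Me^{-\gamma(t-1/2)}$ in $\mathcal{L}(\mathring{H}_{\eta}^{2})$ by the decay just proved, the second is bounded in $\mathcal{L}(\mathring{H}_{\eta}^{1},\mathring{H}_{\eta}^{2})$ by the $t\in(0,1]$ case evaluated at $t=1/2$, and the product gives $Me^{-\gamma t}$; combining both regimes yields $M(1+t^{-1/2})e^{-\gamma t}$ for all $t>0$.

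The main obstacle is the contour-shifting step in $\mathring{L}_{\eta}^{2}$: one must verify that a contour lying entirely in $\{\Re\lambda\le-\gamma\}$ can be chosen so that (a) it encloses the whole spectrum — in particular the finitely many isolated eigenvalues, all of which have $\Re\lambda<-2\gamma$ by the remark following Lemma \ref{lem:estimsemigroup poids}, and the essential spectrum, which by Lemma \ref{newlemma} lies in the region $\Sigma_{U_+}\subset\{\Re\lambda<-\nu(\alpha^2-\eta^2)\}$, comfortably to the left of $-\gamma$ for $\eta$ small since $\gamma<\nu\alpha^2/2$ — and (b) the resolvent bound $C/|\lambda|$ of Lemma \ref{lem: locationspect} still holds along it, which requires the contour to stay inside the admissible sector $|\arg(\lambda+\nu\alpha^2)|\le 2\pi/3$ outside a bounded set; this forces the contour's two rays to open at angle slightly less than $2\pi/3$ from the vertical, and one checks that such rays, once translated left by $\gamma$, do remain in $\{\Re\lambda\le-\gamma\}$ and the integral $\int e^{\Re\lambda\,t}|d\lambda|/|\lambda|$ converges with the stated exponential rate. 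This is precisely the contour $\Gamma$ of Figure \ref{fig:gamma}, already used in Lemma \ref{lem:estimsemigroup poids}, so the geometric construction can be quoted rather than redone; the only genuinely new point is transferring the bound from $\mathring{L}_{\eta}^{2}$ to $\mathring{H}_{\eta}^{2}$, which is handled by Lemma \ref{lem:equiv norms} and the boundedness of $\boldsymbol{L}^{(1)}$ as above.
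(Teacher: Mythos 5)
Your overall plan matches the paper's (very terse) proof: shift the Dunford contour into $\{\Re\lambda\le-\gamma\}$ using the spectral localization of Lemmas \ref{lem: locationspect} and \ref{newlemma}, get the exponential decay in $\mathcal{L}(\mathring{L}_\eta^2)$, use Lemma \ref{lem:equiv norms} to pass to $\mathring{H}_\eta^2$, then splice the short-time smoothing from Lemma \ref{lem:basic Lem} onto the long-time decay by writing $e^{\boldsymbol{L}_{(\nu)}t}=e^{\boldsymbol{L}_{(\nu)}(t-1/2)}e^{\boldsymbol{L}_{(\nu)}(1/2)}$. The second estimate is correctly handled this way.

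However, for the first estimate there is a real flaw in the transfer step. You write
$\boldsymbol{L}_{(\nu)}e^{\boldsymbol{L}_{(\nu)}t}=e^{\boldsymbol{L}_{(\nu)}t/2}\boldsymbol{L}_{(\nu)}e^{\boldsymbol{L}_{(\nu)}t/2}$
and then bound $\|\boldsymbol{L}_{(\nu)}e^{\boldsymbol{L}_{(\nu)}t/2}\|\lesssim t^{-1}$. This yields $\|\boldsymbol{L}_{(\nu)}^{(0)}e^{\boldsymbol{L}_{(\nu)}t}\widetilde{u}\|_{\mathring{L}_\eta^2}\lesssim t^{-1}e^{-\gamma t}\|\widetilde{u}\|_{\mathring{L}_\eta^2}$, which (a) blows up as $t\to0$ whereas the target estimate is uniform on $t\ge0$, and (b) has the wrong norm on the right side ($\mathring{L}_\eta^2$ rather than $\mathring{H}_\eta^2$). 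The $t^{-1}$ factor is spurious: since $\widetilde{u}\in\mathring{H}_\eta^2=D(\boldsymbol{L}_{(\nu)})$, the semigroup commutes with its generator on the domain, so $\boldsymbol{L}_{(\nu)}e^{\boldsymbol{L}_{(\nu)}t}\widetilde{u}=e^{\boldsymbol{L}_{(\nu)}t}\boldsymbol{L}_{(\nu)}\widetilde{u}$ directly, and then the $\mathcal{L}(\mathring{L}_\eta^2)$ decay applied to $\boldsymbol{L}_{(\nu)}\widetilde{u}\in\mathring{L}_\eta^2$ (whose norm is $\lesssim\|\widetilde{u}\|_{\mathring{H}_\eta^2}$ by Lemma \ref{lem: L1} and Lemma \ref{lem:equiv norms}) gives the correct $e^{-\gamma t}\|\widetilde{u}\|_{\mathring{H}_\eta^2}$ bound with no singularity. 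Your treatment of the $\boldsymbol{L}^{(1)}e^{\boldsymbol{L}_{(\nu)}t}\widetilde{u}$ term has a similar circularity: you invoke the $\mathring{H}_\eta^1$ (or $\mathring{H}_\eta^2$) bound on $e^{\boldsymbol{L}_{(\nu)}t}\widetilde{u}$, which is what is being proved. The standard resolution is to use the relative boundedness of $\boldsymbol{L}^{(1)}$ with respect to $\boldsymbol{L}_{(\nu)}^{(0)}$ with relative bound $b<1$ (as established in Appendix \ref{proof3}), estimate $\|\boldsymbol{L}^{(1)}e^{\boldsymbol{L}_{(\nu)}t}\widetilde{u}\|_{\mathring{L}_\eta^2}\le a\|e^{\boldsymbol{L}_{(\nu)}t}\widetilde{u}\|_{\mathring{L}_\eta^2}+b\|\boldsymbol{L}_{(\nu)}^{(0)}e^{\boldsymbol{L}_{(\nu)}t}\widetilde{u}\|_{\mathring{L}_\eta^2}$, and absorb the $b$ term into the left side.
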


\begin{proof}
This results from the fact that the set of (isolated) eigenvalues of $%
\boldsymbol{L}_{(\nu)}$ is located as indicated in Lemma \ref{lem: locationspect},
Moreover, we have $\Re \lambda <\gamma$ for $\nu>\nu_0$, by
definition of $\nu _{0}(\alpha )$.  The estimate  (\ref{estim semigroup hilb}), which is valid on $(0,1)$,
can then be extended to $(0,\infty)$ using the $\mathring{H}_{\eta}^{2}$ estimate.
\end{proof}


\subsection{Study of the quadratic term}


Let us define the projections $\widetilde{P}$ and $P_{0}$,
which separate the 0-Fourier mode from the oscillating part, by: for any $u\in 
\mathcal{X}_{\eta}$%
\begin{eqnarray*}
u &=&\widetilde{P}u+P_{0}u, \\
\widetilde{P}u &=&\widetilde{u}\in \mathring{L}_{\eta}^{2},\text{ }%
P_{0}u=u_{0}\in \mathring{L}^{\infty }.
\end{eqnarray*}%
The above projections naturally work  in $\mathcal{Z}_{\eta}$ or $\mathcal{Y}_{\eta}.$ 
The aim of this section is to show that the quadratic operator 
\begin{equation*}
\boldsymbol{B}(u,v)=-\frac{1}{2}\Pi  \Bigl[ (u\cdot \nabla )v+(v\cdot \nabla )u \Bigr]
\end{equation*}%
is well defined in $\mathcal{Y}_{\eta}$ for $u$ and $v$ in $\mathcal{Z}_{\eta}$.

\begin{lemma}
The quadratic operator $\boldsymbol{B}$ is bounded from $\mathcal{Z}_{\eta}$ to 
$\mathcal{Y}_{\eta}:$ there exists $C>0$ such that%
\begin{equation}
\|\boldsymbol{B}(u,v)\|_{\mathcal{Y}_{\eta}}\leq C\|u\|_{\mathcal{Z}_{\eta}}\|v\|_{%
\mathcal{Z}_{\eta}}.  \label{bound B}
\end{equation}
\end{lemma}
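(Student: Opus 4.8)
The plan is to decompose the bilinear form according to the Fourier mode splitting $u = \widetilde u + u_0$, $v = \widetilde v + v_0$, and bound each of the resulting four pieces separately, keeping track of which piece lands in $\mathring H^1_\eta$ and which in $\mathring W^{1,\infty}$. First I would write
$$
\boldsymbol{B}(u,v) = -\tfrac12 \Pi\Bigl[ (u\cdot\nabla)v + (v\cdot\nabla)u\Bigr]
$$
and expand the product into the interactions $(\widetilde u,\widetilde v)$, $(\widetilde u, v_0)$, $(u_0,\widetilde v)$ and $(u_0,v_0)$. Since $u_0 = (u_0^x,0)$ and $v_0=(v_0^x,0)$ are $x$-independent with vanishing $y$-component, $(u_0\cdot\nabla)v_0 = u_0^x\partial_x v_0 = 0$, so the $(u_0,v_0)$ interaction contributes nothing; this is the observation already used for $\boldsymbol{L}^{(1)}$ in Lemma \ref{lem: L1}. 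Each remaining term involving a $u_0$ factor is again $x$-independent in its "slow'' slot, and one checks that $(u_0\cdot\nabla)\widetilde v + (\widetilde v\cdot\nabla)u_0$ has zero $x$-average only in the genuinely oscillating part; the $x$-independent leftover is absorbed into the $\mathring W^{1,\infty}$ component after applying $\Pi$, using Lemma \ref{lem:decomp Hk}, while the oscillating part goes into $\mathring H^1_\eta$.

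The core of the argument is the purely oscillating interaction $\widetilde B(\widetilde u,\widetilde v)$ with $\widetilde u,\widetilde v\in \mathring H^2_\eta$, which must be shown to lie in $\mathring H^1_\eta$ with norm controlled by $\|\widetilde u\|_{\mathring H^2_\eta}\|\widetilde v\|_{\mathring H^2_\eta}$. Here I would work Fourier mode by Fourier mode in $x$: writing $[(\widetilde u\cdot\nabla)\widetilde v]_n = \sum_{p+q=n} (\widetilde u_p\cdot\nabla_{p,q})\widetilde v_q$ where $\nabla_{p,q}$ denotes $(iq\,\cdot\,, D\,\cdot\,)$ acting on the $q$-th mode, and similarly for the symmetric term. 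The exponential weight is handled by the elementary fact $e^{\eta y} = e^{\eta y}$ distributing across a product only if one factor is unweighted, so I would instead use that in $C^0_\eta$ a product of two $C^0_\eta$ functions sits in $C^0_{2\eta}\subset C^0_\eta$ with $\|fg\|_{C^0_\eta}\le \|f\|_{C^0_\eta}\|g\|_{C^0_\eta}$ pointwise once we note $e^{\eta y}|f(y)g(y)| \le (e^{\eta y}|f|)\,\|g\|_{C^0_0}\le \|f\|_{C^0_\eta}\|g\|_{C^0_\eta}$; this keeps the weight without loss. The summation in $n$ is then closed by a discrete Young/Cauchy--Schwarz estimate on $\ell^2$ sequences: the $\mathring H^1_\eta$ norm weights $v_n$ by $|n|$ and $Dv_n$ by $1$, while the $\mathring H^2_\eta$ norm supplies weights $n^2$ on $v_n$, $|n|$ on $Dv_n$, $1$ on $D^2v_n$, and one verifies that each term $|n|\,\|\widetilde u_p\|\,\|(\text{one }\partial)\widetilde v_q\|$ with $p+q=n$ is dominated using $|n|\le |p|+|q|$ by products of the $\mathring H^2_\eta$ weights, so that the convolution of two $\ell^2$ sequences lands in $\ell^2$ (using $\ell^1\ast\ell^2\subset\ell^2$ after shifting one power of the index). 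The incompressibility constraints $\nabla\cdot\widetilde u=0$, $\widetilde u^y|_{y=0}=0$ are preserved by $\Pi$ and ensure the output genuinely sits in $\mathring H^1_\eta$ rather than just $H^1_\eta$.

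The main obstacle I anticipate is bookkeeping the derivative and index budget in the oscillating term so that exactly one derivative is spent (we are mapping $H^2\times H^2\to H^1$, so there is one derivative to spare, which must compensate the loss of one factor of $|n|$ relative to naive counting) while simultaneously not wasting the exponential decay — in particular the term $(\widetilde u^y D\widetilde v)$ where the $y$-derivative falls on $\widetilde v$ needs $D\widetilde v_q\in C^0_\eta$ with weight $|q|$ available from $\|\widetilde v\|_{\mathring H^2_\eta}$, and the worst case is when neither $|p|$ nor $|q|$ dominates $|n|$, which is handled by splitting the convolution sum at $|p|\le |q|$ versus $|p|>|q|$. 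The mixed terms $\widetilde B(\widetilde u, v_0)$ and $\widetilde B(v_0,\widetilde v)$ are easier: $v_0$ enters without its $x$-dependence, so $(v_0\cdot\nabla)\widetilde v = v_0^x\partial_x\widetilde v$ costs only $\|v_0^x\|_{L^\infty}$ times an $x$-derivative of $\widetilde v$ (bounded by $\|\widetilde v\|_{\mathring H^2_\eta}$ with a spare derivative), while $(\widetilde u\cdot\nabla)v_0 = \widetilde u^y D v_0^x$ costs $\|Dv_0^x\|_{L^\infty}\le\|v_0\|_{\mathring W^{2,\infty}}$ times $\|\widetilde u^y\|_{\mathring H^1_\eta}$ — note $\widetilde u^y\in C^0_\eta$ decays, so the product decays and the resulting term lies in $\mathring H^1_\eta$, not merely in the $L^\infty$ slot. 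Collecting the four contributions and using $\|u\|_{\mathcal Z_\eta}^2 = \|\widetilde u\|_{\mathring H^2_\eta}^2 + \|u_0\|_{\mathring W^{2,\infty}}^2$ gives \eqref{bound B}.
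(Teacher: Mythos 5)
Your proposal is correct and follows essentially the same route as the paper: split into the four interactions, observe $(u_0\cdot\nabla)v_0=0$, bound the mixed terms by the $W^{2,\infty}$ norm of the zero mode, and close the oscillating--oscillating term by a weighted Fourier convolution estimate (the paper's $\sum_p p^{-2}$ Cauchy--Schwarz is exactly your $\ell^1\ast\ell^2\subset\ell^2$ step, and the weight is handled by the same pointwise inequality $e^{\eta y}|fg|\le\|f\|_{C^0_\eta}\|g\|_{C^0_\eta}$). The only slight imprecision is your suggestion of an ``$x$-independent leftover'' from the mixed terms: since $\widetilde v$ has zero average and $u_0$ is $x$-independent, those terms contribute nothing to the zero Fourier mode, so $P_0B(u,v)=P_0B(\widetilde u,\widetilde v)$ exactly, as in the paper.
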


\begin{proof} Let us decompose $u,$ $v$ in $\mathcal{Z}_{\eta}$%
\begin{equation*}
u=\widetilde{u}+u_{0},\text{ }v=\widetilde{v}+v_{0}
\end{equation*}%
then%
\begin{eqnarray}
\widetilde{P}B(u,v) &=&\widetilde{P}B(\widetilde{u},\widetilde{v})+B(%
\widetilde{u},v_{0})+B(\widetilde{v},u_{0}),  \notag \\
P_{0}B(u,v) &=&P_{0}B(\widetilde{u},\widetilde{v}).
\label{quadratic identity}
\end{eqnarray}%
Indeed, 
\begin{equation*}
(u_{0}\cdot \nabla )v_{0}=u_{0}^{y}Dv_{0}=0\text{ }
\end{equation*}%
since $u_{0}^{y}=0$ for $u_{0}\in \mathring{L}^{\infty }.$ This implies that 
$P_{0}B(u_{0},v_{0})=0.$

Now, we observe that $(\widetilde{u}\cdot \nabla )v_{0}$ and $(u_{0}\cdot
\nabla )\widetilde{v}$ satisfy%
\begin{equation*}
\Bigl\|(\widetilde{u}\cdot \nabla )v_{0} \Bigr\|_{L_{\eta}^{2}}^{2}=\sum_{|n|\geq 1}\|%
\widetilde{u}_{n}^{y}Dv_{0}^{x}\|_{C_{\eta}^{0}}^{2}\leq \|v_{0}\|_{W^{1,\infty
}}^{2}\|\widetilde{u}\|_{L_{\eta}^{2}}^{2},
\end{equation*}%
\begin{eqnarray*}
\Bigl\|(\widetilde{u}\cdot \nabla )v_{0} \Bigr\|_{H_{\eta}^{1}}^{2} &=&\sum_{|n|\geq
1}n^{2}\|\widetilde{u}_{n}^{y}Dv_{0}^{x}\|_{C_{\eta}^{0}}^{2}+\|D(\widetilde{u}%
_{n}^{y}Dv_{0}^{x})\|_{C_{\eta}^{0}}^{2} \\
&\leq &\|v_{0}\|_{W^{1,\infty }}^{2}\|\widetilde{u}%
\|_{H_{\eta}^{1}}^{2}+\|v_{0}\|_{W^{2,\infty }}^{2}\|\widetilde{u}%
\|_{L_{\eta}^{2}}^{2},
\end{eqnarray*}%
and analogously for $(u_{0}\cdot \nabla )\widetilde{v}.$ Hence, as $\Pi $
is bounded from $H_{\eta}^{1}$ to $\mathring{H}_{\eta}^{1},$ we obtain easily%
\begin{eqnarray*}
\Bigl\|B(\widetilde{u},v_{0})+B(\widetilde{v},u_{0})\Bigr\|_{\mathring{H}_{\eta}^{1}}
&\leq &C \Bigl(\|\widetilde{u}\|_{H_{\eta}^{1}}\|v_{0}\|_{W^{2,\infty }}+\|\widetilde{%
v}\|_{H_{\eta}^{1}}\|u_{0}\|_{W^{2,\infty }} \Bigr) \\
&\leq &C\|u\|_{\mathcal{Z}_{\eta}}\|v\|_{\mathcal{Z}_{\eta}}.
\end{eqnarray*}%
Now, before considering $(\widetilde{u}\cdot \nabla )\widetilde{v}$, let us
first consider the product of a scalar function $f$ in $H_{\eta}^{1}$ with a
scalar $g$ function in $L_{\eta}^{2},$ both with 0-average$.$ We show that the
product $fg$ is in $L_{\eta}^{2}.$ Indeed, the Fourier coefficients satisfy
\begin{equation*}
(fg)_{n}=\sum_{p,q=n-p}f_{p}g_{q}
\end{equation*}%
and%
\begin{equation*}
\|(fg)_{n}\|_{C_{\eta}^{0}}\leq \sum_{p\neq 0,q=n-p\neq 0}\frac{1}{p}%
\|pf_{p}\|_{C_{\eta}^{0}}\|g_{q}\|_{C_{\eta}^{0}},
\end{equation*}%
so that%
\begin{equation*}
\|(fg)_{n}\|_{C_{\eta}^{0}}^{2}\leq \left( \sum_{p^{\prime }\neq 0}\frac{1}{%
p^{\prime 2}}\right)
\sum_{p,q=n-p}\|pf_{p}\|_{C_{\eta}^{0}}^{2}\|g_{q}\|_{C_{\eta}^{0}}^{2}.
\end{equation*}%
Hence%
\begin{equation*}
\|fg\|_{L_{\eta}^{2}}^{2}\leq
C\sum_{n,p,q=n-p}\|pf_{p}\|_{C_{\eta}^{0}}^{2}\|g_{q}\|_{C_{\eta}^{0}}^{2}\leq
C\|f\|_{H_{\eta}^{1}}^{2}\|g\|_{L_{\eta}^{2}}^{2}.
\end{equation*}%
Coming back to $(\widetilde{u}\cdot \nabla )\widetilde{v}$ , with $%
\widetilde{u}$ and $\widetilde{v}$ in $H_{\eta}^{2},$ we deduce immediately
that $\widetilde{P}(\widetilde{u}\cdot \nabla )\widetilde{v}\in H_{\eta}^{1},$
and the bound (\ref{bound B}) for $\widetilde{P}B(\widetilde{u},\widetilde{v}%
)$ holds immediately in $\mathring{H}_{\eta}^{1}.$

For the 0-Fourier mode, we have also%
\begin{equation*}
\|(fg)_{0}\|_{L_{\eta}^{\infty }}^{2}\leq \left( \sum_{p^{\prime }\neq 0}\frac{1%
}{p^{\prime 2}}\right)
\sum_{p}\|pf_{p}\|_{C_{\eta}^{0}}^{2}\|g_{-p}\|_{C_{\eta}^{0}}^{2}\leq
C\|f\|_{H_{\eta}^{1}}^{2}\|g\|_{L_{\eta}^{2}}^{2},
\end{equation*}%
so that $P_{0}(\widetilde{u}\cdot \nabla )\widetilde{v}$ satisfies easily%
\begin{equation*}
\|P_{0}(\widetilde{u}\cdot \nabla )\widetilde{v}\|_{W_{\eta}^{1,\infty }}\leq
C\|\widetilde{u}\|_{H_{\eta}^{2}}\|\widetilde{v}\|_{H_{\eta}^{2}},
\end{equation*}%
and estimate (\ref{bound B}) holds.
\end{proof}


\section{Nonlinear stability of $U$}


In this section, we consider the nonlinear evolution problem with an initial
data close to the basic flow $U,$  assuming that the spectrum of the
linearized operator $\boldsymbol{L}_{(\nu )}$ is situated on the left side
of the imaginary axis, except the essential spectrum which contains the full
negative real line. We prove the nonlinear stability of $U.$

Assume $\xi >0$ and let us define the Banach space $\mathcal{Z}_{\eta,\xi}$ by%
\begin{equation*}
\mathcal{Z}_{\eta,\xi }=\Bigl\{ v=\widetilde{v}+v_{0}; \, \widetilde{v}\in C^{0}(%
\mathbb{R}^{+},\mathring{H}_{\eta}^{2}), \, v_{0}\in C^{0}(\mathbb{R}^{+},\mathring{%
W}^{2,\infty }),
\end{equation*}%
\begin{equation*}
 |||v|||_{\eta,\xi  }=\underset{t\geq 0}{\sup } \Bigl( \|\widetilde{v}(t)e^{\xi t}\|_{\mathring{H}_{\eta}^{2}}
+ \|(1+t)v_{0}(t) \|_{\mathring{W}^{2,\infty}} \Bigr)<\infty \Bigr\}.
\end{equation*}%
Note the exponential weight in time for the non zero Fourier component and the $(1+t)$ weight
in time for the zero Fourier component.
We now detail the stability part of the Theorem \ref{maintheo} of the introduction.

\begin{theorem} \label{maintheo1}
\label{Lem:stabnonlin}
Let us assume $\eta > 0$ and assume that the set of
eigenvalues $\lambda $ of operator $\boldsymbol{L}_{(\nu )}$ satisfies%
\begin{equation*}
\Re \lambda <-\xi  <0,
\end{equation*}%
then there exists $\varepsilon >0$ such that for 
\begin{equation*}
\|\widetilde{P}v(0)\|_{\mathring{H}_{\eta}^{2}} + \|P_{0}v(0)\|_{\mathring{W}_{\eta}^{2}}\leq \varepsilon ,
\end{equation*}
there exists a unique solution $v\in \mathcal{Z}_{\eta,\xi }$ of the following
differential equation in $\mathcal{X}_{\eta}$%
\begin{equation}
\frac{dv}{dt}=\boldsymbol{L}_{(\nu )}v+B(v,v),  \label{equ diff basic}
\end{equation}%
with $v_{t=0}=v(0)=\widetilde{P}v(0)+P_{0}v(0).$ Moreover there exists $M>0$
such that%
\begin{eqnarray*}
|||\widetilde{P}v|||_{\eta,\xi  } &\leq &M\|\widetilde{P}v(0)\|_{\mathring{H}%
_{\eta}^{2}}, \\
|||P_{0}v|||_{\eta,\xi  } &\leq &M \Bigl( \|P_{0}v(0)\|_{\mathring{W}_{\eta}^{2}}+\|%
\widetilde{P}v(0)\|_{\mathring{H}_{\eta}^{2}}^{2} \Bigr).
\end{eqnarray*}
\end{theorem}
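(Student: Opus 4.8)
The plan is to prove Theorem \ref{maintheo1} by a standard fixed-point / Duhamel argument, carefully tracking the two different temporal decay rates enforced by the norm $|||\cdot|||_{\eta,\xi}$. Write $v = \widetilde v + v_0$ according to the splitting $\mathcal{X}_\eta = \mathring L^2_\eta \oplus \mathring L^\infty$, and recast \eqref{equ diff basic} as the fixed-point system
\begin{align*}
\widetilde P v(t) &= e^{\boldsymbol{L}_{(\nu)}t}\widetilde P v(0) + \int_0^t e^{\boldsymbol{L}_{(\nu)}(t-s)}\,\widetilde P B(v(s),v(s))\,ds,\\
P_0 v(t) &= e^{\boldsymbol{L}_{(\nu)}t}P_0 v(0) + \int_0^t e^{\boldsymbol{L}_{(\nu)}(t-s)}\,P_0 B(v(s),v(s))\,ds.
\end{align*}
The idea is to show that the right-hand side defines a contraction on a small ball of $\mathcal{Z}_{\eta,\xi}$, using: the semigroup estimates of the previous subsection, namely $\|e^{\boldsymbol{L}_{(\nu)}t}\widetilde u\|_{\mathring H^2_\eta}\le Me^{-\gamma t}\|\widetilde u\|_{\mathring H^2_\eta}$ and $\|e^{\boldsymbol{L}_{(\nu)}t}\widetilde u\|_{\mathcal L(\mathring H^1_\eta,\mathring H^2_\eta)}\le M(1+t^{-1/2})e^{-\gamma t}$ on the oscillating part (with $\gamma > \xi$, which is possible since all eigenvalues have real part $<-\xi$); the estimates \eqref{estim sg L(0) W2}--\eqref{estim sg L(0) W1} on $\mathring L^\infty$ for the zero mode; and the boundedness $\|\boldsymbol{B}(u,v)\|_{\mathcal Y_\eta}\le C\|u\|_{\mathcal Z_\eta}\|v\|_{\mathcal Z_\eta}$ together with the structural identities \eqref{quadratic identity}.

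The core of the argument is the bilinear estimate in the weighted-in-time norm. For the oscillating component, one uses $\widetilde P B(v,v) = \widetilde P B(\widetilde v,\widetilde v)+B(\widetilde v,v_0)+B(\widetilde v, u_0)$ — wait, I mean $B(\widetilde v,v_0)+B(v_0,\widetilde v)$ reading off \eqref{quadratic identity}. Crucially, $\widetilde P B(\widetilde v,\widetilde v)$ carries the weight $e^{2\xi s}$ from two copies of $\widetilde v$, and the mixed terms $B(\widetilde v,v_0)$ carry $e^{\xi s}(1+s)^{-1}$; convolving against $M(1+(t-s)^{-1/2})e^{-\gamma(t-s)}$ and multiplying by $e^{\xi t}$ gives, for the pure term, $\int_0^t (1+(t-s)^{-1/2})e^{-(\gamma-\xi)(t-s)}e^{-\xi s}\,ds$ which is bounded uniformly in $t$ because $\gamma>\xi$ and the $s^{-1/2}$ singularity is integrable. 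For the zero mode, $P_0 B(v,v)=P_0 B(\widetilde v,\widetilde v)$ lies in $\mathring W^{1,\infty}_\eta$ with norm $\lesssim e^{-2\xi s}$ (actually $\lesssim |||\widetilde v|||^2 e^{-2\xi s}$); feeding this into \eqref{estim sg L(0) W1} gives $\int_0^t \frac{M}{\sqrt{t-s}(1+\sqrt{t-s})} e^{-2\xi s}\,ds \lesssim (1+t)^{-1}$ after splitting the integral at $t/2$ — on $[0,t/2]$ the kernel is $O((1+t)^{-1})$ and the exponential is integrable, on $[t/2,t]$ the exponential gives $e^{-\xi t}$ which beats $(1+t)^{-1}$ while $\int_{t/2}^t (t-s)^{-1/2}\,ds = O(t^{1/2})$ is controlled. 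This yields the claimed $(1+t)$-decay of $v_0$ and explains the asymmetric bookkeeping $|||P_0 v|||\le M(\|P_0 v(0)\|+\|\widetilde P v(0)\|^2)$: the zero mode is fed quadratically by the oscillating mode.

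The fixed-point is then closed by the standard contraction estimate: if $\||v\||_{\eta,\xi}, \||v'\||_{\eta,\xi}\le R$ with $R$ small, the bilinearity of $B$ gives Lipschitz control of the map with constant $O(R)$, hence a unique fixed point for $\varepsilon$ (and $R$) small enough, with $R \lesssim \varepsilon$; uniqueness in $\mathcal{Z}_{\eta,\xi}$ follows from the same Lipschitz bound. I expect the main obstacle to be purely technical rather than conceptual: verifying that $B$ maps $\mathcal Z_{\eta,\xi}$-valued paths into $\mathcal Y_\eta$-valued paths with the \emph{correct} time weights at each of the three pieces of \eqref{quadratic identity} — in particular making sure the mixed term $B(\widetilde v,v_0)$, which only decays like $e^{-\xi s}(1+s)^{-1}$ rather than $e^{-2\xi s}$, is still absorbed, and that the Duhamel integral for the zero mode genuinely produces $(1+t)^{-1}$ and not merely $(1+t)^{-1/2}$ from the $1/\sqrt{t-s}$ kernel. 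Everything else is a routine convolution-of-decays computation using the semigroup bounds already established.
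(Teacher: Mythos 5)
Your proposal follows essentially the same route as the paper: split $v=\widetilde v + v_0$, use the structural identities $\widetilde P B(v,v)=\widetilde P B(\widetilde v,\widetilde v)+2B(\widetilde v,v_0)$ and $P_0B(v,v)=P_0B(\widetilde v,\widetilde v)$, write the Duhamel system for the two components, and close with the same semigroup bounds and the same convolution estimates (in particular the key $\int_0^t\frac{1}{\sqrt{t-s}(1+\sqrt{t-s})}e^{-\delta s}\,ds\lesssim (1+t)^{-1}$ integral that produces the $(1+t)^{-1}$ decay of the zero mode and the asymmetric dependence on the initial data). The only cosmetic difference is that you set things up as a contraction on a small ball of $\mathcal Z_{\eta,\xi}$ while the paper invokes the implicit function theorem and then extracts the a priori bounds from the Duhamel system a posteriori; these are interchangeable here and yield the same existence, uniqueness, and estimates.
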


\begin{remark}
Note that this Theorem implies that 
\begin{eqnarray*}
\|\widetilde{P}v(t)\|_{\mathring{H}_{\eta}^{2}} &\leq &Me^{-\xi t}\|%
\widetilde{P}v(0)\|_{\mathring{H}_{\eta}^{2}},\text{ }t\geq 0, \\
\|P_{0}v(t)\|_{\mathring{W}^{2,\infty }} &\leq &\frac{M}{1+t}%
\Bigl( \|P_{0}v(0)\|_{\mathring{W}_{\eta}^{2}}+\|\widetilde{P}v(0)\|_{\mathring{H}%
_{\eta}^{2}}^{2} \Bigr),\text{ }t\geq 0.
\end{eqnarray*}%
The slow decrease of the $0$-mode as $t\rightarrow \infty $ is due to the
influence of the essential spectrum of $\boldsymbol{L}_{(\nu )}$ which contains
the full negative real line. Moreover, we may notice that the decay in $y$
at $\infty $ is lost for the $0$-mode for $t>0.$
\end{remark}

\begin{proof}
Using (\ref{quadratic identity}) and (\ref{bound B}) shows that there exists 
$C>0$ such that we have the estimates%
\begin{eqnarray*}
\Bigl\|\widetilde{P}B(\widetilde{v},\widetilde{v})+2B(\widetilde{v},v_{0})\Bigr\|_{%
\mathring{H}_{\eta}^{1}} &\leq &C \Bigl(\|\widetilde{v}\|_{\mathring{H}%
_{\eta}^{2}}^{2}+\|\widetilde{v}\|_{\mathring{H}_{\eta}^{2}}\|v_{0}\|_{\mathring{W}%
^{2,\infty }} \Bigr), \\
\|P_{0}B(\widetilde{v},\widetilde{v})\|_{\mathring{W}_{\eta}^{1,\infty }} &\leq
&C\|\widetilde{v}\|_{\mathring{H}_{\eta}^{2}}^{2}.
\end{eqnarray*}
Now (\ref{equ diff basic}) becomes%
\begin{eqnarray*}
\frac{d\widetilde{v}}{dt} &=&\boldsymbol{L}_{(\nu )}\widetilde{v}+\widetilde{%
P}B(\widetilde{v},\widetilde{v})+2B(\widetilde{v},v_{0}), \\
\frac{dv_{0}}{dt} &=&\boldsymbol{L}_{(\nu )}v_{0}+P_{0}B(\widetilde{v},%
\widetilde{v}),
\end{eqnarray*}%
which leads to the following integral formulation for $t\geq 0$%
\begin{eqnarray}
\widetilde{v}(t) &=&e^{\boldsymbol{L}_{(\nu )}t}\widetilde{P}%
v(0)+\int_{0}^{t}e^{\boldsymbol{L}_{(\nu )}(t-s)} \Bigl[\widetilde{P}B(\widetilde{v%
},\widetilde{v})+2B(\widetilde{v},v_{0}) \Bigr](s)ds,  \label{compvtilde} \\
v_{0}(t) &=&e^{\boldsymbol{L}_{(\nu )}t}P_{0}v(0)+\int_{0}^{t}e^{\boldsymbol{%
L}_{(\nu )}(t-s)}P_{0}B(\widetilde{v},\widetilde{v})(s)ds,  \label{compv0}
\end{eqnarray}%
where we look for $\widetilde{v}+v_{0}\in \mathcal{Z}_{\eta,\xi }.$ 
We can solve the system above by the implicit function theorem with respect
to $(\widetilde{v},v_{0})$ in the neighborhood of $0$ for $v(0)=\widetilde{P}%
v(0)+P_{0}v(0)$ provided 
\begin{eqnarray*}
&&\widetilde{P}v(0)\text{ sufficiently small in }\mathring{H}_{\eta}^{2}, \\
&&P_{0}v(0)\text{ sufficiently small in }\mathring{W}_{\eta}^{2}.
\end{eqnarray*}
Using the bound of the imaginary parts of eigenvalues which are isolated in
the left half complex plane, there exist $\xi_{m}>0$
such that all the eigenvalues $\lambda $ of $\boldsymbol{L}_{(\nu )}$ satisfy%
\begin{equation*}
\sup \Re \lambda =-\xi_{m} <-\xi  <0.
\end{equation*}%
We choose $\xi_1$ such that $- \xi_m <  - \xi_1 < -  \xi$.
Then we have the estimates%
\begin{eqnarray*}
\|e^{\boldsymbol{L}_{(\nu )}t}\|_{\mathcal{L}(\mathring{H}_{\eta}^{2})} &\leq
&Ce^{-\xi _{1}t},\text{ }t\geq 0, \\
\|e^{\boldsymbol{L}_{(\nu )}t}\|_{\mathcal{L}(\mathring{W}_{\eta}^{2,\infty },%
\mathring{W}^{2,\infty })} &\leq &\frac{C}{1+t},\text{ }t\geq 0, \\
\|e^{\boldsymbol{L}_{(\nu )}(t-s)}\|_{\mathcal{L}(\mathring{H}_{\eta}^{1},%
\mathring{H}_{\eta}^{2})} &\leq &C \Bigl[1+\frac{1}{\sqrt{t-s}} \Bigr] e^{-\xi_{1}(t-s)},\text{ }t>s, \\
\|e^{\boldsymbol{L}_{(\nu )}(t-s)}\|_{\mathcal{L}(\mathring{W}_{\eta}^{1,\infty
},\mathring{W}^{2,\infty })} &\leq &\frac{C}{\sqrt{t-s}(1 + \sqrt{t-s})},\text{ }t>s.
\end{eqnarray*}%
We note that%
\begin{eqnarray*}
\|e^{\xi t}e^{\boldsymbol{L}_{(\nu )}t}\widetilde{P}v(0)\|_{\mathring{H}%
_{\eta}^{2}} &\leq &C\|\widetilde{P}v(0)\|_{\mathring{H}_{\eta}^{2}}, \\
\|e^{\boldsymbol{L}_{(\nu )}t}P_{0}v(0)\|_{\mathring{W}^{2,\infty }} &\leq &%
\frac{C}{1+t}\|P_{0}v(0)\|_{\mathring{W}_{\eta}^{2}},
\end{eqnarray*}%
that
\begin{eqnarray*}
&&\Bigl\|e^{\xi t}e^{\boldsymbol{L}_{(\nu )}(t-s)} \Bigl[\widetilde{P}B(\widetilde{v}%
,\widetilde{v})+2B(\widetilde{v},v_{0})\Bigr](s)\Bigr\|_{\mathring{H}_{\eta}^{2}} \\
&\leq &MC \Bigl(1+\frac{1}{\sqrt{t-s}} \Bigr)e^{-(\xi _{1}-\xi )(t-s)}|||%
\widetilde{v}|||_{\eta,\xi }|||\widetilde{v}+v_{0}|||_{\eta,\xi },
\end{eqnarray*}%
and that 
\begin{equation*}
\Bigl\|e^{\boldsymbol{L}_{(\nu )}(t-s)}P_{0}B(\widetilde{v},\widetilde{v})(s) \Bigr\|_{%
\mathring{W}^{2,\infty }}\leq \frac{MC}{\sqrt{t-s} (1 + \sqrt{t-s})}e^{-2\xi s}|||%
\widetilde{v}|||_{\eta,\xi }^{2}.
\end{equation*}%
Hence the right hand side of (\ref{compvtilde}), and (\ref{compv0}) is
quadratic and $C^{1}$ in $(\widetilde{v},v_{0})\in \mathcal{Z}_{\eta,\xi }.$
It results by the implicit function theorem, that, provided $(\widetilde{P}%
v(0),P_{0}v(0))$ small enough in $\mathring{H}_{\eta}^{2}\oplus \mathring{W}%
_{\eta}^{2},$ there exists a unique solution $(\widetilde{v},v_{0})\in \mathcal{%
Z}_{\eta,\xi}$ in a neighborhood of $0.$ Moreover, directly from the system
(\ref{compvtilde}), (\ref{compv0}), we have the estimate%
\begin{align*}
\|e^{\xi  t}\widetilde{v}(t)\|_{\mathring{H}_{\eta}^{2}}
\leq C\|\widetilde{P}%
v(0)\|_{\mathring{H}_{\eta}^{2}}+MC\int_{0}^{t} &(1+\frac{1}{(t-s)^{1/2}}%
)
\\ 
& \times e^{-(\xi _{1}-\xi  )(t-s)}|||\widetilde{v}|||_{\eta,\xi  }|||\widetilde{%
v}+v_{0}|||_{\eta,\xi } ds,
\end{align*}%
hence%
\begin{equation*}
|||\widetilde{v}|||_{\eta,\xi }\leq C\|\widetilde{P}v(0)\|_{\mathring{H}%
_{\eta}^{2}}+K|||\widetilde{v}|||_{\eta,\xi }|||\widetilde{v}%
+v_{0}|||_{\eta,\xi  },
\end{equation*}%
using%
\begin{equation*}
\int_{0}^{t} \Bigl[ 1+\frac{1}{(t-s)^{1/2}} \Bigr] e^{-\delta (t-s)}ds\leq C_{1}(\delta )%
\text{ for }\delta >0.
\end{equation*}%
In the same way, we obtain%
\begin{equation*}
|||v_{0}|||_{\eta,\xi }\leq C\|P_{0}v(0)\|_{\mathring{W}_{\eta}^{2,\infty
}}+K|||\widetilde{v}|||_{\eta,\xi }^{2},
\end{equation*}%
using%
\begin{equation*}
\int_{0}^{t}\frac{1}{\sqrt{t-s}) (1+\sqrt{t-s})}e^{-\delta s}ds\leq \frac{C_{2}(\delta )}{1+t},\text{ for }\delta >0.
\end{equation*}%
Hence, 
\begin{equation*}
|||\widetilde{v}|||_{\eta,\xi }\leq C\|\widetilde{P}v(0)\|_{\mathring{H}%
_{\eta}^{2}}+CK\|P_{0}v(0)\|_{\mathring{W}_{\eta}^{2,\infty }}|||\widetilde{v}%
|||_{\eta,\xi }+K|||\widetilde{v}|||_{\eta,\xi }^{2}+K^{2}|||\widetilde{v}%
|||_{\eta,\xi }^{3},
\end{equation*}%
so that for $v(0)$ such that%
\begin{equation*}
CK\|P_{0}v(0)\|_{\mathring{W}_{\eta}^{2,\infty }}+2CK\|\widetilde{P}v(0)\|_{%
\mathring{H}_{\eta}^{2}}+4C^{2}K^{2}\|\widetilde{P}v(0)\|_{\mathring{H}%
_{\eta}^{2}}^{2}< \frac{1}{2},
\end{equation*}%
we obtain%
\begin{eqnarray*}
|||\widetilde{v}|||_{\eta,\xi } &\leq &2C\|\widetilde{P}v(0)\|_{\mathring{H}%
_{\eta}^{2}}, \\
|||v_{0}|||_{\eta,\xi } &\leq &C\|P_{0}v(0)\|_{\mathring{W}_{\eta}^{2,\infty
}}+4C^{2}K\|\widetilde{P}v(0)\|_{\mathring{H}_{\eta}^{2}}^{2}
\end{eqnarray*}%
which ends the proof of the nonlinear asymptotic stability.
\end{proof}


\section{Study of the bifurcation}



\subsection{Setup}


Let us define
$$
s = \omega_0 t
$$
such that the time-periodic solution which we are looking for is now $2\pi$ periodic in $s$. 
The wave number $\omega_0$ is an unknown of the bifurcation, which must be determined.

Let us define the bifurcation parameter $\mu$ as
$$
\mu = \nu - \nu_0
$$
By assumption, ${\bf L} := {\bf L}_{(\nu_0)}$ has two purely imaginary simple eigenvalues $\pm i \omega_0$,
associated to eigenvectors of the form
\begin{equation} \label{formzeta}
\zeta = e^{i \alpha x} \hat v(y), 
\quad 
\overline{\zeta} = e^{- i \alpha x} \overline{ \hat v(y)}
\end{equation}
and no other eigenvalues of non negative real part.

In what follows, we need to invert the linear system
$$
\omega_0 {dv \over ds} - {\bf L} \, v = f
$$
in a space of vector functions which are $2 \pi$ periodic in $s$ and 
$2 \pi / \alpha$ periodic in $x$.
We expand $f$ and $v$ in Fourier series in time and space, $k$ being the Fourier wave number in time and $n$ the Fourier wave number in space. This leads to the sequence of equations
$$
i k \omega_0 v_{k,n} - {\bf L}_n v_{k,n} = f_{k,n}
$$
where $(k,n) \in \mathbb{Z}^2$, and where the linear operator ${\bf L}_n$ is defined
in a space of vector functions of $y$ by
\begin{align*}
{\bf L}_n v_n &= \nu_0 (D^2 - n^2 \alpha^2) v_n 
+ \Bigl( \begin{array}{c} i \alpha n \cr D \cr \end{array} \Bigr) q_n
- i n \alpha U v_n 
- \Bigl( \begin{array}{c} v_n^y U' \cr 0 \cr \end{array} \Bigr),
\\
0 &= i n \alpha v_n^x + D v_n^y,
\end{align*}
where $q_n$ is the related component of the pressure.
We know by assumption that 
\begin{itemize}
\item ${\bf L}_n - i k \omega_0$ is invertible for  $n \ne 0$ and  $k \ne \pm 1$,

\item for  $n = 1$, ${\bf L}_n - i \omega_0$ has a 1-dim kernel spanned
by $\zeta$, 

\item for $n = -1$, ${\bf L}_n + i \omega_0$ has a 1-dim kernel spanned by 
$\bar \zeta$,

\item for  $n \ne \pm 1$, ${\bf L}_n \mp i \omega_0$ is invertible,

\item for $k \ne 0$, ${\bf L}_0 - i k \omega_0$ is invertible.
\end{itemize}
It remains to study the invertibility of ${\bf L}_0$.


\subsection{Study of the inverse of ${\bf L}_0$}


\begin{lemma}\label{lemma 20}
If $f_{0,0} \in L^{\infty}_\eta$, then the equation
\begin{equation} \label{L0}
{\bf L}_0 v_{0,0} = \Bigl( \begin{array}{c} f_{0,0} \cr 0 \cr \end{array} \Bigr)
\end{equation}
has a unique solution such that $v_{0,0}^x$ is bounded.
Moreover, 
$$
\| v_{0,0} \|_{W^{2,\infty}}  \le C_\eta \| f_{0,0} \|_{L^{\infty}_\eta}
$$
for some positive constant $C_\eta$.
\end{lemma}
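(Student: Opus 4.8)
The plan is to solve the system \eqref{L0} explicitly, exploiting the fact that the zero--zero Fourier mode decouples the problem into a one--dimensional ODE system on $\mathbb{R}^+$. Writing out ${\bf L}_0$ from the formula in Section~4.1 with $n=0$, the divergence constraint $D v_{0,0}^y = 0$ together with the boundary condition $v_{0,0}^y|_{y=0}=0$ forces $v_{0,0}^y \equiv 0$, so the only unknown is the scalar $v_{0,0}^x(y)$, which must satisfy
\begin{equation*}
\nu_0 D^2 v_{0,0}^x = f_{0,0}, \qquad v_{0,0}^x(0) = 0,
\end{equation*}
where the second component of \eqref{L0} is automatically satisfied (it only involves the pressure gradient $D q_0$, which can be recovered afterwards, and the source there is $0$). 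So the whole lemma reduces to: given $f_{0,0} \in L^\infty_\eta(\mathbb{R}^+)$, find the unique bounded solution of $\nu_0 w'' = f_{0,0}$ on $\mathbb{R}^+$ with $w(0)=0$, and estimate it in $W^{2,\infty}$.

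The key point is the choice of the right particular solution so that boundedness is achieved: since $f_{0,0}$ decays like $e^{-\eta y}$, one can integrate twice starting from $+\infty$ for the first derivative. Concretely, set
\begin{equation*}
w'(y) = -\frac{1}{\nu_0}\int_y^\infty f_{0,0}(z)\,dz,
\qquad
w(y) = \int_0^y w'(z)\,dz.
\end{equation*}
Then $w'' = f_{0,0}/\nu_0$, $w(0)=0$, and $|w'(y)| \le (\nu_0 \eta)^{-1} e^{-\eta y}\|f_{0,0}\|_{L^\infty_\eta}$, so $w' \in L^\infty$ (indeed $L^1$), hence $w(y)=\int_0^y w'$ is bounded by $(\nu_0\eta^2)^{-1}\|f_{0,0}\|_{L^\infty_\eta}$; moreover $\|w\|_{W^{2,\infty}} = \|w\|_\infty + \|w'\|_\infty + \|w''\|_\infty \le C_\eta \|f_{0,0}\|_{L^\infty_\eta}$ with $C_\eta$ depending on $\nu_0$ and $\eta$. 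For uniqueness among bounded solutions: the difference of two solutions is a bounded affine function of $y$ vanishing at $0$, hence of the form $ay$, which is bounded only if $a=0$. Finally one notes that $w$ need not decay in $y$ (only $w',w''$ do), consistent with the fact that $v_{0,0}^x$ lives in $\mathring{W}^{2,\infty}$ and not in a weighted space --- this is exactly the phenomenon flagged earlier for the $0$-mode.

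There is essentially no serious obstacle here: the lemma is the $(k,n)=(0,0)$ endpoint of the spectral analysis, and the only thing to be careful about is that the natural Stokes resolvent machinery (Lemmas~\ref{lem:spectrumL0 expky}--\ref{lem: locationspect}) degenerates at $\lambda=0$ on the $0$-mode because $0$ lies in the essential spectrum; this is why we must solve by hand rather than invoke the resolvent bound. The mild subtlety worth spelling out is the appearance of the pressure: the second equation of ${\bf L}_0 v_{0,0}=(f_{0,0},0)^T$ reads $\nu_0 D^2 v_{0,0}^y + D q_{0} = 0$, which together with $v_{0,0}^y\equiv 0$ just determines $q_0$ as a constant (and $\Pi$ has already removed any ambiguity); so one should remark that \eqref{L0} is to be read modulo this harmless pressure term, exactly as in the definition of ${\bf L}_n$. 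With $w$ constructed and estimated as above, setting $v_{0,0}=(w,0)$ completes the proof.
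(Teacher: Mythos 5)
Your proof is correct and follows essentially the same route as the paper: solve $\nu_0 D^2 v_{0,0}^x = f_{0,0}$ by integrating once from $+\infty$ and once from $0$, fixing the integration constant by boundedness, and then reading off the $W^{2,\infty}$ bound from the $e^{-\eta y}$ decay of $f_{0,0}$. You additionally spell out the uniqueness argument (a bounded affine function vanishing at the origin must vanish) and the reduction $v_{0,0}^y\equiv 0$, both of which the paper leaves implicit; these are harmless and correct additions.
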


\begin{proof}
The equation (\ref{L0}) leads to
\begin{align*}
\nu_0 D^2 v_{0,0} = f_{0,0}.
\end{align*}
Taking care of the boundary condition $v_{0,0}^x = 0$ at $y = 0$ and its boundedness at infinity, we find
\begin{align*}
D v_{0,0}^x &= {1 \over \nu_0} \Bigl[ - \int_y^\infty f_{0,0}(s) \, ds + A \Bigr],
\\
v_{0,0}^x &= {1 \over \nu_0} \int_0^y d\tau \Bigl[ - \int_\tau^\infty f_{0,0}(s) \, ds
+ A \Bigr]
\end{align*}
for some constant $A$ to be determined so that $v_{0,0}^x$ is bounded
and $D v_{0,0}^x$ tends exponentially to $0$ at infinity. This leads
to $A = 0$ and to
$$
v_{0,0}^x(y) = - {1 \over \nu_0} \int_0^y d\tau \int_\tau^\infty f_{0,0}(s) \, ds ,
$$
and, by Fubini's theorem,
$$
v_{0,0}^x(y) = - {1 \over \nu_0} \Bigl[ \int_0^y s f_{0,0}(s) \, ds + y \int_y^\infty f_{0,0}(s) \, ds \Bigr],
$$
which ends the proof of the Lemma.
\end{proof}

We note that 
$$
\lim_{y \to + \infty} v_{0,0}^x(y) = - {1 \over \nu_0} \int_0^{+ \infty} s f_{0,0}(s) \, ds,
$$
which in general does not vanish. Thus, in general, $v_{0,0}^x$ does not go to $0$ at infinity, 
but $D v_{0,0}^x$ and $D^2 v_{0,0}^x$ decay exponentially fast at infinity. 
However $v_{0,0}$ only appears in the operator $(v \cdot \nabla) v$,
and thus only in terms of the form $(v_{0,0} \cdot \nabla) v$  and $(v \cdot \nabla) v_{0,0}$ where $v$ will be exponentially decaying.

\subsection{The eigenvectors $\zeta$ and $\zeta^\star$}


We look for 
$\zeta = e^{i \alpha x} \hat v(y) \in \mathcal{Z}_\eta$
where $\alpha \ne 0$ and 
$\hat v = (\hat v^x, \hat v^y) \in C^2_\eta$
satisfy
\begin{align*}
i \omega_0 \hat v &= \nu_0 (D^2 - \alpha^2) \hat v
+ \Bigl( \begin{array}{c} i \alpha \cr D \cr \end{array} \Bigr) \hat q
- i \alpha U \hat v
- \Bigl( \begin{array}{c} \hat v^y U' \cr 0 \cr \end{array} \Bigr),
\\
0 &= i \alpha \hat v^x + D \hat v^y 
\end{align*}
with $\hat v^x = \hat v^y = 0$ for $y = 0$.
This leads to $\hat v^y \in C^4_\eta$ and
\begin{equation} \label{2}
\Bigl[ i \omega_0 - \nu_0 (D^2 - \alpha^2) - i \alpha U \Bigr]
(D^2 - \alpha^2) \hat v^y + i \alpha U'' \hat v^y = 0
\end{equation}
with $\hat v^y = D \hat v^y = 0$ for $y = 0$, which is the Orr-Sommerfeld equation.

By assumption, (\ref{2}) has a non trivial smooth solution $\hat v^y(y)$, satisfying the boundary conditions, 
unique up to a multiplicative constant. Moreover, there exist $c_1 > 0$ and $\eta > 0$ such that
$$
| \hat v^y(y) | + | D \hat v^y(y)| \le c_1 e^{- \eta y}
$$
for $0 \le y < + \infty$. 
It results that there exists $c > 0$ and $\eta > 0$ such that
$$
| \hat v(y) | + | D \hat v(y) | \le c e^{- \eta y}
$$
for $0 \le y < + \infty$ (see \cite{Guo}).

\subsection{Pseudo-inverse of $\omega_0 d/ds - {\bf L}$}


The aim of this section is to construct a pseudo inverse to the linear operator
$$
\mathcal{L} = \omega_0 {d \over ds} - {\bf L}
$$ 
in a space of $2 \pi$ periodic vector functions (periodic in time and space).

Let us define $\zeta^*$, the eigenvector of ${\bf L}^\star$
for the eigenvalue $- i \omega_0$, such that 
$\langle \zeta, \zeta^\star \rangle = 1$ (see \cite{Kato}).
Let $\mathbb{T}_1 = \mathbb{R} / 2 \pi \mathbb{Z}$.
Let $\mathcal{H}$ be a subspace of $L^2(\mathbb{T}_1, \mathcal{X}_\eta)$ such that, for every $v \in \mathcal{H}$,
$$
\langle v, e^{i s} \zeta^\star \rangle_{\mathcal{H}} =
\langle v, e^{- i s} \overline{\zeta}^\star \rangle_{\mathcal{H}}
= 0
$$
and such that $v_{00} = 0$. 
Let us define
$$
H^\sharp = H^1(\mathbb{T}_1,\mathcal{X}_\eta)
\cap L^2(\mathbb{T}_1, \mathcal{Z}_\eta),
$$
together with its norm
$$
\| v \|_{H^\sharp}^2 = \| \partial_s v \|_{L^2(\mathbb{T}_1,\mathcal{X}_\eta)}^2 + \| v \|_{L^2(\mathbb{T}_1, \mathcal{Z}_\eta)}^2.
$$

\begin{lemma}
For $f \in \mathcal{H}$, there exists a unique $v \in H^\sharp \cap \mathcal{H}$ such that
$$
\Bigl( \omega_0 {d \over ds } - {\bf L} \Bigr) v = f.
$$
This defines a pseudo-inverse $\mathcal{L}^{-1}$ to ${\cal L}$ which satisfies
$$
\| v \|_{H^\sharp} \le \| f \|_{\mathcal{H}} .
$$
\end{lemma}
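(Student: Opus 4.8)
The plan is to decompose the equation $\mathcal{L} v = f$ into Fourier modes in time, so that we must solve $(i k \omega_0 - {\bf L}) v_k = f_k$ for each $k \in \mathbb{Z}$, where $v_k, f_k \in \mathcal{X}_\eta$. For $k = 0$ the constraint $v_{00} = 0$ together with the orthogonality conditions removes the kernel, and one inverts ${\bf L}$ on the complement of $\mathrm{span}(\zeta, \overline{\zeta})$ within the zero-time-mode space; here one uses that $0$ is not in the essential spectrum of ${\bf L}$ (Lemma \ref{newlemma}) and that the only eigenvalues on the imaginary axis are $\pm i \omega_0$, so ${\bf L}$ restricted to the spectral complement of these two eigenvalues is boundedly invertible $\mathcal{X}_\eta \to \mathcal{Z}_\eta$, except on the $0$-Fourier mode in $x$ where we invoke Lemma \ref{lemma 20} to get the $W^{2,\infty}$ bound. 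For $k = \pm 1$, the operator ${\bf L} \mp i \omega_0$ has the one-dimensional kernel spanned by $\zeta$ (resp. $\overline{\zeta}$); the orthogonality condition $\langle f_{\pm 1}, \zeta^\star \rangle = 0$ (resp. with $\overline{\zeta}^\star$) is exactly the Fredholm solvability condition, and the requirement that $v \in \mathcal{H}$ fixes the ambiguity in the solution by killing the component along $\zeta$. For $|k| \geq 2$, ${\bf L} - i k \omega_0$ is invertible with no solvability condition needed.

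The key point is the uniform-in-$k$ resolvent estimate that allows summing the modes back into $H^\sharp$. For $|k|$ large, Lemma \ref{lemma11} gives $\|(i k \omega_0 - {\bf L})^{-1}\|_{\mathcal{L}(\mathcal{X}_\eta)} \le C/|k|$; more precisely, by the structure of the operator one also controls $\|v_k\|_{\mathcal{Z}_\eta}$ by $C \| f_k \|_{\mathcal{X}_\eta}$ using that $(i k \omega_0 - {\bf L})^{-1}$ maps into $\mathcal{Z}_\eta$ with the gain coming from the second-order Stokes part (Lemma \ref{lem:equiv norms} and the holomorphic semigroup estimates). Then $\| \partial_s v \|_{L^2(\mathbb{T}_1,\mathcal{X}_\eta)}^2 = \sum_k k^2 \omega_0^2 \|v_k\|_{\mathcal{X}_\eta}^2$ and $\| v \|_{L^2(\mathbb{T}_1,\mathcal{Z}_\eta)}^2 = \sum_k \|v_k\|_{\mathcal{Z}_\eta}^2$, and from $i k \omega_0 v_k = {\bf L} v_k + f_k$ we read off $k \omega_0 \|v_k\|_{\mathcal{X}_\eta} \le \|{\bf L} v_k\|_{\mathcal{X}_\eta} + \|f_k\|_{\mathcal{X}_\eta} \le C\|v_k\|_{\mathcal{Z}_\eta} + \|f_k\|_{\mathcal{X}_\eta}$, so both pieces of the $H^\sharp$-norm are controlled by $\sum_k \|f_k\|_{\mathcal{X}_\eta}^2 = \|f\|_{\mathcal{H}}^2$, up to a constant. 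The finitely many exceptional modes $k \in \{-1, 0, 1\}$ contribute only a bounded amount by the arguments of the previous paragraph. After tracking constants carefully (rescaling the inner product on $\mathcal{H}$ if necessary, or absorbing the constant into the definition of the norms), one obtains the clean bound $\|v\|_{H^\sharp} \le \|f\|_{\mathcal{H}}$.

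Uniqueness follows because if $\mathcal{L} v = 0$ with $v \in H^\sharp \cap \mathcal{H}$, then each $v_k$ lies in the kernel of $i k \omega_0 - {\bf L}$, which is trivial for $k \ne \pm 1$ and spanned by $\zeta$ (resp. $\overline{\zeta}$) for $k = \pm 1$; the condition $v \in \mathcal{H}$ forces those components to vanish, hence $v = 0$. The solution $v$ automatically lies in $\mathcal{H}$ by construction (we solve mode-by-mode in the complement). The main obstacle I expect is establishing the uniform $\mathcal{X}_\eta \to \mathcal{Z}_\eta$ bound on $(i k \omega_0 - {\bf L})^{-1}$ with a constant independent of $k$ — Lemma \ref{lemma11} gives the $\mathcal{X}_\eta \to \mathcal{X}_\eta$ decay but one needs the gain of two derivatives uniformly, which requires combining the sectoriality of the Stokes part with the relatively compact perturbation $\boldsymbol{L}^{(1)}$ and, crucially, handling the $0$-Fourier-mode-in-$x$ component where the essential spectrum reaches up to $0$; there Lemma \ref{lemma 20} and the explicit formula it provides are what rescue the argument, since $i k \omega_0$ with $k \ne 0$ stays away from the essential spectrum $(-\infty, 0]$.
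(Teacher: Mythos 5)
Your overall strategy — time-Fourier decomposition, mode-by-mode inversion with a Fredholm condition at the resonant modes, resolvent decay from Lemma \ref{lemma11} for large $|k|$, then summation to land in $H^\sharp$ — matches the paper's proof (which in fact decomposes in both time and space Fourier modes $(k,n)$ simultaneously, making the case analysis cleaner). The main technical point you flag, namely a uniform-in-$k$ bound of $(ik\omega_0 - {\bf L})^{-1}$ from $\mathcal{X}_\eta$ into $\mathcal{Z}_\eta$ supplying the second-derivative gain, is indeed the heart of the estimate (\ref{3}) the paper states, so your diagnosis is sound.

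Your treatment of the $k=0$ time mode, however, is confused and would need to be fixed. First, $\zeta$ and $\overline{\zeta}$ are eigenvectors of ${\bf L}$ for the eigenvalues $\pm i\omega_0$, not $0$, so the equation $-{\bf L} v_0 = f_0$ has no kernel component along $\zeta, \overline{\zeta}$ and no Fredholm condition is needed there; the orthogonality conditions in the definition of $\mathcal{H}$ are imposed on the $(k,n) = (1,1)$ and $(-1,-1)$ components, i.e.\ on the $k = \pm 1$ time modes, not on $k=0$. Second, your claim that ``$0$ is not in the essential spectrum of ${\bf L}$ (Lemma \ref{newlemma})'' contradicts that very lemma: the essential spectrum of ${\bf L}_{(\nu)}$ restricted to $\mathring{L}^\infty$ (the $x$-Fourier-zero part) is exactly $(-\infty,0]$, which \emph{does} contain $0$. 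The way this difficulty is sidestepped in the present statement is simply that $\mathcal{H}$ requires $f_{0,0} = 0$ and $v_{0,0} = 0$, so the $(0,0)$ component never has to be inverted at all; Lemma \ref{lemma 20} (which solves ${\bf L}_0 v_{0,0} = f_{0,0}$ for nonzero $f_{0,0}$ decaying in $y$) is used later in the Lyapunov--Schmidt reduction to solve for the $(0,0)$ mode of the bifurcating solution, but plays no role in constructing this pseudo-inverse. In short: for $k=0$, with $v_{0,0}$ forced to vanish, ${\bf L}_n v_{0,n} = -f_{0,n}$ is solved directly for each $n \ne 0$ using the spectral assumptions, with no orthogonality condition and no invocation of Lemma \ref{lemma 20}.
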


\begin{proof}
As $f \in \mathcal{H}$, $f$ is of the form
$$
f(x,y,s) = \sum_{(n,k) \in \mathbb{Z}^2} f_{k,n}(y) e^{i (k s + n \alpha x)} 
$$
with
$$
\Bigl\langle f_{1,1} e^{i \alpha x} , \zeta^\star \Bigr\rangle 
= \Bigl\langle f_{-1,-1} e^{-i \alpha x} , \overline{\zeta^\star} \Bigr\rangle
= 0,
\quad f_{0,0} = 0.
$$
Similarly, we look for $v \in \mathcal{H}$, namely for a function $v$ of the form
$$
v(x,y,s) = \sum_{(n,k) \in \mathbb{Z}^2} v_{k,n}(y) e^{i (k s + n \alpha x)} 
$$
with
$$
\Big\langle v_{1,1} e^{i \alpha x} , \zeta^\star \Big\rangle 
= \Big\langle v_{-1,-1} e^{-i \alpha x} , \overline{\zeta^\star} \Big\rangle
= 0,
\quad v_{00} = 0.
$$
This leads to
\begin{equation} \label{bibi}
\Bigl( i k \omega_0 - {\bf L}_n \Bigr) v_{k,n} = f_{k,n}
\end{equation}
for $(k,n) \ne \pm (1,1)$ and $(k,n) \ne (0,0)$.

Using Lemma \ref{lemma11}, we know that there exist $k_0$ and $c$
such that if $|k| > k_0$ is large enough, the equation (\ref{bibi})
has a unique solution such that
\begin{equation} \label{3}
\| D^2 v_{k,n} \|_{C^0_\eta}
+ |n| \, \| D v_{k,n} \|_{C^0_\eta}
+ (1 + n^2) \| v_{k,n} \|_{C^0_\eta}
\le \| f_{k,n} \|_{C^0_\eta},
\end{equation}
and
$$
\| v_{k,n} \|_{C^0_\eta} \le {c \over |k|}
\| f_{k,n} \|_{C^0_\eta}
$$
provided $(k,n) \ne \pm (1,1),  (0,0)$.

For $n \ne \pm 1$, $n \ne 0$ and $|k| \le k_0$, ${\bf L}_n$ is invertible
by assumption in section $2.1$, thus (\ref{3}) is still valid
(up to the change of the factor $c > 0$).
For $n = \pm 1$, (\ref{3}) is also valid since $f_{k,n}$ is orthogonal to $\zeta^\star$ and
$\overline{\zeta^\star}$, and thus in the range of $i k \omega_0 - {\bf L}_n$.
For $n = 0$, as $f_{0,0} = 0$, $v_{0,0} = 0$, and we just have to consider the case $k \ne 0$, where $i k \omega_0 - {\bf L}_0$
is invertible by Lemma \ref{lemma11}, thus (\ref{3}) is still valid.

Combining all these estimates, we get that the pseudo inverse of $\mathcal{L}$ is bounded from ${\cal H}$ to $H^\sharp$.
\end{proof}




\subsection{Bifurcation of a time periodic solution}


We are looking for a solution $v$, which is $2 \pi$ periodic in $s$, 
$2 \pi / \alpha$ periodic in $x$, of the non linear equation
\begin{equation} \label{4}
\Bigl[ \omega {d \over ds} - {\bf L} \Bigr] v = \mu {\bf L}' v
+ B(v,v)
\end{equation}
where
$$
{\bf L}' v = \Pi \Delta v,
$$
$$
B(v,v) = - \Pi (v \cdot \nabla) v.
$$
Let
$$
H^{\sharp\sharp} = H^2(\mathbb{T}_1,\mathcal{X}_\eta)
\cap H^1(\mathbb{T}_1, \mathcal{Z}_\eta).
$$
We decompose $v \in H^{\sharp\sharp}$ as
$$
v = \underline{v} + w,
$$
$$
\underline{v} = A e^{i s } \zeta + \overline{A} e^{- i s } \bar \zeta + v_{0,0},
$$
$$
\langle w_{1,1} e^{i \alpha x}, \zeta^{\star} \rangle
= \langle w_{-1,-1} e^{- i \alpha x}, \overline{\zeta^\star} \rangle = 0,
$$
$$
w_{0,0} = 0,
$$
and define the projection $\frak{P}_0$ by
$$
w = \frak{P}_0 v.
$$
The projection $\frak{P}_0$ is bounded in $H^1(\mathbb{T}_1,{\cal X}_\eta)$
and in $H^{\sharp\sharp}$ and commutes with the linear operator $( \omega \partial_s - {\bf L})$. Later 
we will use the fact that our system is invariant under two different $SO(2)$ symmetries, an hence commutes
 with  the operator $T_a$ representing the translation in time 
$s \to s + a$, and the operator  $\tau_\beta$ representing the translation in space $ x \to x + \beta$.

Let 
$$
\xi = x + {s \over \alpha} .
$$

We now prove the following  bifurcation theorem, which precises Theorem \ref{maintheo} and gives an expansion 
of the bifurcation parameter $\mu$, of the time frequency $\omega$
and of  the time and space periodic solution near the bifurcation point in terms of
the amplitude $\varepsilon$.

\begin{theorem} \label{maintheo2}
For $\mu$ in a right or left neighborhood of $0$, there exists a bifurcated time-periodic solution of (\ref{4}) which is a traveling wave function $\widehat{V}_{\varepsilon }(\xi,y)$,
of the form
\begin{equation*}
\widehat{V}_{\varepsilon }=\varepsilon V_{1}+\varepsilon ^{2}V_{2}+\mathcal{O%
}(\varepsilon ^{3})\in \mathcal{Z}_{\eta},
\end{equation*}%
with%
\begin{eqnarray*}
V_{1} &=&(e^{i\alpha \xi }\widehat{\zeta }+e^{-i\alpha \xi }\overline{%
\widehat{\zeta }})\text{ }\in \mathring{H}_{\eta}^{2}, \\
V_{2} &=&\widehat{w}_{2,2}e^{2i\alpha \xi }+\overline{\widehat{w}_{2,2}}%
e^{-2i\alpha \xi }+v_{2,0},
\end{eqnarray*}%
\begin{eqnarray*}
\mu (\varepsilon ) &=&\varepsilon ^{2}\mu _{2}+\mathcal{O}(\varepsilon ^{4}),
\\
\omega (\varepsilon ) &=&\omega _{0}+\varepsilon ^{2}\omega _{2}+\mathcal{O}%
(\varepsilon ^{4}),
\end{eqnarray*}%
Moreover, we have 
\begin{eqnarray*}
\mu _{2} \Bigl\langle (D^{2}-\alpha ^{2})\widehat{\zeta },\widehat{\zeta }^{\ast
} \Bigr\rangle -i\omega _{2} 
&=&c-b,\text{ }\Re \Bigl\langle (D^{2}-\alpha ^{2})%
\widehat{\zeta },\widehat{\zeta }^{\ast } \Bigr\rangle >0, \\
\widehat{w}_{2,2}e^{2i\alpha \xi } &=&(2i\omega _{0}-\boldsymbol{L}^{(0)}%
)^{-1}B(e^{i\alpha \xi }\widehat{\zeta },e^{i\alpha \xi }\widehat{\zeta }%
)\in \mathring{H}_{\eta}^{2}, \\
v_{2,0} &=&-2L_{0}^{-1}B(e^{i\alpha \xi }\widehat{\zeta },e^{-i\alpha \xi }%
\overline{\widehat{\zeta }})\in \mathring{W}^{2,\infty }, \\
c &=&-\Bigl\langle 2B(e^{i\alpha \xi }\widehat{\zeta },v_{2,0}),e^{i\alpha \xi }%
\widehat{\zeta }^{\ast } \Bigr\rangle , \\
b &=& \Bigl\langle 2B(e^{-i\alpha \xi }\overline{\widehat{\zeta }},e^{2i\alpha \xi
}\widehat{w}_{2,2}),e^{i\alpha \xi }\widehat{\zeta }^{\ast } \Bigr\rangle ,
\end{eqnarray*}%
\end{theorem}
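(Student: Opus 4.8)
\textbf{Proof proposal for Theorem \ref{maintheo2}.}

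The plan is to apply a Lyapunov--Schmidt reduction in the space $H^{\sharp\sharp}$, treating the amplitude $\varepsilon$ as the bifurcation parameter and recovering $\mu$ and $\omega$ as functions of $\varepsilon$. We seek $v = \underline v + w$ with $\underline v = \varepsilon\,e^{is}\widehat\zeta + \varepsilon\,e^{-is}\overline{\widehat\zeta} + v_{0,0}$ and $w = \frak P_0 v$, together with scalar unknowns $\mu$ and $\omega$. Substituting this ansatz into \eqref{4} and applying $\frak P_0$, which commutes with $\omega\partial_s - {\bf L}$, gives the \emph{range equation}
\begin{equation*}
\Bigl(\omega\frac{d}{ds} - {\bf L}\Bigr) w = \frak P_0\Bigl[\mu{\bf L}'(\underline v + w) + B(\underline v + w, \underline v + w)\Bigr] - (\omega - \omega_0)\frak P_0\frac{d\underline v}{ds}.
\end{equation*}
First I would observe that on the range of $\frak P_0$ the operator $\omega_0\partial_s - {\bf L}$ is invertible by the pseudo-inverse Lemma of Section 4.4 (with the bound $\|\mathcal L^{-1}\| \le 1$), so by the implicit function theorem the range equation can be solved for $w = w(\varepsilon,\mu,\omega)$ in $H^{\sharp\sharp}$, analytic in its arguments, with $w = \mathcal O(\varepsilon^2 + |\mu|\varepsilon + |\omega-\omega_0|\varepsilon)$. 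The component $v_{0,0}$ is determined from Lemma \ref{lemma 20} applied to the $(0,0)$ Fourier mode of $B(v,v)$; because the quadratic identity \eqref{quadratic identity} shows $P_0 B(v,v) = P_0 B(\widetilde v, \widetilde v)$, this feeds in at order $\varepsilon^2$ and gives $v_{2,0} = -2 L_0^{-1} B(e^{i\alpha\xi}\widehat\zeta, e^{-i\alpha\xi}\overline{\widehat\zeta})$, lying in $\mathring W^{2,\infty}$ by that Lemma.

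Next I would write down the \emph{bifurcation equation} by projecting \eqref{4} onto $e^{is}\zeta^\star$ (the $e^{-is}\overline{\zeta^\star}$ projection being the complex conjugate). Pairing with $e^{is}\zeta^\star$ and using $\langle({\bf L} - i\omega_0)\,\cdot\,, e^{is}\zeta^\star\rangle = 0$ together with $\langle\zeta,\zeta^\star\rangle = 1$, the linear terms collapse to $-i(\omega - \omega_0)\varepsilon + \mu\,\varepsilon\,\langle(D^2-\alpha^2)\widehat\zeta,\widehat\zeta^\star\rangle$ plus higher-order corrections, and the quadratic term contributes $\langle B(v,v), e^{is}\zeta^\star\rangle$. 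Dividing by $\varepsilon$, the bifurcation equation has the schematic form
\begin{equation*}
\mu\,\langle(D^2-\alpha^2)\widehat\zeta,\widehat\zeta^\star\rangle - i(\omega-\omega_0) = \varepsilon^2\,(c - b) + \mathcal O(\varepsilon^4) + \mathcal O(\mu\varepsilon^2),
\end{equation*}
where the $\varepsilon^2$ coefficient comes from the two resonant quadratic interactions: $e^{is}\widehat\zeta$ with $v_{2,0}$ (giving $c$) and $e^{-is}\overline{\widehat\zeta}$ with the second harmonic $e^{2i\alpha\xi}\widehat w_{2,2}$ (giving $b$), with $\widehat w_{2,2}e^{2i\alpha\xi} = (2i\omega_0 - {\bf L}^{(0)})^{-1}B(e^{i\alpha\xi}\widehat\zeta, e^{i\alpha\xi}\widehat\zeta)$ obtained by inverting ${\bf L}_2 - 2i\omega_0$, which is invertible by assumption. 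Taking real and imaginary parts of this scalar complex equation and using $\Re\langle(D^2-\alpha^2)\widehat\zeta,\widehat\zeta^\star\rangle > 0$ (which follows from assumption (A3), the transversality condition $\partial_\nu\Re\lambda > 0$, via the eigenvalue perturbation formula $\lambda'(\nu_0) = \langle{\bf L}'\widehat\zeta,\widehat\zeta^\star\rangle = \langle(D^2-\alpha^2)\widehat\zeta,\widehat\zeta^\star\rangle$), the implicit function theorem yields $\mu(\varepsilon) = \varepsilon^2\mu_2 + \mathcal O(\varepsilon^4)$ and $\omega(\varepsilon) = \omega_0 + \varepsilon^2\omega_2 + \mathcal O(\varepsilon^4)$ with $\mu_2\langle(D^2-\alpha^2)\widehat\zeta,\widehat\zeta^\star\rangle - i\omega_2 = c - b$.

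Finally, to identify the solution as a genuine \emph{traveling wave} $\widehat V_\varepsilon(\xi,y)$ with $\xi = x + s/\alpha$, I would invoke the $SO(2)\times SO(2)$ equivariance: the system commutes with the time translation $T_a$ and the space translation $\tau_\beta$. The two simple eigenvalues $\pm i\omega_0$ with eigenvectors $e^{\pm i\alpha x}\hat v$ mean that the combined action of $T_a\tau_\beta$ on the critical eigenspace is by the single phase $e^{i(a + \alpha\beta)}$ — there is effectively only one phase, not two — so the bifurcating solution, after fixing the phase (say $A = \varepsilon \in \mathbb R_+$), is invariant under the one-parameter group $a \mapsto T_a\tau_{-a/\alpha}$, which forces $v$ to depend on $x$ and $s$ only through $\xi = x + s/\alpha$. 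This upgrades the time-periodic solution to a traveling wave and gives the stated form of $V_1$ and $V_2$. The expansions $V_1, V_2 \in \mathring H^2_\eta$ (resp. $v_{2,0}\in\mathring W^{2,\infty}$) follow from the mapping properties in Lemmas \ref{lem: L1}, \ref{lemma 20} and the boundedness of $B: \mathcal Z_\eta\times\mathcal Z_\eta \to \mathcal Y_\eta$.

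\textbf{Main obstacle.} The delicate point is that the classical Hopf bifurcation machinery requires a spectral gap, which is absent here: the essential spectrum of ${\bf L}$ fills $(-\infty,0]$ and touches the imaginary axis at the origin. The resolution is that the pseudo-inverse $\mathcal L^{-1}$ on $\frak P_0 H^{\sharp\sharp}$ only needs invertibility of $ik\omega_0 - {\bf L}_n$ for $(k,n)\ne\pm(1,1),(0,0)$ and of ${\bf L}_0$ on the subspace where $f_{0,0}$ is handled separately — precisely the content of Lemmas \ref{lemma11} and \ref{lemma 20}. Thus the zero mode, which is where the essential spectrum bites, is dealt with by the explicit ODE solution of Lemma \ref{lemma 20} rather than by a resolvent bound, and the fact that $v_{0,0}$ does not decay in $y$ is harmless because it only ever enters $B$ paired against exponentially decaying factors. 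Verifying that the implicit function theorem applies in $H^{\sharp\sharp}$ — in particular that $B$ maps $H^{\sharp\sharp}$ boundedly into the relevant space and that $v_{0,0}$'s lack of decay does not break the estimates — is the technical heart of the argument; everything else is the standard Lyapunov--Schmidt bookkeeping.
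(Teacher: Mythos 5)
Your proposal is correct and follows essentially the same Lyapunov--Schmidt strategy as the paper's proof: you decompose $v = \underline{v} + w$ with $w = \frak{P}_0 v$, invert the linear part on $\frak{P}_0 H^{\sharp\sharp}$ via the pseudo-inverse lemma, determine the $(0,0)$ mode through the explicit ODE solution of Lemma \ref{lemma 20}, project onto $e^{is}\zeta^\star$ to obtain the scalar bifurcation equation (solved by the implicit function theorem using $\Re\langle (D^2-\alpha^2)\widehat\zeta, \widehat\zeta^\star\rangle>0$), and use the $SO(2)\times SO(2)$ equivariance via $T_a\tau_{-a/\alpha}=\mathrm{Id}$ to conclude the solution is a traveling wave in $\xi = x+s/\alpha$. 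Your identification of the central obstacle (essential spectrum reaching $0$) and its resolution (treating the zero Fourier mode by the explicit formula rather than by a resolvent bound, and noting that $v_{0,0}$'s lack of decay is harmless inside $B$) also matches the paper's argument.
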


We note that the bifurcation is supercritical if $\mu_2>0$, i.e. $\Re (c-b)>0$, where
$$
c - b = \Bigl\langle 4 B \Bigl( \zeta, {\bf L}_{0}^{-1} B(\zeta,\overline{\zeta})
- 2 B(\bar \zeta, (2 i \omega_0 - L)^{-1} B(\zeta,\zeta) \Bigr), \zeta^\star \Bigr\rangle .
$$

\begin{proof}
We use an adapted Lyapunov-Schmidt method (variant of the implicit function theorem).
Using the decomposition $v = \underline{v} + w$, the system becomes 
$$
\Bigl( \omega_0 {d \over ds} - {\bf L} \Bigr) w - {\bf L}_{0} v_{0,0}
= \mu {\bf L}' v - (\omega - \omega_0) {dv \over ds} + B(v,v),
$$
where we look for $v$ such that $w \in \frak{P}_0 H^{\sharp\sharp}$, $A \in \mathbb{C}$,
$v_{0,0} \in W^{2,\infty}$
when $(\mu, \omega - \omega_0)$ is close to $0$
in $\mathbb{R}^2$. After decomposition, this becomes
\begin{align} \label{5}
\Bigl( \omega_0 {d \over ds} - {\bf L} \Bigr) w
&= \mu \frak{P}_0 {\bf L}'(\underline{v} + w) 
- (\omega - \omega_0) {d w \over ds} 
+ \frak{P}_0 B(\underline{v} + w, \underline{v} + w),
\end{align}
\begin{equation} \label{6}
\Big\langle \mu {\bf L}'(\underline{v} + w) - (\omega - \omega_0)
{d \underline{v} \over ds} 
+ B(\underline{v} + w, \underline{v} + w) , \zeta^\star e^{i s} \Big\rangle
= 0,
\end{equation}
\begin{equation} \label{7}
{\bf L}_{0} v_{0,0} + \mu {\bf L}' v_{0,0} 
+ \Big[ B(\underline{v} + w, \underline{v} + w ) \Bigr]_{0,0}
= 0,
\end{equation}
where we look for solutions $(w,A,v_{0,0})$ in a neighborhood of $0$
in
$$
\frak{P}_0 H^{\sharp\sharp} \times \mathbb{C} \times W^{2,\infty} .
$$
We first solve (\ref{5}) with respect to $w \in \frak{P}_0 H^{\sharp\sharp}$ in function of 
$\underline{v}$, $\omega - \omega_0$, $v_{0,0}$ and $\mu$ in a neighborhood of $0$.
Moreover, the choice of our spaces (see (\ref{3})) implies that 
$$
{dw \over ds}, \frak{P}_0 B(v,v) \in
H^1(  \mathbb{T}_1 ,{\cal X}_\eta),
$$
which allows to apply the implicit function theorem with respect to $w$. 
The principal part which is independent of $w$ in the right hand side of (\ref{5}), is
$$
\mu {\bf L}' \underline{v} + \frak{P}_0 B(\underline{v},\underline{v})
$$
so we find
$$
w = W(A,\overline{A}, \omega - \omega_0, \mu, v_{0,0}) \in \frak{P}_0 H^{\sharp\sharp},
$$
with
\begin{align*}
W(A,\overline{A}, \omega - \omega_0, \mu, v_{0,0})  &= \Bigl( \omega_0
{d \over ds} - {\bf L} \Bigr)^{-1} 
\Bigl[\mu \frak{P}_0{\bf L}' \underline{v} + \frak{P}_0 B(\underline{v},\underline{v}) \Bigr]
\\
& \quad + O \Bigl(
\| \underline{v} \|^3 + \Bigl[ |\mu| + |\omega - \omega_0 | \Bigr]
\| \underline{v} \|^2 + |\mu| \, | \omega - \omega_0 | \, \| \underline{v} \| \Bigr).
\end{align*}
We note that for $A = 0$, {\it id est} for $\underline{v} = v_{0,0}$, we have
$$
W(0,0,\omega - \omega_0,\mu,v_{0,0}) = 0.
$$
Using now $T_a$, we have
$$
T_a( A\zeta e^{is} ) = A e^{i a} \zeta e^{is},
\qquad
T_a \underline{v}(s) = \underline{v}(s+a),
$$
hence, the commuting property leads to
$$
T_a W (A, \overline{A}, \omega - \omega_0, \mu, v_{0,0}) 
= W(e^{i a} A , e^{- i a } \overline{A}, \omega - \omega_0, \mu, v_{0,0}).
$$
 Moreover, we note that,  using $\tau_{-a / \alpha}$
\begin{equation} \label{propagation}
T_a \tau_{-a / \alpha} = Id, \text{ for any } a\in \mathbb{R}
\end{equation}
on the $\underline{v}$ part.

Replacing $w$ by $W$ in equations (\ref{6}) and (\ref{7}), we obtain an infinite
dimensional system
\begin{align*}
F(A, \overline{A}, \omega - \omega_0, \mu, v_{0,0}) &= 0,
\\
{\bf L}_{0} v_{0,0} + \mu {\bf L}' v_{0,0} 
+ [ B(\underline{v} + W, \underline{v} + W) ]_{0,0} &= 0.
\end{align*}
We note that, applying $T_a$ for any $a \in \mathbb{R}$,
$$
F(e^{i a } A , e^{- i a} \overline{A}, \omega - \omega_0, \mu, v_{0,0} )
= e^{ia} F(A, \overline{A}, \omega - \omega_0, \mu, v_{0,0} ),
$$
so that $A$ is in factor in $F$, which only depends on $|A|^2$. Moreover
$$
\Big\langle \mu {\bf L}' (\underline{v} + W), \zeta^\star e^{is} \Big\rangle
= A \mu \langle \Delta \zeta, \zeta^\star \rangle+ O( | \mu | \, \| W \|),
$$
$$
\Big\langle (\omega - \omega_0) {d\underline{v} \over ds}, \zeta^\star e^{is} \Big\rangle
= i A (\omega - \omega_0),
$$
$$
\Big\langle B(\underline v, \underline v), \zeta^\star e^{is} \Big\rangle
= 2 A \Big\langle B(\zeta, v_{0,0} ), \zeta^\star \Big\rangle,
$$
hence, it is clear that $F$ is of the form
\begin{align} \label{8}
0 &= A \mu \langle \Delta \zeta, \zeta^\star \rangle
- i A (\omega - \omega_0) + b A |A|^2 + 2 A \langle B(\zeta,v_{0,0}),\zeta^\star \rangle
\\
& \quad + 
O\Bigl( |A| \, |\mu|^2 + |\mu| \, |A|^2
+ |\mu| \, |A| \, |v_{0,0} |
+ |A| \, |v_{0,0}|^2 + |A|^4 \Bigr)
\end{align}
where
\begin{equation} \label{9}
b = 2 \Big\langle B \Big( \overline{\zeta}, (2 i \omega_0 - L)^{-1} B(\zeta,\zeta) \Big), \zeta^\star  \Big\rangle.
\end{equation}
Finally, looking for $A \ne 0$ leads to
\begin{align} \label{10}
0 &= \mu \langle \Delta \zeta, \zeta^\star \rangle- i (\omega - \omega_0)
+ b |A|^2 + 2 \langle B(\zeta,v_{0,0}), \zeta^\star \rangle
\\ & \quad
+ O \Bigl(  |\mu|^2 + (|\mu| + |\omega - \omega_0| ) |A|^2 + |A|^4 \Bigr).
\end{align}
Let us notice that
$$
\langle {\bf L}' \zeta, \zeta^\star \rangle 
= \langle \Delta \zeta, \zeta^\star \rangle
= \partial_\nu \lambda(\alpha,\nu_0(\alpha)).
$$
The $(0,0)$ mode equation  (\ref{7}) has a very specific form. Indeed, let $P_0$ be the projection giving the 0-Fourier mode.  We observe that 
\begin{equation*}
B(P_0 v,P_0 v)=0,\text{   for any  }v\in W^{2,\infty},
\end{equation*}
hence the term $B(\underline{v} + W, \underline{v} + W )$ decays like $e^{-\eta y}$. It results that  $[B(\underline{v} + W, \underline{v} + W )]_{0,0} \in L_{\eta}^{\infty}$. Moreover we observe that
\begin{equation*}
{\bf L}_{0,0}+\mu {\bf L}'=(1+\frac{\mu}{\nu_0}){\bf L}_{0,0},
\end{equation*}
hence, using Lemma \ref{lemma 20}, (\ref{7}) may be written in $W^{2,\infty}$ as
\begin{equation*}
(1+\frac{\mu}{\nu_0})v_{0,0}+{\bf L}_{0,0}^{-1}[B(\underline{v} + W, \underline{v} + W )]_{0,0}=0,
\end{equation*}
which can be solved in $v_{0,0} \in W^{2,\infty}$ by the implicit function theorem, noticing that
 $A$ only occurs through $|A|^2$.
The principal part which is independent of $v_{0,0}$ in (\ref{7})
is
$$
\Bigl[ B \Bigl( A e^{i s} \zeta 
+ \bar A e^{- i s} \bar \zeta,
A e^{i s} \zeta + \bar A e^{-i s} \bar \zeta \Bigr) \Bigr]_{0,0}
= 2 | A|^2 B(\zeta, \bar \zeta) \in C^0_\eta.
$$
This leads to
$$
v_{0,0} = - 2 |A|^2 {\bf L}_{0,0}^{-1} B(\zeta, \overline{\zeta})
+ O \Bigl( |\mu | \, |A|^2 + |A|^4 \Bigr).
$$
Replacing $v_{0,0}$ by its expression
in (\ref{10}) leads to a two dimensional real system, which can be solved by the implicit
function theorem, with respect to $\mu$ and $\omega - \omega_0$.
The equivariance of the system under the groups $T_a$ and
$\tau_\beta$ 
implies that (\ref{propagation}) also holds on $W$.
Thus we obtain a traveling wave, with a function depending on $(s,x)$ through $x+s/\alpha$. 
Defining
$$
c = 4 \Big\langle B(\zeta, L_{0,0}^{-1} B(\zeta, \overline{\zeta})),
\zeta^\star \Big\rangle,
$$
we finally obtain the bifurcating solution, parametrized by $A$
(defined up to a phase shift)
\begin{align*}
\mu &= {\Re(c - b) \over \Re \langle \Delta \zeta, \zeta^\star \rangle}
|A|^2 + O(|A|^4)
\\ 
\omega - \omega_0 & = \Bigl( \Im (b-c) 
+ {\Re (c - b) \Im \langle \Delta \zeta, \zeta^\star \rangle \over
\Re \langle \Delta \zeta, \zeta^\star \rangle} \Bigr) |A|^2 + O(|A|^4),
\\
v_{0,0} &= - 2 |A|^2 L_{0,0}^{-1} B(\zeta,\overline{\zeta}) + O(|A|^4)
\in W^{2,\infty},
\\
w &= A^2 e^{2 i s} (2 i \omega_0 - L)^{-1} B(\zeta,\zeta)
+ c.c. + O(|A|^3) \in \frak{P}_0 H^{\sharp\sharp},
\\
v &= A e^{is} \zeta + \overline{A} e^{-i s} \overline{\zeta} + v_{0,0} + w.
\end{align*}
The theorem is proved as soon as we replace $|A|$ by $\varepsilon$.
\end{proof}

We note that the phase of $A$ is arbitrary, which corresponds to an arbitrary shift parallel to the $x$ axis , 
or to a shift in time. Moreover, by assumption,
$$
\Re \langle \Delta \zeta, \zeta^\star \rangle > 0.
$$
Then, the supercriticality or subcriticality of the bifurcation depends on
the sign of $\Re (c - b)$ which needs to be computed.

\section{Nonlinear stability of the bifurcating traveling wave}


In all what follows the bracket $\langle \cdot ,\cdot \rangle $ is
understood as the duality product between $\mathcal{X}_{\eta}=\mathring{L}%
_{\eta}^{2}\oplus \mathring{L}^{\infty }$ and $\mathcal{X}_{\eta}^{\ast }=(%
\mathring{L}_{\eta}^{2})^{\ast }\oplus \mathring{L}^{1}$.


\subsection{Study of the linearized operator}


We proved that the bifurcating periodic solution is in fact a travelling
wave, function of $(\alpha x+\omega t,y).$ It is then natural to change
coordinates and to replace $x$ by 
\begin{equation*}
\xi =x+\frac{\omega t}{\alpha }.
\end{equation*}%
With these coordinates, we define%
\begin{equation*}
\widehat{v}(\xi ,y,t)=v(x,y,t),
\end{equation*}%
so that the Navier-Stokes system becomes%
\begin{equation}
\frac{d\widehat{v}}{dt}=\boldsymbol{L}_{\nu ,\omega }\widehat{v}+B(\widehat{v%
},\widehat{v}),\quad\nabla \cdot \widehat{v}=0,  \label{new NS equ}
\end{equation}%
with%
\begin{eqnarray*}
\boldsymbol{L}_{\nu ,\omega }\widehat{v} =\boldsymbol{L}_{(\nu )}\widehat{v%
}-\frac{\omega }{\alpha }\frac{\partial \widehat{v}}{\partial \xi } 
=\boldsymbol{L}^{(0)}\widehat{v}+\mu \boldsymbol{L}^{\prime }\widehat{v}-\frac{%
\omega }{\alpha }\frac{\partial \widehat{v}}{\partial \xi },
\end{eqnarray*}%
where%
\begin{equation*}
\boldsymbol{L}^{(0)}=\boldsymbol{L}_{(\nu_0)}^{(0)}+\boldsymbol{L}^{(1)}.
\end{equation*}%
The linear operator $\boldsymbol{L}_{\nu ,\omega }$ is acting in $\mathcal{X}%
_{\eta}$, with domain $\mathcal{Z}_{\eta}.$ In the following we need to localize
the essential spectrum of the linear operator 
$$\boldsymbol{L}_{(\nu )}^{(0)}+\boldsymbol{L}^{(0,1)}-%
\frac{\omega}{\alpha }\frac{\partial }{\partial \xi }=\nu \Pi \Delta -\frac{%
\omega+\alpha U_+ }{\alpha }\frac{\partial }{\partial \xi }$$ in $\boldsymbol{\mathring{L%
}}_{\eta}^{2}.$ This is described in the following Lemma.

\begin{lemma}
\label{lem: spect L(0)omega}There exists $\eta>0$ small enough, such that for
any $\lambda $ in the spectrum of 
$$
\nu \Pi \Delta -\frac{\omega+\alpha U_+ }{\alpha }%
\frac{\partial }{\partial \xi }
$$ 
acting in $\boldsymbol{\mathring{L}}_{\eta}^{2}$,
we have $\lambda \in \Sigma_{U_{+},\omega}$ where (see Figure \ref{spectrumc}) 
\begin{eqnarray*}
\Sigma_{U_{+},\omega} = & & \Bigl\{ \Re \lambda \leq -\nu (\alpha ^{2}-\eta^{2}) \Bigr\} 
\\
&\cap& \Bigl\{ \, \Bigl\{ 2\pi /3 \leq \arg |(\lambda _{0}+\nu \alpha ^{2})|\leq \pi \Bigr\} 
\cup \Bigl\{
\Re \lambda \leq -\frac{\nu \alpha ^{2}}{(\omega+\alpha U_+) ^{2}+4\nu ^{2}\alpha^{2}\eta^{2}}(\Im \lambda )^{2}+\eta^{2}\nu  \Bigr\} \, \Bigr\}.
\end{eqnarray*}%
For $\lambda $ outside of this region $\Sigma _{U+,\omega}$, where 
\begin{equation*}
0\leq |\arg (\lambda _{0}+\nu \alpha ^{2})|\leq \frac{2\pi}{3}-\delta
\end{equation*}%
with $0<\delta <\pi /6$, and 
where we add some small $\delta_1$ to $U_+$, there exists $C>0$ such that%
\begin{equation*}
\Bigl\| \Bigl( \nu \Pi \Delta -\frac{\omega+\alpha U_+ }{\alpha }\frac{\partial }{\partial \xi }%
-\lambda \mathbb{I} \Bigr)^{-1} \Bigr\|_{\mathcal{L}(\boldsymbol{\mathring{L}}%
_{\eta}^{2})}\leq \frac{C}{|\lambda +\nu \alpha ^{2}|}.
\end{equation*}
\end{lemma}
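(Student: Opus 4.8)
The plan is to reduce the statement to the constant-coefficient analysis already carried out in Appendix \ref{newappendix} for Lemma \ref{newlemma}. The key remark is that, since $U_+$ and $\omega$ are constants, the operator
\[
\mathcal{A}:=\nu \Pi \Delta -\frac{\omega +\alpha U_+}{\alpha }\frac{\partial }{\partial \xi }
\]
has constant coefficients, and in the variable $\xi$ (where, at fixed $t$, $\partial_x=\partial_\xi$) it is exactly the operator $\boldsymbol{L}_{(\nu )}^{(0)}+\boldsymbol{L}^{(1,0)}=\nu \Pi \Delta -U_+\partial_x$ of Lemma \ref{newlemma}, with the constant transport speed $U_+$ replaced by $c:=U_++\omega/\alpha$; the extra term $-\tfrac{\omega}{\alpha}\partial_\xi$ merely shifts that speed, and $\Pi[c\,\partial_\xi v]=c\,\partial_\xi v$ since $\partial_\xi v$ stays in $\mathring{L}_\eta^2$. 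Because the shift $x\mapsto\xi$ is an isometry of $\mathring{L}_\eta^2$, the whole argument of Appendix \ref{newappendix} may be transcribed with $U_+$ replaced by $c$; in particular the parabola coefficient becomes $\frac{\nu}{c^{2}+4\nu ^{2}\eta^{2}}=\frac{\nu \alpha ^{2}}{(\omega +\alpha U_+)^{2}+4\nu ^{2}\alpha ^{2}\eta^{2}}$, which is precisely the one defining $\Sigma_{U_+,\omega}$.

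I would then recall the mechanism producing the region. Expanding $\mathcal{A}$ in Fourier modes in $\xi$, with wavenumber $n\alpha$, decouples it into ODE operators $\mathcal{A}_n$ acting in $y$; for $n=0$ the transport term disappears, leaving the Stokes operator, whose spectral contribution sits in the $\mathring{L}^\infty$ block (Lemma \ref{lemma3}) and is irrelevant here. For $n\neq 0$, conjugating by $e^{\eta y}$ identifies $\mathring{L}_\eta^2$ with $L^2(\mathbb{R}^+)$ and replaces $D$ by $D-\eta$; the spectrum of the resulting constant-coefficient system on the half-line is controlled by its characteristic roots, namely a pair $\pm|n|\alpha$ from the pressure (harmless, since $\eta<\alpha/2$ keeps these off the critical line), and the pair coming from $D^2-n^2\alpha^2-(\lambda+in\alpha c)/\nu$. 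Writing the borderline condition $\Re\sqrt{n^2\alpha^2+(\lambda+in\alpha c)/\nu}=\eta$ in real and imaginary parts gives the shifted parabolic curves $\Re\lambda=\nu\eta^2-\nu n^2\alpha^2-\frac{(\Im\lambda+n\alpha c)^2}{4\nu\eta^2}$; over $n\neq0$ the rightmost boundary occurs at $n=\pm1$ (which also forces $\Re\lambda\le-\nu(\alpha^2-\eta^2)$), and the two curves for $n=\pm1$ are both enclosed in the region of the stated form. The alternative $2\pi/3\le|\arg(\lambda+\nu\alpha^2)|\le\pi$ arises from the same characteristic pair in the regime where its square root is no longer dominated by $\eta$, exactly as in Lemma \ref{lemma3}. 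Hence $Sp|_{\mathring{L}_\eta^2}\,\mathcal{A}\subset\Sigma_{U_+,\omega}$.

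For the resolvent bound I would argue mode by mode: when $\lambda$ lies in the sector $0\le|\arg(\lambda+\nu\alpha^2)|\le 2\pi/3-\delta$ and outside the region obtained from $\Sigma_{U_+,\omega}$ by replacing $U_+$ with $U_++\delta_1$, the distance from $\lambda$ to the range of the $n$-th modal symbol is at least a fixed multiple of $|\lambda+\nu\alpha^2|$, uniformly in $n$ (large $|n|$ is harmless since the mode-$n$ spectrum lies to the left of $\Re\lambda=-\nu n^2\alpha^2$), so that $\|(\mathcal{A}_n-\lambda)^{-1}\|\le C/|\lambda+\nu\alpha^2|$ with $C$ independent of $n$; the small margin $\delta_1$ is exactly what keeps this lower bound from degenerating up to the boundary of $\Sigma_{U_+,\omega}$. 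Reassembling the modes as in the proof of estimate (\ref{estimresol1zero}) in Lemma \ref{lemma3} then yields $\|(\mathcal{A}-\lambda\mathbb{I})^{-1}\|_{\mathcal{L}(\mathring{L}_\eta^2)}\le C/|\lambda+\nu\alpha^2|$, which also reproves the spectral inclusion.

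The main obstacle is not any individual estimate but the bookkeeping of the two competing regions — the $\eta$-dependent parabolic region coming from advection plus diffusion, and the fixed angular sector coming from the Stokes part — done uniformly in the mode number $n$ and uniformly up to the boundary of $\Sigma_{U_+,\omega}$; this is precisely the role of the auxiliary parameter $\delta_1$. Since $\mathcal{A}$ is constant-coefficient, no genuinely new analytic difficulty appears beyond those already handled in Appendices \ref{proof2} and \ref{newappendix}, and the proof is essentially a transcription of those arguments with $U_+$ replaced by $U_++\omega/\alpha$.
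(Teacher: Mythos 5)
Your proposal is correct and follows the same route the paper takes: the paper's own proof of this lemma is a one-line reference to the proof of Lemma~\ref{newlemma} (Appendix~\ref{newappendix}) with $U_+$ replaced by $U_+ + \omega/\alpha$, and your transcription of that argument --- including the mode-by-mode characteristic-root analysis, the envelope computation producing the coefficient $\nu\alpha^2/\bigl((\omega + \alpha U_+)^2 + 4\nu^2\alpha^2\eta^2\bigr)$, and the reading of $\delta_1$ as a uniform margin keeping the resolvent bound non-degenerate up to $\partial\Sigma_{U_+,\omega}$ --- simply fills in the details the authors leave implicit.
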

The proof of this Lemma is identical to the proof of Lemma \ref{newlemma} for the essential spectrum, up 
to the change of $U_+$ in $U_+ + \omega/\alpha$. This enlarges the parabolic region.
\begin{figure}[th]
\begin{center}
\includegraphics[width=5cm]{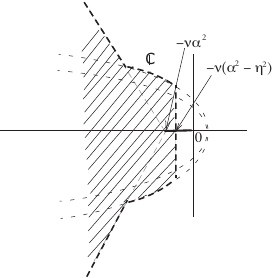}
\end{center}
\caption{Hatched region $\Sigma_{U_+,\omega}$ and negative real line
containing the spectrum of $\protect\nu\Pi\Delta-(U_{+}+\frac{\protect\omega }{%
\protect\alpha })\frac{\partial }{\partial \protect\xi }$ and the essential
spectrum of $\boldsymbol{L}_{\protect\nu,\protect\omega}$.}
\label{spectrumc}
\end{figure}

We know that there is a bifurcating {steady} solution of (\ref%
{new NS equ}) which is%
\begin{equation*}
\widehat{V}_{\varepsilon }(\xi ,y)\overset{def}{=}V_{\varepsilon }(x,y,t),%
\text{ }\omega =\omega (\varepsilon ),\mu =\mu (\varepsilon ),
\end{equation*}%
as described in Theorem \ref{maintheo2}.

Now we introduce $\phi _{0}^{\ast },\phi _{1}^{\ast }\in (\mathring{L}%
_{\eta}^{2})^{\ast }$ defined by%
\begin{eqnarray*}
\phi _{0}^{\ast } &=&\frac{i}{2}(e^{i\alpha \xi }\widehat{\zeta ^{\ast }}%
-e^{-i\alpha \xi }\overline{\widehat{\zeta ^{\ast }}}), \\
\phi _{1}^{\ast } &=&\frac{1}{2}(e^{i\alpha \xi }\widehat{\zeta ^{\ast }}%
+e^{-i\alpha \xi }\overline{\widehat{\zeta ^{\ast }}}),
\end{eqnarray*}%
and we prove the following Lemma:

\begin{lemma}
\label{lem:structure Lomega}For $\eta>0$ small enough, 
there exists $M$ such
that the spectrum of $\boldsymbol{L}_{\nu ,\omega }$ is for one part composed of an
essential spectrum included in the region $\Sigma _{\omega }$
and in the half negative real line. The rest of
the spectrum is a set of isolated eigenvalues $\lambda $ with finite
multiplicities, such that%
\begin{equation*}
\lambda =\lambda _{n}^{(0)}-in\omega ,\text{ }
\end{equation*}%
where $\lambda _{n}^{(0)}$ is an eigenvalue of $\boldsymbol{L}_{(\nu )}$
associated with the eigenvector $\zeta _{n}=e^{in\alpha }\widehat{v}_{n}\in 
\mathring{H}_{\eta}^{2}.$

Defining the linear operator%
\begin{equation*}
\mathcal{A}_\varepsilon = \boldsymbol{L}_{\nu ,\omega }+2B(\widehat{V}_{\varepsilon },\cdot ),
\end{equation*}%
then, the essential spectrum of $\mathcal{A}_\varepsilon$ is made of the half negative real line and of  
a perturbation of order $\varepsilon^2$ of the region $\Sigma_\varepsilon$ of figure \ref{spectrumc}.

For $\varepsilon =0,$ the operator%
\begin{equation*}
\mathcal{A}_0 = \boldsymbol{L}_{\nu ,\omega }|_{\varepsilon =0}=\boldsymbol{L}_{(\nu_0)}-\frac{\omega
_{0}}{\alpha }\frac{\partial }{\partial \xi }
\end{equation*}%
has a double $0$ eigenvalue. For $\varepsilon \neq 0,$ $0$ is a simple
eigenvalue of $\mathcal{A}_\varepsilon$ with corresponding eigenvector%
\begin{eqnarray*}
v_{\varepsilon } &=&\frac{1}{\alpha \varepsilon }\frac{\partial \widehat{V}%
_{\varepsilon }}{\partial \xi }=i(e^{i\alpha \xi }\widehat{\zeta }%
-e^{-i\alpha \xi }\overline{\widehat{\zeta }})+\mathcal{O}(\varepsilon )%
\text{ }\in \mathring{H}_{\eta}^{2} \\
\langle v_{\varepsilon },\phi _{0}^{\ast }\rangle &=&1,\text{ }\langle
v_{\varepsilon },\phi _{1}^{\ast }\rangle =0,
\end{eqnarray*}%
and there is another simple eigenvalue $\sigma _{\varepsilon
}=2 \Re (b-c)\varepsilon ^{2}+\mathcal{O}(\varepsilon ^{3})$ in the neighborhood
of $0.$ All other eigenvalues $\lambda $ of $\mathcal{A}_\varepsilon$ are such that there exists $\gamma $
independent of $\varepsilon ,$ with%
\begin{equation*}
\Re \lambda <-\gamma <0
\end{equation*}%
for $\varepsilon $ close to $0$. Moreover the eigenvector $u_{\varepsilon }$
corresponding to $\sigma _{\varepsilon }$ has an order $1$ part in $\mathring{H}%
_{\eta}^{2},$ and an order $\varepsilon $ part in $\mathring{%
W}^{2,\infty },$ and $u_{\varepsilon }$ satisfies%
\begin{equation*}
\langle u_{\varepsilon },\phi _{0}^{\ast }\rangle =0,\text{ }\langle
u_{\varepsilon },\phi _{1}^{\ast }\rangle =1.
\end{equation*}
\end{lemma}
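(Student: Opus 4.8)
The plan is to analyze $\boldsymbol{L}_{\nu,\omega}$ and then $\mathcal{A}_\varepsilon$ in three layers: first the essential spectrum, then the discrete spectrum away from $0$, and finally the delicate two-dimensional block near $\lambda=0$. For the essential spectrum of $\boldsymbol{L}_{\nu,\omega}$, I would invoke Lemma \ref{lem: spect L(0)omega}: writing $\boldsymbol{L}_{\nu,\omega}=\boldsymbol{L}_{(\nu)}^{(0)}+\boldsymbol{L}^{(0,1)}-\frac{\omega}{\alpha}\partial_\xi+\boldsymbol{L}^{(1,c)}$, the first three terms have essential spectrum confined to $\Sigma_{U_+,\omega}\cup(-\infty,0]$, and $\boldsymbol{L}^{(1,c)}$ is relatively compact with respect to $\boldsymbol{L}_{(\nu)}^{(0)}$ (Lemma \ref{relatcomp}), so by Kato's Theorem 5.35 the essential spectrum is unchanged and the remainder consists of isolated eigenvalues of finite multiplicity. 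The identification $\lambda=\lambda_n^{(0)}-in\omega$ comes from the observation that in the moving frame the operator acts block-diagonally on Fourier modes $e^{in\alpha\xi}$, and on the $n$-th block it equals $\boldsymbol{L}_n-in\omega$; an eigenvector $\zeta_n=e^{in\alpha\xi}\hat v_n$ of $\boldsymbol{L}_{\nu,\omega}$ with eigenvalue $\lambda$ corresponds to $\hat v_n$ solving $(\boldsymbol{L}_n-in\omega)\hat v_n=\lambda\hat v_n$, i.e. $\boldsymbol{L}_n\hat v_n=(\lambda+in\omega)\hat v_n$, which is exactly the eigenvalue relation for $\boldsymbol{L}_{(\nu)}$.

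Next, for $\mathcal{A}_\varepsilon=\boldsymbol{L}_{\nu,\omega}+2B(\widehat V_\varepsilon,\cdot)$: since $\widehat V_\varepsilon=O(\varepsilon)$ in $\mathcal{Z}_\eta$ and its zero mode $v_{0,0}=-2|A|^2 L_{0,0}^{-1}B(\zeta,\bar\zeta)$ has only exponentially decaying derivatives, the multiplication operator $2B(\widehat V_\varepsilon,\cdot)$ is a relatively compact perturbation on $\mathcal{X}_\eta$ whose zero-mode contribution shifts the parabolic region $\Sigma_{U_+,\omega}$ by a term linear in the (bounded, non-decaying) limit of $v_{0,0}^x$ at infinity — an $O(\varepsilon^2)$ deformation — while leaving the half-line $(-\infty,0]$ untouched. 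Hence the essential-spectrum claim. For the discrete spectrum away from $0$: at $\varepsilon=0$, $\mathcal{A}_0=\boldsymbol{L}_{(\nu_0)}-\frac{\omega_0}{\alpha}\partial_\xi$ has, by the previous paragraph, eigenvalues $\lambda_n^{(0)}-in\omega_0$; the two eigenvalues $\pm i\omega_0$ of $\boldsymbol{L}_{(\nu_0)}$ on modes $n=\pm1$ both map to $0$, giving the stated double $0$ eigenvalue, and assumptions (A1)--(A3) together with Lemma \ref{lem: locationspect} guarantee a uniform spectral gap $\Re\lambda<-\gamma$ for all other eigenvalues, which persists under the $O(\varepsilon)$ perturbation by standard resolvent estimates on a fixed contour.

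The heart of the proof — and the main obstacle — is the splitting of the double $0$ eigenvalue of $\mathcal{A}_0$ for $\varepsilon\neq0$. I would set up a Lyapunov--Schmidt / spectral-projection reduction onto the two-dimensional (generalized) eigenspace spanned by $v_\varepsilon$ and $u_\varepsilon$, dual to $\phi_0^\star,\phi_1^\star$. One eigenvalue stays pinned at $0$: this is forced by the translation invariance $T_a\tau_{-a/\alpha}=\mathrm{Id}$, since differentiating the stationary equation $\mathcal{A}_\varepsilon$-fixed-point relation in $\xi$ shows $\frac{1}{\alpha\varepsilon}\partial_\xi\widehat V_\varepsilon$ lies in the kernel of $\mathcal{A}_\varepsilon$; expanding $\widehat V_\varepsilon=\varepsilon V_1+O(\varepsilon^2)$ with $V_1=e^{i\alpha\xi}\hat\zeta+e^{-i\alpha\xi}\overline{\hat\zeta}$ gives $v_\varepsilon=i(e^{i\alpha\xi}\hat\zeta-e^{-i\alpha\xi}\overline{\hat\zeta})+O(\varepsilon)$ and the normalization $\langle v_\varepsilon,\phi_0^\star\rangle=1$, $\langle v_\varepsilon,\phi_1^\star\rangle=0$. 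The second eigenvalue $\sigma_\varepsilon$ is computed by perturbation theory on the reduced $2\times2$ matrix: its leading behavior is governed by the same cubic-coefficient computation that produced $\mu(\varepsilon)$ and $\omega(\varepsilon)$ in Theorem \ref{maintheo2}, and comparing with the expansions there yields $\sigma_\varepsilon=2\Re(b-c)\varepsilon^2+O(\varepsilon^3)$, with eigenvector $u_\varepsilon$ whose order-$1$ part lies in $\mathring H_\eta^2$ and whose order-$\varepsilon$ correction (coming from the forced $v_{0,0}$-type mode) lies in $\mathring W^{2,\infty}$, normalized by $\langle u_\varepsilon,\phi_0^\star\rangle=0$, $\langle u_\varepsilon,\phi_1^\star\rangle=1$. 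The technical difficulty is that this reduction must be carried out despite the absence of a spectral gap between the discrete eigenvalues and the essential spectrum reaching $0^-$ along the real axis; the resolution is that the relevant spectral projection is still well-defined by a small circle around $0$ that stays in the resolvent set (the essential spectrum near $0$ lies on the negative real axis, so a circle of radius $o(1)$ but not touching $\mathbb{R}^-$ at non-positive points — or rather a keyhole-type contour — isolates the two-dimensional block), and the resolvent bounds from Lemmas \ref{lemma3}, \ref{lem: locationspect} and \ref{lem: spect L(0)omega} make the projection bounded on $\mathcal{X}_\eta$. I would defer the detailed contour construction and the explicit $2\times2$ perturbation computation to the Appendix.
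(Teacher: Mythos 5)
Your overall architecture — essential spectrum via Lemma \ref{lem: spect L(0)omega} plus relative compactness, the Fourier-block identification $\lambda=\lambda_n^{(0)}-in\omega$, differentiation of the fixed-point equation in $\xi$ to pin one eigenvalue at $0$, and a perturbation expansion to extract $\sigma_\varepsilon = 2\Re(b-c)\varepsilon^2 + O(\varepsilon^3)$ — matches the paper. But there are two genuine gaps.

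First, your treatment of the essential spectrum of $\mathcal{A}_\varepsilon$ is internally inconsistent. You assert that $2B(\widehat V_\varepsilon,\cdot)$ is a relatively compact perturbation \emph{and} that it shifts the parabolic region $\Sigma_{U_+,\omega}$ by $O(\varepsilon^2)$; these two statements cannot both hold, since a relatively compact perturbation leaves the essential spectrum fixed. The correct split is the one the paper makes explicit: $2B(P_0\widehat V_\varepsilon,\cdot)$ is \emph{not} relatively compact (because $P_0\widehat V_\varepsilon = v_{0,0}$ does not decay as $y\to\infty$, only its derivatives do), is of size $O(\varepsilon^2)$, acts only on the oscillatory modes, and is responsible for the $O(\varepsilon^2)$ deformation of the essential spectrum; $2B(\widetilde P\widehat V_\varepsilon,\cdot)$ is relatively compact and does not move it.

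Second, your proposed device for justifying the perturbation theory on the double eigenvalue — a small circle or keyhole contour around $0$ — does not work, and this is precisely the nontrivial obstacle the lemma has to overcome. The essential spectrum of $\boldsymbol{L}_{\nu_0,\omega_0}$ contains the entire half-line $(-\infty,0]$, which accumulates at $0$; there is no closed contour in the resolvent set separating $0$ from the rest of the spectrum, so no Riesz spectral projection for the two-dimensional block can be defined by such an integral. The paper's actual escape is structural rather than contour-based: the double eigenvalue $0$ and the whole two-dimensional generalized eigenspace (\ref{eigenspace for 0}) live in $\mathring{H}_\eta^2$, i.e.\ in the span of the nonzero Fourier modes; the $(-\infty,0]$ piece of the essential spectrum, by contrast, comes entirely from the $\mathring{W}^{2,\infty}$ (zero-mode) component, where $\boldsymbol{L}^{(1)}$ vanishes and the operator reduces to the Stokes operator. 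When the operator is restricted to $\mathring{H}_\eta^2$, $0$ \emph{is} an isolated point of the spectrum, the adjoint eigenvectors $\phi_0^\star,\phi_1^\star$ exist, and the standard Lyapunov--Schmidt/perturbation computation is legitimate on that reduced space. Without this observation — which is the paper's way of resolving the ``non-isolated eigenvalue'' warning it cites from Kato — the expansion $\sigma=\varepsilon\sigma_1+\varepsilon^2\sigma_2+\dots$ is merely formal. Your Ansatz, your identification of $v_\varepsilon$ from translation invariance, and your reading off of $\sigma_2$ from the same coefficients $b,c$ that appear in Theorem \ref{maintheo2} are all correct; what is missing is the justification that the expansion converges, which the paper supplies precisely by this subspace-decoupling argument, not by a contour.
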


\begin{proof}
The perturbation $-\frac{\omega }{\alpha }\frac{\partial }{\partial \xi }$
is in $\mathcal{L}(\mathcal{Z}_{\eta},\mathring{H}_{\eta}^{1})$ and is
relatively bounded with respect to $\boldsymbol{L}^{(0)}.$ The proof of
for Lemma \ref{lem: locationspect} may be applied to the perturbated operator $%
\boldsymbol{L}^{(1)}-\frac{\omega }{\alpha }\frac{\partial }{\partial \xi }$
and implies that the spectrum of $\boldsymbol{L}_{\nu ,\omega }$ is included in
the region $\Sigma $ defined by%
\begin{equation*}
|\lambda +\nu \alpha ^{2}|<M,
\end{equation*}%
\begin{equation*}
\text{or } 2\pi /3 \leq |\arg (\lambda +\nu \alpha ^{2})|\leq \pi ,\text{ or }%
\lambda \in (-\infty ,0],
\end{equation*}%
which is a right bounded region, centered on the negative axis. This result
is valid for the whole spectrum, including the essential spectrum and the
isolated eigenvalues.

Since the perturbation operator $\boldsymbol{L}^{(0,1)}$ is relatively compact
with respect to $\boldsymbol{L}^{(0)}$ and to $\nu \Pi
\Delta -\frac{\omega+\alpha U_{+} }{\alpha }\frac{\partial }{\partial \xi },$ we can
assert that the essential spectrum of $\boldsymbol{L}_{\nu ,\omega }$ is
formed by $(-\infty ,0]$, corresponding to its action on  $\mathring{W}%
^{2,\infty },$ and a part in $\Sigma _{U_+,\omega }$ corresponding to its
action on $\mathring{H}_{\eta}^{2}$ (see Figure \ref{spectrumc}). The rest of
the spectrum is composed of isolated eigenvalues with finite multiplicities,
deduced from those of $\boldsymbol{L}_{(\nu )}$ in the following simple way.
First we notice that there is no change in the subspace $%
\mathring{L}^{\infty }$ (0-Fourier mode),  where the eigenvalues are the same for both operators.
All other eigenvalues correspond to eigenvectors in $\mathring{H}_{\eta}^{2}$
of the form 
\begin{equation*}
e^{ni\alpha \xi }\widehat{\zeta }(y),\text{ }n\neq 0,
\end{equation*}%
so that an eigenvalue $\lambda $ of $\boldsymbol{L}_{(\nu )}$ corresponds to
an eigenvalue $\lambda -in\omega $ of $\boldsymbol{L}_{\nu ,\omega },$ which
does not change the real part of the eigenvalue.

Now about the linear operator ${\cal A}_\varepsilon$, we observe that the perturbation $2 B(\hat{V}_\varepsilon,\cdot)$ is not relatively compact because of the $0$ Fourier component of $\hat{V}_\varepsilon$, which
does not decay as $y$ goes to infinity. However, the part 
$2 B(P_0 \hat{V}_\varepsilon,\cdot)$, which is of order $\varepsilon^2$,
only acts on non zero Fourier components. Hence it perturbs the spectrum
(including the essential spectrum) at order $\varepsilon^2$ (see \cite{Kato}).
The rest of the perturbation $2 B(\tilde{P} \hat{V}_\varepsilon,\cdot)$
is relatively compact, hence does not perturb the essential spectrum.

Now, we observe that $0$ is a double eigenvalue of $\boldsymbol{L}_{\nu
,\omega }|_{\varepsilon =0}=\boldsymbol{L}_{(\nu_0)}-\frac{\omega _{0}}{\alpha }\frac{%
\partial }{\partial \xi }$ with corresponding $2$-dimensional (on $\mathbb{R}$)
eigenspace%
\begin{equation}
ae^{i\alpha \xi }\widehat{\zeta }+\overline{a}e^{-i\alpha \xi }\overline{%
\widehat{\zeta }}.  \label{eigenspace for 0}
\end{equation}%
Then differentiating 
$$
{\bf L}_{\nu,\omega} \widehat{V}_\varepsilon + B( \widehat{V}_\varepsilon, \widehat{V}_\varepsilon) = 0
$$
with respect to $\xi $ leads to%
\begin{equation*}
\boldsymbol{L}_{\nu ,\omega }v_{\varepsilon }+2B(\widehat{V}_{\varepsilon
},v_{\varepsilon }) = 0,
\end{equation*}%
which shows that $v_{\varepsilon }\in \mathring{H}_{\eta}^{2}$ (defined in Lemma %
\ref{lem:structure Lomega}) belongs to the kernel of the linearized operator ${\cal A}_\varepsilon$,
in particular for $\varepsilon =0,$ which corresponds to $a=i$ in (\ref%
{eigenspace for 0}). Let us look for eigenvalues close to $0$ for $%
\varepsilon $ close to $0.$ Here the problem is not standard since $0$ is
double, not isolated and lies in the essential spectrum of $\boldsymbol{L}%
_{\nu _{0},\omega _{0}}.$ However we can justify the following computations, which are
identical to the computations used in the standard case. As soon as we can
obtain the principal part of a potential eigenvector and of the potential
eigenvalue, the rest of the expansion in powers of $\varepsilon $ relies on
the implicit function theorem, as in the computation of the bifurcating
periodic solution. Now we check that%
\begin{equation*}
\langle v_{\varepsilon },\phi _{0}^{\ast }\rangle =1,\text{ }\langle
v_{\varepsilon },\phi _{1}^{\ast }\rangle =0,
\end{equation*}%
due to the fact that higher order terms in $v_{\varepsilon }$ occur with
harmonics $e^{ni\xi },$ with $|n|>1.$ We note that, for any $u\in \mathring{%
H}_{\eta}^{2}$, we have in $\mathring{L}_{\eta}^{2},$ 
\begin{equation}
\Bigl\langle (\boldsymbol{L}_{(\nu_0)}-\frac{\omega _{0}}{\alpha }\frac{\partial }{\partial
\xi })u,\phi _{j}^{\ast } \Bigr\rangle =0,\text{ }j=0,1.  \label{prop phi*}
\end{equation}%
Indeed, by definition of the duality product $\langle \mathring{L}_{\eta}^{2},(%
\mathring{L}_{\eta}^{2})^{\ast }\rangle$, we have 
$$
\Bigl\langle \frac{\partial }{\partial \xi }u,\phi _{j}^{\ast }\Bigr\rangle 
+\Bigl\langle u,\frac{\partial }{\partial \xi }\phi _{j}^{\ast }\Bigr\rangle =0,
$$ 
hence
\begin{equation*}
\Bigl\langle (\boldsymbol{L}_{(\nu_0)}-\frac{\omega _{0}}{\alpha}\frac{\partial }{\partial
\xi })u,\phi _{j}^{\ast } \Bigr\rangle =
\Bigl\langle u,(\boldsymbol{L}_{(\nu_0)}^{\ast }+\frac{\omega _{0}}{\alpha }\frac{\partial }{\partial \xi })\phi _{j}^{\ast} \Bigr\rangle =0
\end{equation*}%
since%
\begin{equation*}
\Bigl( \boldsymbol{L}_{(\nu_0)}^{\ast }+\frac{\omega _{0}}{\alpha }\frac{\partial }{\partial
\xi } \Bigr)\widehat{\zeta ^{\ast }}=0.
\end{equation*}%
We may interpret (\ref{prop phi*}) in saying that $\phi _{j}^{\ast }$ for $j=1,2$
are in the kernel of $(\boldsymbol{L}_{(\nu_0)}^{\ast }+\frac{\omega _{0}}{\alpha }%
\frac{\partial }{\partial \xi })$.  It is known (see \cite%
{Kato} p.185) that non isolated eigenvalues might not exist for the adjoint
operator. Here we are saved by the fact that this occurs in $\mathring{H}%
_{\eta}^{2}$ where $0$ is not in the essential spectrum of the reduced operator.

Now, since we perturb a double eigenvalue, we need to find another
eigenvalue (necessarily real) close to $0.$ For this search, we make the Ansatz%
\begin{eqnarray*}
\sigma &=&\varepsilon \sigma _{1}+\varepsilon ^{2}\sigma _{2}+... \\
u &=&u_{0}+\varepsilon u_{1}+\varepsilon ^{2}u_{2}+...\in \mathcal{Z}_{\eta}
\end{eqnarray*}%
and identify powers of $\varepsilon $ in the identity%
\begin{equation*}
\Bigl[ \boldsymbol{L}_{(\nu_0)}+\mu \boldsymbol{L}^{\prime }-\frac{\omega }{\alpha }%
\frac{\partial }{\partial \xi }+2B(\widehat{V}_{\varepsilon },\cdot
)\Bigr]u=\sigma u.
\end{equation*}%
Order $\varepsilon ^{0}$ terms lead to%
\begin{equation*}
\boldsymbol{L}_{(\nu_0)}u_{0}-\frac{\omega _{0}}{\alpha }\frac{\partial u_{0}}{%
\partial \xi }=0,
\end{equation*}%
which gives%
\begin{equation*}
u_{0}=ae^{i\alpha \xi }\widehat{\zeta }+\overline{a}e^{-i\alpha \xi }%
\overline{\widehat{\zeta }},\text{ }a\in \mathbb{C},
\end{equation*}%
where we notice that $a=i$ corresponds to the already known solution $%
v_{\varepsilon }.$ At order $\varepsilon ,$ we obtain%
\begin{equation*}
\sigma _{1}u_{0}=\boldsymbol{L}_{(\nu_0)}u_{1}-\frac{\omega _{0}}{\alpha }\frac{%
\partial u_{1}}{\partial \xi }+2B(V_{1},u_{0}).
\end{equation*}%
Taking the duality product with $e^{i\alpha \xi }\widehat{\zeta ^{\ast }}$
leads to%
\begin{equation*}
a\sigma _{1}= \Bigl\langle 2B(V_{1},u_{0}),e^{i\alpha \xi }\widehat{\zeta ^{\ast }}%
\Bigr\rangle
\end{equation*}%
which vanishes because of the periodicity in $\xi $ and of factors like $%
e^{ni\alpha \xi },$ with $n$ odd in the duality product. Hence $\sigma
_{1}=0 $ and%
\begin{eqnarray*}
u_{1} &=&-\widetilde{ \Bigl( \boldsymbol{L}_{(\nu_0)}-\frac{\omega _{0}}{\alpha }\frac{%
\partial }{\partial \xi } \Bigr)}^{-1}2B(V_{1},u_{0}) \\
&=&2a\widehat{w}_{2,2}e^{2i\alpha \xi }+2\overline{a}\overline{\widehat{w}%
_{2,2}}e^{-2i\alpha \xi }+(a+\overline{a})v_{2,0}.
\end{eqnarray*}%
At order $\varepsilon ^{2}$ we obtain%
\begin{equation*}
\sigma _{2}u_{0}= \Bigl( \boldsymbol{L}_{(\nu_0)}-\frac{\omega _{0}}{\alpha }\frac{\partial }{%
\partial \xi } \Bigr)u_{2}+\mu _{2}\boldsymbol{L}^{\prime }u_{0}-\frac{\omega _{2}%
}{\alpha }\frac{\partial u_{0}}{\partial \xi }%
+2B(V_{1},u_{1})+2B(V_{2},u_{0}).
\end{equation*}%
Taking the duality product with $e^{i\alpha \xi }\widehat{\zeta ^{\ast }}$
leads to%
\begin{equation*}
a\sigma _{2}=a\mu _{2} \Bigl\langle (D^{2}-\alpha ^{2})\widehat{\zeta },\widehat{%
\zeta ^{\ast }} \Bigr\rangle -ia\omega _{2}-(a+\overline{a})c+2ab+\overline{a}b-ac.
\end{equation*}%
The identity%
\begin{equation*}
\mu _{2} \Bigl\langle (D^{2}-\alpha ^{2})\widehat{\zeta },\widehat{\zeta }^{\ast
}\Bigr\rangle -i\omega _{2}=c-b
\end{equation*}%
reduces the above identity to%
\begin{eqnarray*}
a\sigma _{2} =(a+\overline{a})(b-c).
\end{eqnarray*}%
We know that $(a+\overline{a})=0$ is a solution at any order, which gives the
eigenvalue $0$ and the eigenvector $v_{\varepsilon }$. We deduce the other
solution (which is defined up to a real factor) by choosing %
\begin{equation*}
\arg a=\theta,\text{  }a=\frac{e^{i\theta}}{\cos\theta}
\end{equation*}%
and%
\begin{equation*}
\sigma _{2}=2\Re(b-c).
\end{equation*}%
Now
\begin{equation*}
\langle u_{0},\phi _{0}^{\ast }\rangle =0,\text{ }\langle u_{0},\phi
_{1}^{\ast }\rangle =1.
\end{equation*}%
 We can go on the computation of $\sigma_n$ using the Fredholm alternative as in the simple case, so that finally
\begin{equation*}
\langle u_{\varepsilon },\phi _{0}^{\ast }\rangle =0,\text{ }\langle
u_{\varepsilon },\phi _{1}^{\ast }\rangle =1,
\end{equation*}%
because of periodicity in $\xi $ and factors as $e^{in\xi }$ with $|n|\neq 1$
for $u_{\varepsilon }-u_{0}.$
This ends the proof of Lemma \ref%
{lem:structure Lomega}. 
\end{proof}


\subsection{Elimination of the Goldstone mode. Operator $\boldsymbol{A}_{\protect\varepsilon }.$}


The Goldstone mode is the eigenvector $\frac{\partial V_{\varepsilon }}{%
\partial \xi }=\alpha \varepsilon v_{\varepsilon }.$ Using the invariance of
the system (\ref{new NS equ}) under translations in $\xi ,$ we can eliminate
the $0$ eigenvalue and obtain a system in the codimension 1 subspace%
\begin{equation*}
\langle u,\phi _{0}^{\ast }\rangle =0.
\end{equation*}%
Indeed, let us set%
\begin{equation}
\widehat{v}=\tau _{b}(\widehat{V}_{\varepsilon }+\varepsilon u),\text{ }%
\langle u,\phi _{0}^{\ast }\rangle =0,  \label{def shift mode}
\end{equation}%
where $\tau _{b}$ represents the shift $\xi \rightarrow \xi +b,$ 
keeping in mind that
$\langle v_{\varepsilon },\phi _{0}^{\ast }\rangle =1$.
Then 
\begin{equation*}
\frac{d\widehat{v}}{dt}=\tau _{b}\frac{\varepsilon du}{dt}+\varepsilon \frac{%
db}{dt}\tau _{b} \Bigl( \alpha v_{\varepsilon }+\frac{\partial u}{\partial \xi } \Bigr)
\end{equation*}%
and (\ref{new NS equ}) becomes, after factoring out $\varepsilon \tau _{b}$%
\begin{equation}
\frac{du}{dt}+\frac{db}{dt} \Bigl( \alpha v_{\varepsilon }+\frac{\partial u}{%
\partial \xi } \Bigr)=\boldsymbol{L}_{\nu ,\omega }u+2B(\widehat{V}_{\varepsilon
},u)+\varepsilon B(u,u).  \label{decomp NS}
\end{equation}%
Taking the duality product with $\phi _{0}^{\ast }$ leads to%
\begin{equation}
\frac{db}{dt} \Bigl( \alpha + \Bigl\langle \frac{\partial u}{\partial \xi },\phi
_{0}^{\ast } \Bigr\rangle \Bigr)=\langle \boldsymbol{L}_{\nu ,\omega }u,\phi _{0}^{\ast
}\rangle + \Bigl\langle 2B(\widehat{V}_{\varepsilon },u)+\varepsilon B(u,u),\phi
_{0}^{\ast } \Bigr\rangle ,  \label{equ ba}
\end{equation}%
which may be written as%
\begin{equation}
\frac{db}{dt}=g_{\varepsilon }(u).  \label{equ b}
\end{equation}%
Taking into account 
\begin{equation*}
\Bigl\langle \frac{\partial u}{\partial \xi },\phi _{0}^{\ast } \Bigr\rangle 
= -\Bigl\langle
u,\frac{\partial }{\partial \xi }\phi _{0}^{\ast } \Bigr\rangle =\alpha \langle
u,\phi _{1}^{\ast } \rangle ,
\end{equation*}%
\begin{equation*}
\langle \boldsymbol{L}_{\nu ,\omega }u,\phi _{0}^{\ast }\rangle =\langle \mu 
\boldsymbol{L}^{\prime }u,\phi _{0}^{\ast }\rangle +\frac{\omega -\omega _{0}%
}{\alpha }\langle u,\phi _{1}^{\ast }\rangle =\mathcal{O}(\varepsilon ^{2}\|%
\widetilde{u}\|_{\mathring{L}_{\eta}^{2}}),
\end{equation*}%
we observe that, for $\|u\|_{\mathcal{Z}_{\eta}}$ small enough, 
\begin{equation*}
g_{\varepsilon }(u)=\frac{\varepsilon }{\alpha } \Bigl\langle 2B(V_{1},u),\phi
_{0}^{\ast } \Bigr\rangle +\mathcal{O} \Bigl(\varepsilon ^{2}\|u\|_{\mathcal{Z}%
_{\eta}}+\varepsilon \|u\|_{\mathcal{Z}_{\eta}}^{2} \Bigr).
\end{equation*}%
Now we define the projection $Q_{\varepsilon }$ for any $v\in \mathcal{X}_{\eta}$ by
\begin{equation*}
Q_{\varepsilon }v=v-\langle v,\phi _{0}^{\ast }\rangle v_{\varepsilon },
\end{equation*}%
and observe that, since $v_{\varepsilon }\in \mathring{H}_{\eta}^{2},$ 
\begin{equation*}
P_{0}(Q_{\varepsilon }v)=P_{0}v,\text{ }Q_{\varepsilon }u_{\varepsilon
}=u_{\varepsilon }.
\end{equation*}%
The rest of (\ref{decomp NS}), which is now independent of $b,$ becomes
\begin{equation}
\frac{du}{dt}=\boldsymbol{A}_{\varepsilon }u+\varepsilon Q_{\varepsilon
}B(u,u)-g_{\varepsilon }(u)Q_{\varepsilon }\frac{\partial u}{\partial \xi },
\label{proj NS}
\end{equation}%
where the linear operator $\boldsymbol{A}_{\varepsilon }$ is defined by 
\begin{equation*}
\boldsymbol{A}_{\varepsilon }=Q_{\varepsilon }(\boldsymbol{L}_{\nu ,\omega
}+2B(\widehat{V}_{\varepsilon },\cdot ))=Q_{\varepsilon }\mathcal{A}_\varepsilon.
\end{equation*}%
We know that \ the eigenvector $u_{\varepsilon }$ satisfies%
\begin{equation*}
\langle u_{\varepsilon },\phi _{0}^{\ast }\rangle =0,
\end{equation*}%
and 
\begin{equation*}
\Bigl( \boldsymbol{L}_{\nu ,\omega }+2B(\widehat{V}_{\varepsilon },\cdot
) \Bigr) u_{\varepsilon }=\sigma _{\varepsilon }u_{\varepsilon },
\end{equation*}%
so that we have

\begin{equation*}
\boldsymbol{A}_{\varepsilon }u_{\varepsilon }=Q_{\varepsilon } \Bigl( \boldsymbol{L}%
_{\nu ,\omega }+2B(\widehat{V}_{\varepsilon },\cdot ) \Bigr) u_{\varepsilon
}=\sigma _{\varepsilon }u_{\varepsilon }.
\end{equation*}

\begin{remark}
We note that $u_{\varepsilon }$ is not in $\mathring{H}^{2}$ since it
contains a $0-$Fourier mode. In fact we have
\begin{equation*}
u_{\varepsilon }=\widetilde{P}u_{\varepsilon }+P_{0}u_{\varepsilon },\text{ }%
\langle u_{\varepsilon },\phi _{1}^{\ast }\rangle =1,\text{ }%
P_{0}u_{\varepsilon }=2v_{2,0}\varepsilon +\mathcal{O}(\varepsilon ^{2}).
\end{equation*}
\end{remark}

The rest of the spectrum of $\boldsymbol{A}_{\varepsilon }$ is the same as
the spectrum of $\mathcal{A}_\varepsilon$ except the eigenvalues $\sigma _{\varepsilon }$ and $0.$ The
estimates obtained for the spectrum are similar to those for $\boldsymbol{L}%
_{\nu ,\omega }$.  Due to the perturbation of
order $\varepsilon ,$ we can assert that all eigenvalues other than $\sigma
_{\varepsilon }$, outside of the region where the essential spectrum is
located, are in the region indicated on Figure \ref{spectrumc}, have finite
multiplicities, are isolated and located on the left of the line $\Re
\lambda <-k<0.$ Observe that now the subspace with $0$ average and\ the
subspace with only the $0$ Fourier mode are coupled by the term $B(%
\widehat{V}_{\varepsilon },u)$: 
\begin{eqnarray*}
\widetilde{P}B(\widehat{V}_{\varepsilon },u) &=&\widetilde{P}B(\widetilde{P}%
\widehat{V}_{\varepsilon },\widetilde{u})+B(\widetilde{P}\widehat{V}%
_{\varepsilon },P_{0}u)+B(P_{0}\widehat{V}_{\varepsilon },\widetilde{u}) \\
P_{0}B(\widehat{V}_{\varepsilon },u) &=&P_{0}B(\widetilde{P}\widehat{V}%
_{\varepsilon },\widetilde{u}),
\end{eqnarray*}%
so that the estimates on the semi group are more complicated if we wish to
split the subspaces.


\subsection{Nonlinear stability}


In the subspace $P_{\varepsilon }\mathcal{X}_{\eta}$ we need to solve the
initial value problem%
\begin{equation}
\frac{du}{dt}=\boldsymbol{A}_{\varepsilon }u+B_{\varepsilon }(u),\text{ }%
u(0)\in \mathcal{Z}_{\eta}  \label{new NS stab}
\end{equation}%
where $u(0)$ is small enough in norm, and where $B_{\varepsilon }$ is analytic
from $\mathcal{Z}_{\eta}$ to $\mathcal{Y}_{\eta}$ with%
\begin{equation*}
B_{\varepsilon }(u)=\varepsilon Q_{\varepsilon }B(u,u)-g_{\varepsilon
}(u)Q_{\varepsilon }\frac{\partial u}{\partial \xi }.
\end{equation*}%
Using that $Q_{\varepsilon }\widetilde{P}=%
\widetilde{P}Q_{\varepsilon },$ $P_{0}Q_{\varepsilon }=P_{0}$ and $%
B(P_{0}u,P_{0}v)\equiv 0$, we have
\begin{eqnarray*}
\widetilde{P}B_{\varepsilon }(\widetilde{u}+P_{0}u) &=&\varepsilon
Q_{\varepsilon }\widetilde{P}B(\widetilde{u},\widetilde{u})+2\varepsilon
Q_{\varepsilon }B(\widetilde{u},P_{0}u)-g_{\varepsilon }(\widetilde{u}%
+P_{0}u)Q_{\varepsilon }\frac{\partial \widetilde{u}}{\partial \xi }, \\
P_{0}B_{\varepsilon }(\widetilde{u}+P_{0}u) &=&\varepsilon P_{0}B(\widetilde{%
u},\widetilde{u}),
\end{eqnarray*}%
hence, there exists $\delta >0$ such that for $\|u\|_{\mathcal{Z}_{\eta}}\leq
\delta $ 
\begin{equation}
\|B_{\varepsilon }(u)\|_{\mathcal{Y}_{\eta,\eta}}\leq c\varepsilon \|u\|_{\mathcal{%
Z}_{\eta}}^{2},  \label{estimBeps}
\end{equation}%
\begin{equation*}
|g_{\varepsilon }(u)|\leq c\varepsilon \|u\|_{\mathcal{Z}_{\eta}},
\end{equation*}%
where%
\begin{equation*}
\mathcal{Y}_{\eta,\eta}=\Bigl\{f\in \mathcal{Y}_{\eta}; \, P_{0}f\in \mathring{W}%
_{\eta}^{1,\infty }\Bigr\}.
\end{equation*}
Using the properties of the operator $\boldsymbol{A}_{\varepsilon }$ we can
prove that the spectrum of $e^{\boldsymbol{A}_{\varepsilon }t}$ is the union
of an essential spectrum and of a bounded set of isolated eigenvalues of finite
multiplicities. More precisely the essential spectrum is included in the
union of the real interval $[0,1]$ with a set included in $\{\lambda \in 
\mathbb{C};\lambda =e^{\sigma t},\sigma \in \Sigma _{U_+,\omega }\}$ where $%
\Sigma _{U_+,\omega }$ is described in Figure \ref{spectrumc}. It results that in the
case when the eigenvalue $\sigma _{\varepsilon }<-\kappa \varepsilon ^{2},$
the spectral radius of $e^{\boldsymbol{A}_{\varepsilon }t}$ equals $1$. Hence,
we have 
\begin{equation*}
\|e^{\boldsymbol{A}_{\varepsilon }t}\|_{\mathcal{L}(\mathcal{Z}_{\eta})}\leq C,%
\text{ for }t\geq 0.
\end{equation*}%
However, this estimate is not sufficient to avoid secular terms in the
solution of the initial value problem (\ref{new NS stab}).

Let us proceed in adapting the proof of Theorem \ref{maintheo1}.
We obtain the following Lemma

\begin{lemma}
\label{lem:estimexp A Qtilde}Let us assume that $\sigma _{\varepsilon
}<-\kappa \varepsilon ^{2},$ then there exists $C(\varepsilon )>0$ such that%
\begin{equation}
\|e^{\boldsymbol{A}_{\varepsilon }t}\|_{\mathcal{L}(\mathcal{Z}%
_{\eta,\eta},\mathcal{Z}_{\eta})}\leq \frac{C(\varepsilon )}{1+t},\text{ }t\geq 0  \label{estim Zkka}
\end{equation}%
\begin{equation}
\|\widetilde{P}e^{\boldsymbol{A}_{\varepsilon }t}f\|_{\mathcal{L}(\mathring{H%
}_{\eta}^{2})} \leq \frac{C(\varepsilon )}{\sqrt{t}
(1 + \sqrt{t})}\|\widetilde{P}f\|_{%
\mathring{H}_{\eta}^{1}}+\frac{C(\varepsilon )}{1+t}\|P_{0}f\|_{%
\mathring{W}_{\eta}^{1,\infty }},\text{ }t>0  \label{estim Kkk} 
\end{equation}
$$
\|P_{0}e^{\boldsymbol{A}_{\varepsilon }t}f\|_{\mathcal{L}(\mathring{W}%
^{2,\infty })} \leq \frac{C(\varepsilon )}{1+t}\|\widetilde{P}f\|_{%
\mathring{H}_{\eta}^{1}}+\frac{C(\varepsilon )}{\sqrt{t}(1 + \sqrt{t})}\|P_{0}f\|_{\mathring{W%
}_{\eta}^{1,\infty }},\text{ }t>0.  \notag
$$
\end{lemma}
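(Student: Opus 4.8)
The plan is to represent $e^{\boldsymbol{A}_{\varepsilon }t}$ by Dunford's formula and to repeat, with the necessary modifications, the contour analysis already used for Lemma~\ref{lem:estimsemigroup poids}. For data $f$ in the relevant weighted space I would write
\[
e^{\boldsymbol{A}_{\varepsilon }t}f=\frac{1}{2\pi i}\int _{\Gamma }e^{\lambda t}(\lambda \mathbb{I}-\boldsymbol{A}_{\varepsilon })^{-1}f\,d\lambda ,
\]
where $\Gamma $ has the shape of the contour in Figure~\ref{fig:gamma}: it stays in $\{\Re \lambda <-\delta \}$ for $|\lambda |\ge \rho $, follows the right boundary of the (order-$\varepsilon ^{2}$ perturbed) region $\Sigma _{U_{+},\omega }$ carrying the $\mathring{L}_{\eta }^{2}$-part of the essential spectrum, and then wraps around the half-line $(-\infty ,0]$ near the origin. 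The wrap-around is forced: the $0$-Fourier mode carries the whole essential spectrum $(-\infty ,0]$ and there is no gap at $0$. By the hypothesis $\sigma _{\varepsilon }<-\kappa \varepsilon ^{2}$ and Lemma~\ref{lem:structure Lomega}, all isolated eigenvalues of $\boldsymbol{A}_{\varepsilon }$ lie in $\{\Re \lambda <-\kappa \varepsilon ^{2}\}$, with $\sigma _{\varepsilon }$ the rightmost and all others in $\{\Re \lambda <-\gamma \}$, so $\Gamma $ can be drawn with the whole spectrum strictly to its left and with $\Gamma \setminus \{|\lambda |<\rho \}$ at fixed distance to the left of the imaginary axis.

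The next step is to understand $(\lambda \mathbb{I}-\boldsymbol{A}_{\varepsilon })^{-1}$ along $\Gamma $ using the near-triangular structure of $\boldsymbol{A}_{\varepsilon }$ with respect to $\mathcal{X}_{\eta }=\mathring{L}_{\eta }^{2}\oplus \mathring{L}^{\infty }$. One has $P_{0}\boldsymbol{A}_{\varepsilon }P_{0}=\nu \Pi \Delta $ (a multiple of $\boldsymbol{L}_{(\nu _{0})}^{(0)}$) and $P_{0}\boldsymbol{A}_{\varepsilon }\widetilde{P}=2P_{0}B(\widetilde{P}\widehat{V}_{\varepsilon },\cdot )$, which is $O(\varepsilon )$, smoothing, and exponentially localized in $y$; moreover $\widetilde{P}\boldsymbol{A}_{\varepsilon }\widetilde{P}$ differs from $\boldsymbol{L}_{(\nu )}-\frac{\omega }{\alpha }\partial _{\xi }$ by the relatively compact term $2\widetilde{P}B(\widetilde{P}\widehat{V}_{\varepsilon },\cdot )$, by the $O(\varepsilon ^{2})$ term $2B(P_{0}\widehat{V}_{\varepsilon },\cdot )$ acting only on nonzero Fourier modes, and by the rank-one $Q_{\varepsilon }$-correction, so that its essential spectrum is that of $\boldsymbol{L}_{(\nu )}-\frac{\omega }{\alpha }\partial _{\xi }$ displaced by $O(\varepsilon ^{2})$, hence contained in $\{\Re \lambda \le -\nu (\alpha ^{2}-\eta ^{2})+O(\varepsilon ^{2})\}$; finally $\widetilde{P}\boldsymbol{A}_{\varepsilon }P_{0}=\widetilde{P}Q_{\varepsilon }2B(\widetilde{P}\widehat{V}_{\varepsilon },\cdot )$ is $O(\varepsilon )$ and maps $\mathcal{Z}_{\eta }$ into $\mathring{H}_{\eta }^{1}$. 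Accordingly, for $|\lambda |\ge \rho $ I would use the $O(1/|\lambda |)$ resolvent bound of Lemma~\ref{lem: spect L(0)omega} together with the relative-compactness argument (Theorem~5.35 of \cite{Kato}); near $\lambda =0$ I would solve the block system explicitly, the $P_{0}$-component through the ODE of Lemma~\ref{lemma 20} (which is precisely where the exponential weight $e^{\eta y}$ on the data is used), the $\widetilde{P}$-component through the locally analytic, hence bounded, resolvent of $\widetilde{P}\boldsymbol{A}_{\varepsilon }\widetilde{P}$, and the $O(\varepsilon )$ coupling through a convergent Neumann series of the $2\times 2$ block.

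Inserting this into the integral and estimating term by term as in the appendix proofs of Lemma~\ref{lem:estimsemigroup poids} and of the holomorphic-semigroup bounds for $e^{\boldsymbol{L}_{(\nu )}t}$: the part $|\lambda |\ge \rho $ of $\Gamma $ gives $O(e^{-\delta t})$; the residues at $\sigma _{\varepsilon }$ and at the other isolated eigenvalues give contributions bounded by $C(\varepsilon )e^{-\kappa \varepsilon ^{2}t}\le C(\varepsilon )(1+t)^{-1}$ and $C(\varepsilon )e^{-\gamma t}$; and the loop around $(-\infty ,0]$ contributes nothing through the analytic $\widetilde{P}$-block (Cauchy, after collapsing the loop), so that the only slow part is the branch-cut integral of the $0$-mode resolvent, which by the computation of Lemma~\ref{lem:estimsemigroup poids} yields $(1+t)^{-1}$ when the data carries two $y$-weighted derivatives and $\bigl(\sqrt{t}(1+\sqrt{t})\bigr)^{-1}$ when it carries one. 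The $O(\varepsilon )$ coupling only multiplies the branch-cut jump by bounded analytic factors, so the rates survive with an $\varepsilon $-dependent constant; keeping track of the $\widetilde{P}$ and $P_{0}$ components — equivalently, running the analogue of the Duhamel argument of Theorem~\ref{maintheo1} with the convolution inequalities $\int _{0}^{t}(1+(t-s)^{-1/2})e^{-\delta (t-s)}(1+s)^{-1}\,ds\le C(1+t)^{-1}$ and $\int _{0}^{t}e^{-\delta s}\bigl(\sqrt{t-s}(1+\sqrt{t-s})\bigr)^{-1}\,ds\le C(1+t)^{-1}$ — produces the two mixed estimates (\ref{estim Kkk}) and the $\mathcal{L}(\mathring{W}^{2,\infty })$ estimate, and (\ref{estim Zkka}) is their combination.

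The hard part will be the simultaneous absence of a spectral gap at $0$ and the $\widetilde{P}$-$P_{0}$ coupling: one can neither diagonalise nor expect exponential decay, so the algebraic rates must be extracted from the Dunford integral on the detour around $(-\infty ,0]$ — the same delicate analysis as in the appendix proof of Lemma~\ref{lem:estimsemigroup poids} — and then carried through the coupling without loss, which works because each off-diagonal block gains an exponential $y$-weight and a derivative. The careful bookkeeping is to verify that the rank-one $Q_{\varepsilon }$-correction and the eigenvalue $\sigma _{\varepsilon }$, which lies only $O(\varepsilon ^{2})$ away from the endpoint $0$ of the essential spectrum, do not spoil the estimates; they do not, provided the constant is allowed to depend on $\varepsilon $, as the statement permits.
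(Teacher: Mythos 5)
Your plan follows the paper's strategy at a high level — Dunford's formula on the keyhole contour of Figure~\ref{fig:gamma}, a block decomposition with respect to $\widetilde{P}$ and $P_0$, a Neumann series to absorb the $O(\varepsilon)$ off-diagonal coupling, and then the branch-cut analysis of Lemma~\ref{lem:estimsemigroup poids} for the algebraic rates. But there is a genuine gap in your treatment of $\sigma_\varepsilon$. You propose to ``compute the residue at $\sigma_\varepsilon$'' and bound it by $C(\varepsilon)e^{-\kappa\varepsilon^2 t}\le C(\varepsilon)(1+t)^{-1}$. However $\sigma_\varepsilon$ is a real negative number of size $O(\varepsilon^2)$, so it lies on the half-line $(-\infty,0]$, which is the essential spectrum of the $P_0$-block: it is an \emph{embedded} eigenvalue, not an isolated one, so $(\lambda-\boldsymbol{A}_\varepsilon)^{-1}$ has no Riesz projection there and you cannot simply peel off a residue. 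This is precisely the ``non-standard'' difficulty the authors flag in the proof of Lemma~\ref{lem:structure Lomega}. The paper's way around it is a further, one-dimensional reduction that you skip: in Appendix~\ref{App: estim semigrAeps} the resolvent equation is solved with $u=\beta u_\varepsilon+\widetilde{w}+u_0$ (not just $\widetilde{P}u+P_0u$), where $\beta=\langle u,\phi_1^\star\rangle$. All the $\sigma_\varepsilon$-dependence is then concentrated in the scalar $\beta=\frac{\eta}{\lambda-\sigma_\varepsilon}+\frac{\varepsilon}{\lambda-\sigma_\varepsilon}F(\widetilde{w}+u_0)$, whose Dunford integral can be evaluated by elementary residue calculus \emph{because it is a scalar meromorphic function}, giving $\eta e^{-|\sigma_\varepsilon|t}$ plus a controllable remainder. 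Only after this is the coupled $(\widetilde w,u_0)$-system inverted by the Neumann series built from $D_{\varepsilon,\lambda}$ and $E_{\varepsilon,\lambda}$, whose bounds along $\Gamma$ are uniform in $\varepsilon$ precisely because the eigendirection has been removed.

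A related imprecision: you say the $\widetilde{P}$-block resolvent is ``locally analytic, hence bounded'' near $\lambda=0$. Without projecting out $u_\varepsilon$, the $\widetilde{P}$-block has a spectral value at distance $O(\varepsilon^2)$ from the contour segments $\Gamma_1\cup\Gamma_2$, so the resolvent there is only $O(1/\varepsilon^2)$; and a pointwise bound of the integrand over $\Gamma_1\cup\Gamma_2$ (whose total length is $\sim\gamma\sim\varepsilon^2$) then gives a contribution of order one with no $t$-decay. The $1/(1+t)$ rate depends on the \emph{cancellation} that the exact residue identity $\frac{1}{2i\pi}\int_\Gamma e^{\lambda t}\frac{d\lambda}{\lambda-\sigma_\varepsilon}=e^{-|\sigma_\varepsilon|t}$ captures, which again requires the scalar reduction. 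Finally, the convolution inequalities you quote at the end are those used in the nonlinear argument of Theorem~\ref{maintheo1}, not in this lemma; the paper's proof here is a pure contour-integral argument, and a Duhamel iteration off $e^{\boldsymbol{A}_0 t}$ would be problematic since $\boldsymbol{A}_0$ still has $0$ in its spectrum.
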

The proof is done in Appendix %
\ref{App: estim semigrAeps}. The estimate (\ref{estim Kkk}), valid for all $%
t>0$, shows the loss of regularity as $t\rightarrow 0.$ We are now ready to
prove the following

\begin{theorem} \label{maintheo3}
Let us assume that $\sigma _{\varepsilon }<0$ (i.e.  that 
the bifurcation is supercritical) and let us choose $\eta>0$ and $\varepsilon $ small
enough. Then there exists $\delta >0$ such that, for $\|u(0)\|_{%
\mathcal{Z}_{\eta,\eta}}\leq \delta$, there is a unique solution $u$ of 
\begin{equation}
\frac{du}{dt}=\boldsymbol{A}_{\varepsilon }u+B_{\varepsilon }(u),u(0)\in 
\mathcal{Z}_{\eta,\eta},\text{ }t\geq 0  \label{equ nonlin stab}.
\end{equation}%
Moreover, we have 
\begin{equation*}
\|u(t)\|_{\mathcal{Z}_{\eta}}\leq \frac{2C(\varepsilon )}{1+t}\|u(0)\|_{%
\mathcal{Z}_{\eta,\eta}},\text{ }t\geq 0.
\end{equation*}
\end{theorem}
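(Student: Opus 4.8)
The plan is to transpose the argument of Theorem~\ref{maintheo1} to the reduced system (\ref{equ nonlin stab}), using the semi-group estimates of Lemma~\ref{lem:estimexp A Qtilde} in the role played there by the estimates on $e^{\boldsymbol{L}_{(\nu )}t}$. First I would rewrite (\ref{equ nonlin stab}) in Duhamel form $u=\Phi (u)$, where
$$
\Phi (u)(t)=e^{\boldsymbol{A}_{\varepsilon }t}u(0)+\int_{0}^{t}e^{\boldsymbol{A}_{\varepsilon }(t-s)}B_{\varepsilon }(u(s))\,ds ,
$$
and look for a fixed point of $\Phi $ in the Banach space
$$
\mathcal{E}=\Bigl\{ u\in C^{0}([0,\infty ),\mathcal{Z}_{\eta }):\ \|u\|_{\mathcal{E}}:=\sup_{t\geq 0}(1+t)\,\|u(t)\|_{\mathcal{Z}_{\eta }}<\infty \Bigr\},
$$
the weight $(1+t)^{-1}$ being the best decay one can hope to extract from Lemma~\ref{lem:estimexp A Qtilde}. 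By (\ref{estim Zkka}) the linear part already satisfies $\|e^{\boldsymbol{A}_{\varepsilon }t}u(0)\|_{\mathcal{Z}_{\eta }}\leq C(\varepsilon )(1+t)^{-1}\|u(0)\|_{\mathcal{Z}_{\eta ,\eta }}$, so the whole issue is to control the Duhamel integral.

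The key point is that $B_{\varepsilon }(u)$ lands in $\mathcal{Y}_{\eta ,\eta }$ with $\|B_{\varepsilon }(u)\|_{\mathcal{Y}_{\eta ,\eta }}\leq c\varepsilon \|u\|_{\mathcal{Z}_{\eta }}^{2}$ (estimate (\ref{estimBeps}), which also absorbs the Goldstone term $g_{\varepsilon }(u)\,Q_{\varepsilon }\partial _{\xi }u$): in particular its $0$-Fourier mode still decays exponentially in $y$, since the identity $B(P_{0}v,P_{0}v)\equiv 0$ forces the $0$-mode of $B$ to be a sum of products of oscillating modes. Inserting the two lines of (\ref{estim Kkk}) — the $\mathring{H}_{\eta }^{1}\to \mathring{H}_{\eta }^{2}$ estimate and its $P_{0}$ companion in $\mathring{W}^{2,\infty }$ — the integrand of the Duhamel term is bounded by a sum of terms of the form $\frac{C(\varepsilon )}{\sqrt{t-s}\,(1+\sqrt{t-s})}\,\|B_{\varepsilon }(u(s))\|_{\mathcal{Y}_{\eta ,\eta }}$ and $\frac{C(\varepsilon )}{1+(t-s)}\,\|B_{\varepsilon }(u(s))\|_{\mathcal{Y}_{\eta ,\eta }}$; using $\|B_{\varepsilon }(u(s))\|_{\mathcal{Y}_{\eta ,\eta }}\leq c\varepsilon (1+s)^{-2}\|u\|_{\mathcal{E}}^{2}$ together with the elementary convolution bounds
$$
\int_{0}^{t}\frac{ds}{\sqrt{t-s}\,(1+\sqrt{t-s})\,(1+s)^{2}}+\int_{0}^{t}\frac{ds}{(1+(t-s))\,(1+s)^{2}}\leq \frac{C_{1}}{1+t}
$$
(a routine splitting of each integral at $s=t/2$), one obtains
$$
\|\Phi (u)\|_{\mathcal{E}}\leq C(\varepsilon )\|u(0)\|_{\mathcal{Z}_{\eta ,\eta }}+C_{2}\varepsilon \,\|u\|_{\mathcal{E}}^{2},
$$
together with the companion Lipschitz bound $\|\Phi (u)-\Phi (v)\|_{\mathcal{E}}\leq C_{2}\varepsilon (\|u\|_{\mathcal{E}}+\|v\|_{\mathcal{E}})\|u-v\|_{\mathcal{E}}$, both justified exactly as in the proof of Theorem~\ref{maintheo1}. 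Setting $R=2C(\varepsilon )\delta $ and then, for $\varepsilon $ small (as already required for Lemma~\ref{lem:estimexp A Qtilde}) and $\delta $ small enough — depending on $\varepsilon $ — that $R$ lies below the threshold of (\ref{estimBeps}), $C_{2}\varepsilon R<1/2$ and $C(\varepsilon )\delta +C_{2}\varepsilon R^{2}\leq R$, the implicit function theorem (equivalently, the contraction mapping principle) would yield a unique $u\in \mathcal{E}$ solving (\ref{equ nonlin stab}), and $\|u\|_{\mathcal{E}}\leq R=2C(\varepsilon )\delta $ is precisely the stated bound $\|u(t)\|_{\mathcal{Z}_{\eta }}\leq 2C(\varepsilon )(1+t)^{-1}\|u(0)\|_{\mathcal{Z}_{\eta ,\eta }}$.

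The hard part is the phenomenon stressed throughout the paper: because the essential spectrum of $\boldsymbol{A}_{\varepsilon }$ reaches up to $0$, one only has $\|e^{\boldsymbol{A}_{\varepsilon }t}\|_{\mathcal{L}(\mathcal{Z}_{\eta })}\leq C$ with no decay — far too weak to close a quadratic fixed point. Everything rests on the refined Lemma~\ref{lem:estimexp A Qtilde}, where the $(1+t)^{-1}$ decay and the $t^{-1/2}$ smoothing are recovered by exploiting the exponential $y$-decay of the data; this is available for $B_{\varepsilon }(u)$ only thanks to $B(P_{0}v,P_{0}v)\equiv 0$, which keeps the slowly decaying $0$-mode out of the nonlinearity. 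A secondary difficulty is that $e^{\boldsymbol{A}_{\varepsilon }t}$ does \emph{not} preserve the $y$-weight on the $0$-mode for $t>0$ (one genuinely leaves $\mathcal{Z}_{\eta ,\eta }$ and stays only in $\mathcal{Z}_{\eta }$), so the iteration has to be run in $\mathcal{E}\subset C^{0}([0,\infty ),\mathcal{Z}_{\eta })$ rather than in a weighted space; and the supercriticality assumption $\sigma _{\varepsilon }<0$ is exactly what puts all the isolated eigenvalues of $\boldsymbol{A}_{\varepsilon }$ (after the Goldstone mode has been quotiented out by $Q_{\varepsilon }$) in $\{\Re \lambda <-\gamma <0\}$, so that Lemma~\ref{lem:estimexp A Qtilde} applies with a genuine decay rate rather than mere boundedness of the semi-group.
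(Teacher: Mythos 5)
Your proposal follows essentially the same route as the paper: rewrite (\ref{equ nonlin stab}) in Duhamel form (\ref{integ form a}), combine the semi-group decay of Lemma~\ref{lem:estimexp A Qtilde} with the quadratic bound (\ref{estimBeps}) on $B_{\varepsilon}$, and close the estimate in the weighted norm $\sup_{t}(1+t)\|u(t)\|_{\mathcal{Z}_{\eta}}$ via the same split-at-$t/2$ convolution bound (\ref{estim integ}). The only difference is cosmetic: the paper phrases the closure as a Gronwall-type a priori inequality on $|||u|||_{t}$ and then invokes smallness of $\|u(0)\|_{\mathcal{Z}_{\eta,\eta}}$, whereas you spell out the contraction-mapping/implicit-function-theorem step explicitly, which is the same argument.
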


Moreover, the shift $b(t)$, solution of (\ref{equ b}), satisfies
\begin{equation*}
|b(t)|\leq |b(0)|+ \ln(1 + t) C^{\prime \prime }(\varepsilon)\|u(0)\|_{\mathcal{Z}_{\eta,\eta}},\text{ }t\geq 0.
\end{equation*}

\begin{proof} 
The integral formulation of (\ref{equ nonlin stab}) is%
\begin{equation}
u(t)=e^{\boldsymbol{A}_{\varepsilon }t}u(0)+\int_{0}^{t}e^{\boldsymbol{A}%
_{\varepsilon }(t-s)}B_{\varepsilon } \Bigl( u(s) \Bigr) \, ds. \label{integ form a}
\end{equation}%
Using the estimates (\ref{estimBeps}), (\ref{estim Zkka}),  (\ref{estim Kkk}),
we obtain 
\begin{equation}
\|u(t)\|_{\mathcal{Z}_{\eta}}\leq \frac{C(\varepsilon )}{1+t}\|u(0)\|_{%
\mathcal{Z}_{\eta},_{\eta}}+\int_{0}^{t}\frac{c\varepsilon C(\varepsilon )}{\sqrt{%
t-s}(1 + \sqrt{t-s})}\|u(s)\|_{\mathcal{Z}_{\eta}}^{2} \, ds.  \label{estim nonlin a}
\end{equation}%
Let us define 
\begin{equation*}
|||u|||_{t}=\underset{s\in \lbrack 0,t]}{\sup } \Bigl\|(1+s)u(s) \Bigr\|_{\mathcal{Z}_\eta},
\end{equation*}%
then%
\begin{equation}
\|(1+t^{ })u(t)\|_{\mathcal{Z}_\eta} \leq C(\varepsilon )%
\|u(0)\|_{\mathcal{Z}_{\eta},_{\eta}}+K(\varepsilon )
|||u|||_{t}^{2},  \label{ineg |||u|||}
\end{equation}%
where 
\begin{equation}
\int_{0}^{t}\frac{c\varepsilon C(\varepsilon ) (1 + t)}{\sqrt{t-s}%
(1+\sqrt{t-s})(1 + s^2)} ds \leq K(\varepsilon ),
\label{estim integ}
\end{equation}%
which can be checked by splitting the integral in $s < t/2$ and $s > t/2$.
In view of (\ref{ineg |||u|||}), if $\|u(0)\|_{\mathcal{Z}_{\eta},_{\eta}}$ is small enough, then $\|(1+t^{ })u(t)\|_{\mathcal{Z}_\eta}$ is bounded uniformly in time, by a constant depending on 
$\|u(0)\|_{\mathcal{Z}_{\eta},_{\eta}}$.
The Lemma is proved. 
\end{proof}


\section{Appendix}



\subsection{Proof of Lemma \protect\ref{lem:decomp Hk}\label{App:decompHk}}


The Helmholtz decomposition is very classical, however, as our function spaces are not standard,
we have to detail it. We take the Fourier transform in the horizontal variable of the decomposition 
\begin{align*}
\widetilde{u} =\widetilde{v}+\nabla \phi , \quad
\Delta \phi =\nabla \cdot \widetilde{u}, \quad \frac{\partial \phi }{%
\partial y}|_{y=0}=\widetilde{u}^{y}|_{y=0},
\end{align*}%
which gives
\begin{eqnarray*}
(D^{2}-n^{2}\alpha ^{2})\phi _{n} &=&in\alpha u_{n}^{x}+Du_{n}^{y}=g_{n}, \\
D\phi _{n}(0) &=&\widetilde{u}_{n}^{y}(0)=h_{n},
\end{eqnarray*}%
where, by definition%
\begin{eqnarray}
|u_{n}(y)| \leq M_{n}e^{-\eta y},  \qquad \label{estim u n}
\|u\|_{L_{\eta}^{2}}^{2} =\sum_{|n|\geq 1}M_{n}^{2}<\infty ,  \notag
\end{eqnarray}%
and $Du_{n}^{y}$ is defined in the distribution sense. We easily obtain  (for $n>0$) 
\begin{equation*}
\phi _{n}(y)=-\frac{1}{2n\alpha }\int_{0}^{\infty }g_{n}(\tau )e^{-n\alpha
|\tau -y|}d\tau -\frac{1}{2n\alpha }\int_{0}^{\infty }g_{n}(\tau
)e^{-n\alpha (\tau +y)}d\tau -\frac{h_{n}}{n\alpha }e^{-n\alpha y}.
\end{equation*}%
Replacing $g_{n}$ by its expression gives, after an integration by parts,
\begin{eqnarray*}
\phi _{n}(y) &=&\frac{1}{2}\int_{0}^{y}(u_{n}^{y}-iu_{n}^{x})(\tau
)e^{n\alpha (\tau -y)}d\tau -\frac{1}{2}\int_{y}^{\infty
}(u_{n}^{y}+iu_{n}^{x})(\tau )e^{n\alpha (\tau -y)}d\tau \\
&&-\frac{1}{2}\int_{0}^{\infty }(u_{n}^{y}+iu_{n}^{x})e^{-n\alpha (\tau
+y)}d\tau .
\end{eqnarray*}%
We observe that $\phi _{n}\in C^{1}( \mathbb{R} ^{+})$ with%
\begin{eqnarray*}
D\phi _{n}(y) &=&\frac{\alpha n}{2}\int_{0}^{y}(iu_{n}^{x}-u_{n}^{y})(\tau
)e^{n\alpha (\tau -y)}d\tau -\frac{\alpha n}{2}\int_{y}^{\infty
}(iu_{n}^{x}+u_{n}^{y})(\tau )e^{n\alpha (\tau -y)}d\tau \\
&&+\frac{\alpha n}{2}\int_{0}^{\infty }(u_{n}^{y}+iu_{n}^{x})(\tau
)e^{- n\alpha (\tau +y)}d\tau -u_{n}^{x}(y),
\end{eqnarray*}%
and using (\ref{estim u n}), provided that $\eta<\alpha /2,$ $\phi _{n}$ is
such that the following estimates hold, with $c$ independent of $n$%
\begin{equation*}
|\phi _{n}(y)e^{\eta y}|\leq c\frac{M_{n}}{n},\quad |D\phi _{n}(y)e^{\eta y}|\leq
cM_{n}.
\end{equation*}%
The result is that $\nabla \phi \in L_{\eta}^{2}$ and $\widetilde{v}\in \mathring{L}_{\eta}^{2},$ with
$\|\widetilde{v}\|_{\mathring{L}_{\eta}^{2}}\leq c\|u\|_{L_{\eta}^{2}}$.
The result on the component in $\mathring{L}^{\infty }$ is straightforward.


\subsection{Proof of Lemma \protect\ref{lem:spectrumL0 expky}}


\label{proof2}


\subsubsection{Resolvent estimate and spectrum in $\mathring{L}^{\infty }$}


For the part of the linear operator in $\mathring{L}^{\infty },$ we need to
solve, for $\lambda \notin (-\infty ,0],$%
\begin{eqnarray*}
\nu D^{2}v_{0}^{x}-\lambda v_{0}^{x} &=&f_{0}\in L^{\infty }(\mathbb{R}^{+})
\\
v_{0}^{x}(0) &=&0,\text{ }v_{0}^{x}\in W^{2,\infty }.
\end{eqnarray*}%
Let us define $s$ such that $\Re s>0$ and $s^{2}=\lambda /\nu _{},$ then%
\begin{eqnarray*}
v_{0}^{x}(y) &=&-\frac{1}{2s\nu }\int_{y}^{\infty }f_{0}(\tau )e^{s(y-\tau
)}d\tau -\frac{1}{2s\nu }\int_{0}^{y}f_{0}(\tau )e^{-s(y-\tau )}d\tau \\
&&+\frac{1}{2s\nu }\int_{0}^{\infty }f_{0}(\tau )e^{-s(y+\tau )}d\tau .
\end{eqnarray*}%
We check that $v_{0}^{x}\in W^{2,\infty },$
and 
\begin{equation*}
|v_{0}^{x}(y)|\leq \frac{\|f_{0}\|_{L^{\infty} }}{|s| \, |\Re s|\nu}.
\end{equation*}%
Moreover, for $\arg \lambda =\theta \in \lbrack 0,\pi )$, if $\Re \lambda <0$,
\begin{eqnarray*}
\frac{1}{|s| \, |\Re s|\nu } = {1 \over \nu |s|^2 \cos (\theta/2)} = {1 \over |\lambda| \cos(\theta/2)}.
\end{eqnarray*}%
If $\Re \lambda \ge 0$, then we have
\begin{eqnarray*}
\frac{1}{|s| \, |\Re s|\nu } 
\leq \frac{\sqrt{2}}{|\lambda |}
\end{eqnarray*}%
since in this case, $0 \le \theta \le \pi / 2$.
This proves the first estimate in Lemma \ref{lem:spectrumL0 expky}.

Let us now concentrate on the case $\lambda <0$. Then we have%
\begin{equation*}
v_{0}^{x}(y)=A\sin sy+\frac{1}{s\nu }\int_{0}^{y}f_{0}(\tau )\sin s(y-\tau) \, d\tau ,
\end{equation*}%
where $s^2 = - \lambda / \nu \ge 0$ and where
 $A$ is arbitrary. Let us now show that the range of $\boldsymbol{L}_{(\nu)}^{(0)}-\lambda \mathbb{I}$ is not closed.
Indeed, let us choose $f_{0}\in L^{\infty }(\mathbb{R}^{+})$ such that $f_{0}(y) = \chi(y)/y$, where 
$\chi(y)$ is the characteristic function of the set%
\begin{equation*}
\cup _{n\geq 1}[\frac{2\pi n}{s},\frac{2\pi n+\pi /2}{s}].
\end{equation*}%
We obtain%
\begin{equation*}
\int_{0}^{y}f_{0}(\tau )\sin s(y-\tau )d\tau =\sin sy\int_{0}^{y}\chi \frac{%
\cos s\tau }{\tau }d\tau -\cos sy\int_{0}^{y}\chi \frac{\sin s\tau }{\tau }%
d\tau ,
\end{equation*}%
and%
\begin{align*}
\sum_{1\leq n\leq N(y)}\frac{s}{(2\pi n+\pi /2)} &< \int_{0}^{y}\chi \frac{%
\cos s\tau }{\tau }d\tau <\sum_{1\leq n\leq N(y)}\frac{s}{(2\pi n)} \\
\sum_{1\leq n\leq N(y)}\frac{s}{(2\pi n+\pi /2)} &< \int_{0}^{y}\chi \frac{%
\sin s\tau }{\tau }d\tau <\sum_{1\leq n\leq N(y)}\frac{s}{(2\pi n)}
\end{align*}%
with 
$\lbrack sy]=2\pi N(y)$, $[\cdot ]$ being the integer part.
As $y\rightarrow \infty$, series on both sides diverge, since the function $%
\int_{0}^{y}f_{0}(\tau )\sin s(y-\tau )d\tau $ behaves as%
\begin{equation*}
(\sin sy-\cos sy)\sum_{1\leq n\leq N(y)}\frac{s}{(2\pi n)},
\end{equation*}%
hence the limit is not in $W^{2,\infty }(\mathbb{R}^{+}),$ showing that $f_{0}$ is not
in the range of $\boldsymbol{L}_{(\nu )}^{(0)}-\lambda \mathbb{I}.$ Now consider
the sequence $\{f^{(N)}\}_{N\in \mathbb{N}}$ defined by%
\begin{equation*}
f^{(N)}(y)=f_{0}(y)\chi _{\lbrack 0,N]}.
\end{equation*}%
It is clear that $\{f^{(N)}\}$ is a Cauchy sequence in $L^{\infty }(\mathbb{R%
}^{+})$ since for $M>N$%
\begin{equation*}
\|f^{(N)}-f^{(M)}\|_{L^{\infty }}\leq \frac{1}{N}
\end{equation*}%
and this series converges in $L^{\infty }(\mathbb{R}^{+})$ towards $f_{0}.$
Moreover the functions $f^{(N)}(y)$ lie in the range of $\boldsymbol{L}%
_{(\nu )}^{(0)}-\lambda \mathbb{I}$ \ since%
 \begin{equation*}
v_{0N}^{x}(y)=\frac{1}{s\nu }\int_{0}^{N}f_{0}(\tau )\sin s(y-\tau
)d\tau,
\end{equation*}%
is the solution of ($\boldsymbol{L}_{(\nu )}^{(0)}-\lambda \mathbb{I)}%
v_{0N}=f^{(N)}$ with $v_{0N}^{x}\in W^{2,\infty }$. Since $f_{0}$ is not in
the range, this shows that the range is not closed.

For $\lambda =0,$ we obtain%
\begin{equation*}
v_{0}^{x}(y)=-\frac{1}{\nu }\Bigl[\int_{0}^{y}\tau f_{0}(\tau )d\tau
+y\int_{y}^{\infty }f_{0}(\tau )d\tau \Bigr]
\end{equation*}%
which shows that $0$ is not eigenvalue. Moreover choosing $f_{0}(y)=1/(1+y)$
in $L^{\infty }$ but not in the range, and choosing a Cauchy sequence $%
\{f^{(N)}=f_{0}\chi _{\lbrack 0,N]}\}$ converging to $f_{0}$ in $L^{\infty }$
and sitting in the range of $\boldsymbol{L}_{(\nu )},$ shows that the range
is not closed. Hence $\lambda =0$ lies in the essential spectrum.


\subsubsection{Resolvent estimates in $\mathring{L}_{\eta}^{2}$}\label{app 8.2.2}


Let us now study the linear system
\begin{equation*}
(\boldsymbol{L}_{(\nu )}^{(0)}-\lambda )v=f\in \mathring{L}_{\eta}^{2},
\end{equation*}%
namely Stokes' equation, where we look for $v\in \mathring{H}_{\eta}^{2}$. 
Using the Fourier series for $f$ and $v$ leads for $n\neq 0$ to%
\begin{eqnarray} 
\nu \Bigl( D^{2}-n^{2}\alpha ^{2}-\frac{\lambda }{\nu } \Bigr) v_{n}+\binom{in\alpha }{D}%
q_{n} &=&\binom{f_{n}}{g_{n}},  \label{equ for resolv} \\
in\alpha v_{n}^{x}+Dv_{n}^{y} &=&0, \notag
\end{eqnarray}%
with boundary condition $v_{n}|_{y=0}=0$, where 
\begin{equation*}
in\alpha f_{n}+Dg_{n}=0.
\end{equation*}%
By definition of $\mathring{L}^2_\eta$, $\|(f_{n},g_{n})\|_{C_{\eta}^{0}}\leq M_{n}$ with $\sum_{n\neq
0}M_{n}^{2}=\|(f,g)\|_{L_{\eta}^{2}}^{2}$.
This leads to%
\begin{eqnarray}
(D^{2}-n^{2}\alpha ^{2}) \Bigl[ D^{2}- \Bigl( n^{2}\alpha ^{2}+\frac{\lambda }{\nu } \Bigr) \Bigr]v_{n}^{y} 
&=&\frac{1}{\nu }(D^{2}-n^{2}\alpha ^{2})g_{n},
\label{equ for resolv2} \\
v_{n}^{y} &=&Dv_{n}^{y}=0\text{ for }y=0,  \notag
\end{eqnarray}%
leading to explicit expressions for $v_{n}^{x}$ and $v_{n}^{y}.$ Let 
\begin{equation*}
s_{n}^{2}=n^{2}\alpha ^{2}+\frac{\lambda }{\nu }.
\end{equation*}%
We obtain%
\begin{equation}
v_{n}^{y}(y)=w_{n}(y)-\frac{w_{n}(0)}{2} \Bigl[ \frac{n\alpha +s_{n}}{n\alpha -s_{n}%
}(e^{-s_{n}y}-e^{-n\alpha y})+e^{-s_{n}y}+e^{-n\alpha y} \Bigr],  \label{vn(y)}
\end{equation}%
where%
\begin{equation}
w_{n}(y)=-\frac{1}{2\nu s_{n}}\int_{0}^{\infty }g_{n}(\tau )e^{-s_{n}|\tau
-y|}d\tau -\frac{1}{2\nu s_{n}}\int_{0}^{\infty }g_{n}(\tau )e^{-s_{n}(\tau
+y)}d\tau . \label{wn(y)}
\end{equation}%
We have the following useful Lemma:

\begin{lemma} \label{lemme30}
There exists $C>0$ such that
for any $\delta$ with $0<\delta <\pi /6$, any $n\neq 0$ and any $\lambda
\in \mathbb{C}$ such that $0\leq \arg (\lambda +\nu \alpha ^{2})\leq 2\pi
/3-\delta$ 
we have
\begin{eqnarray*}
|s_{n}| \geq \Re s_n \geq C \Bigl( \alpha \sqrt{n^{2}-1}+|\lambda +\nu \alpha ^{2}|^{1/2} \Bigr). 
\end{eqnarray*}
\end{lemma}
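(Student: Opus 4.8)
The plan is to work directly with the definition $s_n^2 = n^2\alpha^2 + \lambda/\nu$, choosing the branch of the square root with $\Re s_n > 0$, and to track how $s_n$ depends on the argument and modulus of $\lambda + \nu\alpha^2$. First I would rewrite $s_n^2 = (n^2-1)\alpha^2 + (\alpha^2 + \lambda/\nu) = (n^2-1)\alpha^2 + (\lambda + \nu\alpha^2)/\nu$, so that $\nu s_n^2 = \nu(n^2-1)\alpha^2 + (\lambda + \nu\alpha^2)$. Writing $\lambda + \nu\alpha^2 = \rho e^{i\theta}$ with $\rho = |\lambda + \nu\alpha^2| \ge 0$ and $0 \le \theta \le 2\pi/3 - \delta$, the point $\nu s_n^2$ lies in the half-plane $\{\Re z \ge 0\}$ shifted by the nonnegative real quantity $\nu(n^2-1)\alpha^2$; in particular $\nu s_n^2$ has argument in $[0, 2\pi/3 - \delta]$ as well, since adding a positive real number to a complex number with argument in $[0,2\pi/3-\delta] \subset (-\pi/2,\pi/2)\cup[\pi/2,\pi)$... wait — $2\pi/3 - \delta > \pi/2$, so one must be slightly careful: adding a positive real only decreases the argument in absolute value, so $\arg(\nu s_n^2) \in [0, 2\pi/3 - \delta]$ still holds, with the endpoint attained only when $n^2 = 1$.

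The key elementary estimate is then: if $z \in \mathbb{C}$ satisfies $0 \le \arg z \le 2\pi/3 - \delta$, then the principal square root $w = \sqrt{z}$ (with $\Re w > 0$) satisfies $\Re w = |w|\cos(\arg z / 2) \ge |w| \cos(\pi/3 - \delta/2) = |z|^{1/2}\cos(\pi/3 - \delta/2)$, and since $\delta < \pi/6$ we have $\pi/3 - \delta/2 < \pi/3 < \pi/2$, so $c_\delta := \cos(\pi/3 - \delta/2) > \cos(\pi/3) = 1/2 > 0$ is a positive constant depending only on $\delta$. Applying this to $z = \nu s_n^2$ gives $\Re(\sqrt{\nu}\, s_n) = \sqrt{\nu}\,\Re s_n \ge c_\delta |\nu s_n^2|^{1/2} = c_\delta \sqrt{\nu}\, |s_n|$, hence $\Re s_n \ge c_\delta |s_n|$; and trivially $|s_n| \ge \Re s_n$, giving the first two inequalities of the claim.

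For the last inequality I would bound $|s_n|$ from below. From $\nu|s_n|^2 = |\nu(n^2-1)\alpha^2 + (\lambda+\nu\alpha^2)|$ and the fact that both the real part $\nu(n^2-1)\alpha^2 + \rho\cos\theta$ and — when $\cos\theta$ could be negative, i.e. $\theta > \pi/2$ — a comparison with $\rho\sin\theta$ (which is bounded below by $\rho\sin\delta$ on the relevant arc) keeps the modulus comparable to $\nu(n^2-1)\alpha^2 + \rho$: concretely, for $z$ with $\arg z \in [0, 2\pi/3 - \delta]$ one has $|z| \ge c_\delta' \cdot (\text{any convex combination controlling size})$; more simply, $|a + z| \ge c_\delta'(a + |z|)$ for $a \ge 0$ and $\arg z \in [0,\pi - \delta']$, which is a standard fact proved by splitting on $|z| \le a$ versus $|z| > a$. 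This yields $\nu |s_n|^2 \ge c_\delta'(\nu(n^2-1)\alpha^2 + |\lambda+\nu\alpha^2|)$, and then $|s_n| \ge \sqrt{c_\delta'}\,(\sqrt{n^2-1}\,\alpha + |\lambda+\nu\alpha^2|^{1/2}/\sqrt{\nu})/\sqrt{2}$ after using $\sqrt{u+v} \ge (\sqrt u + \sqrt v)/\sqrt 2$; absorbing all $\nu$- and $\delta$-dependent factors into a single constant $C$ (here $\nu$ ranges in a fixed small interval, so this is harmless) gives the stated bound. The main obstacle, such as it is, is purely bookkeeping: making sure the constant $C$ can be taken uniform in $n$ and in $\lambda$ on the whole sector, and handling the borderline case $\theta$ near $2\pi/3$ where $\cos\theta < 0$, which is exactly where the hypothesis $\delta > 0$ (equivalently $\arg < 2\pi/3$) is used to keep $\sqrt z$ away from the imaginary axis and to keep $|a+z|$ comparable to $a + |z|$.
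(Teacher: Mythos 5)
Your proof is correct, and it takes a cleaner route than the paper, though it is built from the same two ingredients. The paper first derives the lower bound $|s_n|^2 \ge \cos(\theta/2)\bigl[(n^2-1)\alpha^2 + |\lambda+\nu\alpha^2|/\nu\bigr]$ by expanding $|s_n|^4$ and splitting cases on the sign of $\cos\theta$; it then writes $(\Re s_n)^2 = \tfrac12\bigl(\Re s_n^2 + |s_n|^2\bigr)$ and feeds the $|s_n|^2$ bound into this identity, so that the possibly negative $\cos\theta$ coefficient coming from $\Re s_n^2$ is compensated by the $\cos(\theta/2)$ coefficient; the essential numerical fact there is $\cos(\theta/2)+\cos\theta \ge \cos(\pi/3-\delta/2)-\cos(\pi/3+\delta)>0$. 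You instead decouple angle from magnitude: you first observe that $\nu s_n^2 = \nu(n^2-1)\alpha^2 + (\lambda+\nu\alpha^2)$ is a nonnegative real shift of a number with argument in $[0,2\pi/3-\delta]$, hence itself has argument in that interval, so $\Re s_n = |s_n|\cos(\arg s_n) \ge |s_n|\cos(\pi/3-\delta/2)$ with no case-splitting on the sign of $\cos\theta$; then you separately establish $\nu|s_n|^2 = |a+z| \ge c_\delta'(a+|z|)$ via the elementary fact that a positive real plus a complex number with argument bounded away from $\pi$ has modulus comparable to the sum of moduli. Your version buys conceptual transparency and avoids the slightly delicate step in the paper where the stated intermediate bounds on $|s_n|^4$ do not quite yield (\ref{bound sn^2}) for $\cos\theta\ge 0$ without one more short computation; the paper's version is more self-contained in the sense that everything is reduced to one explicit algebraic expansion. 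Both rely at bottom on $\sqrt{a+b}\ge \tfrac1{\sqrt2}(\sqrt a+\sqrt b)$ and on the strict inequality $2\pi/3-\delta < \pi$.
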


\begin{proof} Using
$$
s_n^2 = (n^2 - 1) \alpha^2 + \frac{\lambda + \nu \alpha^2}{\nu},
$$
we have, with $\theta = \arg (\lambda +\nu \alpha ^{2})$,
\begin{equation*}
|s_{n}|^{4}=\Bigl[(n^{2}-1)\alpha ^{2}+\frac{\cos \theta }{\nu }|\lambda +\nu
\alpha ^{2}|\Bigr]^{2}+\frac{1}{\nu ^{2}}|\lambda +\nu \alpha ^{2}|^{2}\sin
^{2}\theta ,
\end{equation*}%
hence
\begin{equation*}
|s_{n}|^{4}=(n^{2}-1)^{2}\alpha ^{4}+\frac{1}{\nu ^{2}}|\lambda +\nu \alpha
^{2}|^{2}+2(n^{2}-1)\alpha ^{2}\frac{1}{\nu }|\lambda +\nu \alpha ^{2}|\cos
\theta ,
\end{equation*}%
which implies%
\begin{eqnarray*}
|s_{n}|^{4} &\geq &\Bigl[(n^{2}-1)^{2}\alpha ^{4}+\frac{1}{\nu ^{2}}|\lambda
+\nu \alpha ^{2}|^{2}\Bigr]\text{ for }\cos \theta \geq 0, \\
&\geq &(1-|\cos \theta |)\Bigl[(n^{2}-1)^{2}\alpha ^{4}+\frac{1}{\nu ^{2}}%
|\lambda +\nu \alpha ^{2}|^{2}\Bigr] \text{ for }\cos \theta <0.
\end{eqnarray*}%
Hence, in all cases%
\begin{equation}
|s_{n}|^{2}\geq \cos \frac{\theta}{2}\Bigl\{(n^{2}-1)\alpha ^{2}+\frac{1}{\nu }|\lambda
+\nu \alpha ^{2}|\Bigr\},  \label{bound sn^2}
\end{equation}%
and, as $\theta/2 \le \pi/3$,
$$
|s_{n}| \geq \frac{1}{2} \Bigl[(n^{2}-1)^{1/2}\alpha +\frac{1}{\nu ^{1/2}}%
|\lambda +\nu \alpha ^{2}|^{1/2}\Bigr] 
\geq C \Bigl( \alpha \sqrt{n^{2}-1}+|\lambda +\nu \alpha ^{2}|^{1/2} \Bigr),
$$
where the constant $C$ depends on $\nu$.
Now, we also have%
\begin{equation*}
(\Re s_{n})^{2}= \frac{1}{2} \Bigl[ \Re s_n^2 + |s_n|^2 \Bigr] 
= \frac{1}{2}\Bigl[(n^{2}-1)\alpha ^{2}+\frac{1}{\nu }|\lambda
+\nu \alpha ^{2}|\cos \theta +|s_{n}|^{2}\Bigr].
\end{equation*}%
Using (\ref{bound sn^2}), we obtain 
\begin{equation*}
(\Re s_{n})^{2}\geq \frac{1}{2}\Bigl[ \Bigl(1+\cos \frac{\theta}{2}\Bigr)(n^{2}-1)\alpha
^{2}+ \Bigl(\cos \frac{\theta}{2}+\cos \theta \Bigr)\frac{1}{\nu }|\lambda +\nu \alpha ^{2}|\Bigr].
\end{equation*}%
Since $\cos \theta /2+\cos \theta \geq \cos (\pi /3-\delta /2)-\cos (\pi
/3+\delta )>0,$ it results that, as above (adapting the constant $C),$%
\begin{equation*}
\Re s_{n}\geq C \Bigl(\alpha \sqrt{n^{2}-1}+|\lambda +\nu \alpha ^{2}|^{1/2} \Bigr),
\end{equation*}%
which ends the proof.
\end{proof}

Now, from (\ref{wn(y)}) we have the estimate%
\begin{equation*}
|w_{n}(y)e^{\eta y}|\leq \frac{3M_{n}}{2\nu |s_{n}|(\Re s_{n}-\eta)},
\end{equation*}%
and assuming that $\lambda $ is such that $|\lambda +\nu \alpha ^{2}|\geq \varepsilon _{0}^{2}$,
we choose $\eta>0$ such that%
\begin{equation*}
\eta<C\frac{\varepsilon _{0}}{2},
\end{equation*}%
so that 
\begin{eqnarray*}
\Re s_{n}-\eta > C \Bigl(\alpha \sqrt{n^{2}-1}+|\lambda +\nu \alpha ^{2}|^{1/2}-%
\frac{\varepsilon _{0}}{2} \Bigr) 
>\frac{C}{2} \Bigl(\alpha \sqrt{n^{2}-1}+|\lambda +\nu \alpha ^{2}|^{1/2} \Bigr).
\end{eqnarray*}%
Finally we have (using $|n|\neq 0)$%
\begin{equation}
|w_{n}(y)e^{\eta y}|\leq \frac{cM_{n}}{(n^{2}-1)\alpha +|\lambda +\nu \alpha
^{2}|},  \label{w(y)}
\end{equation}%
hence%
\begin{equation*}
\|w\|_{L_{\eta}^{2}}\leq \frac{c_{1}}{|\lambda +\nu \alpha ^{2}|}%
\|g\|_{L_{\eta}^{2}}.
\end{equation*}%
Coming back to (\ref{vn(y)}) we observe that%
\begin{equation*}
\Bigl|\frac{n\alpha +s_{n}}{n\alpha -s_{n}} \Bigr|
=\frac{|s_{n}+n\alpha |^{2}}{|\alpha
^{2}-|\lambda +\nu \alpha ^{2}||}
\end{equation*}%
and assuming that $|\lambda +\nu \alpha ^{2}|\neq \alpha ^{2}$ when $\lambda 
$ is real , it is clear that%
\begin{equation*}
\frac{|s_{n}+n\alpha |^{2}}{|s_{n}|(\Re s_{n}-\eta)}\leq c_{2},\text{
independently of }n\text{ and }\lambda .
\end{equation*}%
It results that 
\begin{equation*}
\|\widetilde{v}^{y}\|_{L_{\eta}^{2}}\leq \frac{c_{1}}{|\lambda +\nu \alpha ^{2}|%
}\|g\|_{L_{\eta}^{2}}.
\end{equation*}%
Now we have%
\begin{equation*}
v_{n}^{x}=-\frac{Dv_{n}^{y}}{in\alpha },
\end{equation*}%
hence%
\begin{equation}
v_{n}^{x}(y)=-\frac{Dw_{n}(y)}{in\alpha }-iw_{n}(0)\frac{s_{n}}{n\alpha
-s_{n}}(e^{-n\alpha y}-e^{-s_{n}y}).  \label{vn^x(y)}
\end{equation}%
It is easy to check that, after an integration by parts,%
\begin{equation*}
Dw_{n}(y)=-\frac{1}{2\nu s_{n}}\int_{0}^{\infty }Dg_{n}(\tau
)e^{s_{n}|y-\tau |}d\tau +\frac{1}{2\nu s_{n}}\int_{0}^{\infty }Dg_{n}(\tau
)e^{-s_{n}(y+\tau )}d\tau ,
\end{equation*}%
and since%
\begin{equation*}
f_{n}=-\frac{Dg_{n}}{in\alpha },\text{ }|f_{n}(y)e^{\eta y}|\leq M_{n},
\end{equation*}%
we obtain%
\begin{equation}
\Bigl| \frac{Dw_{n}(y)}{in\alpha }e^{\eta y} \Bigr| \leq \frac{3M_{n}}{2\nu |s_{n}|(\Re
s_{n}-\eta)}.  \label{DW/n}
\end{equation}%
Now collecting (\ref{vn^x(y)}), (\ref{DW/n}), (\ref{w(y)}) for $w_{n}(0),$
and%
\begin{equation*}
\frac{1}{|n\alpha -s_{n}|(\Re s_{n}-\eta)}=\frac{|n\alpha +s_{n}|}{(\Re
s_{n}-\eta)|\alpha ^{2}-|\lambda +\nu \alpha ^{2}||}\leq c_{4},
\end{equation*}%
we obtain, with a constant $c$ independent of $\lambda $,%
\begin{equation*}
\|v\|_{L_{\eta}^{2}}\leq \frac{c}{|\lambda +\nu \alpha ^{2}|}%
\Bigl( \|f\|_{L_{\eta}^{2}}+\|g\|_{L_{\eta}^{2}} \Bigr),
\end{equation*}%
which is the result stated in Lemma \ref{lem:spectrumL0 expky}.


\subsection{Proof of Lemma \protect\ref{lem: locationspect}}


\label{proof3} 
We first prove that the linear operator $\boldsymbol{L}^{(1)}$ is relatively
bounded with respect to $\boldsymbol{L}_{(\nu )}^{(0)}$, namely that for $v\in \mathring{H}_{\eta}^{2}$%
\begin{equation}
\|\boldsymbol{L}^{(1)}v\|_{\mathring{L}_{\eta}^{2}}\leq a\|v\|_{\mathring{L}%
_{\eta}^{2}}+b\|\boldsymbol{L}_{(\nu )}^{(0)}v\|_{\mathring{L}_{\eta}^{2}},
\label{relat bound}
\end{equation}%
where $b$ can be chosen as small as we want. Indeed%
\begin{equation*}
(\boldsymbol{L}^{(1)}v)_{n}=-\Pi _{n}\left[ in\alpha Uv_{n}+U^{\prime }\binom{v_{n}^{y}}{0}\right],
\end{equation*}%
thus
\begin{equation*}
\|(\boldsymbol{L}^{(1)}v)_{n}\|_{C_{\eta}^{0}}\leq c(|n|+1)M_{n}
\end{equation*}%
where%
\begin{equation*}
\sup |\widetilde{v}_{n}(y)|e^{\eta  y}=M_{n},
\end{equation*}%
hence%
\begin{equation*}
\|(\boldsymbol{L}^{(1)}v)_{n}\|_{C_{\eta}^{0}}^{2}\leq 2c^{2} \Bigl(\varepsilon
^{2}n^{4}+\frac{1}{4\varepsilon ^{2}} \Bigr) M_{n}^{2},
\end{equation*}%
and since%
\begin{eqnarray*}
\|v\|_{\mathring{L}_{\eta}^{2}}^{2} &=&\sum_{|n|\geq 1}M_{n}^{2}, \\
\|v\|_{\mathring{H}_{\eta}^{2}}^{2} &=&\sum_{|n|\geq
1}n^{4}M_{n}^{2}+n^{2}\|Dv_{n}\|_{C_{\eta}^{0}}^{2}+\|D^{2}v_{n}\|_{C_{\eta}^{0}}^{2},
\end{eqnarray*}%
we obtain%
\begin{equation*}
\|\boldsymbol{L}^{(1)}v\|_{_{\mathring{L}_{\eta}^{2}}}^{2}\leq
2c^{2}\varepsilon ^{2}\|v\|_{\mathring{H}_{\eta}^{2}}^{2}+\frac{c^{2}}{%
2\varepsilon ^{2}}\|v\|_{\mathring{L}_{\eta}^{2}}^{2}.
\end{equation*}%
Thanks to the equivalence between $\|\boldsymbol{L}_{(\nu )}^{(0)}v\|_{%
\mathring{L}_{\eta}^{2}}$ and $\|v\|_{\mathring{H}_{\eta}^{2}}$ 
\begin{equation*}
\|v\|_{\mathring{H}_{\eta}^{2}}\leq K\|\boldsymbol{L}_{(\nu )}^{(0)}v\|_{%
\mathring{L}_{\eta}^{2}},
\end{equation*}%
we obtain (\ref{relat bound}) with $b=Kc\varepsilon $ and $a= c / 2 \varepsilon$
(which may be large).

Now, using (\ref{relat bound}) on $v = (\lambda - \boldsymbol{L}^{(0)}_{(\nu)}) w$,
together with the bounds obtained in Lemma \ref{lem:spectrumL0 expky}, we have%
\begin{eqnarray*}
&&a\Bigl\|(\lambda -\boldsymbol{L}_{(\nu )}^{(0)})^{-1} \Bigr\|_{\mathcal{L}(\mathring{L}%
_{\eta}^{2})}+b \Bigl\| \boldsymbol{L}_{(\nu )}^{(0)}(\lambda -\boldsymbol{L}_{(\nu )}^{(0)})^{-1} \Bigr\|_{\mathcal{L}%
(\mathring{L}_{\eta}^{2})} \\
&\leq &\frac{cC}{2\varepsilon |\lambda +\nu \alpha ^{2}|)}+Kc\varepsilon \Bigl(1+%
\frac{C|\lambda |}{|\lambda +\nu \alpha ^{2}|} \Bigr),
\end{eqnarray*}%
provided 
\begin{equation*}
0\leq \arg (\lambda +\nu \alpha ^{2})\leq 2\pi /3-\delta.
\end{equation*}%
It is clear that 
\begin{equation*}
a \Bigl\|(\lambda -L_{(\nu )}^{(0)})^{-1} \Bigr\|_{\mathcal{L}(\mathring{L}%
_{\eta}^{2})}+b \Bigl\|L_{(\nu )}^{(0)}(\lambda -L_{(\nu )}^{(0)})^{-1} \Bigr\|_{\mathcal{L}%
(\mathring{L}_{\eta}^{2})}<1
\end{equation*}%
for $\varepsilon $ such that 
$$
Kc\varepsilon \Bigl( 2+\frac{\nu\alpha^{2}\varepsilon }{cC} \Bigr) < \frac{1}{2}
$$
and imposing
\begin{equation}
|\lambda +\nu \alpha ^{2}|> \frac{cC}{\varepsilon}.  \label{cond sur lambda}
\end{equation}%
Applying the theorem 3.17 p.214 in \cite{Kato}, we deduce that the set of $%
\lambda $ satisfying (\ref{cond sur lambda}) is included in the resolvent
set of $\boldsymbol{L}_{(\nu )}^{(0)}+\boldsymbol{L}^{(1)}=\boldsymbol{L}%
_{(\nu )}$ when restricted to the subspace $\mathring{L}_{\eta}^{2}.$

Now in the subspace $\mathring{L}^{\infty }$, we have $\boldsymbol{L}%
^{(1)}|_{\mathring{L}^{\infty }}=0$, hence the result holds.
This means that the spectrum of eigenvalues of $\boldsymbol{L}_{(\nu )}$ is
located in a right bounded region as indicated in the Lemma.


\subsection{Proof of Lemma \protect\ref{relatcomp}}


\label{proof4} Let us consider a sequence $\{v^{(n)}\}_{n\in \mathbb{N}%
}\subset \mathcal{Z}_{\eta}$ such that $v^{(n)}$ and $\boldsymbol{L}_{(\nu
)}^{(0)}v^{(n)}$ are bounded in $\mathcal{X}_{\eta},$ then the linear operator $%
\boldsymbol{L}^{(1,c)}$ is relatively compact with respect to $\boldsymbol{L}%
_{(\nu )}^{(0)}$ if there exists a subsequence $\{v^{(p_{n})}\}_{p_{n}\in 
\mathbb{N}}$ such that $\boldsymbol{L}^{(1,c)}v^{(p_{n})}$ converges in $%
\mathcal{X}_{\eta}.$

We know that $\boldsymbol{L}^{(1,c)}\in \mathcal{L}(\mathcal{Z}_\eta,\mathcal{Y}_\eta)$
cancels on $\mathcal{L}(\mathring{W}^{2,\infty },\mathring{W}%
^{1,\infty }),$ so it is sufficient to work in $\mathcal{L}(\mathring{H}%
_{\eta}^{2},\mathring{H}_{\eta}^{1}).$

Here we use an additional property of $\boldsymbol{L}^{(1,c)}:$ the function $%
U(y)$ tends exponentially to $U_+$ like $e^{-\gamma y}$ as $y\rightarrow \infty$. 
The $m-th$ Fourier component 
\begin{equation*}
\Bigl[ ((U-U_+)\cdot \nabla )v+(v\cdot \nabla )U \Bigr]_{m}(y)=(U(y)-U_+)im\alpha
v_{m}(y)+v_{m}^{y}(y)U^{\prime }(y)
\end{equation*}%
is bounded by%
\begin{equation*}
Ke^{-(\gamma +\eta)y} \Bigl( 1+|m| \Bigr) |v_{m}(y)e^{\eta y}|\leq Ke^{-(\gamma
+\eta)y} \Bigl( 1+|m| \Bigr) \|v_{m}\|_{C_{\eta}^{0}},
\end{equation*}%
where%
\begin{equation*}
\sum_{|m|\geq 1}(1+m^{2}) \|v_{m}\|_{C_{\eta}^{0}}^{2}=\|v\|_{\mathring{H}%
_{\eta}^{1}}^{2}.
\end{equation*}%
It appears, from the properties of the projection $\Pi $ described in
Appendix \ref{App:decompHk}, that 
\begin{equation*}
\boldsymbol{L}^{(1,c)}\in \mathcal{L}(\mathring{H}_{\eta}^{1},\mathring{L}%
_{\eta+\gamma }^{2})\cap \mathcal{L}(\mathring{H}_{\eta}^{2},\mathring{H}%
_{\eta+\gamma }^{1}).
\end{equation*}%
Let us show that the identity map $\mathring{H}_{\eta+\gamma
}^{1}\hookrightarrow \mathring{L}_{\eta}^{2}$ is compact, which is sufficient
for our purpose.

We define the space $\mathring{H}_{\eta,N}^{1},$ and similarly $\mathring{L}%
_{\eta,N}^{2},$ by%
\begin{eqnarray*}
\mathring{H}_{\eta,N}^{1} =\Bigl\{ \widetilde{v}\in \lbrack H^{1}[(\mathbb{T},C_{\eta}^{0}(0,N)]]^{2}; \,
\int_{0}^{2\pi }\widetilde{v}(x,y)dx=0, \, \nabla \cdot \widetilde{v}=0,%
\, \widetilde{v}^{y}|_{y=0}=0\Bigr\} .
\end{eqnarray*}%
Let us consider a sequence $\{v^{(n)}\}_{n\in \mathbb{N}}$, bounded in $%
\mathring{H}_{\eta+\gamma }^{1},$ and the corresponding sequence $%
\{v^{(n)N}\}_{n\in \mathbb{N}}$ in $\mathring{H}_{\eta+\gamma ,N}^{1},$ where%
\begin{equation*}
v_{m}^{(n)N}(y)=\left\{ 
\begin{array}{c}
v_{m}^{(n)}(y)\text{ for }y\in \lbrack 0,N] \\ 
0\text{ for }y>N%
\end{array}%
\right. .
\end{equation*}%
Then%
\begin{equation*}
m^{2}\|v_{m}^{(n)}-v_{m}^{(n)N}\|_{C_{\eta}^{0}}^{2}+\|Dv_{m}^{(n)}-Dv_{m}^{(n)N}\|_{C_{\eta}^{0}}^{2}\leq 4M_{m}^{2}e^{-2\gamma N},
\end{equation*}%
where 
\begin{equation*}
M_{m}^{2}=m^{2}\|v_{m}^{(n)}\|_{C_{\eta}^{0}}^{2}+\|Dv_{m}^{(n)}\|_{C_{\eta}^{0}}^{2}.
\end{equation*}%
Since the interval $[0,N]$ is bounded, and the functions are bounded and
equicontinuous in $y,$ there is a subsequence $\{v^{(p_{n}^{N})}\}_{n}$
converging in $\mathring{L}_{\eta,N}^{2}$ when $n\rightarrow \infty .$

Now, for any $N$, we have%
\begin{equation*}
\|v^{(p_{n}^{N})}-v^{(p_{q}^{N})}\|_{\mathring{L}_{\eta,N}^{2}}\leq Ke^{-\gamma
N}\varepsilon _{p_{n}}\text{ for }q>p,\text{ }
\end{equation*}%
and $\varepsilon _{p_{n}}\rightarrow 0$ as $n\rightarrow \infty $ since $%
\{v^{(p_{n}^{N})}\}_{_{n}}$ is a Cauchy sequence in $\mathring{L}_{\eta,N}^{2}$. 
Now extracting the diagonal subsequence $\{v^{(p_{N}^{N})}\}_{N\rightarrow
\infty }$, we obtain a Cauchy sequence $\{v^{(p_{N}^{N})}\}$ in $\mathring{L}%
_{\eta}^{2},$ which thus converges.


\subsection{Proof of Lemma \ref{newlemma}
\label{newappendix}}



To prove Lemma \ref{newlemma}, let us proceed as in Appendix
\ref{app 8.2.2}. We have explicitly the solution $v$ of 
\begin{equation*}
\nu \Pi \Delta v- \Pi \Bigl[ U_+\frac{\partial v}{\partial x } \Bigr]
-\lambda v=f\in \mathring{L}_{\eta}^{2},\text{ }v(0)=0,
\end{equation*}%
with formulas (\ref{equ for resolv}), (\ref{equ for resolv2}) in Appendix \ref{app 8.2.2}. The
result is based on the good estimates given at Lemma \ref{lemme30}, for $%
s_{n}$ and $\Re s_{n}-\eta $ where now
\begin{equation*}
s_{n}^{2}=n^{2}\alpha ^{2}+\frac{\lambda -i n \alpha U_+ }{\nu },\text{ }\Re
s_{n}>0,
\end{equation*}%
and $\eta >0$ is small enough. We first observe that if $\lambda $ is such that $%
s_{n}=0$ or $\Re s_{n}=\eta $ then $\lambda $ is situated in a very specific
region which can be avoided for a finite number of values $1\leq |n|\leq
N_{0}.$ Then it will be sufficient to obtain lower estimates for $|s_{n}|$
and $\Re s_{n}-\eta $ for values of $n$ such that $|n|>N_{0}.$

First consider $\lambda $ such that $s_{n}=0.$ This implies%
\begin{equation*}
\lambda _{r}=-\nu \alpha ^{2}n^{2},\text{ }\lambda _{i}=n\alpha U_+ ,
\end{equation*}%
hence%
\begin{equation}
\lambda _{r}=-\frac{\nu }{U_+^{2}}(\lambda _{i})^{2},
\label{parabolaP1}
\end{equation}%
which means that $\lambda $ belongs to a parabola $P_{1},$ truncated by $%
\lambda _{r}\leq -\nu \alpha ^{2}$ since $|n|\geq 1.$

Now consider $\lambda $ such that $\Re s_{n}=\eta .$ This implies%
\begin{equation*}
A+\sqrt{A^{2}+B^{2}}=2\eta ^{2},
\end{equation*}%
with%
\begin{equation*}
A=n^{2}\alpha ^{2}+\frac{\lambda _{r}}{\nu },\text{ }B=\frac{\lambda
_{i}-n\alpha U_+ }{\nu }.
\end{equation*}%
Hence%
\begin{equation*}
B^{2}=4\eta ^{4}-4A\eta ^{2},
\end{equation*}%
which leads to%
\begin{equation*}
(\lambda _{i}-n\alpha U_+ )^{2}=4\nu ^{2}\eta ^{4}-4\nu \eta ^{2}(\lambda _{r}+\nu
\alpha ^{2}n^{2})
\end{equation*}%
\begin{equation*}
\lambda _{r}=-\frac{(\lambda _{i}-n\alpha U_+ )^{2}}{4\nu \eta ^{2}}-\nu (\alpha
^{2}n^{2}-\eta ^{2})
\end{equation*}%
which is a set of parabolas in $\mathbb{C} $ with the following envelope
(varying the parameter $|n|\geq 1)$%
\begin{equation*}
\lambda _{r}=-\frac{\nu }{U_+^{2}+4\nu ^{2}\eta ^{2}}%
\lambda _{i}^{2}+\nu \eta ^{2}.
\end{equation*}%
It results that if $\lambda $ is such that $\Re s_{n}=\eta $ then 
\begin{equation}
\lambda _{r}\leq -\frac{\nu }{U_+^{2}+4\nu ^{2}
\eta ^{2}}\lambda _{i}^{2}+\nu \eta ^{2},\text{ }\lambda _{r}\leq -\nu (\alpha
^{2}-\eta ^{2}),  \label{parabolaP2}
\end{equation}%
which corresponds again to a parabolic region, truncated by $\lambda
_{r}\leq -\nu (\alpha ^{2}-\eta ^{2})<0$ for $\eta <\alpha .$ It is clear that the
parabola (\ref{parabolaP1}) is included in this region.

Let us choose $\lambda $ outside of region (\ref{parabolaP2}) and such that 
\begin{equation*}
0\leq \arg (\lambda +\nu \alpha ^{2})\leq 2\pi /3-\delta ,
\end{equation*}%
and let us follow the method used in Appendix \ref{app 8.2.2}. We have for $%
\cos \theta >\frac{C}{N_{0}}$ where $\frac{|n U_+ |}{(n^{2}-1)\alpha ^{}}%
\leq \frac{C}{_{N_{0}}}$, and for $|n|\geq N_{0},$ 
\begin{equation*}
|s_{n}|^{4}\geq \Bigl[ (n^{2}-1)^{2}\alpha ^{4}+\frac{1}{\nu ^{2}}|\lambda
+\nu \alpha ^{2}|^{2} \Bigr]+\frac{n^{2}\alpha^2 U_+^{2}}{\nu ^{2}},
\end{equation*}%
and for $\cos (2\pi /3-\delta )\leq \cos \theta \leq \frac{C}{N_{0}},$%
\begin{equation*}
|s_{n}|^{4}\geq (1-|\cos \theta |-\frac{C}{N_{0}}) \Bigl[ (n^{2}-1)^{2}\alpha ^{4}+%
\frac{1}{\nu ^{2}}|\lambda +\nu \alpha ^{2}|^{2} \Bigr]+\frac{n^{2}\alpha^2 U_+^{2}}{%
\nu ^{2}},
\end{equation*}%
so that in all cases%
\begin{equation*}
|s_{n}|^{2}\geq (\cos \theta /2-\delta _{1}) \Bigl[(n^{2}-1)\alpha ^{2}+\frac{1}{%
\nu }|\lambda +\nu \alpha ^{2}| \Bigr],
\end{equation*}%
where $\delta _{1}$ is such that%
\begin{equation*}
(\cos ^{2}\theta /2-\frac{C}{2N_{0}})^{1/2}=\cos \theta /2-\delta _{1},\text{
}\delta _{1}<\frac{C}{N_{0}},
\end{equation*}%
using $\cos \theta /2>\cos (\pi /3-\delta /2)>1/2.$

Now we have 
\begin{equation*}
(\Re s_{n})^{2}\geq \frac{1}{2}\Bigl\{(1+\cos \theta /2-\delta
_{1})(n^{2}-1)\alpha ^{2}+(\cos \theta /2+\cos \theta -\delta _{1})\frac{1}{%
\nu }|\lambda +\nu \alpha ^{2}|\Bigr\}.
\end{equation*}%
Moreover, we have
\begin{eqnarray*}
\cos \theta /2+\cos \theta -\delta _{1} &\geq &\cos (\pi /3-\delta /2)-\cos
(\pi /3+\delta )-\delta _{1} \\
&\geq &\frac{\sqrt{3}\delta }{4}-\delta _{1}\text{ for }\delta \text{ small
enough.}
\end{eqnarray*}%
Hence choosing $N_{0}$ large enough, such that 
\begin{equation*}
\frac{C}{N_{0}}<\frac{\sqrt{3}\delta }{8},
\end{equation*}%
we obtain the required estimates of Lemma \ref{lemme30} in Appendix \ref{app 8.2.2}. 

Now, the inverse of $\boldsymbol{L}_{(\nu )}^{(0)}
+ \boldsymbol{L}^{(1,0)} - \lambda$ is bounded 
in $\mathring{L}^2_\eta$ by $C / |\lambda + \nu \alpha^2|$
provided that we adapt the constant $C$ in order to take
care of the Fourier components with $|n| < N_0$.

The Lemma \ref{newlemma} is then proved.


\subsection{Proof of Lemma \protect\ref{lem:estimsemigroup poids}\label{app:
lem semigr poids}}


Let $f\in L_{\eta}^{\infty }$. Then $v= ( \lambda -\boldsymbol{L}_{(\nu )}^{(0)})^{-1}f$
is explicitely given by 
\begin{eqnarray}\label{v(y) Linfty}
v(y) &=&\frac{1}{2s\nu }\int_{y}^{\infty }f(\tau )e^{s(y-\tau )}d\tau +\frac{%
1}{2s\nu }\int_{0}^{y}f(\tau )e^{-s(y-\tau )}d\tau  \\
&&-\frac{1}{2s\nu }\int_{0}^{\infty }f(\tau )e^{-s(y+\tau )}d\tau .\notag
\end{eqnarray}%
We want to estimate
$\|e^{\boldsymbol{L}_{(\nu )}^{(0)}t}f\|_{L^{\infty }}$,
using 
\begin{equation}
e^{\boldsymbol{L}_{(\nu )}^{(0)}t}f=\frac{1}{2i\pi }\int_{\Gamma }e^{\lambda
t} \Bigl(\lambda -\boldsymbol{L}_{(\nu )}^{(0)} \Bigr)^{-1}fd\lambda ,
\label{integ for semi group}
\end{equation}%
where the contour $\Gamma $ is detailed on Figure \ref{fig:gamma}. 

The estimate obtained in Lemma \ref%
{lem:spectrumL0 expky} is not sufficient, and does not use the decay at $%
\infty $ of $f(y).$ Now we have, with $s^{2}=\lambda /\nu ,$ $\Re s>0$%
\begin{equation}
\|(\lambda -\boldsymbol{L}_{(\nu )}^{(0)})^{-1}f\|_{L^{\infty }}\leq \frac{3%
}{2\nu |s||\Re s-\eta|}\|f\|_{L_{\eta}^{\infty }}\text{ if }|\Re s-\eta|\neq 0,
\label{estim resolv Linfty}
\end{equation}%
\begin{equation*}
\|(\lambda -\boldsymbol{L}_{(\nu )}^{(0)})^{-1}f\|_{L^{\infty }}\leq \frac{3%
}{2\nu |s|\eta}\|f\|_{L_{\eta}^{\infty }}\text{ for } \frac{\eta}{2}\leq \Re s,
\end{equation*}%
where we notice that $\Re s-\eta$ may be negative. 
In fact, we use estimate (\ref%
{estim resolv Linfty}) only for large $|\lambda |.$ For bounded $|\lambda |$
we use next result.

\subsubsection{Special estimates for bounded $\protect\lambda $}

Let us come back to 
\begin{equation}
\lambda u-\nu D^{2}u=f\in L_{\eta}^{\infty },\text{ }v(0)=0.
\label{equ small lambda}
\end{equation}%
and assume that $\lambda $ is bounded. We first solve%
\begin{equation*}
\nu D^{2}v=-f,
\end{equation*}%
looking for $v$ such that $v\rightarrow 0$ as $y\rightarrow \infty .$ We find%
\begin{equation*}
v(y)=-\int_{y}^{\infty }ds\int_{s}^{\infty }\frac{f(\tau )}{\nu }d\tau ,
\end{equation*}%
so that%
\begin{equation*}
\|v(y)\|\leq \frac{e^{-\eta y}}{\eta ^{2}\nu }\|f\|_{L_{\eta }^{\infty }},\text{ }%
\|Dv(y)\|\leq \frac{e^{-\eta y}}{\eta \nu }\|f\|_{L_{\eta }^{\infty }},\text{ }%
\|D^{2}v(y)\|\leq \frac{e^{-\eta y}}{\nu }\|f\|_{L_{\eta }^{\infty }}.
\end{equation*}%
Let us now solve%
\begin{eqnarray*}
\lambda w-\nu D^{2}w &=&-\lambda v \\
w(0) &=&0,
\end{eqnarray*}%
which leads to
\begin{eqnarray*}
w(y) &=&\frac{-\lambda }{2s\nu }\int_{y}^{\infty }v(\tau )e^{s(y-\tau
)}d\tau -\frac{\lambda }{2s\nu }\int_{0}^{y}v(\tau )e^{-s(y-\tau )}d\tau \\
&&+\frac{\lambda }{2s\nu }\int_{0}^{\infty }v(\tau )e^{-s(y+\tau )}d\tau ,
\end{eqnarray*}%
thus
\begin{eqnarray*}
\|w\|_{L^{\infty }} &\leq &\frac{3|s|}{\eta }\|v\|_{L_{\eta }^{\infty }}\leq \frac{%
3|s|}{\eta ^{3}\nu }\|f\|_{L_{\eta }^{\infty }}, \\
\|Dw\|_{L^{\infty }} &\leq &\frac{3|s|}{\eta }\|Dv\|_{L_{\eta }^{\infty }}\leq \frac{%
3|s|}{\eta ^{2}\nu }\|f\|_{L_{\eta }^{\infty }}, \\
\|D^{2}w\|_{L^{\infty }} &\leq &\frac{3|\lambda |}{\eta \nu }\|Dv\|_{L_{\eta }^{%
\infty }}\leq \frac{3|\lambda |}{\eta ^{2}\nu ^{2}}\|f\|_{L_{\eta }^{\infty }},
\end{eqnarray*}%
i.e. 
\begin{equation*}
\|w\|_{W^{2,\infty }}\leq \frac{3|s|}{\eta ^{3}\nu } \Bigl[ 1+\eta +\eta |s| \Bigr]\|f\|_{L_{\eta }^{%
\infty }}.
\end{equation*}%
Now we solve%
\begin{equation*}
\lambda w_{1}-\nu D^{2}w_{1}=0,\text{ }w_{1}(0)=-v(0),
\end{equation*}%
which leads to%
\begin{equation*}
w_{1}(y)=-v(0)e^{-sy},
\end{equation*}%
hence%
\begin{equation*}
\|w_{1}\|_{W^{2,\infty }}\leq \frac{1+|s|+|s|^{2}}{\eta ^{2}\nu }%
\|f\|_{L_{\eta }^{\infty }},
\end{equation*}%
The sum%
\begin{equation*}
u=v+w+w_{1}
\end{equation*}%
satisfies (\ref{equ small lambda}) and we have the estimate%
\begin{equation}
\Bigl\| (\lambda -L_{(\nu )}^{(0)})^{-1}f \Bigr\|_{W^{2,\infty }}\leq \frac{4(1+|s|)^{2}%
}{\eta ^{3}\nu }\|f\|_{L_{\eta }^{\infty }}\leq c_{0}\|f\|_{L_{\eta }^{\infty }}
\label{estim resol nighb 0}
\end{equation}%
which is used when $\lambda =\nu s^{2}$ is bounded by some large constant $A$.


\subsubsection{Study of the contour integral}


On the part $\Gamma _{0}$ of $\Gamma$, we have $\lambda =\delta e^{i\theta
}, $ $-\pi /2\leq \theta \leq \pi /2.$ For any $\delta >0$%
\begin{eqnarray*}
\Bigl\|\frac{1}{2i\pi }\int_{\Gamma _{0}}e^{\lambda t} \Bigl(\lambda -\boldsymbol{L}%
_{(\nu )}^{(0)} \Bigr)^{-1}f d\lambda  \Bigr|_{L^{\infty }} &\leq &\frac{%
c_{0}\|f\|_{L_{k}^{\infty }}}{2\pi }\int_{-\pi /2}^{\pi /2}\delta e^{\delta
t}d\theta \\
&\leq &\frac{c_{0}}{2}\delta e^{\delta t}\|f\|_{L_{k}^{\infty }},
\end{eqnarray*}
which goes to $0$ as $\delta$ goes to $0$, $t$ being fixed.

On the part $\Gamma _{1}$ we have $\lambda =-\tau +i\delta ,$ $\tau \in
\lbrack 0,\gamma ],$ with $\gamma >0$ of order 1, so that $d\lambda =-d\tau $%
. Hence%
\begin{equation*}
\Bigl\|\frac{1}{2i\pi }\int_{\Gamma _{1}}e^{\lambda t} \Bigl(\lambda -\boldsymbol{L}%
_{(\nu )}^{(0)} \Bigr)^{-1}fd\lambda  \Bigr\|_{L^{\infty }}\leq \frac{%
c_{0}\|f\|_{L_{k}^{\infty }}}{2\pi }\int_{0}^{\gamma }e^{-\tau t}d\tau =%
\frac{c_{0}\|f\|_{L_{k}^{\infty }}}{2\pi }\frac{(1-e^{\gamma t})}{t}.
\end{equation*}%
Now on the part $\Gamma _{2}$ of the contour $\Gamma $ we have $\lambda
=-\gamma +i\beta ,$ $\beta \in \lbrack \delta ,A],$ so that $d\lambda
=d\beta ,$ and the estimate (\ref{estim resol nighb 0}) leads to%
\begin{equation*}
\Bigl\|\frac{1}{2i\pi }\int_{\Gamma _{2}}e^{\lambda t} \Bigl(\lambda -\boldsymbol{L}%
_{(\nu )}^{(0)}\Bigr)^{-1}fd\lambda \Bigr\|_{L^{\infty }}\leq \frac{%
c_{0}\|f\|_{L_{k}^{\infty }}e^{-\gamma t}}{2\pi }(A-\delta ).
\end{equation*}%
It is then clear that the estimates for integral on $\overline{\Gamma _{1}}$
and $\overline{\Gamma _{2}}$ may be obtained in the same way and that the
integral on $\Gamma _{0}\cup \Gamma _{1}\cup \overline{\Gamma _{1}}\cup
\Gamma _{2}\cup \overline{\Gamma _{2}}$ has a limit when $\delta \rightarrow
0.$ Completing with the rest of $\Gamma $ which is independent of $\delta ,$
and which is bounded in a standard way for $t\in (0,1),$ and bounded for $%
t>1 $ by $ce^{-\delta _{1}t},$ we obtain an estimate of the form (the
main part comes from the integral on $\Gamma _{1})$%
\begin{equation*}
\|e^{\boldsymbol{L}_{(\nu )}^{(0)}t}f\|_{L^{\infty }}\leq \Bigl\|\frac{1}{2i\pi }%
\int_{\Gamma }e^{\lambda t} \Bigl(\lambda -\boldsymbol{L}_{(\nu
)}^{(0)} \Bigr)^{-1}fd\lambda \Bigr\|_{L^{\infty }}\leq \frac{C}{1+t}%
\|f\|_{L_{\eta}^{\infty }}.
\end{equation*}%


\subsubsection{End of the proof}


The rest of the proof of the first part of Lemma \ref{lem:estimsemigroup
poids} results from the fact that $e^{\boldsymbol{L}_{(\nu )}^{(0)}t}$
commutes with $\boldsymbol{L}_{(\nu )}^{(0)}$ and from%
\begin{equation*}
D^{2}(\lambda -\boldsymbol{L}_{(\nu )}^{(0)})^{-1}f=(\lambda -\boldsymbol{L}%
_{(\nu )}^{(0)})^{-1}D^{2}f,
\end{equation*}%
which implies that
\begin{equation*}
\|D^{2}e^{\boldsymbol{L}_{(\nu )}^{(0)}t}f\|_{L^{\infty }}\leq \frac{C}{1+t}%
\|D^{2}f\|_{L^{\infty }}.
\end{equation*}%
Moreover the interpolation estimate 
\begin{equation*}
\|Dv\|_{L^{\infty }}\leq \|D^{2}v\|_{_{L^{\infty }}}+\|v\|_{L^{\infty }},
\end{equation*}%
is obtained in solving%
\begin{equation*}
D^{2}v-v=g\in L^{\infty },\text{ }v(0)=0.
\end{equation*}%
Indeed this leads to%
\begin{eqnarray*}
v(y) &=&-\frac{1}{2}\int_{0}^{\infty }g(\tau )e^{-|\tau -y|}d\tau +\frac{1}{2%
}\int_{0}^{\infty }g(\tau )e^{-(\tau +y)}d\tau , \\
Dv(y) &=&\frac{1}{2}\int_{0}^{y}g(\tau )e^{\tau -y}d\tau -\frac{1}{2}%
\int_{y}^{\infty }g(\tau )e^{y-\tau }d\tau -\frac{1}{2}\int_{0}^{\infty
}g(\tau )e^{-(\tau +y)}d\tau ,
\end{eqnarray*}%
and we obtain 
\begin{equation*}
\|Dv\|_{L^{\infty }}\leq \|g\|_{L^{\infty }}\leq \|D^{2}v\|_{L^{\infty
}}+\|v\|_{L^{\infty }}.
\end{equation*}%
Now, taking $v=(\lambda -\boldsymbol{L}_{(\nu )}^{(0)})^{-1}f$, we obtain the first part of the Lemma.

Let $f\in W_{\eta}^{1,\infty }$. Then the solution $v(y)$ in $%
W_{0}^{2,\infty }$ of%
\begin{equation*}
(\lambda -\boldsymbol{L}_{(\nu )}^{(0)})v=f
\end{equation*}%
with $s^{2}=\lambda /\nu ,$ $\Re s>0,$ is given by (\ref{v(y) Linfty}%
) and, after an integration by parts, we have%
\begin{eqnarray*}
Dv(y) &=&\frac{1}{2s\nu }\int_{0}^{\infty }f^{\prime }(\tau )e^{-s|y-\tau
|}d\tau +\frac{1}{2s\nu }\int_{0}^{\infty }f^{\prime }(\tau )e^{-s(y+\tau
)}d\tau , \\
D^{2}v(y) &=&-\frac{1}{2\nu }\int_{0}^{y}f^{\prime }(\tau )e^{-s(y-\tau
)}d\tau +\frac{1}{2\nu }\int_{y}^{\infty }f^{\prime }(\tau )e^{s(y-\tau
)}d\tau \\
&&-\frac{1}{2\nu }\int_{0}^{\infty }f^{\prime }(\tau )e^{-s(y+\tau )}d\tau .
\end{eqnarray*}%
It is clear that we have the estimates 
\begin{equation}
\|v\|_{W^{1,\infty }}\leq \left\{ 
\begin{array}{c}
c_{0}\|f\|_{W_{\eta}^{1,\infty }}\text{ for }|\lambda |\leq A \\ 
\frac{C}{|\lambda |}\|f\|_{W_{\eta}^{1,\infty }}\text{ for }\lambda \in \Gamma
_{3}%
\end{array}%
\right. ,  \label{estim inv L-lambda 0-mode}
\end{equation}%
\begin{equation*}
\|D^{2}v\|_{L^{\infty }}\leq \left\{ 
\begin{array}{c}
c_{0}\|f\|_{W_{\eta}^{1,\infty }}\text{ for }|\lambda |\leq A \\ 
\frac{C}{\sqrt{|\lambda |}}\|f\|_{W_{\eta}^{1,\infty }}\text{ for }\lambda \in
\Gamma _{3}%
\end{array}%
\right. ,
\end{equation*}%
where $\lambda \in \Gamma _{3}$ is such that $\arg \lambda =2\pi /3-\delta ,$
hence%
\begin{equation*}
\Re \lambda \simeq -\frac{1}{2}|\lambda |,\text{ }|d\lambda |=d|\lambda |,%
\text{ }A\leq \Im \lambda \sim \frac{\sqrt{3}}{2}|\lambda |\leq \infty .
\end{equation*}%
It results that for, $\lambda \in \Gamma$, we have the estimates%
\begin{equation}
\|v\|_{W^{2,\infty }}\leq \frac{C}{(1+|\lambda |)}\|f\|_{W_{\eta}^{2,\infty }},
\label{estim resol L(0) b}
\end{equation}%
\begin{equation}
\|v\|_{W^{2,\infty }}\leq \frac{C}{1+\sqrt{|\lambda |}}\|f\|_{W_{\eta}^{1,%
\infty }}.  \label{estim resol L(0)  a}
\end{equation}%
The estimate (\ref{estim resol L(0) b}) implies the first part of the Lemma %
\ref{lem:estimsemigroup poids} already proved. Estimate (\ref{estim resol
L(0) a}) leads to the second estimate (\ref{estim sg L(0) W1}) in Lemma \ref%
{lem:estimsemigroup poids}, where we notice that the behavior in $1/\sqrt{%
|\lambda |}$ for $|\lambda |$ large in the part $\Gamma _{3}$ of the
integral (\ref{integ for semi group}) gives the factor $1/\sqrt{t}$ for $t$
near $0,$ while the integral on $\Gamma _{1}$ gives a bound in $1/(1+t)$ as $%
t\rightarrow \infty $.


\subsection{Proof of Lemma \protect\ref{lem perturb hilb}}


\label{proof6} 
Let us solve the system%
\begin{equation}
\Bigl( \mathbb{I}-\varepsilon \boldsymbol{L}_{(\nu )}^{(0)} \Bigr)u_{\varepsilon }=u\in 
\mathring{H}_{\eta}^{1},\text{ }u_{\varepsilon }\in \mathring{H}_{\eta}^{2},
\label{basic syst}
\end{equation}%
hence in Fourier components%
\begin{eqnarray*}
(D^{2}-s_{n}^{2})u_{\varepsilon ,n}+\binom{in\alpha }{D}q_{n} &=&\frac{1}{%
\varepsilon \nu }\binom{u_{n}^{x}}{u_{n}^{y}}, \\
in\alpha u_{\varepsilon ,n}^{x}+Du_{\varepsilon ,n}^{y} &=&0,u_{\varepsilon
,n}|_{y=0}=0,
\end{eqnarray*}%
where 
\begin{equation*}
in\alpha u_{n}^{x}+Du_{n}^{y}=0,
\end{equation*}%
\begin{equation*}
s_{n}^{2}=n^{2}\alpha ^{2}+\frac{1}{\varepsilon \nu }.
\end{equation*}%
We obtain%
\begin{eqnarray*}
(D^{2}-n^{2}\alpha ^{2})[D^{2}-s_{n}^{2})]u_{\varepsilon ,n}^{y} &=&\frac{1}{%
\varepsilon \nu }(D^{2}-n^{2}\alpha ^{2})u_{n}^{y}, \\
u_{\varepsilon ,n}^{y} &=&Du_{\varepsilon ,n}^{y}=0\text{ for }y=0,
\end{eqnarray*}%
which allows to solve separately in $u_{\varepsilon ,n}^{y}$ and $%
u_{\varepsilon ,n}^{x}.$ First, we get%
\begin{equation}
u_{\varepsilon ,n}^{y}(y)=w_{n}(y)-Dw_{n}(0)\frac{e^{-s_{n}y}-e^{-n\alpha y}}{s_{n}-n\alpha },  \label{u eps n}
\end{equation}%
where%
\begin{equation*}
w_{n}(y)=-\frac{1}{2\varepsilon \nu s_{n}}\int_{0}^{\infty }u_{n}^{y}(\tau
)e^{-s_{n}|\tau -y|}d\tau +\frac{1}{2\varepsilon \nu s_{n}}\int_{0}^{\infty
}u_{n}^{y}(\tau )e^{-s_{n}(\tau +y)}d\tau .
\end{equation*}%
After an integration by parts, we obtain 
\begin{eqnarray*}
w_{n}(y) &=&\frac{1}{2\varepsilon \nu s_{n}^{2}}\int_{0}^{y}Du_{n}^{y}(\tau
)e^{-s_{n}(y-\tau )}d\tau -\frac{1}{2\varepsilon \nu s_{n}^{2}}%
\int_{y}^{\infty }Du_{n}^{y}(\tau )e^{s_{n}(y-\tau )}d\tau  \\
&&+\frac{1}{2\varepsilon \nu s_{n}^{2}}\int_{0}^{\infty }Du_{n}^{y}(\tau
)e^{-s_{n}(y+\tau )}d\tau -\frac{1}{\varepsilon \nu s_{n}^{2}}u_{n}^{y}(y)
\end{eqnarray*}%
from which we deduce that 
\begin{equation*}
|w_{n}(y)e^{\eta y}|\leq \frac{3M_{n}}{2\nu \varepsilon s_{n}^{2}(s_{n}-\eta)}+%
\frac{M_{n}}{\varepsilon \nu ns_{n}^{2}}
\end{equation*}%
where%
\begin{eqnarray*}
n^{2}|u_{n}^{y}(y)e^{\eta y}|^{2}+|Du_{n}^{y}(y)e^{\eta y}|^{2} &\leq &M_{n}^{2}, \\
\|u\|_{\mathring{H}_{\eta}^{1}}^{2} &=&\sum_{|n|\geq 1}M_{n}^{2}.
\end{eqnarray*}%
We also have%
\begin{eqnarray*}
Dw_{n}(y) &=&-\frac{1}{2\varepsilon \nu s_{n}}\int_{0}^{y}Du_{n}^{y}(\tau
)e^{-s_{n}(y-\tau )}d\tau -\frac{1}{2\varepsilon \nu s_{n}}\int_{y}^{\infty
}Du_{n}^{y}(\tau )e^{-s_{n}(\tau -y)}d\tau  \\
&&-\frac{1}{2\varepsilon \nu s_{n}}\int_{0}^{\infty }Du_{n}^{y}(\tau
)e^{-s_{n}(\tau +y)}d\tau ,
\end{eqnarray*}%
from which we deduce that 
\begin{equation*}
|Dw_{n}(y)e^{\eta y}|\leq \frac{3M_{n}}{2\nu \varepsilon s_{n}(s_{n}-\eta )}
\end{equation*}%
and%
\begin{eqnarray*}
D^{2}w_{n}(y) &=&\frac{1}{2\varepsilon \nu }\int_{0}^{y}Du_{n}^{y}(\tau
)e^{-s_{n}(y-\tau )}d\tau -\frac{1}{2\varepsilon \nu }\int_{y}^{\infty
}Du_{n}^{y}(\tau )e^{-s_{n}(\tau -y)}d\tau  \\
&&+\frac{1}{2\varepsilon \nu }\int_{0}^{\infty }Du_{n}^{y}(\tau
)e^{-s_{n}(\tau +y)}d\tau ,
\end{eqnarray*}%
from which we obtain that 
\begin{equation*}
|D^{2}w_{n}(y)e^{\eta y}|\leq \frac{3M_{n}}{2\nu \varepsilon (s_{n}-\eta )}.
\end{equation*}%
Now, we show that there exists $C>0$ such that for $|n|\geq 1$ 
\begin{equation}
\max \left\{ \frac{3n^{2}}{2\nu \varepsilon s_{n}^{2}(s_{n}-\eta )},\frac{|n|}{%
\nu \varepsilon s_{n}^{2}},\frac{|n|}{2\nu \varepsilon s_{n}(s_{n}-\eta )},\frac{%
3}{2\nu \varepsilon (s_{n}-\eta )}\right\} \leq \frac{C}{\varepsilon ^{1/2}}.
\label{simple estim}
\end{equation}%
Indeed, we note that%
\begin{eqnarray*}
\varepsilon s_{n}^{2} &=&\varepsilon n^{2}\alpha ^{2}+1/\nu >\frac{1}{2}%
(\alpha |n|\sqrt{\varepsilon }+\nu ^{-1/2})^{2}, \\
\sqrt{\varepsilon }(s_{n}-\eta ) &>&\frac{1}{\sqrt{2}} \Bigl(\alpha |n|-\eta \sqrt{2})%
\sqrt{\varepsilon }+\nu ^{-1/2} \Bigr),
\end{eqnarray*}%
so that, for $\eta $ small enough, there exists $c>0$ independent of $n$ and $%
\varepsilon $ such that%
\begin{equation*}
\sqrt{\varepsilon }s_{n}>\sqrt{\varepsilon }(s_{n}-\eta )>c \Bigl( |n|\sqrt{\varepsilon 
}+1 \Bigr).
\end{equation*}%
Then, we have (with $X=|n|\sqrt{\varepsilon }$) 
\begin{equation*}
\frac{3n^{2}}{2\nu \varepsilon s_{n}^{2}(s_{n}-\eta )}\leq \frac{3n^{2}\sqrt{%
\varepsilon }}{2\nu c^{3}(|n|\sqrt{\varepsilon }+1)^{3}}\leq \frac{C_{1}}{%
\sqrt{\varepsilon }}\frac{X^{2}}{(X+1)^{3}}\leq \frac{C}{\varepsilon ^{1/2}},
\end{equation*}%
\begin{equation*}
\frac{|n|}{2\nu \varepsilon s_{n}(s_{n}-\eta )}\leq \frac{1}{2\nu c^{2}\sqrt{%
\varepsilon }}\frac{X}{(X+1)^{2}}\leq \frac{C}{\varepsilon ^{1/2}},
\end{equation*}%
\begin{equation*}
\frac{3}{2\nu \varepsilon (s_{n}-\eta )}\leq \frac{3}{2c\nu \sqrt{\varepsilon }}%
\frac{1}{X+1}\leq \frac{C}{\varepsilon ^{1/2}},
\end{equation*}%
showing that (\ref{simple estim}) holds true. It then results that%
\begin{equation*}
n^{4}\|w_{n}\|_{C_{\eta}^{0}}^{2}+n^{2}\|Dw_{n}\|_{C_{\eta}^{0}}^{2}+\|D^{2}w_{n}\|_{C_{\eta}^{0}}^{2}\leq C_{1}%
\frac{M_{n}^{2}}{\varepsilon }.
\end{equation*}%
Coming back to (\ref{u eps n}), we use (assuming from now on that $n>0$) 
\begin{equation*}
|Dw_{n}(0)|\leq c_{0}\frac{M_{n}}{(X+1)^{2}},\text{ }X=n\sqrt{\varepsilon ,}
\end{equation*}%
and good estimates for the function of $y$ defined by%
\begin{equation*}
b_{n}(y)\overset{def}{=}\frac{e^{-s_{n}y}-e^{-n\alpha y}}{s_{n}-n\alpha }.
\end{equation*}%
Indeed, we have%
\begin{eqnarray*}
|e^{-(n\alpha -\eta )y}-e^{-(s_{n}-\eta )y}| &=&e^{-(n\alpha
-\eta )y}(1-e^{-(s_{n}-n\alpha )y}) \\
&\leq &(s_{n}-n\alpha )ye^{-(n\alpha -\eta )y} \\
&\leq &\frac{(s_{n}-n\alpha )}{(n\alpha -\eta )e},
\end{eqnarray*}%
hence there exists $C>0$ such that%
\begin{equation*}
\|b_{n}\|_{C_{\eta}^{0}}\leq \frac{C}{n},
\end{equation*}%
and 
\begin{equation*}
n^{2}\|Dw_{n}(0)b_{n}\|_{C_{\eta}^{0}}\leq c_{0}CM_{n}\frac{n}{(X+1)^{2}}\leq
c_{0}C\frac{M_{n}}{\sqrt{\varepsilon }}.
\end{equation*}%
Now 
\begin{eqnarray*}
Db_{n}(y) =\frac{n\alpha e^{-n\alpha y}-s_{n}e^{-s_{n}y}}{s_{n}-n\alpha }
=-e^{-s_{n}y}+\frac{n\alpha (e^{-n\alpha y}-e^{-s_{n}y})}{s_{n}-n\alpha }
\end{eqnarray*}%
hence there exists $C>0$ such that%
\begin{equation*}
\|Db_{n}\|_{C_{\eta}^{0}}\leq C
\end{equation*}%
and%
\begin{equation*}
n\|Dw_{n}(0)Db_{n}\|_{C_{\eta}^{0}}\leq C_{1}M_{n}\frac{n}{(X+1)^{2}}\leq C_{2}%
\frac{M_{n}X}{\sqrt{\varepsilon }(X+1)^{2}}\leq C_{2}\frac{M_{n}}{\sqrt{%
\varepsilon }}.
\end{equation*}%
In the same way, we have%
\begin{equation*}
D^{2}b_{n}(y)=(s_{n}+n\alpha )e^{-s_{n}y}+\frac{n^{2}\alpha
^{2}(e^{-s_{n}y}-e^{-n\alpha y})}{s_{n}-n\alpha }
\end{equation*}%
leading to%
\begin{equation*}
\|D^{2}b_{n}\|_{C_{\eta}^{0}}\leq C_{3}\frac{X+1}{\sqrt{\varepsilon }},
\end{equation*}%
hence%
\begin{equation*}
\|Dw_{n}(0)D^{2}b_{n}\|_{C_{\eta}^{0}}\leq c_{0}C_{3}M_{n}\frac{1}{\sqrt{%
\varepsilon }(X+1)}\leq c_{0}C_{3}\frac{M_{n}}{\sqrt{\varepsilon }}.
\end{equation*}%
Finally%
\begin{equation*}
n^{4}\|u_{\varepsilon ,n}^{y}\|_{C_{\eta}^{0}}^{2}+n^{2}\|Du_{\varepsilon
,n}^{y}\|_{C_{\eta}^{0}}^{2}+\|D^{2}u_{\varepsilon
,n}^{y}\|_{C_{\eta}^{0}}^{2}\leq C\frac{M_{n}^{2}}{\varepsilon }.
\end{equation*}
Let us now consider $u_{n,\varepsilon }^{x}.$ We have%
\begin{eqnarray*}
(D^{2}-s_{n}^{2})u_{n,\varepsilon }^{x} &=&-in\alpha q_{n}+\frac{u_{n}^{x}}{%
\nu _{0}\varepsilon },\text{ }u_{n,\varepsilon }^{x}(0)=0, \\
(D^{2}-n^{2}\alpha ^{2})q_{n} &=&0,\text{ }Dq_{n}(0)=-D^{2}u_{\varepsilon
,n}^{y}(0),
\end{eqnarray*}%
hence%
\begin{equation*}
q_{n}(y)=\frac{D^{2}u_{\varepsilon ,n}^{y}(0)}{n\alpha }e^{-n\alpha y},
\end{equation*}%
and 
\begin{eqnarray*}
u_{n,\varepsilon }^{x}(y) &=&\frac{1}{2\varepsilon \nu s_{n}^{2}}%
\int_{0}^{y}Du_{n}^{x}(\tau )e^{-s_{n}(y-\tau )}d\tau -\frac{1}{2\varepsilon
\nu s_{n}^{2}}\int_{y}^{\infty }Du_{n}^{x}(\tau )e^{s_{n}(y-\tau )}d\tau  \\
&&+\frac{1}{2\varepsilon \nu s_{n}^{2}}\int_{0}^{\infty }Du_{n}^{x}(\tau
)e^{-s_{n}(y+\tau )}d\tau -\frac{1}{\varepsilon \nu s_{n}^{2}}u_{n}^{x}(y) \\
&&+\frac{iD^{2}u_{\varepsilon ,n}^{y}(0)}{2\nu s_{n}}\left( -b_{n}(y)+\frac{%
e^{-n\alpha y}+e^{-s_{n}y}}{s_{n}+n\alpha }\right) .
\end{eqnarray*}%
It appears that the estimates on the part without $D^{2}u_{\varepsilon
,n}^{y}(0)$ are the same as the estimates obtained for $w_{n}(y).$ Now we
have in addition, as seen above,%
\begin{equation*}
\left\vert \frac{iD^{2}u_{\varepsilon ,n}^{y}(0)}{2\nu s_{n}}\right\vert
\leq C\frac{M_{n}}{X+1},
\end{equation*}%
\begin{equation*}
n^{2}\|b_{n}\|_{C_{\eta}^{0}}+n\|Db_{n}\|_{C_{\eta}^{0}}+\|D^{2}b_{n}\|_{C_{\eta}^{0}}\leq C_{1}%
\frac{X+1}{\sqrt{\varepsilon }}.
\end{equation*}%
Finally, we easily have 
\begin{align*}
n^{2}\Bigl\|\frac{e^{-n\alpha y}+e^{-s_{n}y}}{s_{n}+n\alpha }\Bigr\|_{C_{\eta}^{0}}
&+n \Bigl\| \frac{n\alpha e^{-n\alpha y}+s_{n}e^{-s_{n}y}}{s_{n}+n\alpha } \Bigr\|_{C_{\eta}^{0}}
+ \Bigl\|\frac{n^{2}\alpha ^{2}e^{-n\alpha y}+s_{n}^{2}e^{-s_{n}y}}{s_{n}+n\alpha } \Bigr\|_{C_{\eta}^{0}} \\
& \qquad \qquad \leq C_{2}\frac{X+1}{\sqrt{\varepsilon }},
\end{align*}%
hence %
\begin{equation*}
n^{4}\|u_{\varepsilon ,n}^{x}\|_{C_{\eta}^{0}}^{2}+n^{2}\|Du_{\varepsilon
,n}^{x}\|_{C_{\eta}^{0}}^{2}+\|D^{2}u_{\varepsilon
,n}^{x}\|_{C_{\eta}^{0}}^{2}\leq C\frac{M_{n}^{2}}{\varepsilon },
\end{equation*}%
which ends the proof of the Lemma \ref{lem perturb hilb}. %


\subsection{Estimates on $(\boldsymbol{L}_{\protect\nu ,\protect%
\omega }-\protect\lambda )^{-1}\widetilde{P}$}\label{app:complement}


Let us consider $v\in \mathring{H}_{\eta}^{2}$, solution of 
\begin{equation*}
(\boldsymbol{L}_{\nu ,\omega }-\lambda )v=f\in \mathring{L}_{\eta}^{2},
\end{equation*}%
for $\lambda \notin \Sigma _{U_+,\omega }$. We have%
\begin{equation*}
\boldsymbol{L}_{\nu ,\omega }=\boldsymbol{L}_{\nu ,\omega }^{(0)}+%
\boldsymbol{L}^{(1)},\text{ with }\boldsymbol{L}_{\nu ,\omega }^{(0)}=\nu
\Pi \Delta -\frac{\omega }{\alpha }\frac{\partial }{\partial \xi },
\end{equation*}%
and, using the Fourier components, we obtain%
\begin{equation*}
\lbrack (\boldsymbol{L}_{\nu ,\omega }^{(0)})_{n}-\lambda ]v_{n}+(%
\boldsymbol{L}^{(1)})_{n}v_{n}=f_{n},
\end{equation*}%
with%
\begin{equation*}
(\boldsymbol{L}^{(1)})_{n}v_{n}=-\Pi _{n}\left\{ in\alpha U\binom{v_{n}^{x}}{%
v_{n}^{y}}+\binom{v_{n}^{y}DU}{0}\right\} ,
\end{equation*}%
hence 
\begin{equation*}
\|(\boldsymbol{L}^{(1)})_{n}v_{n}\|_{C_{\eta}^{0}}\leq
C(1+|n|)\|v_{n}\|_{C_{\eta}^{0}}.
\end{equation*}%
Now, from the computation above, we know that for $\lambda \notin \Sigma
_{U_+,\omega }$ 
\begin{equation*}
\Bigl\|[(\boldsymbol{L}_{\nu ,\omega }^{(0)})_{n}-\lambda ]^{-1} \Bigr\|_{\mathcal{L}%
(C_{\eta}^{0})}\leq \frac{C}{(n^{2}-1)+|\lambda +\nu \alpha ^{2}|},
\end{equation*}%
so that there exists $N_{0}$ such that for $|n|>N_{0}$%
\begin{equation*}
\Bigl\|[(\boldsymbol{L}_{\nu ,\omega }^{(0)})_{n}-\lambda ]^{-1}(\boldsymbol{L}%
^{(1)})_{n} \Bigr\|_{\mathcal{L}(C_{\eta}^{0})}\leq 1/2,
\end{equation*}%
hence, for $|n|>N_{0}$%
\begin{eqnarray*}
\|v_{n}\|_{C_{\eta}^{0}} &\leq &2\|[(\boldsymbol{L}_{\nu ,\omega
}^{(0)})_{n}-\lambda ]^{-1}f_{n}\|_{C_{\eta}^{0}} \\
&\leq &\frac{2C}{(n^{2}-1)+|\lambda +\nu \alpha ^{2}|}\|f_{n}\|_{C_{\eta}^{0}}.
\end{eqnarray*}%
Now we choose a contour $\Gamma$, like the one described at Figure 2, such that there is no
eigenvalue (which form a bounded discrete set) of $(\boldsymbol{L}_{\nu ,\omega })_{n}$
on it for $1\leq |n|\leq N_{0}$. For such a contour there exists $M>0$ such that for $\lambda \in
\Gamma$,
\begin{eqnarray*}
\|(\boldsymbol{L}_{\nu ,\omega }-\lambda )^{-1}\widetilde{P}\|_{\mathcal{L}(%
\mathring{H}_{\eta}^{2},\mathring{H}_{\eta}^{2})} &\leq &\frac{M}{|\lambda +\nu
\alpha ^{2}|}, \\
\|(\boldsymbol{L}_{\nu ,\omega }-\lambda )^{-1}\widetilde{P}\|_{\mathcal{L}(%
\mathring{H}_{\eta}^{1},\mathring{H}_{\eta}^{2})} &\leq &\frac{M}{|\lambda +\nu
\alpha ^{2}|^{1/2}}, \\
\|(\boldsymbol{L}_{\nu ,\omega }-\lambda )^{-1}\widetilde{P}\|_{\mathcal{L}(%
\mathring{L}_{\eta}^{2},\mathring{H}_{\eta}^{2})} &\leq &M.
\end{eqnarray*}%
We may observe that, for $\lambda \in \Gamma ,$ there exists $C>0$ and $a>0$
such that%
\begin{equation*}
|\lambda +\nu \alpha ^{2}|\geq \frac{1}{C}(a^{2}+|\lambda |).
\end{equation*}


\subsection{Estimates for $(\protect\lambda -\boldsymbol{A}_{\protect%
\varepsilon })^{-1}$ for $\protect\lambda \in \Gamma $, and for $e^{\boldsymbol{A}%
_{\protect\varepsilon }t}$ \label{App: estim semigrAeps}}


Let us estimate the solution of%
\begin{equation*}
(\lambda -\boldsymbol{A}_{\varepsilon })u=f\in Q_{\varepsilon }\mathcal{X}%
_{\eta},
\end{equation*}%
where we look for $u\in Q_{\varepsilon }\mathcal{Z}_{\eta}.$ We look for 
\begin{eqnarray*}
u &=&\beta u_{\varepsilon }+\widetilde{w}+u_{0},\text{ }\beta =\langle
u,\phi _{1}^{\ast }\rangle ,\text{ }\langle u,\phi _{0}^{\ast }\rangle =0, \\
\widetilde{P}u &=&\beta \widetilde{P}u_{\varepsilon }+\widetilde{w},\text{ }%
P_{0}u=\beta P_{0}u_{\varepsilon }+u_{0},
\end{eqnarray*}%
with%
\begin{eqnarray*}
f &=&\eta u_{\varepsilon }+\widetilde{g}+f_{0},\text{ }\eta =\langle f,\phi
_{1}^{\ast }\rangle ,\text{ }\langle f,\phi _{0}^{\ast }\rangle =0, \\
\widetilde{P}f &=&\eta \widetilde{P}u_{\varepsilon }+\widetilde{g},\text{ }%
P_{0}f=\eta P_{0}u_{\varepsilon }+f_{0},
\end{eqnarray*}%
and%
\begin{eqnarray*}
\boldsymbol{A}_{\varepsilon }(\beta u_{\varepsilon }+\widetilde{w}+u_{0})
&=&\beta \sigma _{\varepsilon }u_{\varepsilon }+\boldsymbol{A}_{\varepsilon }%
\widetilde{w}+\boldsymbol{A}_{\varepsilon }u_{0}, \\
\boldsymbol{A}_{\varepsilon }u_{0} &=&\boldsymbol{L}_{\nu
}^{(0)}u_{0}+2Q_{\varepsilon }B(\widehat{V}_{\varepsilon },u_{0}), \\
\boldsymbol{A}_{\varepsilon }\widetilde{w} &=&Q_{\varepsilon }[\boldsymbol{L}%
_{\nu ,\omega }\widetilde{w}+2B(\widehat{V}_{\varepsilon },\widetilde{w})].
\end{eqnarray*}%
Then we obtain the following system for the unknown $\beta ,\widetilde{w}%
,u_{0}:$%
\begin{equation}
(\lambda -\sigma _{\varepsilon })\beta =\eta + \Bigl\langle \Bigl[ \boldsymbol{L}%
_{\nu ,\omega }-\boldsymbol{L}_{\nu _{0},\omega _{0}}+2B(\widehat{V}%
_{\varepsilon },\cdot ) \Bigr](\widetilde{w}+u_{0}),\phi _{1}^{\ast }\Bigr\rangle ,
\label{equ beta}
\end{equation}%
\begin{eqnarray}
(\lambda -\widetilde{P}\boldsymbol{A}_{\varepsilon })\widetilde{w} &=&%
\widetilde{g}+2\widetilde{P}Q_{\varepsilon }B(\widehat{V}_{\varepsilon
},u_{0}),  \label{equ wtilde} \\
(\lambda -\boldsymbol{L}_{\nu }^{(0)})u_{0} &=&f_{0}+2P_{0}B(\widehat{V}%
_{\varepsilon },\widetilde{w}),  \label{equ u0}
\end{eqnarray}%
and we need to estimate the solution all along the curve $\Gamma $ described
by $\lambda .$ Now we fix the choice of $\Gamma $ in such a way that $\gamma 
$ (which defines the parts $\Gamma _{1}$ and $\Gamma _{2}$) is such that%
\begin{equation*}
\gamma =\kappa ^{2}\varepsilon ^{2}\sim |\sigma _{\varepsilon }|/2,
\end{equation*}%
so that the curves $\Gamma _{0}\cup \Gamma _{1}\cup \Gamma _{2}$ lie on the
right of the real eigenvalue $\sigma _{\varepsilon }=-|\sigma _{\varepsilon
}|.$ We notice that the operator $\widetilde{P}\boldsymbol{A}_{\varepsilon }$
is%
\begin{equation*}
Q_{\varepsilon } \Bigl(\boldsymbol{L}_{\nu ,\omega }+2\widetilde{P}B(\widehat{V}%
_{\varepsilon },\cdot ) \Bigr),
\end{equation*}%
and we notice that for $\varepsilon =0,$ it is the operator $Q_{0}%
\boldsymbol{L}_{\nu _{0},\omega _{0}}$ which has a simple eigenvalue $0$,
eliminated when it is acting in the subspace orthogonal to $\phi _{1}^{\ast
}.$ The remaining spectrum is then a perturbation of order $\varepsilon $ of
the spectrum of $\boldsymbol{L}_{\nu ,\omega }$, except the eigenvalue close
to $0,$ i.e. its spectrum is ``far" on the left from the imaginary axis. It
results from estimates obtained at Appendix \ref{app:complement}, that we have, with a certain $a>0$ the estimates%
\begin{equation*}
\Bigl\|(\lambda -\widetilde{P}\boldsymbol{A}_{\varepsilon })^{-1} \Bigr\|_{\mathcal{L}(%
\mathring{H}_{\eta}^{2})}\leq \frac{C}{a^{2}+|\lambda |},\text{ for }\lambda
\in \Gamma ,
\end{equation*}%
and in the same way 
\begin{equation}
\Bigl\|(\lambda -\widetilde{P}\boldsymbol{A}_{\varepsilon })^{-1} \Bigr\|_{\mathcal{L}(%
\mathring{H}_{\eta}^{1},\mathring{H}_{\eta}^{2})}\leq \frac{C}{a+|\lambda |^{1/2}},%
\text{ for }\lambda \in \Gamma ,  \label{estim  resol Aeps Ptilde}
\end{equation}%
using the following property for $\lambda \in \Gamma $%
\begin{equation*}
\frac{1}{|\lambda +\nu \alpha ^{2}|}\leq \frac{c}{a^{2}+|\lambda |},\text{
for some }c,a>0.
\end{equation*}%
From (\ref{equ wtilde}), (\ref{equ u0}) we obtain immediately, for $\lambda
\in \Gamma ,$%
\begin{equation*}
\widetilde{w}=(\lambda -\widetilde{P}\boldsymbol{A}_{\varepsilon })^{-1}%
\widetilde{g}+2(\lambda -\widetilde{P}\boldsymbol{A}_{\varepsilon })^{-1}%
\widetilde{P}Q_{\varepsilon }B(\widehat{V}_{\varepsilon },u_{0})
\end{equation*}%
\begin{equation*}
u_{0}=(\lambda -\boldsymbol{L}_{\nu }^{(0)})^{-1}f_{0}+2(\lambda -%
\boldsymbol{L}_{\nu }^{(0)})^{-1}P_{0}B(\widehat{V}_{\varepsilon },%
\widetilde{w}).
\end{equation*}%
Finally we obtain%
\begin{eqnarray*}
\widetilde{w}-D_{\varepsilon ,\lambda }\widetilde{w} &=&(\lambda -\widetilde{%
P}\boldsymbol{A}_{\varepsilon })^{-1}\widetilde{g}+2(\lambda -\widetilde{P}%
\boldsymbol{A}_{\varepsilon })^{-1}\widetilde{P}Q_{\varepsilon }B \Bigl( \widehat{V}%
_{\varepsilon },(\lambda -\boldsymbol{L}_{\nu }^{(0)})^{-1}f_{0} \Bigr), \\
u_{0}-E_{\varepsilon ,\lambda }u_{0} &=&(\lambda -\boldsymbol{L}_{\nu
}^{(0)})^{-1}f_{0}+2(\lambda -\boldsymbol{L}_{\nu }^{(0)})^{-1}P_{0}B \Bigl( %
\widehat{V}_{\varepsilon },(\lambda -\widetilde{P}\boldsymbol{A}%
_{\varepsilon })^{-1}\widetilde{g} \Bigr),
\end{eqnarray*}%
where we need to estimate the operator $D_{\varepsilon ,\lambda }$ acting on 
$\widetilde{w}:$ 
\begin{equation}
D_{\varepsilon ,\lambda }\widetilde{w}\overset{def}{=}4(\lambda -\widetilde{P%
}\boldsymbol{A}_{\varepsilon })^{-1}\widetilde{P}Q_{\varepsilon }B \Bigl(\widehat{V%
}_{\varepsilon },(\lambda -\boldsymbol{L}_{\nu }^{(0)})^{-1}P_{0}B(\widehat{V%
}_{\varepsilon },\widetilde{w}) \Bigr),  \label{def Deps}
\end{equation}%
and the operator $E_{\varepsilon ,\lambda }$ acting on $u_{0}:$%
\begin{equation}
E_{\varepsilon ,\lambda }u_{0}\overset{def}{=}4(\lambda -\boldsymbol{L}_{\nu
}^{(0)})^{-1}P_{0}B \Bigl( \widehat{V}_{\varepsilon },(\lambda -\widetilde{P}%
\boldsymbol{A}_{\varepsilon })^{-1}\widetilde{P}Q_{\varepsilon }B(\widehat{V}%
_{\varepsilon },u_{0}) \Bigr).  \label{def Eeps}
\end{equation}


\subsubsection{Estimate for $D_{\protect\varepsilon ,\protect\lambda }$}


We have%
\begin{equation*}
\|D_{\varepsilon ,\lambda }\|_{\mathcal{L}(\mathring{H}_{\eta}^{2})}\leq
c\varepsilon ^{2}\|(\lambda -\widetilde{P}\boldsymbol{A}_{\varepsilon
})^{-1}\|_{\mathcal{L}(\mathring{H}_{\eta}^{1},\mathring{H}_{\eta}^{2})}\|(\lambda
-\boldsymbol{L}_{\nu }^{(0)})^{-1}\|_{\mathcal{L}(\mathring{W}_{\eta}^{1},%
\mathring{W}^{2,\infty })},
\end{equation*}%
hence, using (\ref{estim resol nighb 0}) and
(\ref{estim resol Aeps Ptilde}),
\begin{align*}
\|D_{\varepsilon ,\lambda }\|_{\mathcal{L}(\mathring{H}_{\eta}^{2})} &\leq \frac{%
cC^{2}\varepsilon ^{2}}{\sqrt{|\lambda |}(a+\sqrt{%
|\lambda |})}\leq  C_1 \varepsilon^2 ,\text{ for }\lambda
\in \Gamma_3 
\\
&\leq
\frac{cC^{2}\varepsilon^{2} c_0}{(a+\sqrt{%
|\lambda |})}\leq  C_1 \varepsilon^2 ,\text{ for }\lambda
\in \Gamma_0 \cup \Gamma_1 \cup \Gamma_2. 
\end{align*}%
It results that for $\varepsilon $ small enough, the operator $\mathbb{I}%
-D_{\varepsilon ,\lambda }$ has a bounded inverse in $\mathcal{L}(\mathring{H%
}_{\eta}^{2})$.

\subsubsection{Estimate for $E_{\protect\varepsilon ,\protect\lambda }$}

In the same way, we have%
\begin{equation*}
\|E_{\varepsilon ,\lambda }\|_{\mathcal{L}(\mathring{W}^{2,\infty })}\leq
c\varepsilon ^{2}\|(\lambda -\boldsymbol{L}_{\nu }^{(0)})^{-1}\|_{\mathcal{L}%
(\mathring{W}_{\eta}^{1},\mathring{W}^{2,\infty })}\|(\lambda -\widetilde{P}%
\boldsymbol{A}_{\varepsilon })^{-1}\|_{\mathcal{L}(\mathring{H}_{\eta}^{1},%
\mathring{H}_{\eta}^{2})},
\end{equation*}%
hence%
\begin{equation*}
\|E_{\varepsilon ,\lambda }\|_{\mathcal{L}(\mathring{W}^{2,\infty })}\leq 
C_1 \varepsilon^2,\text{ for }\lambda \in \Gamma .
\end{equation*}%
It results that, for $\varepsilon $ small enough, the operator $\mathbb{I}%
-E_{\varepsilon ,\lambda }$ has a bounded inverse in $\mathcal{L}(\mathring{W%
}^{2,\infty }).$

\subsubsection{Estimates related to $\widetilde{g}$ and $f_{0}$}

Now we have
\begin{eqnarray*}
\Bigl\|2\widetilde{P}Q_{\varepsilon }B \Bigl( \widehat{V}_{\varepsilon },(\lambda -%
\boldsymbol{L}_{\nu }^{(0)})^{-1}f_{0} \Bigr) \Bigr\|_{\mathring{H}_{\eta}^{1}} &\leq &%
\frac{C\varepsilon }{\sqrt{|\lambda |}(1 +\sqrt{|\lambda |}),%
}\|f_{0}\|_{\mathring{W}_{\eta}^{2,\infty }}, \, \hbox{for } \lambda \in \Gamma_3
\\
&\leq &%
\frac{c_0 C\varepsilon }{(1 +\sqrt{|\lambda |}),%
}\|f_{0}\|_{\mathring{W}_{\eta}^{2,\infty }}, \, \hbox{for } \lambda \in \Gamma_0 \cup \Gamma_1 \cup \Gamma_2,
\\
\Bigl\|2P_{0}B \Bigl(\widehat{V}_{\varepsilon },(\lambda -\widetilde{P}\boldsymbol{A}%
_{\varepsilon })^{-1}\widetilde{g}\Bigr) \Bigr\|_{\mathring{W}_{\eta}^{1}} &\leq &\frac{%
C\varepsilon }{a^{2}+|\lambda |}\|\widetilde{g}\|_{\mathring{H}_{\eta}^{2}}.
\end{eqnarray*}%
For $\widetilde{g}\in \mathring{H}_{\eta}^{1}$ and $f_{0}$ in $\mathring{W}%
_{\eta}^{1,\infty }$, we obtain instead%
\begin{eqnarray*}
\Bigl\|2\widetilde{P}Q_{\varepsilon }B \Bigl(\widehat{V}_{\varepsilon },(\lambda -%
\boldsymbol{L}_{\nu }^{(0)})^{-1}f_{0} \Bigr) \Bigr\|_{\mathring{H}_{\eta}^{1}} &\leq &%
\frac{C\varepsilon }{1 +\sqrt{|\lambda |}}\|f_{0}\|_{%
\mathring{W}_{\eta}^{1,\infty }}, \\
\Bigl\|2P_{0}B \Bigl( \widehat{V}_{\varepsilon },(\lambda -\widetilde{P}\boldsymbol{A}%
_{\varepsilon })^{-1}\widetilde{g} \Bigr) \Bigr\|_{\mathring{W}_{\eta}^{1}} &\leq &\frac{%
C\varepsilon }{a+\sqrt{|\lambda |}}\|\widetilde{g}\|_{\mathring{H}_{\eta}^{1}}.
\end{eqnarray*}

\subsubsection{Estimate related to $\protect\beta $}

For the component $\beta $ of $u$ we obtain%
\begin{eqnarray*}
\beta &=&\frac{\eta }{\lambda -\sigma _{\varepsilon }}+\frac{\varepsilon }{%
\lambda -\sigma _{\varepsilon }}F(\widetilde{w}+u_{0}) \\
|F(\widetilde{w}+u_{0})| &\leq &C(\|\widetilde{w}\|_{\mathring{H}%
_{\eta}^{1}}+\|u_{0}\|_{\mathring{W}^{1,\infty }}),\text{ }
\end{eqnarray*}%
where for $\lambda \in \Gamma $ we have $|\lambda -\sigma _{\varepsilon
}|\geq \frac{1}{2}\kappa ^{2}\varepsilon ^{2}$ by construction.

\subsubsection{Estimates for $e^{\boldsymbol{A}_{\protect\varepsilon }t}$}

We still use the contour $\Gamma $ defined in Figure \ref{fig:gamma}
with $\gamma =\kappa ^{2}\varepsilon ^{2}\sim |\sigma _{\varepsilon }|/2.$
We have  
\begin{eqnarray*}
\widetilde{P}e^{\boldsymbol{A}_{\varepsilon }t}(\eta u_{\varepsilon }+%
\widetilde{g}+f_{0}) &=&\frac{1}{2i\pi }\int_{\Gamma }e^{\lambda t}\left( 
\widetilde{w}+\beta \widetilde{P}u_{\varepsilon }\right) d\lambda \\
P_{0}e^{\boldsymbol{A}_{\varepsilon }t}(\eta u_{\varepsilon }+\widetilde{g}%
+f_{0}) &=&\frac{1}{2i\pi }\int_{\Gamma }e^{\lambda t}(u_{0}+\beta
P_{0}u_{\varepsilon })d\lambda
\end{eqnarray*}%
where the explicit part of $\beta $ gives 
\begin{equation}
\frac{1}{2i\pi }\int_{\Gamma }e^{\lambda t}\frac{\eta }{\lambda -\sigma
_{\varepsilon }}d\lambda =\eta e^{-|\sigma _{\varepsilon }|t}.
\label{estim integ beta}
\end{equation}%
Now we use on $\Gamma_3$%
\begin{eqnarray*}
\|\widetilde{w}\|_{\mathring{H}_{\eta}^{2}} &\leq &\frac{C}{a^{2}+|\lambda |}\|%
\widetilde{g}\|_{\mathring{H}_{\eta}^{2}}+\frac{cC^{2}\varepsilon }{\sqrt{%
|\lambda |}(a+\sqrt{|\lambda |})(1 +\sqrt{|\lambda |})}%
\|f_{0}\|_{\mathring{W}_{\eta}^{2,\infty }}, 
\\
\|u_{0}\|_{\mathring{W}^{2,\infty }} &\leq &\frac{C}{\sqrt{|\lambda |}%
(1 +\sqrt{|\lambda |})}\|f_{0}\|_{\mathring{W}%
_{\eta}^{2,\infty }}+\frac{cC^{2}\varepsilon }{(a^{2}+|\lambda |)(1 +\sqrt{|\lambda |})}\|\widetilde{g}\|_{\mathring{H}_{\eta}^{2}},
\end{eqnarray*}%
on $\Gamma_0 \cup \Gamma_1 \cup \Gamma_2$
\begin{eqnarray*}
\|\widetilde{w}\|_{\mathring{H}_{\eta}^{2}} &\leq &\frac{C}{a^{2}+|\lambda |}\|%
\widetilde{g}\|_{\mathring{H}_{\eta}^{2}}+\frac{c c_0 C^{2}\varepsilon }{(a+\sqrt{|\lambda |})(1 +\sqrt{|\lambda |})}%
\|f_{0}\|_{\mathring{W}_{\eta}^{2,\infty }}, 
\\
\|u_{0}\|_{\mathring{W}^{2,\infty }} &\leq &\frac{c_0 C}{%
(1 +\sqrt{|\lambda |})}\|f_{0}\|_{\mathring{W}%
_{\eta}^{2,\infty }}+\frac{c C^{2}\varepsilon }{(a^{2}+|\lambda |)(1 +\sqrt{|\lambda |})}\|\widetilde{g}\|_{\mathring{H}_{\eta}^{2}},
\end{eqnarray*}%
and, for $\widetilde{g}\in \mathring{H}_{\eta}^{1}$ and $f_{0}$ in $\mathring{W}%
_{\eta}^{1,\infty }$,
\begin{eqnarray*}
\|\widetilde{w}\|_{\mathring{H}_{\eta}^{2}} &\leq &\frac{C}{a +%
\sqrt{|\lambda |}}\|\widetilde{g}\|_{\mathring{H}_{\eta}^{1}}+\frac{%
cC^{2}\varepsilon }{(1 +\sqrt{|\lambda |})(a+\sqrt{|\lambda
|})}\|f_{0}\|_{\mathring{W}_{\eta}^{1,\infty }}, \\
\|u_{0}\|_{\mathring{W}^{2,\infty }} &\leq &\frac{C}{1 +%
\sqrt{|\lambda |}}\|f_{0}\|_{\mathring{W}_{\eta}^{1,\infty }}+\frac{%
cC^{2}\varepsilon }{(a+\sqrt{|\lambda |})(1 +\sqrt{|\lambda
|})}\|\widetilde{g}\|_{\mathring{H}_{\eta}^{1}}.
\end{eqnarray*}%
It results that there exists $M(\varepsilon)>0$ such that 
\begin{eqnarray*}
\Bigl\|\frac{1}{2i\pi }\int_{\Gamma }e^{\lambda t}\widetilde{w}d\lambda \Bigr\|_{%
\mathring{H}_{\eta}^{2}} 
+ &\leq &\frac{M(\varepsilon )}{1+t} \Bigl[\|\widetilde{g}\|_{%
\mathring{H}_{\eta}^{2}}+\|f_{0}\|_{\mathring{%
W}_{\eta}^{2,\infty }} \Bigr], \\
\Bigl\|\frac{1}{2i\pi }\int_{\Gamma }e^{\lambda t}u_{0}d\lambda \Bigr\|_{\mathring{W}%
^{2,\infty }} &\leq &\frac{M(\varepsilon )}{1+t}
\Bigl[\|\widetilde{g}\|_{\mathring{%
H}_{\eta}^{2}} +\|f_{0}\|_{\mathring{W}%
_{\eta}^{2,\infty }} \Bigr],
\end{eqnarray*}%
and for $\widetilde{g}\in \mathring{H}_{\eta}^{1}$ and $f_{0}$ in $\mathring{W}%
_{\eta}^{1,\infty }$%
\begin{eqnarray*}
\Bigl\|\frac{1}{2i\pi }\int_{\Gamma }e^{\lambda t}\widetilde{w}d\lambda \Bigr\|_{%
\mathring{H}_{\eta}^{2}} &\leq &\frac{M(\varepsilon )}{\sqrt{t} (1 + \sqrt{t})}\|\widetilde{g}%
\|_{\mathring{H}_{\eta}^{1}}+\frac{M(\varepsilon )}{1+t}\|f_{0}\|_{\mathring{W}%
_{\eta}^{1,\infty }}, \\
\Bigl\|\frac{1}{2i\pi }\int_{\Gamma }e^{\lambda t}u_{0}d\lambda \Bigr\|_{\mathring{W}%
^{2,\infty }} &\leq &\frac{M(\varepsilon )}{\sqrt{t}(1 + \sqrt{t})}\|f_{0}\|_{\mathring{W}%
_{\eta}^{1,\infty }}+\frac{M(\varepsilon )}{1+t}\|\widetilde{g}\|_{\mathring{H}%
_{\eta}^{1}}.
\end{eqnarray*}%
Now we also have%
\begin{equation*}
\frac{1}{2i\pi }\int_{\Gamma }e^{\lambda t}\beta d\lambda =\eta e^{-|\sigma
_{\varepsilon }|t}+\frac{1}{2i\pi }\int_{\Gamma }e^{\lambda t}\frac{%
\varepsilon }{\lambda -\sigma _{\varepsilon }}F(\widetilde{w}+u_{0})d\lambda
,
\end{equation*}%
with%
\begin{eqnarray*}
&& \Bigl|\frac{1}{2i\pi }\int_{\Gamma }e^{\lambda t}\frac{\varepsilon }{\lambda
-\sigma _{\varepsilon }}F(\widetilde{w}+u_{0})d\lambda \Bigr| \\
&\leq &\frac{c}{\varepsilon }\left( \Bigl\|\frac{1}{2i\pi }\int_{\Gamma
}e^{\lambda t}\widetilde{w}d\lambda \Bigr\|_{\mathring{H}_{\eta}^{1}}+\Bigr\|\frac{1}{%
2i\pi }\int_{\Gamma }e^{\lambda t}u_{0}d\lambda \Bigl\|_{\mathring{W}^{1,\infty
}}\right) .
\end{eqnarray*}%
Since it is sufficient to estimate $(\widetilde{w}+u_{0})$ in $\mathring{H}%
_{\eta}^{1}\oplus \mathring{W}^{1,\infty }$,
similarly, we have
\begin{equation*}
\Bigl|\frac{1}{2i\pi }\int_{\Gamma }e^{\lambda t}\frac{\varepsilon }{\lambda
-\sigma _{\varepsilon }}F(\widetilde{w}+u_{0})d\lambda \Bigr|\leq \frac{%
cM(\varepsilon) }{\varepsilon (1+t)}\left( 
\|\widetilde{g}\|_{\mathring{H}_{\eta}^{1}}
+\|f_{0}\|_{\mathring{W}%
_{\eta}^{1,\infty }}\right) .
\end{equation*}
Finally, we obtain the following estimates for a certain constant $%
C(\varepsilon )$%
$$
\|e^{\boldsymbol{A}_{\varepsilon }t}u\|_{\mathcal{Z}_\eta}
\leq {C(\varepsilon) \over 1 + t} \| u \|_{\mathcal{Z}_{\eta,\eta}}.
$$
Similarly, for $\widetilde{g}\in \mathring{H}_{\eta}^{1}$ and $f_{0}$ in $%
\mathring{W}_{\eta}^{1,\infty }$ we obtain%
\begin{eqnarray*}
\|\widetilde{P}e^{\boldsymbol{A}_{\varepsilon }t}u\|_{\mathcal{L}(\mathring{H%
}_{\eta}^{2})} &\leq &\frac{C(\varepsilon )}{\sqrt{t} (1 + \sqrt{t})}\|\widetilde{P}u\|_{%
\mathring{H}_{\eta}^{1}}+\frac{C(\varepsilon )}{1+{t}}\|P_{0}u\|_{%
\mathring{W}_{\eta}^{1,\infty }}, \\
\|P_{0}e^{\boldsymbol{A}_{\varepsilon }t}u\|_{\mathcal{L}(\mathring{W}%
^{2,\infty })} &\leq &\frac{C(\varepsilon )}{1+{t}}\|\widetilde{P}u\|_{%
\mathring{H}_{\eta}^{1}}+\frac{C(\varepsilon )}{\sqrt{t}(1 + \sqrt{t})}\|P_{0}u\|_{\mathring{W%
}_{\eta}^{1,\infty }}.
\end{eqnarray*}


\subsubsection*{Acknowledgments}  

The authors would like to warmly thank Mariana Haragus
for allowing our collaboration and her valuable help
and comments.
D. Bian is supported by NSFC under the contract 12271032.

\subsubsection*{Conflict of interest}  The authors state that there is no conflict of interest.

\subsubsection*{Data availability}  Data are not involved in this research paper.



\end{document}